\documentclass[11pt]{amsart}
%
\usepackage{amsmath,amssymb}
\usepackage{graphicx}
\usepackage{psfrag} 
\addtolength{\hoffset}{-0.7cm}
\addtolength{\textwidth}{1.4cm}
\usepackage{hyperref}
\usepackage{tikz,epsfig,color,graphicx,epstopdf,subfig}
\usetikzlibrary{positioning,chains,fit,shapes,calc, fit}

\title[Proof of the $1$-factorization \& Hamilton decomposition conjectures II]{Proof of the $1$-factorization and Hamilton decomposition conjectures II: the bipartite case}
\author{B\'ela Csaba, Daniela K\"uhn, Allan Lo, Deryk Osthus and Andrew~Treglown}
\thanks {The research leading to these results was partially supported by the  European Research Council
under the European Union's Seventh Framework Programme (FP/2007--2013) / ERC Grant
Agreement no. 258345 (B.~Csaba, D.~K\"uhn and A.~Lo), 306349 (D.~Osthus) and 259385 (A.~Treglown).
The research was also partially supported by the EPSRC, grant no. EP/J008087/1 (D.~K\"uhn and D.~Osthus).
}
\date{\today} 
\def\COMMENT#1{}
\def\TASK#1{}

\begin{document}
\numberwithin{equation}{section}\textbf{•}

\def\noproof{{\unskip\nobreak\hfill\penalty50\hskip2em\hbox{}\nobreak\hfill%
        $\square$\parfillskip=0pt\finalhyphendemerits=0\par}\goodbreak}
\def\endproof{\noproof\bigskip}
\newdimen\margin   
\def\textno#1&#2\par{%
    \margin=\hsize
    \advance\margin by -4\parindent
           \setbox1=\hbox{\sl#1}%
    \ifdim\wd1 < \margin
       $$\box1\eqno#2$$%
    \else
       \bigbreak
       \hbox to \hsize{\indent$\vcenter{\advance\hsize by -3\parindent
       \sl\noindent#1}\hfil#2$}%
       \bigbreak
    \fi}
\def\proof{\removelastskip\penalty55\medskip\noindent{\bf Proof. }}

\def\cC{\mathcal{C}}
\def\cV{\mathcal{V}}
\def\C{\mathcal{C}}
\def\cM{\mathcal{M}}
\def\cP{\mathcal{P}}
\def\eps{\varepsilon}
\def\ex{\mathbb{E}}
\def\prob{\mathbb{P}}
\def\pr{\mathbb{P}}
\def\eul{{\rm e}}
\def\i{(i_1,i_2,i_3,i_4)}
\def\I{i_1,i_2,i_3,i_4}
\def\cJ{\mathcal{J}}
\def\epszero{\eps_0}
\newcommand{\cB}{\mathcal{B}}

\newtheorem{firstthm}{Proposition}[section]
\newtheorem{thm}[firstthm]{Theorem}
\newtheorem{prop}[firstthm]{Proposition}
\newtheorem{fact}[firstthm]{Fact}
\newtheorem{lemma}[firstthm]{Lemma}
\newtheorem{cor}[firstthm]{Corollary}
\newtheorem{problem}[firstthm]{Problem}
\newtheorem{defin}[firstthm]{Definition}
\newtheorem{conj}[firstthm]{Conjecture}
\newtheorem{claim}[firstthm]{Claim}
\newtheorem{remark}[firstthm]{Remark}

\maketitle

\begin{abstract}
In a sequence of four papers, we prove the following results (via a unified approach) for all sufficiently large $n$:
\begin{itemize}
\item[(i)] [\emph{$1$-factorization conjecture}]
Suppose  that $n$ is even and $D\geq 2\lceil n/4\rceil -1$. 
Then every $D$-regular graph $G$ on $n$ vertices has a decomposition into perfect matchings.
Equivalently, $\chi'(G)=D$.

\item[(ii)] [\emph{Hamilton decomposition conjecture}]
Suppose that $D \ge   \lfloor n/2 \rfloor $.
Then every $D$-regular graph $G$ on $n$ vertices has a decomposition
into Hamilton cycles and at most one perfect matching.

\item[(iii)] [\emph{Optimal packings of Hamilton cycles}] Suppose that $G$ is a graph on $n$ vertices with
minimum degree $\delta\ge n/2$.
Then $G$ contains at least ${\rm reg}_{\rm even}(n,\delta)/2 \ge (n-2)/8$ edge-disjoint Hamilton cycles.
Here $\textnormal{reg}_{\textnormal{even}}(n,\delta)$ denotes the degree of the 
largest even-regular spanning subgraph one can guarantee in a graph on $n$ vertices
with minimum degree~$\delta$.
\end{itemize}
According to Dirac, (i) was first raised in the 1950s.
(ii) and the special case $\delta= \lceil n/2 \rceil$ of (iii) answer questions of Nash-Williams from 1970.
All of the above bounds are best possible. In the current paper, we prove the above results for the case when $G$ is close to a
complete balanced bipartite graph.
\end{abstract}
\maketitle

\tableofcontents

\section{Introduction}
The topic of decomposing a graph into a given collection of edge-disjoint subgraphs has a
long history. Indeed, in 1892, Walecki~\cite{lucas} proved that every complete graph of odd order has a  decomposition into edge-disjoint Hamilton cycles.
In a sequence of four papers, we provide a unified approach towards proving three  long-standing graph decomposition conjectures for all sufficiently large graphs.

\subsection{The $1$-factorization conjecture}

Vizing's theorem states that for any graph~$G$ of maximum degree $\Delta$, its edge-chromatic number
$\chi'(G)$ is either $\Delta$ or $\Delta+1$. However, the problem of determining the precise value of $\chi '(G)$
for an arbitrary graph $G$ is NP-complete~\cite{NP}. 
Thus, it is of interest to determine classes of graphs $G$ that attain the (trivial) lower bound~$\Delta$
-- much of the recent book~\cite{stiebitz} is devoted to the subject.
If $G$ is a regular graph then $\chi'(G)=\Delta(G)$ precisely when $G$ has a $1$-factorization:
a \emph{$1$-factorization} of a graph~$G$ consists of a set of edge-disjoint perfect matchings covering all edges of~$G$.
The  $1$-factorization conjecture states that every regular graph of sufficiently high degree has a 
$1$-factorization. It was first stated explicitly by Chetwynd and Hilton~\cite{1factorization,CH} (who also proved partial results).
However, they state that
according to Dirac, it was already discussed in the 1950s. We prove the $1$-factorization conjecture for sufficiently large graphs.

\begin{thm}\label{1factthm}
There exists an $n_0 \in \mathbb N$ such that the following holds.
Let $ n,D \in \mathbb N$ be such that $n\geq n_0$ is even and $D\geq 2\lceil n/4\rceil -1$. 
Then every $D$-regular graph $G$ on $n$ vertices has a $1$-factorization.%
    \COMMENT{So this means that $D\ge n/2$ if $n = 2 \pmod 4$ and $D\ge n/2-1$ if $n = 0 \pmod 4$.}
    Equivalently, $\chi'(G)=D$.
\end{thm}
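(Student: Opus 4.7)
The plan is as follows. Since this paper handles the case where $G$ is close to a complete balanced bipartite graph, I would fix a partition $V(G)=A\cup B$ with $|A|=|B|=n/2$ so that all but a $o(1)$-fraction of $e(G)$ lies in $G[A,B]$; in particular, the number of edges inside $A$ or inside $B$, and the set $A_0\cup B_0$ of \emph{exceptional} vertices whose neighbourhood deviates significantly from the opposite part, are both of size $o(n)$. The first step is to \emph{clean up} $G$: I would peel off a small number of carefully chosen edge-disjoint perfect matchings that together cover all edges inside the parts incident to exceptional vertices and equalise the number of inside-$A$ and inside-$B$ edges at every non-exceptional vertex. After this preprocessing the remaining graph $G'$ is nearly $D'$-regular, nearly balanced bipartite between $A$ and $B$, and its irregularities are confined to a very small exceptional set.

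The main body would follow the approximate-decomposition-plus-absorption strategy running through the four-paper series. Inside $G'$ I would reserve a small, specially engineered \emph{robustly matchable} spanning subgraph $H$: a sparse subgraph whose absorbing property guarantees that, for any suitably sparse, nearly-regular, nearly-balanced bipartite residual $R$ with the right parity, $H\cup R$ decomposes into perfect matchings. With $H$ set aside, the bulk $G'-H$ is essentially a regular balanced bipartite graph, to which the regularity method applies: a regularity partition yields a super-regular reduced bipartite multigraph; via a near $1$-factorization of the reduced multigraph combined with the blow-up lemma (or iterative matching arguments on $\varepsilon$-super-regular pairs) one extracts edge-disjoint perfect matchings of $G'-H$ covering all but an $o(1)$-fraction of its edges. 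The sparse leftover is then combined with $H$ and decomposed using the absorbing property, yielding the $1$-factorization of $G$.

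The main obstacle is the interplay between \emph{balance} and \emph{parity} forced by the bipartite-like structure. A perfect matching of $G$ must pair each vertex of $A$ with exactly one partner, so any matching using $k$ edges inside $A$ must use exactly $k$ edges inside $B$. Throughout the decomposition one must therefore allocate the small stock of edges inside the parts, together with the degree contributions of exceptional vertices, in a synchronised way so that at every stage the residual graph stays essentially regular and essentially balanced. Constructing $H$ so that it is flexible enough to correct every compatible imbalance left behind after the approximate step, while being sparse enough not to disturb the regularity that drives that step, is the technical heart of the argument. This is also where the hypothesis $D\ge 2\lceil n/4\rceil-1$ is tight: in the bipartite regime $D$ may be as small as $n/2-1$, so there is essentially no slack in the degree, and the design of $H$ must exploit the bipartite structure to the fullest.
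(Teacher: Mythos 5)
Your proposal takes a genuinely different route from the paper, and it has a gap at its heart. The paper never decomposes $G$ directly into perfect matchings. Instead it proves Theorem~\ref{1factbip}: every $D$-regular $\eps_{\rm ex}$-bipartite graph with $D$ even and $D\ge(1/2-\eps_{\rm ex})n$ has a \emph{Hamilton} decomposition. The $1$-factorization is then obtained by splitting each Hamilton cycle into two perfect matchings (peeling off one perfect matching beforehand if $D$ is odd). The engine behind Theorem~\ref{1factbip} is the robust decomposition lemma from~\cite{Kelly}, imported here as Corollary~\ref{rdeccor}; it furnishes a sparse Hamilton-decomposable ``absorber'' $G^{\rm rob}$ which cleans up the leftover from the approximate decomposition. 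You instead posit a ``robustly matchable'' spanning subgraph $H$ with the property that $H\cup R$ decomposes into perfect matchings for every admissible sparse residual $R$. This is the central technical device in your plan, yet it is only asserted, not constructed; building such a gadget that respects the balance constraint $e_M(A')-e_M(B')=(|A'|-|B'|)/2$ for every matching $M$, and that accommodates exceptional vertices whose internal degree can be as large as roughly $D/2$, would be a major project, not a routine adaptation of existing absorbers. Indeed such a vertex forces about $D/2$ of your matchings to carry an internal edge at it, so the ``clean-up'' stage that you describe as peeling off ``a small number of carefully chosen edge-disjoint perfect matchings'' must in fact remove a constant fraction of the degree --- this is where the real work is hiding, and it corresponds roughly to the machinery of Sections~\ref{eliminating}--\ref{findBES} in the paper. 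A further reason the paper takes the Hamilton-cycle route is that it simultaneously yields Theorem~\ref{HCDthm}, which a matching-only argument cannot give.

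There is also a factual slip: you claim that the bound $D\ge 2\lceil n/4\rceil-1$ is essentially tight in the bipartite regime, so that there is ``no slack'' in the degree. The paper explicitly observes, after Theorem~\ref{NWmindegbip}, that the opposite is true: Theorem~\ref{1factbip} holds for $D\ge(1/2-\eps_{\rm ex})n$ with $\eps_{\rm ex}>0$ a fixed constant, i.e.~a linear amount below the stated threshold, and the extremal examples for the $1$-factorization conjecture are close to the union of two cliques, which is the regime treated in~\cite{2clique}. The slack in the bipartite case is in fact what makes the preprocessing in Sections~\ref{eliminating} and~\ref{findBES} possible, so your description of the constraints is pointing at the wrong bottleneck.
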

The bound on the minimum degree in Theorem~\ref{1factthm} is best possible.
In fact, a smaller degree bound does not even ensure a single perfect matching.%
\COMMENT{Deryk added new sentence and deleted overfull conjecture below}
 To see this, suppose first that $n = 2 \pmod{4}$.
Consider the graph which is the disjoint union of two cliques of order $n/2$ (which is odd). If $n = 0 \pmod 4$,
consider the graph obtained from the disjoint union of cliques of orders $n/2-1$ and $n/2+1$ (both odd)
by deleting a Hamilton cycle in the larger clique.

 Perkovic and Reed~\cite{reed} proved an approximate version of Theorem~\ref{1factthm}
(they assumed that $D \ge n/2+\eps n$). Recently, 
this was generalized by Vaughan~\cite{vaughan} to multigraphs of bounded multiplicity, thereby
proving an approximate version of a `multigraph  $1$-factorization conjecture' which was raised by
Plantholt and Tipnis~\cite{PT}.
Further related results and problems are discussed in the recent monograph~\cite{stiebitz}.

\subsection{The Hamilton decomposition conjecture}

A \emph{Hamilton decomposition} of a graph~$G$ consists of a set of edge-disjoint Hamilton cycles covering all the edges of~$G$.
A natural extension of this to regular graphs $G$ of odd degree is to ask for a decomposition into 
Hamilton cycles and one perfect matching (i.e.~one perfect matching $M$ in $G$ together with a 
Hamilton decomposition of $G-M$). 
Nash-Williams~\cite{initconj,decompconj} raised the problem of finding a Hamilton decomposition 
in an even-regular graph of sufficiently large degree.
The following result completely solves this problem for large graphs.
\begin{thm} \label{HCDthm} 
There exists an $n_0 \in \mathbb N$ such that the following holds.
Let $ n,D \in \mathbb N$ be such that $n \geq n_0$ and
$D \ge   \lfloor n/2 \rfloor $.
Then every $D$-regular graph $G$ on $n$ vertices has a decomposition into Hamilton cycles and 
at most one perfect matching.
\end{thm}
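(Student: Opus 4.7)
The plan is to reduce Theorem~\ref{HCDthm} to three sub-cases according to the global structure of $G$, each handled by a different technique. Fix constants $0 < \eps \ll \tau \ll 1$. Call $G$ \emph{bipartite-extremal} if $V(G)$ admits a near-balanced partition $(A,B)$ with $\big||A|-|B|\big|\le 1$ and at most $\eps n^2$ edges within parts; call $G$ \emph{two-cliques-extremal} if $V(G)$ admits such a partition with at most $\eps n^2$ edges between parts; otherwise call $G$ \emph{non-extremal}. The definitions are arranged so that every $D$-regular graph with $D\ge \lfloor n/2 \rfloor$ falls into at least one class, and so that non-extremal $G$ are robust $(\tau,\tau')$-expanders for some constant $\tau'>0$ of the same order as $\tau$. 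If $D$ is odd (so $n$ is necessarily even), a preliminary step extracts a perfect matching $M$ from $G$---whose existence follows from a Dirac-type argument since $D\ge n/2$---and the rest of the proof is applied to the even-regular graph $G-M$; the matching $M$ then becomes the exceptional perfect matching in the decomposition.

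In the non-extremal case, the plan is to apply directly the Hamilton decomposition theorem for robust expanders due to K\"uhn and Osthus, which states that every $D$-regular robust expander on $n$ vertices with $D\ge n/2$ has a full Hamilton decomposition. This disposes of the non-extremal case at once, and reduces the theorem to the two extremal cases.

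The bipartite-extremal case is the content of the present paper. Given a near-balanced bipartition $(A,B)$ of $G$ with at most $\eps n^2$ edges within parts, the plan has three ingredients. First, set aside a structured reserve of edges---an \emph{absorber}---with carefully controlled regularity and expansion properties, whose role is to soak up both the within-part edges and any small residue left over at the end. Second, apply a bipartite analogue of the robust-expander Hamilton decomposition to decompose the bulk of the remaining graph into Hamilton cycles that alternate strictly between $A$ and $B$. Third, reincorporate the within-part edges one at a time by rerouting each such edge through a short path inside the absorber, transforming each bipartite cycle into a genuine Hamilton cycle of $G$ while keeping all the other cycles untouched. The two-cliques-extremal case is handled in a parallel paper of the series by a structurally analogous argument in which the roles of within-part and between-part edges are swapped.

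The main obstacle is the bipartite-extremal case. A Hamilton cycle in a balanced bipartite graph must alternate perfectly between the two parts, so any within-part edge used inside a Hamilton cycle must be paired with another within-part edge in the same cycle in order to preserve the overall alternation. One must therefore distribute the up to $\eps n^2$ within-part edges among the roughly $D/2$ Hamilton cycles and match them up correctly, all while keeping the remainder of $G$ close to regular and bipartite enough for a bipartite robust-expander decomposition to be available. Designing an absorber flexible enough to handle an \emph{arbitrary} admissible configuration of leftover within-part edges, proving that the absorber can be extracted without destroying the regularity needed elsewhere, and then integrating the absorber seamlessly with the bipartite decomposition step are where the bulk of the technical work of the paper lies.
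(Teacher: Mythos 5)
Your top-level reduction is exactly the one used in the paper series: split into an almost-bipartite case, an almost-two-cliques case and a robust-expander case, dispose of the last via the Hamilton decomposition theorem for robust expanders of \cite{Kelly}, strip off a perfect matching first when $D$ is odd, and treat the two extremal cases separately. That is precisely how Theorem~\ref{HCDthm} is proved; note that the present paper only supplies the almost-bipartite case (Theorem~\ref{1factbip}), while the derivation of Theorem~\ref{HCDthm} from the three structural results is carried out in the companion paper~\cite{2clique}.

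Where your sketch deviates is in the order of operations inside the bipartite-extremal case, and the deviation is a genuine problem. You propose to first decompose the bulk of $G[A,B]$ into Hamilton cycles ``that alternate strictly between $A$ and $B$'' and only afterwards reincorporate the within-part edges one at a time by rerouting them through the absorber. This order cannot work: when $|A|\neq |B|$ (which does occur, e.g.\ in Babai's extremal example described in Section~\ref{sec:sketch1}) the bipartite part contains no Hamilton cycles at all, and there are in addition exceptional vertices with many neighbours in both classes which cannot sit on an alternating cycle; so the objects you would later ``repair'' do not exist in the first place. The paper proceeds in the opposite order: it packages the within-part edges together with the exceptional vertices \emph{in advance} into edge-disjoint, localized balanced exceptional systems, each satisfying the balance condition $e_J(A)-e_J(B)=|A|-|B|$ of~(\ref{eJA}), replaces each by fictive $AB$-edges, and builds every Hamilton cycle of the approximate decomposition around one prescribed such system from the outset. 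The absorber (the robustly decomposable graph of~\cite{Kelly}) is used only to finish off the sparse leftover of the approximate decomposition, and its special factors are themselves constructed from balanced exceptional systems rather than by rerouting individual edges. You correctly identify the balancing obstruction in your final paragraph, but the mechanism you describe for overcoming it is not viable as stated; the pre-grouping of the exceptional edges and vertices, before any cycles are built, is the missing idea.
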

The bound on the degree in Theorem~\ref{HCDthm} is best possible (see Proposition~3.1 in~\cite{2clique} for a proof of this). Note that Theorem~\ref{HCDthm} does not quite imply Theorem~\ref{1factthm},
as the degree threshold in the former result is slightly higher.

Previous results include the following:
Nash-Williams~\cite{NWreg} showed that the degree bound in Theorem~\ref{HCDthm} ensures a single Hamilton cycle.
Jackson~\cite{Jackson79} showed that one can ensure close to $D/2-n/6$ edge-disjoint Hamilton cycles. More recently,
Christofides, K\"uhn and Osthus~\cite{CKO} obtained an approximate decomposition under the assumption that $D \ge n/2 +\eps n$. Finally,
under the same assumption, K\"uhn and Osthus~\cite{KellyII} obtained an exact decomposition
(as a consequence of the main result in~\cite{Kelly} on Hamilton decompositions of robustly expanding graphs). 

\subsection{Packing Hamilton cycles in graphs of large minimum degree}
Dirac's theorem is best possible in the sense that one cannot lower the minimum degree condition.
Remarkably though,
the conclusion can be strengthened considerably: Nash-Williams~\cite{Diracext} proved that every graph $G$ on $n$ vertices with minimum degree $\delta(G) \ge n/2$
contains $\lfloor 5n/224 \rfloor$ edge-disjoint Hamilton cycles.
Nash-Williams~\cite{Diracext,initconj,decompconj} raised the question of finding the best possible bound on the number of edge-disjoint Hamilton cycles in a Dirac graph. This question is answered by Corollary~\ref{NWmindegcor} below. 

In fact, we answer a more general form of this question: what is the number of edge-disjoint Hamilton cycles one can guarantee in a graph $G$ of minimum degree~$\delta$?
Let $\textnormal{reg}_{\textnormal{even}}(G)$
be the largest degree of an even-regular spanning subgraph of $G$. Then let
\[
\textnormal{reg}_{\textnormal{even}}(n,\delta):=\min\{\textnormal{reg}_{\textnormal{even}}(G):|G|=n,\ \delta(G)=\delta\}.
\]
Clearly, in general we cannot guarantee more than $\textnormal{reg}_{\textnormal{even}}(n,\delta)/2$
edge-disjoint Hamilton cycles in a graph of order $n$ and minimum
degree $\delta$. The next result shows that this bound is best possible (if $\delta < n/2$, then $\textnormal{reg}_{\textnormal{even}}(n,\delta)=0$).%
\COMMENT{Deryk changed this}
\begin{thm}\label{NWmindeg}
There exists an $n_0 \in \mathbb N$ such that the following holds. Suppose that $G$ is a graph on $n\ge n_0$ vertices with
minimum degree $\delta\ge n/2$. Then $G$ contains at least ${\rm reg}_{\rm even}(n,\delta)/2$ edge-disjoint Hamilton cycles.
\end{thm}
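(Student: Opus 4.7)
The plan is to deduce Theorem~\ref{NWmindeg} from Theorem~\ref{HCDthm} applied to a maximum-degree regular spanning subgraph of $G$. Let $D$ denote the largest integer for which $G$ contains a spanning $D$-regular subgraph, and fix such a subgraph $H$. The proof rests on two combinatorial claims: (a) $D \geq \lfloor n/2 \rfloor$, and (b) $\lfloor D/2 \rfloor \geq \textnormal{reg}_{\textnormal{even}}(n,\delta)/2$. Once these are in hand, Theorem~\ref{HCDthm} decomposes $H$ into $\lfloor D/2 \rfloor$ Hamilton cycles together with at most one perfect matching, and discarding the matching yields the required edge-disjoint Hamilton cycles in $G$.

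Claim~(a) follows from Katerinis-type $k$-factor theorems for graphs of minimum degree at least $n/2$: under $\delta(G) \geq n/2$, $G$ has a $k$-factor for every $0 \leq k \leq \delta(G)$ with $kn$ even, which forces $D \geq \delta(G) \geq n/2$ when $n$ is even, and $D \geq \delta(G)-1 \geq \lfloor n/2 \rfloor$ when $n$ is odd. For claim~(b), first note that $D \geq \textnormal{reg}_{\textnormal{even}}(G) \geq \textnormal{reg}_{\textnormal{even}}(n,\delta)$ since every even-regular spanning subgraph is in particular regular. If $D$ is even then $\lfloor D/2 \rfloor = D/2 \geq \textnormal{reg}_{\textnormal{even}}(n,\delta)/2$ directly. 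If $D$ is odd then $n$ must be even and $D \geq n/2$, so by Dirac's theorem $H$ has a Hamilton cycle and hence a perfect matching $M$; the graph $H-M$ is $(D-1)$-regular with $D-1$ even, and by the maximality of $D$ together with the parity mismatch we conclude $\textnormal{reg}_{\textnormal{even}}(G) = D-1$, so $\lfloor D/2 \rfloor = (D-1)/2 \geq \textnormal{reg}_{\textnormal{even}}(n,\delta)/2$.

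The main obstacle is not the reduction itself --- which is short and largely parity bookkeeping --- but the application of Theorem~\ref{HCDthm}, whose proof is the main content of this four-paper series. The present paper contributes precisely the bipartite case of Theorem~\ref{HCDthm}, which is essential here because the extremal graphs minimising $\textnormal{reg}_{\textnormal{even}}(n,\delta)$ subject to $\delta \geq n/2$ are close to the complete balanced bipartite graph $K_{n/2,n/2}$ (for instance when $n/2$ is odd one has $\textnormal{reg}_{\textnormal{even}}(K_{n/2,n/2}) = n/2-1$, forcing the relevant $H$ to have odd degree $n/2$). This is exactly the regime the reduction drops us into in the worst case, and so the bipartite analysis in the remainder of the paper is what actually does the work.
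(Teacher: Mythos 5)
Your reduction breaks down at claim~(a): the largest $D$ for which $G$ has a $D$-regular spanning subgraph need not be anywhere near $\lfloor n/2\rfloor$, and so Theorem~\ref{HCDthm} simply does not apply to your $H$. The paper's own extremal construction (attributed to Babai, described in Section~\ref{sec:sketch1}) already refutes the claim. Take $n=8k+2$, let $A$ be a set of $4k+2$ vertices spanned only by a perfect matching, let $B$ be an independent set of size $4k$, and add all $AB$-edges. Then $\delta(G)=4k+1=n/2$. However, if $H\subseteq G$ is $D$-regular, then since $B$ is independent we have $e_H(A,B)=\sum_{v\in B}d_H(v)=4kD$, while $\sum_{v\in A}d_H(v)=2e_H(A)+e_H(A,B)=(4k+2)D$ forces $e_H(A)=D$; as $G[A]$ has only $2k+1$ edges, $D\le 2k+1\approx n/4$, far below $\lfloor n/2\rfloor=4k+1$. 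The Katerinis-type $k$-factor theorem you invoke in fact carries the hypothesis $n\ge 4k-5$, i.e.\ $k$ at most about $n/4$, and this construction shows that restriction cannot be dropped when $\delta(G)=n/2$. (In particular, if your claim~(a) were correct, this graph would satisfy $\textnormal{reg}_{\textnormal{even}}(G)\ge n/2-1$, whereas in fact $\textnormal{reg}_{\textnormal{even}}(G)=2k=(n-2)/4$.)

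This is precisely why Theorem~\ref{NWmindegbip} is formulated and proved as it is. There the even-regular spanning subgraph $G$ is only assumed to satisfy $n/100\le D\le(1/2-\alpha)n$, and the $D/2$ Hamilton cycles are found in the ambient graph $F$, not inside $G$: each cycle consists of a small balancing $A_0B_0$-path system drawn from $G[A']\cup G[B']$ together with $A'B'$-edges taken from $F$ (see Lemma~\ref{extendpaths} and Lemma~\ref{almostthm}), exploiting that $F$ is close to $K_{n/2,n/2}$ and hence dense even where $G$ is not. Theorem~\ref{NWmindeg} is then assembled in~\cite{2clique} from the three structural cases (almost bipartite, close to two cliques, robust expander), with Theorem~\ref{NWmindegbip} supplying the almost-bipartite ingredient; it is not derivable by a one-shot application of Theorem~\ref{HCDthm} to a regular spanning subgraph.
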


K\"uhn, Lapinskas and Osthus~\cite{KLOmindeg} proved Theorem~\ref{NWmindeg} in the case when $G$ is not close to one of the extremal graphs for Dirac's theorem.
An approximate version of Theorem~\ref{NWmindeg} for $\delta \ge n/2+\eps n$
was obtained earlier by Christofides, K\"uhn and Osthus~\cite{CKO}.
Hartke and Seacrest~\cite{HartkeHCs} gave a simpler argument with improved error bounds.

The following consequence of Theorem~\ref{NWmindeg} answers the original question of Nash-Williams. 
\begin{cor}\label{NWmindegcor}
There exists an $n_0 \in \mathbb N$ such that the following holds. Suppose that $G$ is a graph on $n\ge n_0$ vertices with
minimum degree $\delta\ge n/2$. Then $G$ contains at least $(n-2)/8$ edge-disjoint Hamilton cycles.
\end{cor}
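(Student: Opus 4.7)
The strategy is to deduce Corollary~\ref{NWmindegcor} directly from Theorem~\ref{NWmindeg} via the auxiliary inequality
\[
{\rm reg}_{\rm even}(n,\delta)\ge\frac{n-2}{4}\qquad\text{whenever }\delta\ge n/2.
\]
Since Theorem~\ref{NWmindeg} supplies at least ${\rm reg}_{\rm even}(n,\delta)/2$ edge-disjoint Hamilton cycles in any such $G$, this inequality immediately translates into the claimed bound of $(n-2)/8$ edge-disjoint Hamilton cycles.

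To establish the inequality, I fix any graph $G$ on $n$ vertices with $\delta(G)\ge n/2$, and take $k$ to be the smallest even integer with $k\ge (n-2)/4$. Then $k\le (n-2)/4+2\le n/4+2$, which is well below $\delta$ for $n$ large. The goal is to exhibit a $k$-factor of $G$; such a factor is precisely an even-regular spanning subgraph of degree $k\ge (n-2)/4$, so producing it yields the required bound on ${\rm reg}_{\rm even}(n,\delta)$.

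The $k$-factor itself is found by invoking a Dirac-type $k$-factor theorem (for instance, Katerinis's theorem): any graph of order $n$ with $\delta\ge n/2$ contains a $k$-factor provided $kn$ is even and $n$ is sufficiently large relative to $k$. For our choice, $k$ is even (so $kn$ is automatically even) and $n\gg k$, so the hypotheses are satisfied. Plugging the resulting lower bound on ${\rm reg}_{\rm even}(n,\delta)$ back into Theorem~\ref{NWmindeg} then gives the corollary.

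The main obstacle, and the only nontrivial step beyond routine arithmetic, is the $k$-factor existence statement. Its proof runs through Tutte's $f$-factor theorem: one verifies the deficiency condition for every pair of disjoint $S,T\subseteq V(G)$ via a short case analysis leveraging $\delta(G)\ge n/2$ and $k\le n/4+2$. For our parameter range this verification is standard and requires no further input from the paper series, so the substantive content of the corollary is entirely carried by Theorem~\ref{NWmindeg}.
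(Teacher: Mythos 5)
Your overall reduction is correct and matches what the paper intends: by Theorem~\ref{NWmindeg} it suffices to show $\textnormal{reg}_{\textnormal{even}}(n,\delta)\ge (n-2)/4$ whenever $\delta\ge n/2$, and the present paper does not carry this step out itself but defers to~\cite{2clique}. Invoking a Dirac-type $k$-factor theorem such as Katerinis's is a legitimate way to supply it.

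The gap is in your verification of Katerinis's hypothesis. Katerinis's theorem requires the explicit inequality $n\ge 4k-5$; with $k$ on the order of $n/4$ this is a tight, non-asymptotic relation, so the assertion ``$n\gg k$, hence the hypotheses are satisfied'' is not a verification at all --- indeed it is false as written, since $k$ grows linearly with $n$ and the ratio $n/k$ stays near $4$ no matter how large $n_0$ is. Your looser estimate $k\le n/4+2$ only yields $4k-5\le n+3$, which does not give $n\ge 4k-5$. To close the gap one must take $k_0$ to be the smallest even integer with $k_0\ge (n-2)/4$ and check $n\ge 4k_0-5$ by cases on $n$ modulo $8$; the extremal case is $n\equiv 3\pmod 8$, where $k_0=(n+5)/4$ and the Katerinis inequality holds with equality. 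This is exactly the regime where the Babai construction (described in Section~\ref{sec:sketch1}) shows the resulting bound on Hamilton cycles is best possible, so there is no slack to absorb loose arithmetic: the step you dismiss as ``routine arithmetic'' is in fact the only place the argument can fail, and as currently written it does.
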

See~\cite{2clique} for an explanation as to why Corollary~\ref{NWmindegcor} follows from Theorem~\ref{NWmindeg}
and for a construction showing the bound on the number of edge-disjoint Hamilton cycles in
Corollary~\ref{NWmindegcor} is best possible (the construction is also described in Section~\ref{sec:sketch1}).

\subsection{Overall structure of the argument}

For all three of our main results, we split the argument according to the structure of the graph $G$ under consideration:
\begin{enumerate}
\item[{\rm (i)}] $G$ is  close to the complete balanced bipartite graph $K_{n/2,n/2}$;
\item[{\rm (ii)}] $G$ is close to the union of two disjoint copies of a clique $K_{n/2}$;
\item[{\rm (iii)}] $G$ is a `robust expander'.
\end{enumerate}
Roughly speaking, $G$ is a robust expander if for every set $S$ of vertices, its neighbourhood is at least a little larger than $|S|$,
even if we delete a small proportion of the edges of $G$.
The main result of~\cite{Kelly} states that every dense regular robust expander
has a Hamilton decomposition.
This immediately implies Theorems~\ref{1factthm} and~\ref{HCDthm} in Case~(iii).
For Theorem~\ref{NWmindeg}, Case (iii) is proved in~\cite{KLOmindeg} using a more involved argument, but also based on
the main result of~\cite{Kelly}.

Case~(ii) is proved in~\cite{2clique, paper4}. The current paper is devoted to the proof of Case~(i).
In~\cite{2clique} we derive Theorems~\ref{1factthm},~\ref{HCDthm} and~\ref{NWmindeg} from the structural results covering Cases (i)--(iii). 

The arguments in the current paper for Case~(i) as well as those in~\cite{2clique} for Case~(ii)
make use of an `approximate' decomposition result  proved in~\cite{paper3}.
In both Case~(i) and Case~(ii) we use the main lemma from~\cite{Kelly} (the `robust decomposition lemma') when transforming this approximate
decomposition into an exact one.

\subsection{Statement of the main results of this paper}
As mentioned above, the focus of this paper is to prove Theorems~\ref{1factthm},~\ref{HCDthm} and~\ref{NWmindeg}
when our graph is \emph{close} to the complete balanced bipartite graph $K_{n/2,n/2}$. More precisely,
we say that a graph $G$ on $n$ vertices is \emph{$\eps$-bipartite} if there is
a partition $S_1, S_2$ of $V(G)$ which satisfies the following:
\begin{itemize}
\item $n/2-1<|S_1|, |S_2|<n/2+1$;
\item $e(S_1), e(S_2)\le \varepsilon n^2.$
\end{itemize}   
The following result  implies Theorems~\ref{1factthm} and \ref{HCDthm} in the case when
our given graph is close to $K_{n/2,n/2}$.
\begin{thm}\label{1factbip}
There are $\eps_{\rm ex} >0$ and $n_0\in\mathbb{N}$ such that the following holds. Suppose that $D \ge (1/2-\eps_{\rm ex})n$ and $D$ is even
and suppose that $G$ is a $D$-regular graph on $n \ge n_0$ vertices which is $\eps_{\rm ex}$-bipartite.
Then $G$ has a Hamilton decomposition.%
\COMMENT{We could probably prove the following stronger result: 
`For every $\alpha>1/4$, there is an $\eps_{\rm ex} >0$ and $n_0$ such that if $G$ is an $\alpha n$-regular graph $G$ on $n \ge n_0$ vertices which is $\eps_{\rm ex}$-close to being bipartite, 
then $G$ has a Hamilton decomposition.'
The main changes needed for this are probably in the approx cover section and the fact that we would need to construct the universal walk via shifted walks. 
So it probably does not seem worth it. Also, note that we get the $1$-factorization from this by removing a perfect matching if the $D$ degree is odd.
(Note that $D$ odd implies $n$ even). To find this perfect matching we need to apply either Dirac (if $D\ge n/2$) or Tutte (if $D$ is only
almost $n/2$).}
\end{thm}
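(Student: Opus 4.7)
The plan is to follow the framework sketched in the introduction: produce an approximate Hamilton decomposition via the main result of~\cite{paper3} and then use the robust decomposition lemma of~\cite{Kelly} to convert it into an exact Hamilton decomposition. Compared with the robust-expander setting, the new difficulty is that $G$ is close to bipartite. If $(A,B)$ is an optimal bipartition then every Hamilton cycle of $G$ is forced to use exactly $|A|-|B|$ more edges from $E(A)$ than from $E(B)$; since $D$ is even and $G$ is $D$-regular one checks that $e(A)-e(B)=D(|A|-|B|)/2$, so the target discrepancy is divisible among the $D/2$ Hamilton cycles to be found, but we must realise this distribution explicitly.

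First I would preprocess: replace $(S_1,S_2)$ by a bipartition $(A,B)$ minimising $e(A)+e(B)$, so that $|A|,|B|\in\{\lfloor n/2\rfloor,\lceil n/2\rceil\}$; identify a very small exceptional set $V_0\subseteq V(G)$ consisting of vertices with atypical degree to the opposite side; and set $A':=A\setminus V_0$, $B':=B\setminus V_0$, so that every vertex of $A'\cup B'$ has $(1-o(1))n/2$ neighbours on the opposite side and $(A',B')$ is a dense, almost balanced quasi-random pair. The combinatorial heart of the argument is then to partition the union $E^*$ of $E(A)\cup E(B)$ with all edges incident to $V_0$ into $D/2$ edge-disjoint \emph{exceptional systems} $J_1,\dots,J_{D/2}$. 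Each $J_i$ should be a very sparse graph consisting of short vertex-disjoint paths whose internal vertices lie in $V_0$ and whose endpoints lie in $A'\cup B'$, which covers $V_0$ in a balanced way and uses exactly $|A|-|B|$ more edges of $E(A)$ than of $E(B)$. Once the interior vertices of $J_i$ are suppressed, the remaining endpoints form a balanced bipartite `reduced' object on $A'\cup B'$ which can be completed to a Hamilton cycle of $G$ by any Hamilton cycle of the pure bipartite graph threading these endpoints in the correct order. Producing such a partition, while satisfying the discrepancy condition, the degree condition at every $v\in V_0$ and the pseudorandomness properties needed for the later steps, is a hypergraph matching / Hall-type argument with several simultaneous constraints; this is the step I expect to be the main obstacle.

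Having fixed $J_1,\dots,J_{D/2}$, I would reserve a small robustly decomposable bipartite subgraph $R$ inside $(A',B')$ using the bipartite version of the robust decomposition lemma of~\cite{Kelly}; the remainder is a near-regular bipartite graph $H$ on $(A',B')$. Applying the approximate decomposition result of~\cite{paper3} to $H$ together with the reduced endpoint patterns of the $J_i$ yields $D/2$ edge-disjoint near-Hamilton cycles, the $i$-th containing $J_i$, covering all edges of $G$ outside a tiny regular leftover $L$. Feeding $L$ into $R$ via the robust decomposition lemma absorbs it cycle by cycle, producing the final $D/2$ Hamilton cycles of $G$ and completing the decomposition.
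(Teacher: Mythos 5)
Your proposal follows the broad framework of the paper (find balanced exceptional systems, apply an approximate decomposition, finish with the robust decomposition lemma), but the central claim---that one can \emph{partition} the set $E^*$ of all edges inside the classes together with all edges at $V_0$ into exactly $D/2$ exceptional systems $J_1,\dots,J_{D/2}$, one per Hamilton cycle---does not match what is actually achievable, and the paper's proof is structured very differently precisely to avoid this.

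There are two concrete obstacles. First, for the approximate decomposition result (Lemma~\ref{almostthm}) and the robust decomposition lemma (Corollary~\ref{rdeccor}) to apply, each balanced exceptional system must be \emph{localized} with respect to a fine $(K,m,\eps_0)$-partition of $A$ and $B$, i.e.\ all its non-exceptional vertices must lie in a single $A_i\cup A_{i'}\cup B_j\cup B_{j'}$. Localization inevitably leaves a small but nontrivial ``global'' leftover (in the paper, $G_{glob}$, of maximum degree $\Theta(\eps_4 D)$) which cannot be absorbed into localized systems; the paper handles it by constructing a separate batch of $k=10\eps_4 D$ Hamilton cycles directly (Lemma~\ref{lem:HCglob}), and only $(1-20\eps_4)D/2$ of the Hamilton cycles come from localized BES. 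Thus the number of exceptional systems is strictly less than $D/2$ and several rounds of ``ordinary'' Hamilton cycles ($\cC_1,\cC_2,\cC_3$ in Steps~1, 5, 6 of the paper's proof) carry the remaining bad edges. Second, the edges between the exceptional vertices on the two sides (the $A_0B_0$-edges) are genuinely problematic for the greedy/Hall-type construction of $J_i$'s you envisage: an $A_0B_0$-edge wants to be an interior edge of a path of length at least three, and there are too few such edges to distribute evenly while respecting the per-vertex degree constraints and the discrepancy condition. The paper devotes all of Section~\ref{eliminating} to removing these edges in a preliminary batch of Hamilton cycles before any exceptional systems are built. Beyond these two issues, the parameter coordination is delicate: the number $s'$ of Hamilton cycles that the robust decomposition lemma produces is determined by its own parameters ($rfk$, $r^\diamond$, etc.), and the number of balanced exceptional factors fed into it must be chosen \emph{before} the approximate decomposition so that $s'$ equals the regularity of the leftover plus the robustly decomposable graph; you cannot defer the choice of $J_i$'s and hope to match the count afterwards. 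None of these points is fatal to the overall strategy---this is indeed the route the paper takes---but each requires a dedicated argument that your sketch does not provide, and the ``$D/2$ exceptional systems, one per Hamilton cycle'' picture must be replaced by the paper's layered construction.
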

The next result implies Theorem~\ref{NWmindeg} in the case when
our graph is close to $K_{n/2,n/2}$.
\begin{thm}\label{NWmindegbip}
For each $\alpha>0$ there are $\eps_{\rm ex} >0$ and $n_0\in\mathbb{N}$ such that the following holds. Suppose that
$F$ is an $\eps_{\rm ex}$-bipartite graph on $n \ge n_0$ vertices with $\delta(F)\ge (1/2-\eps_{\rm ex})n$.
Suppose that $F$ has a $D$-regular spanning subgraph $G$%
\COMMENT{later on we say e.g. `let $G$ be as in Theorem~\ref{NWmindegbip} so this is necessary here}  
 such that $n/100\le D\le (1/2-\alpha)n$ and $D$ is even. Then $F$ contains $D/2$ edge-disjoint Hamilton cycles.
\end{thm}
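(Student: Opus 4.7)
The plan is to reduce Theorem~\ref{NWmindegbip} to the decomposition pipeline developed for Theorem~\ref{1factbip}. Using the slack between $D$ and $n/2$ together with the large minimum degree of $F$, we choose a $D$-regular spanning subgraph $G^\star$ of $F$ which is $\eps$-bipartite for a suitable small $\eps$, and then produce a Hamilton decomposition of $G^\star$ by essentially the same argument as for Theorem~\ref{1factbip}. Since $G^\star\subseteq F$, this yields the required $D/2$ edge-disjoint Hamilton cycles. The role of the hypothesised $D$-regular subgraph $G$ is mainly to certify that $D$ is a legitimate packing parameter ($D$ is even and $Dn/2$ edges can physically fit inside $F$); the edges of $G$ themselves need not all be used.

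First, from the $\eps_{\rm ex}$-bipartite partition $S_1,S_2$ of $V(F)$, we move at most one vertex across to obtain a partition $A,B$ with $||A|-|B||\le 1$, $|A|+|B|=n$, and $e_F(A),e_F(B)\le 2\eps_{\rm ex} n^2$. Every vertex of $F$ then has at least $(1/2-2\eps_{\rm ex})n$ neighbours in the opposite class, so $\delta(F[A,B])\ge D+\alpha n/2-2\eps_{\rm ex} n$. Next, take $G^\star$ to be a $D$-regular spanning subgraph of $F$ which is nearly bipartite with respect to $(A,B)$: when $n$ is even (so $|A|=|B|=n/2$) we take $G^\star$ to lie entirely inside $F[A,B]$; when $n$ is odd we include a small controlled number of same-side edges (by a parity count on cycles alternating between $A$ and $B$, at least one $A$-internal edge per Hamilton cycle is then unavoidable). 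In either case the existence of $G^\star$ follows from a standard max-flow/Hall-type argument, which is feasible thanks to the slack $\alpha n-O(\eps_{\rm ex} n)$ in the minimum degree of $F[A,B]$. The resulting $G^\star$ is $\eps$-bipartite with $\eps$ arbitrarily small relative to $\eps_{\rm ex}$.

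Now $G^\star$ is $D$-regular, $\eps$-bipartite and spanning, so it fits into the decomposition pipeline used to prove Theorem~\ref{1factbip}. We apply this pipeline essentially unchanged: reserve a small-degree robustly decomposable subgraph $G^{\rm rob}\subseteq G^\star$ via the robust decomposition lemma of~\cite{Kelly}; apply the approximate Hamilton decomposition result of~\cite{paper3} to $G^\star-G^{\rm rob}$ so as to obtain $D/2$ edge-disjoint Hamilton cycles covering all of $G^\star-G^{\rm rob}$ except for a very sparse leftover graph $H$; and finally, use the absorbing property of $G^{\rm rob}$ to distribute $G^{\rm rob}\cup H$ into those Hamilton cycles, completing them to a Hamilton decomposition of $G^\star$. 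The bound $n/100\le D$ enters only in guaranteeing that $G^\star$ is dense enough for these two tools to apply.

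The main obstacle will be the construction of $G^\star$: we need it simultaneously to be $D$-regular inside $F$, to have only few same-side edges, and for those same-side edges to be distributed evenly enough -- across the two parts and across individual vertices -- to match the demands of the decomposition machinery, including the parity constraint forced when $n$ is odd. The slack $\alpha n$ between $D$ and $n/2$ is exactly what makes this construction routine: it leaves enough room to realise the prescribed bipartite degree sequence as a subgraph of $F[A,B]$ and, when $n$ is odd, to sprinkle in the required exceptional edges in a balanced way. The subsequent Hamilton decomposition of $G^\star$ is then obtained by a direct transcription of the argument used to prove Theorem~\ref{1factbip}.
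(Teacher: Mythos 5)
Your plan contains a fundamental error that breaks the argument at its core. You propose to construct a $D$-regular, almost bipartite spanning subgraph $G^\star$ of $F$ and then \emph{Hamilton-decompose $G^\star$} by running the full Theorem~\ref{1factbip} machinery on it. But Theorem~\ref{1factbip}, and the two tools it relies on (the approximate decomposition result of~\cite{paper3} and the robust decomposition lemma of~\cite{Kelly}), only apply when the regular graph being decomposed has degree close to $n/2$: the scheme condition (Sch3) requires $d_{G^\star}(v,A_{i,j})\ge(1-\eps)m/L$ and similarly (Sch$3'$) requires $[\eps,1/2]$-superregularity of the oriented bipartite pieces, i.e.\ $G^\star[A,B]$ must be almost complete bipartite. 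Likewise condition~(a) of Lemma~\ref{almostthm} forces $d(w,B_i)=(1-4\mu\pm 4/K)m$ with $\mu$ small. Here $D$ may be as small as $n/100$, so $G^\star[A,B]$ has density on the order of $1/50$ and none of these hypotheses hold. Worse, the conclusion you are aiming for is simply false at this level of generality: a $D$-regular bipartite graph with $D\ge n/100$ need not admit a Hamilton decomposition, and if $G^\star$ is chosen merely by a max-flow argument inside $F[A,B]$ it need not even be connected, let alone Hamiltonian. The slack $D\le(1/2-\alpha)n$ does not help here --- it actually means $G^\star$ is far from the regime where regular graphs are forced to decompose.

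The intended argument is structurally different, and the distinction matters. One never attempts to decompose a $D$-regular subgraph. Instead, one finds the $D/2$ Hamilton cycles inside the \emph{dense host graph} $F$: after passing to a framework via Corollary~\ref{coverA0B02c}, the role of the $D$-regular subgraph $G$ is only to supply, via Corollary~\ref{BEScor}, a collection of $2$-balanced, localized balanced exceptional systems living inside $G[A']\cup G[B']$, one for each Hamilton cycle to be built. Each such BES is then extended to a Hamilton cycle using $AB$-edges of $F$ via Lemma~\ref{almostthm}, applied to $F_2\approx F[A,B]$, which \emph{is} almost complete bipartite and therefore satisfies the degree hypothesis~(a) with $\mu=1/K$. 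The regularity and evenness of $D$ together with the framework properties are what make the bookkeeping work out: $|\cJ'|=D_2/2$ BES exist and are pairwise edge-disjoint, each can be extended, and together with the cycles removed in the earlier cleanup steps this gives exactly $D/2$ Hamilton cycles. In particular the robust decomposition lemma plays no role in the proof of Theorem~\ref{NWmindegbip} --- it is needed only for the \emph{exact} decomposition in Theorem~\ref{1factbip}, where nothing may be left over. Using it here is both unnecessary and, as explained, inapplicable. Your observation that ``the edges of $G$ need not all be used'' is correct but is handled by a different mechanism: all of $G[A']\cup G[B']$ is consumed by the BES and the early cleanup Hamilton cycles, whereas the $AB$-edges of $G$ are mostly irrelevant and are subsumed into the much larger supply of $AB$-edges of $F$.
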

Note that Theorem~\ref{1factbip} implies that the degree bound in Theorems~\ref{1factthm} and~\ref{HCDthm} is not tight in the almost bipartite case
(indeed, the extremal graph is close to being the union of two cliques). On the other hand, the extremal construction for Corollary~\ref{NWmindegcor}
is close to bipartite (see Section~\ref{sec:sketch1} for a description). 
So it turns out that the bound on the number of edge-disjoint Hamilton cycles in Corollary~\ref{NWmindegcor}
is best possible in the almost bipartite case but not when the graph is close to the union of two cliques.%
\COMMENT{Deryk added this and we might want to add this and the comment about $delta<n/2$ into the 2clique paper. }

In Section~\ref{sec:sketch} we give an outline of the proofs of Theorems~\ref{1factbip} and~\ref{NWmindegbip}.
The results from Sections~\ref{eliminating} and~\ref{findBES} are used in both the proofs of Theorems~\ref{1factbip} and~\ref{NWmindegbip}.
In Sections~\ref{sec:spec} and~\ref{sec:robust} we build up machinery for the proof of
Theorem~\ref{1factbip}. We then prove Theorem~\ref{NWmindegbip} in Section~\ref{sec:proof1} and Theorem~\ref{1factbip} in Section~\ref{sec:proof2}.

\section{Notation and Tools}

\subsection{Notation}
Unless stated otherwise, all the graphs and digraphs considered in this paper are simple and do not contain loops. So in a digraph $G$, we allow up to
two edges between any two vertices; at most one edge in each direction.
Given a graph or digraph $G$, we write $V(G)$ for its vertex set, $E(G)$ for its edge set, $e(G):=|E(G)|$ for
the number of its edges and $|G|:=|V(G)|$ for the number of its vertices. 

Suppose that $G$ is an undirected graph. We write $\delta(G)$ for the minimum degree of $G$ and $\Delta(G)$ for its maximum degree.
Given a vertex $v$ of $G$ and a set $A\subseteq V(G)$,
we write $d_G(v,A)$ for the number of neighbours of $v$ in $G$ which lie in~$A$. Given $A,B\subseteq V(G)$,
we write $E_G(A)$ for the set of all those edges of $G$ which have both endvertices in $A$ and $E_G(A,B)$ for the set of
all those edges of $G$ which have one endvertex in $A$ and its other endvertex in $B$. We also call the edges in $E_G(A,B)$
\emph{$AB$-edges} of $G$. We let $e_G(A):=|E_G(A)|$ and $e_G(A,B):=|E_G(A,B)|$.
We denote by $G[A]$ the subgraph of $G$ with vertex set $A$ and edge set $E_G(A)$.
If $A\cap B=\emptyset$, we denote by $G[A,B]$ the bipartite subgraph of $G$
with vertex classes $A$ and $B$ and edge set $E_G(A,B)$. 
If $A=B$ we%
 \COMMENT{We do use this notation, e.g. when
considering $G[A_i,A_j]$ for all $i,j\leq K$.}
 define $G[A,B]:=G[A]$.%
	\COMMENT{Deryk deleted`Note that $2 E(G[A]) = E_G(G[A,A])$' as it seems incorrect}
We often omit the index $G$ if the graph $G$ is clear from the context.
A spanning subgraph $H$ of $G$ is an \emph{$r$-factor} of $G$ if every vertex has degree $r$
in $H$.

Given a vertex set $V$ and two multigraphs%
   \COMMENT{Have to define this for multigraphs instead of graph since if we take $G+H+F$ then $G+H$ might already be a multigraph.}
$G$ and $H$ with $V(G),V(H)\subseteq V$, we write $G+H$ for the multigraph whose vertex
set is $V(G)\cup V(H)$ and in which the multiplicity of $xy$ in $G+H$ is the sum of the multiplicities of $xy$ in $G$ and in~$H$
(for all $x,y\in V(G)\cup V(H)$).
We say that a graph $G$ has a \emph{decomposition} into $H_1,\dots,H_r$ if $G=H_1+\dots +H_r$ and the $H_i$ are pairwise
edge-disjoint.%
\COMMENT{Andy: added definition of decomposition}

If $G$ and $H$ are simple graphs, we write $G\cup H$ for the (simple) graph whose vertex set is
$V(G)\cup V(H)$ and whose edge set is $E(G)\cup E(H)$.%
   \COMMENT{I think this is a good convention. It has the advantage that if we talk about
$G+J^*$ then it is clear that we treat the edges in $J^*$ as being distinct from those in $G$. However, this means that we cannot
talk about $G+BEPS^*$ (we would need to write $G+{\rm Fict}(BEPS^*)$ instead). Have to make sure that this fits with the almost decomposition section/paper.}
Similarly, $G\cap H$ denotes the graph whose vertex set is $V(G)\cap V(H)$ and whose edge set is $E(G)\cap E(H)$.
We write $G-H$ for the subgraph of $G$ which is obtained from $G$
by deleting all the edges in $E(G)\cap E(H)$.%
    \COMMENT{So we don't require that $H\subseteq G$ when using this notation.} 
Given $A\subseteq V(G)$, we write $G-A$ for the graph obtained from $G$ by deleting all vertices in~$A$.

A \emph{path system} is a graph $Q$ which is the union of vertex-disjoint paths (some of them might be trivial).
We say that $P$ is a \emph{path in Q} if $P$ is a component of $Q$ and, abusing the notation, sometimes write $P\in Q$ for this.

If $G$ is a digraph, we write $xy$ for an edge directed from $x$ to $y$.
 A digraph $G$ is an \emph{oriented graph} if there are no $x,y\in V(G)$ such that $xy, yx\in E(G)$.
Unless stated otherwise, when we
refer to paths and cycles in digraphs, we mean directed paths and cycles, i.e.~the edges on these paths/cycles are oriented consistently.
If $x$ is a vertex of a digraph $G$, then $N^+_G(x)$ denotes the \emph{outneighbourhood} of $x$, i.e.~the
set of all those vertices $y$ for which $xy\in E(G)$. Similarly, $N^-_G(x)$ denotes the \emph{inneighbourhood} of $x$, i.e.~the
set of all those vertices $y$ for which $yx\in E(G)$. The \emph{outdegree} of $x$ is $d^+_G(x):=|N^+_G(x)|$ and the
\emph{indegree} of $x$ is $d^-_G(x):=|N^-_G(x)|$. 
We write $\delta(G)$ and $\Delta(G)$ for the minimum and maximum degrees of the underlying simple%
   \COMMENT{Daniela: added simple}
undirected graph of $G$ respectively.%
\COMMENT{AL: added definitions of $\delta(G)$ and $\Delta(G)$. use this convention in e.g. the proof of Lemma~\ref{lma:EF-bipartite}}

For a digraph $G$,%
\COMMENT{AL:added `For a digraph $G$'}
 whenever $A,B\subseteq V(G)$ with $A\cap B=\emptyset$, we denote by $G[A,B]$ the bipartite subdigraph of $G$ with vertex classes $A$ and $B$ whose edges are all the edges of $G$ directed from $A$ to~$B$,
and let $e_G (A,B)$ denote the number of edges in $G[A,B]$. We define $\delta (G[A,B])$ to be the minimum
degree of the underlying undirected graph of $G[A,B]$ and define
$\Delta (G[A,B])$ to be the maximum
degree of the underlying undirected graph of $G[A,B]$.\COMMENT{we use this convention in e.g. the proof of Prop~\ref{lem:dirscheme}}
A spanning subdigraph $H$ of $G$ is an \emph{$r$-factor} of $G$ if the outdegree and the indegree of every vertex of $H$ is~$r$.

If $P$ is a path and $x,y\in V(P)$, we write $xPy$ for the subpath of $P$ whose endvertices are $x$ and $y$.
We define $xPy$ similarly if $P$ is a directed path and $x$ precedes $y$ on~$P$.%
   \COMMENT{Daniela added these 2 sentences}

In order to simplify the presentation, we omit floors and ceilings and treat large numbers as integers whenever this does
not affect the argument. The constants in the hierarchies used to state our results have to be chosen from right to left.
More precisely, if we claim that a result holds whenever $0<1/n\ll a\ll b\ll c\le 1$ (where $n$ is the order of the graph or digraph),
then this means that
there are non-decreasing functions $f:(0,1]\to (0,1]$, $g:(0,1]\to (0,1]$ and $h:(0,1]\to (0,1]$ such that the result holds
for all $0<a,b,c\le 1$ and all $n\in \mathbb{N}$ with $b\le f(c)$, $a\le g(b)$ and $1/n\le h(a)$. 
We will not calculate these functions explicitly. Hierarchies with more constants are defined in a similar way.
We will write $a = b \pm c$ as shorthand for $ b - c \le a \le b+c$.

\subsection{$\eps$-regularity} 
If $G=(A,B)$ is an undirected bipartite graph with vertex classes $A$ and $B$, then the
\emph{density} of $G$ is defined as
$$ d(A, B) := \frac{e_G(A,B)}{|A||B|}.$$
For any $\eps >0$, we say that $G$ is \emph{$\eps$-regular} if for any $A'\subseteq A$
and $B' \subseteq B$ with $|A'| \geq \eps |A|$ and $|B'| \geq \eps |B|$
we have $|d(A',B') - d(A, B)| < \eps$. We say that $G$ is \emph{$(\eps, \ge d)$-regular} if
it is $\eps$-regular and has density $d'$ for some $d' \ge d-\eps$.

We say that $G$ is \emph{$[\eps,d]$-superregular} if it is $\eps$-regular and $d_G(a)=(d\pm \eps)|B|$
for every $a\in A$ and $d_G(b)=(d\pm \eps)|A|$ for every $b\in B$. $G$ is \emph{$[\eps, \ge d]$-superregular} if
it is  \emph{$[\eps, d']$-superregular} for some $d' \ge d$.

Given disjoint vertex sets $X$ and $Y$ in a digraph $G$, recall that $G[X,Y]$ denotes
the bipartite subdigraph of $G$ whose vertex classes are $X$ and $Y$ and whose
edges are all the edges of $G$ directed from $X$ to $Y$. We often view $G[X,Y]$ as
an undirected bipartite graph. In particular, we say $G[X,Y]$ is \emph{$\eps$-regular},
\emph{$(\eps,\ge d)$-regular}, $[\eps,d]$-superregular or $[\eps,\ge d]$-superregular
if this holds when $G[X,Y]$ is viewed as an undirected graph.%

We often use the following simple proposition which follows easily from the definition of (super-)regularity.
We omit the proof, a similar argument can be found e.g.~in~\cite{Kelly}.%
   \COMMENT{Daniela: previously had $0<1/m \ll \eps \le d' \le d \leq 1$. A proof for the new hierarchy in the prop can be found in a comment in paper 1.
I also deleted the first part of the lemma on $\eps$-regularity since we never use this.}
\begin{prop} \label{superslice} 
Suppose that $0<1/m \ll \eps \le d' \ll d \leq 1$. Let $G$ be a bipartite graph with vertex classes
$A$ and $B$ of size $m$. Suppose that $G'$ is obtained from $G$ by removing at most $d'm$
vertices from each vertex class and at most $d'm$ edges incident to each vertex from $G$.
If $G$ is $[\eps,d]$-superregular then $G'$ is $[2\sqrt{d'},d]$-superregular.
\end{prop}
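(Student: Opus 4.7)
The plan is to verify the two defining properties of $[2\sqrt{d'},d]$-superregularity for $G'$ directly. Write $A', B'$ for the vertex classes of $G'$. Since at most $d'm$ vertices are removed from each side, we have $(1-d')m \le |A'|,|B'| \le m$, so $|A'|,|B'| = (1\pm d')m$. This will be used repeatedly to convert between densities measured against $m$ and against $|A'|,|B'|$.

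First I would check the degree condition. Fix $a \in A'$. In $G$ we have $d_G(a) = (d\pm\eps)m$. Passing to $G'$, at most $d'm$ of $a$'s neighbours lie in $B\setminus B'$, and at most $d'm$ further edges at $a$ are deleted, so $d_{G'}(a) = (d\pm\eps)m \pm 2d'm = (d\pm 3d')m$ using $\eps\le d'$. Since $|B'|=(1\pm d')m$, dividing gives $d_{G'}(a) = (d\pm 2\sqrt{d'})|B'|$ because $d'\ll d$ and $3d'+d\cdot d' \le 2\sqrt{d'}$ when $d'$ is sufficiently small. The same calculation works for $b\in B'$.

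Next I would verify $2\sqrt{d'}$-regularity. Take any $A''\subseteq A'$ and $B''\subseteq B'$ with $|A''|\ge 2\sqrt{d'}|A'|$ and $|B''|\ge 2\sqrt{d'}|B'|$. Then $|A''|\ge 2\sqrt{d'}(1-d')m \ge \eps m \ge \eps |A|$, and similarly for $B''$, using $\eps\ll\sqrt{d'}$. So $\eps$-regularity of $G$ yields
\[
d_G(A'',B'') = d(A,B) \pm \eps = d \pm 2\eps,
\]
since the original $[\eps,d]$-superregularity forces $d(A,B) = d\pm\eps$ via the degree count. Passing to $G'$, the number of edges of $G\setminus G'$ with an endpoint in $A''$ is at most $d'm\cdot|A''|/|A''| = d'm$ per vertex, hence at most $d'm\cdot|A''|$ in total, so
\[
e_{G'}(A'',B'') = e_G(A'',B'') \pm d'm|A''|.
\]
Dividing by $|A''||B''|$ and using $|B''| \ge 2\sqrt{d'}(1-d')m$, the perturbation to the density is at most $d'm/|B''| \le d'/\bigl(2\sqrt{d'}(1-d')\bigr) \le \sqrt{d'}$. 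Combined with $d_G(A'',B'')=d\pm 2\eps$, this gives $|d_{G'}(A'',B'') - d| < 2\sqrt{d'}$, which is the required bound.

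There is no real obstacle here; the only point that needs attention is choosing the constants so that $\eps\ll\sqrt{d'}\ll 1$ absorbs the error terms $\eps$, $d'$, $d'/\sqrt{d'}$, and the discrepancy between $m$ and $|A'|,|B'|$. Since the hierarchy $0<1/m\ll\eps\le d'\ll d\le 1$ allows us to take $d'$ arbitrarily small relative to $d$ and $\eps$ arbitrarily small relative to $d'$, the inequalities close easily.
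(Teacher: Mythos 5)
The paper itself omits the proof of this proposition, simply pointing to an analogous argument in~\cite{Kelly}, so there is no in-text argument to compare against. Your calculation is correct and is precisely the standard one for this kind of robustness statement: verify the degree condition by a direct count ($d_{G'}(a)=(d\pm 3d')m$ and $|B'|=(1\pm d')m$), then verify regularity by noting that any admissible $(A'',B'')$ in $G'$ is also admissible for the $\eps$-regularity of $G$, and that removing $\le d'm$ edges per vertex perturbs the density by at most $d'm/|B''|\le\sqrt{d'}$. One small formal point worth flagging: with the paper's definition, $2\sqrt{d'}$-regularity of $G'$ requires $|d_{G'}(A'',B'')-d_{G'}(A',B')|<2\sqrt{d'}$ rather than $|d_{G'}(A'',B'')-d|<2\sqrt{d'}$, which is what you actually bound. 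This is closed at once by applying your estimate also to the pair $(A',B')$ (which trivially satisfies the size hypotheses) and using the triangle inequality; the total error is still $\sqrt{d'}+O(\eps)+O(d')<2\sqrt{d'}$ since $\eps\le d'\ll 1$. Otherwise the constants all close exactly as you say.
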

We will also use the following simple fact.
\begin{fact}\label{simplefact} Let $\eps >0$.
Suppose that $G$ is a bipartite graph with vertex classes  of size $n$ such that
$\delta (G) \geq (1-\eps)n$. Then $G$ is $[\sqrt{\eps},1]$-superregular.
\end{fact}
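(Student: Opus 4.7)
The plan is to verify the two defining conditions of $[\sqrt{\eps},1]$-superregularity separately, treating the degree condition first (which is essentially immediate) and then the $\sqrt{\eps}$-regularity condition (which is a short non-edge counting argument).

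For the degree condition, note that with $d=1$ we require $d_G(a)=(1\pm\sqrt{\eps})n$ for every $a$ in each class. Trivially $d_G(a)\le n\le (1+\sqrt{\eps})n$, and by hypothesis $d_G(a)\ge (1-\eps)n\ge (1-\sqrt{\eps})n$ since $\eps\le \sqrt{\eps}$ (we may assume $\eps<1$, else the statement is vacuous as $\sqrt{\eps}$-regularity is automatic). So both classes satisfy the required degree bound.

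For the regularity condition, let $A,B$ be the vertex classes, and let $A'\subseteq A$, $B'\subseteq B$ with $|A'|,|B'|\ge\sqrt{\eps}\,n$. Observe first that the overall density satisfies $d(A,B)\ge 1-\eps$, since each vertex has at most $\eps n$ non-neighbours in the opposite class. For the restricted density, I would bound the number of non-edges between $A'$ and $B'$: each $a\in A'$ has at most $\eps n$ non-neighbours in $B$, hence at most $\eps n$ non-neighbours in $B'$, giving at most $|A'|\eps n$ non-edges between $A'$ and $B'$. Therefore
\[
d(A',B')\ge 1-\frac{\eps n}{|B'|}\ge 1-\frac{\eps n}{\sqrt{\eps}\,n}=1-\sqrt{\eps}.
\]
Combining this with $d(A',B')\le 1$ and $d(A,B)\in[1-\eps,1]$ shows $|d(A',B')-d(A,B)|\le \sqrt{\eps}-\eps<\sqrt{\eps}$, as required.

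There is essentially no obstacle here; the only minor technicality is to make sure the inequality $|d(A',B')-d(A,B)|<\sqrt{\eps}$ is strict, which follows from $\eps<\sqrt{\eps}$ (and the trivial case $\eps\ge 1$ can be handled separately, as the conclusion imposes no constraint). The proof is therefore just a two-line verification, and I would present it as such.
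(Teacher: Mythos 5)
Your degree computation and the lower bound $d(A',B') \ge 1 - \sqrt{\eps}$ are both correct, but the final combining step contains an arithmetic slip. From $d(A',B') \in [1-\sqrt{\eps},1]$ and $d(A,B) \in [1-\eps,1]$ the worst case for $|d(A',B')-d(A,B)|$ occurs when one density is at the top of its interval and the other at the bottom, which gives only $|d(A',B')-d(A,B)| \le \max(\sqrt{\eps},\eps) = \sqrt{\eps}$ --- not $\le \sqrt{\eps}-\eps$ as you assert (that quantity is the gap between the two lower bounds, which is irrelevant here). Your claimed bound is in fact false: if all $\eps n$ non-neighbours of every vertex in a set $A'$ of size $\sqrt{\eps}n$ lie in a set $B'$ of size $\sqrt{\eps}n$, with the non-edges forming an $\eps n$-regular bipartite graph (so that $\delta(G)=(1-\eps)n$ still holds), and all other vertices have full degree, then $d(A',B') = 1-\sqrt{\eps}$ while $d(A,B) = 1-\eps^{3/2}$, giving a difference of $\sqrt{\eps}-\eps^{3/2} > \sqrt{\eps}-\eps$. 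So your justification for strictness, ``$\sqrt{\eps}-\eps < \sqrt{\eps}$'', does not go through: from your two interval bounds alone you only get $\le \sqrt{\eps}$, not the strict inequality required by the paper's definition of $\eps$-regularity.

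The fix is small: the non-edges that depress $d(A',B')$ are also counted in $d(A,B)$, so the two bounds cannot be tight simultaneously. Writing $N'$ for the number of non-edges between $A'$ and $B'$ and $N \ge N'$ for the total number of non-edges, and using $N' \le |A'|\eps n$ together with $|A'|,|B'|\ge\sqrt{\eps}n$,
\[
d(A,B)-d(A',B') = \frac{N'}{|A'||B'|} - \frac{N}{n^2} \le N'\left(\frac{1}{|A'||B'|} - \frac{1}{n^2}\right) \le \frac{\eps n}{|B'|} - \frac{|A'|\eps}{n} \le \sqrt{\eps} - \eps^{3/2} < \sqrt{\eps},
\]
while $d(A',B')-d(A,B) \le 1-(1-\eps) = \eps < \sqrt{\eps}$ as you already argued. (Equivalently: if there are no non-edges both densities equal $1$, and otherwise $d(A,B) < 1$ strictly, so $d(A,B)-d(A',B') < 1-(1-\sqrt{\eps}) = \sqrt{\eps}$.) The paper states Fact~\ref{simplefact} without proof, so there is no authorial argument to compare against; your non-edge-counting route is surely the intended one, and only this last line needs tightening.
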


\subsection{A Chernoff-Hoeffding bound}
We will often use the following Chernoff-Hoeffding bound for binomial and hypergeometric
distributions (see e.g.~\cite[Corollary 2.3 and Theorem 2.10]{Janson&Luczak&Rucinski00}).
Recall that the binomial random variable with parameters $(n,p)$ is the sum
of $n$ independent Bernoulli variables, each taking value $1$ with probability $p$
or $0$ with probability $1-p$.
The hypergeometric random variable $X$ with parameters $(n,m,k)$ is
defined as follows. We let $N$ be a set of size $n$, fix $S \subseteq N$ of size
$|S|=m$, pick a uniformly random $T \subseteq N$ of size $|T|=k$,
then define $X:=|T \cap S|$. Note that $\mathbb{E}X = km/n$.

\begin{prop}\label{chernoff}
Suppose $X$ has binomial or hypergeometric distribution and $0<a<3/2$. Then
$\mathbb{P}(|X - \mathbb{E}X| \ge a\mathbb{E}X) \le 2 e^{-\frac{a^2}{3}\mathbb{E}X}$.
\end{prop}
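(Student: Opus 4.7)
The plan is to prove both cases by the standard exponential moment (Chernoff) method, handling the hypergeometric case by reduction to the binomial case.

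For the binomial case, write $X=\sum_{i=1}^n X_i$ where the $X_i$ are independent Bernoulli$(p)$ variables, and let $\mu=\mathbb{E}X=np$. For any $t>0$, Markov's inequality applied to $e^{tX}$ gives
\[
\mathbb{P}(X\ge (1+a)\mu)\le e^{-t(1+a)\mu}\,\mathbb{E}[e^{tX}]
=e^{-t(1+a)\mu}\bigl(1-p+pe^{t}\bigr)^{n}
\le \exp\!\bigl(\mu(e^{t}-1)-t(1+a)\mu\bigr),
\]
using $1+x\le e^{x}$. Optimising at $t=\ln(1+a)$ yields the classical bound $\mathbb{P}(X\ge(1+a)\mu)\le\bigl(e^{a}/(1+a)^{1+a}\bigr)^{\mu}$, and an analogous argument with $t<0$ gives the matching lower-tail estimate. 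A standard Taylor expansion then shows that for $0<a<3/2$ both $(1+a)\ln(1+a)-a$ and $a-(1-a)\ln(1-a)$ (with the latter interpreted appropriately when $a\ge 1$, in which case the lower tail is vacuous since $X\ge 0$) are bounded below by $a^{2}/3$, giving the claimed bound $2e^{-a^{2}\mu/3}$.

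For the hypergeometric case with parameters $(n,m,k)$, the plan is to invoke Hoeffding's classical observation that the moment generating function of a hypergeometric random variable is dominated by that of the corresponding binomial: if $X$ counts successes in $k$ draws without replacement from a population with $m$ successes out of $n$, and $Y\sim\mathrm{Bin}(k,m/n)$, then $\mathbb{E}[f(X)]\le \mathbb{E}[f(Y)]$ for every convex function $f$. Applying this to $f(x)=e^{tx}$ and $f(x)=e^{-tx}$ reduces both tails for $X$ to the binomial bounds already established, and since $\mathbb{E}X=\mathbb{E}Y=km/n$ the same constant $a^{2}/3$ in the exponent carries through.

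The main (and only) obstacle is the verification of the numerical constant $1/3$ in the range $0<a<3/2$: one must check that $\varphi(a):=(1+a)\ln(1+a)-a\ge a^{2}/3$ on $(0,3/2)$ and similarly for the lower tail. This is a one-variable calculus exercise — expand $\varphi$ as $a^{2}/2-a^{3}/6+O(a^{4})$ near $0$ and check the endpoint $a=3/2$ directly — but it is where the restriction $a<3/2$ enters. Everything else is a direct application of the textbook Chernoff argument plus Hoeffding's convex-domination lemma, so no new ideas beyond these standard tools are needed, which is consistent with the authors simply citing \cite{Janson&Luczak&Rucinski00}.
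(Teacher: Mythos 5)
The paper does not prove Proposition~\ref{chernoff} itself; it simply cites \cite[Corollary 2.3 and Theorem 2.10]{Janson&Luczak&Rucinski00}, so there is no in-paper argument for your proof to agree or disagree with. Your reconstruction is the standard one underlying those citations: exponential Markov with the optimal $t$ for the binomial tails, the one-variable verification that $(1+a)\ln(1+a)-a\ge a^2/3$ for $0<a<3/2$ (and the stronger $(1-a)\ln(1-a)+a\ge a^2/2\ge a^2/3$ for the lower tail, trivially handled when $a\ge 1$), and Hoeffding's convex-domination lemma to transfer the bound from $\mathrm{Bin}(k,m/n)$ to the hypergeometric with the same mean. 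This is correct and is exactly what the cited sources establish, so it faithfully fills in what the authors left implicit.
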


\section{Overview of the proofs of Theorems~\ref{1factbip} and~\ref{NWmindegbip}}\label{sec:sketch}
Note%
    \COMMENT{Deryk changed this section - so read again}
that, unlike in Theorem~\ref{1factbip}, in Theorem~\ref{NWmindegbip} we do not require a complete
decomposition of our graph $F$ into edge-disjoint Hamilton cycles. Therefore, the proof of Theorem~\ref{1factbip} is considerably more involved than the proof of Theorem~\ref{NWmindegbip}.
Moreover, the ideas in the proof of Theorem~\ref{NWmindegbip} are all used in the proof of Theorem~\ref{1factbip} too.

\subsection{Proof overview for Theorem~\ref{NWmindegbip}} \label{sec:sketch1}
Let $F$ be a graph on $n$ vertices with $\delta (F) \geq (1/2-o(1))n$ which is close to the balanced
bipartite graph $K_{n/2, n/2}$. Further, suppose that $G$ is a $D$-regular spanning subgraph of $F$
as in Theorem~\ref{NWmindegbip}.
Then there is a partition $A$, $B$ of $V(F)$ such that $A$
and $B$ are of roughly equal size and most edges in $F$ go between $A$ and $B$.
 Our ultimate aim is to construct $D/2$ edge-disjoint Hamilton
cycles in $F$.

Suppose first that, in the graph $F$, both $A$ and $B$ are independent sets of equal size. So
$F$ is an almost complete balanced bipartite graph.
In this case, the densest spanning even-regular subgraph $G$ of $F$ is also almost complete bipartite.
This means that one can extend existing techniques (developed e.g. in \cite{CKO, FKS,fk, HartkeHCs,OS})
to find an approximate Hamilton decomposition.
This is achieved in \cite{paper3} and is more than enough to prove Theorem~\ref{NWmindegbip} in this case.
(We state the main result from \cite{paper3} as Lemma~\ref{almostthm} in the current paper.)
The real difficulties arise when
\begin{itemize}
	\item[{\rm (i)}] $F$ is unbalanced;
	\item[{\rm (ii)}] $F$ has vertices having high degree in both $A$ and $B$ (these are called exceptional vertices).
\end{itemize}

To illustrate (i), consider the following example due to Babai%
\COMMENT{osthus added Babai} 
(which is the extremal construction for Corollary~\ref{NWmindegcor}).
Consider the graph $F$ on $n = 8k+2$ vertices consisting of one vertex class $A$ of size $4k+2$ containing
a perfect matching and no other edges, one empty
vertex class $B$ of size $4k$,  and all possible edges between $A$ and $B$.
Thus the minimum degree of $F$ is $4k+1=n/2$. 
Then one can use Tutte's factor theorem to show that the largest even-regular spanning subgraph $G$ of $F$ has degree $D = 2k=(n-2)/4$.
Note that to prove Theorem~\ref{NWmindegbip} in this case, each of the $D/2=k$ Hamilton cycles we find must contain exactly two of the $2k+1$ edges in $A$.
In this way, we can `balance out' the difference in the vertex class sizes.

More generally we will construct our Hamilton cycles in two steps.
In the first step, we find a path system $J$ which balances out the vertex class sizes (so in the above example, $J$ would contain two edges in $A$).
Then we extend $J$ into a Hamilton cycle using only $AB$-edges in $F$.
It turns out that the first step is the difficult one.
It is easy to see that a path system $J$%
\COMMENT{Andy: added $J$}
will balance out the sizes of $A$ and $B$ (in the sense that the number of uncovered vertices in $A$ and $B$ is the same) if 
and only if
\begin{align}
e_J(A)- e_J(B) = |A| - |B|. \label{eJA}
\end{align}
Note that any Hamilton cycle also satisfies this identity.
So we need to find a set of $D/2$ path systems $J$ satisfying \eqref{eJA} (where $D$ is the degree of $G$).
This is achieved (amongst other things) in Sections~\ref{sec:slice} and~\ref{sec:global}.

As indicated above, our aim is to use Lemma~\ref{almostthm} in order to extend each such $J$ into a Hamilton cycle.
To apply Lemma~\ref{almostthm} we also need to extend the balancing path systems $J$ into `balanced exceptional (path)
systems' which contain all the exceptional vertices from (ii).
This is achieved in Section~\ref{besconstruct}.
Lemma~\ref{almostthm} also assumes that the path systems are `localized' with respect to a given subpartition of $A,B$ (i.e.~they are induced by a small number of partition classes).
Section~\ref{partition} prepares the ground for this.

Finding the balanced exceptional systems is extremely difficult if $G$ contains edges between the set $A_0$ of exceptional vertices in $A$ and the
set $B_0$ of exceptional vertices in $B$.
So in a preliminary step, we find and remove a small number of edge-disjoint Hamilton cycles covering all $A_0B_0$-edges in Section~\ref{eliminating}.
We put all these steps together in Section~\ref{sec:proof1}.
(Sections~\ref{sec:spec}, \ref{sec:robust} and~\ref{sec:proof2}%
\COMMENT{osthus added sec 9} 
are only relevant for the proof of Theorem~\ref{1factbip}.)

\subsection{Proof overview for Theorem~\ref{1factbip}}
The main result of this paper is Theorem~\ref{1factbip}.
Suppose that $G$ is a $D$-regular graph satisfying the conditions of that theorem.
Using the approach of the previous subsection, one can obtain an approximate decomposition of $G$,
i.e.~a set of edge-disjoint Hamilton cycles covering almost all edges of~$G$.
However, one does not have any control over the `leftover' graph~$H$, which makes a complete decomposition seem infeasible.
This problem was overcome in~\cite{Kelly} by introducing the concept of a `robustly decomposable graph'~$G^{\rm rob}$.
Roughly speaking, this is a sparse regular graph with the following property:
given \emph{any} very sparse regular graph~$H$ with $V(H)=V(G^{\rm rob})$ which is edge-disjoint from $G^{\rm rob}$,
one can guarantee that $G^{\rm rob} \cup H$ has a Hamilton decomposition.
This leads to the following strategy to obtain a decomposition of $G$:
\begin{itemize}
\item[(1)] find a (sparse) robustly decomposable graph~$G^{\rm rob}$ in $G$ and let $G'$ denote the leftover;
\item[(2)] find an approximate Hamilton decomposition of $G'$ and let $H$ denote the (very sparse) leftover;
\item[(3)] find a Hamilton decomposition of~$G^{\rm rob} \cup H$.
\end{itemize}
It is of course far from obvious that such a graph $G^{\rm rob}$ exists.
By assumption our graph $G$ can be partitioned into two classes $A$ and $B$ of almost equal size such that almost all the edges in $G$ go between $A$ and $B$.
If both $A$ and $B$ are independent sets of equal size then the `robust decomposition lemma' of~\cite{Kelly} guarantees our desired subgraph $G^{\rm rob}$ of $G$.
Of course, in general our graph $G$  will contain edges in $A$ and $B$. 
Our aim is therefore to replace such edges with `fictive  edges' between $A$ and $B$, so that we can apply the 
robust decomposition lemma (which is introduced in Section~\ref{sec:robust}).

More precisely, similarly as in the proof of Theorem~\ref{NWmindegbip}, we construct a collection of localized balanced exceptional systems.
Together these path systems contain all the edges in $G[A]$ and $G[B]$. 
Again,%
\COMMENT{osthus added `again'} 
each balanced exceptional system balances out the sizes of $A$ and $B$ and covers the exceptional
vertices in $G$ (i.e.~those vertices having high degree into both $A$ and $B$).

By replacing  edges of the balanced exceptional systems with fictive edges,
we obtain from $G$ an auxiliary (multi)graph $G^*$ which only contains edges between $A$ and $B$ and 
which does not contain the exceptional vertices of $G$. 
This will allow us to apply
 the robust decomposition lemma. In particular this ensures that each Hamilton cycle obtained in $G^*$ contains
a collection of fictive edges corresponding to a single balanced exceptional system (the set-up of
the robust decomposition lemma does allow for this). Each such Hamilton cycle in $G^*$ then corresponds
to a Hamilton cycle in $G$.

We now give an example of how we introduce fictive edges.
Let $m$ be an integer so that $(m-1)/2$ is even. Set $m':= (m-1)/2$ and $m'':= (m+1)/2$. 
Define the graph $G$ as follows:
Let $A$ and $B$ be disjoint vertex sets of size $m$. Let $A_1, A_2$ be a partition of $A$ and
$B_1,B_2$ be a partition of $B$ such that $|A_1|=|B_1|=m''$. Add all edges between $A$ and $B$. Add
a matching $M_1=\{e_1, \dots, e_{m'/2}\}$ covering precisely the
vertices of $A_2$ and add a matching $M_2=\{e'_1, \dots, e'_{m'/2}\}$ covering precisely the vertices of $B_2$. Finally add a vertex
$v$ which sends an edge to every vertex in $A_1 \cup B_1$. So $G$ is $(m+1)$-regular (and $v$ would be regarded as a exceptional vertex).

Now pair up each edge $e_i$ with the edge $e'_i$. Write $e_i=x_{2i-1} x_{2i}$ and
$e'_i=y_{2i-1} y_{2i}$ for each $1\leq i \leq m'/2$.
Let $A_1 =\{a_1, \dots , a_{m''}\}$ and $B_1=\{b_1, \dots , b_{m''}\}$ and write
$f_i: =a_ib_i$ for all $1 \leq i \leq m''$. Obtain $G^*$ from $G$ by deleting  $v$ together with the
edges in $M_1 \cup M_2$ and by adding the following fictive edges: 
add $f_i$ for each $1\leq i \leq m''$ and add $x_jy_j$ for each $1\leq j \leq m'$.
Then $G^*$ is a balanced bipartite $(m+1)$-regular multigraph containing only edges between $A$ and $B$.

First, note that any Hamilton cycle $C^*$ in $G^*$ that contains precisely one fictive edge $f_i$ for some
$1\le i \leq m''$ corresponds to a Hamilton cycle $C$ in $G$, where we replace the fictive edge $f_i$ with
$a_iv$ and $b_iv$.
Next, consider any Hamilton cycle $C^*$ in $G^*$ that contains precisely three fictive edges; $f_i$ for some
$1\le i \leq m''$ together with $x_{2j-1}y_{2j-1}$ and $x_{2j}y_{2j}$ for some $1\leq j \leq m'/2$.
Further suppose $C^*$ traverses the vertices $a_i,b_i,x_{2j-1},y_{2j-1}, x_{2j},y_{2j}$ in this order. Then $C^*$ corresponds
to a Hamilton cycle $C$ in $G$, where we replace the fictive edges with $a_iv, b_i v, e_j$ and $e'_j$
(see Figure~\ref{fig:bridges}). Here the path system $J$ formed by the edges $a_iv, b_iv, e_j$ and $e'_j$ is an example
of a balanced exceptional system.
The above ideas are formalized in Section~\ref{sec:spec}.

\begin{figure}[htb!]
\begin{center}
\includegraphics[width=0.34\columnwidth]{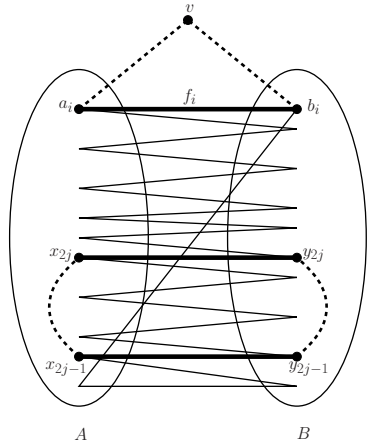}  
\caption{Transforming the problem of finding a Hamilton cycle in $G$ into finding a Hamilton cycle in the balanced bipartite
graph $G^*$}
\label{fig:bridges}
\end{center}
\end{figure}

We can now summarize the steps leading
to proof of Theorem~\ref{1factbip}.
In Section~\ref{eliminating}, we find and remove a set of edge-disjoint Hamilton cycles covering all edges in $G[A_0,B_0]$.
We can then find the localized balanced exceptional systems in Section~\ref{findBES}. After this, we need to extend and combine them into 
certain path systems and factors (which contain fictive edges) in Section~\ref{sec:spec}, before we can use them as an `input' for the robust decomposition lemma in~Section~\ref{sec:robust}.
Finally, all these steps are combined in Section~\ref{sec:proof2} to prove Theorem~\ref{1factbip}.

\section{Eliminating edges between the exceptional sets}\label{eliminating}

Suppose that $G$ is a $D$-regular graph as in Theorem~\ref{1factbip}. 
The purpose of this section is to prove Corollary~\ref{coverA0B02c}.
Roughly speaking, given $K\in \mathbb{N}$, this corollary states that one can delete a small number of edge-disjoint Hamilton cycles from $G$
to obtain a spanning subgraph $G'$ of $G$ and a partition $A,A_0,B,B_0$ of $V(G)$ such that (amongst others) the following properties hold:
\begin{itemize}
\item almost all edges of $G'$ join $A\cup A_0$ to $B\cup B_0$;
\item $|A|=|B|$ is divisible by $K$;
\item every vertex in $A$ has almost all its neighbours in $B\cup B_0$ and every vertex in $B$ has almost all its neighbours in $A\cup A_0$;
\item $A_0\cup B_0$ is small and there are no edges between $A_0$ and
$B_0$ in $G'$.
\end{itemize}
We will call $(G',A,A_0,B,B_0)$ a framework.
(The formal definition of a framework is stated before Lemma~\ref{coverA0B02}.)%
\COMMENT{AL: added this sentense}
Both $A$ and $B$ will then be split into $K$ clusters of equal size.
Our assumption that $G$ is $\eps_{\rm ex}$-bipartite easily implies that there is such a partition $A,A_0,B,B_0$
which satisfies all these properties apart from the property that there are no edges
between $A_0$ and $B_0$.
So the main part of this section shows that we can cover the collection of all  edges 
between $A_0$ and $B_0$ by a small number of edge-disjoint
Hamilton cycles.

Since Corollary~\ref{coverA0B02c} will also be used in the proof of Theorem~\ref{NWmindegbip}, instead of working with regular
graphs we need to consider so-called balanced graphs. We also need to find the above Hamilton cycles in the graph $F\supseteq G$ rather than
in $G$ itself (in the proof of Theorem~\ref{1factbip} we will take $F$ to be equal to~$G$).
 
More precisely, suppose that $G$ is a graph and that $A'$, $B'$ is a partition of $V(G)$, 
where $A'=A_0\cup A$, $B'=B_0\cup B$ and $A,A_0,B,B_0$ are disjoint.
Then we say that $G$ is \emph{$D$-balanced (with respect to $(A,A_0,B,B_0)$)} if 
\begin{itemize}
\item[(B1)] $e_G(A')-e_G(B')=(|A'| -|B'|)D/2$;
\item[(B2)] all vertices in $A_0 \cup B_0$ have degree exactly $D$.%
    \COMMENT{Previously (B2) also included that all vertices in $A \cup B$ have degree at least $D$. But then
$A_0B_0$-path systems are not $2$-balanced since some vertices in $A\cup B$ have degree zero. Check whether we made the necessary changes
and worked with the new def throughout the paper...}
\end{itemize}
Proposition~\ref{edge_number} below implies that whenever $A,A_0,B,B_0$ is a partition of the vertex set of a $D$-regular graph $H$,
then $H$ is $D$-balanced with respect to $(A,A_0,B,B_0)$.
Moreover, note that if $G$ is $D_G$-balanced with respect to $(A,A_0,B,B_0)$ and
$H$ is a spanning subgraph of $G$ which is $D_H$-balanced with respect to $(A,A_0,B,B_0)$, then $G-H$ is
$(D_G-D_H)$-balanced with respect to $(A,A_0,B,B_0)$. Furthermore, a graph $G$ is $D$-balanced with respect to
$(A,A_0,B,B_0)$ if and only if $G$ is $D$-balanced with respect to $(B,B_0,A,A_0)$.
\begin{prop}\label{edge_number}
Let $H$ be a graph and let $A'$, $B'$ be a partition of $V(H)$. Suppose that $A_0$, $A$ is a partition of $A'$
and that $B_0$, $B$ is a partition of $B'$ such that $|A|=|B|$. Suppose that $d_H(v)=D$ for every $v\in A_0\cup B_0$
and $d_H(v)=D'$ for every $v\in A\cup B$. Then $e_H(A')-e_H(B')=(|A'| -|B'|)D/2.$ 
\end{prop}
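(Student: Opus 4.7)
The plan is to count edge-endpoints by summing degrees over each part, exploiting the hypothesis that $|A|=|B|$ so that the contribution of the uniform-degree vertices in $A\cup B$ cancels.

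First I would record the standard identities
\[
\sum_{v\in A'} d_H(v) = 2e_H(A') + e_H(A',B'),\qquad \sum_{v\in B'} d_H(v) = 2e_H(B') + e_H(A',B'),
\]
and subtract them to eliminate the cross-term $e_H(A',B')$. This gives
\[
2\bigl(e_H(A')-e_H(B')\bigr) \;=\; \sum_{v\in A'} d_H(v) \;-\; \sum_{v\in B'} d_H(v).
\]

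Next I would split each side sum according to the partitions $A'=A_0\cup A$ and $B'=B_0\cup B$ and plug in the degree hypotheses to obtain
\[
\sum_{v\in A'} d_H(v) - \sum_{v\in B'} d_H(v) \;=\; (|A_0|-|B_0|)D + (|A|-|B|)D'.
\]
The assumption $|A|=|B|$ kills the $D'$-term, while $|A_0|-|B_0| = (|A_0|+|A|) - (|B_0|+|B|) = |A'|-|B'|$. Combining with the previous identity yields the claim $e_H(A')-e_H(B') = (|A'|-|B'|)D/2$.

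There is no real obstacle here; the only thing to keep straight is that the uniform degree $D'$ of the vertices in $A\cup B$ drops out precisely because of the balance condition $|A|=|B|$, which is why the right-hand side depends only on $D$ and on the imbalance $|A'|-|B'|$ between the two halves of the partition.
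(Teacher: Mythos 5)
Your proof is correct and is essentially the same double-counting argument the paper uses: both express $2e_H(A')-2e_H(B')$ via the degree sums over $A'$ and $B'$, split according to $A_0,A,B_0,B$, and observe that the $D'$-terms cancel by $|A|=|B|$. The only cosmetic difference is that you subtract the two handshake identities directly, whereas the paper first rewrites each of $2e_H(A')$ and $2e_H(B')$ in terms of $D$, $D'$ and $\sum d_H(\cdot,B')$, $\sum d_H(\cdot,A')$ before subtracting.
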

\proof
Note that $$\sum_{x\in A'}d_H(x, B')=e_H(A',B')=\sum_{y\in B'} d_H(y, A').$$ Moreover,
$$2e_H(A')=\sum_{x\in A_0}(D-d_H(x, B'))+\sum_{x\in A}(D'-d_H(x, B'))=D|A_0|+D'|A|-\sum_{x\in A'}d_H(x, B')$$ and
$$2e_H(B')=\sum_{y\in B_0}(D-d_H(y, A'))+\sum_{y\in B}(D'-d_H(y, A'))=D|B_0|+D'|B|-\sum_{y\in B'}d_H(y, A').$$
Therefore $$2e_H(A')-2e_H(B')=D(|A_0|-|B_0|)+D'(|A|-|B|)=D(|A_0|-|B_0|)=D(|A'|-|B'|),$$
as desired.
\endproof

The following observation states that balancedness is preserved under
suitable modifications of the partition.
\begin{prop} \label{keepbalance}
Let $H$ be $D$-balanced with respect to $(A,A_0,B,B_0)$. 
 Suppose that $A'_0,B'_0$ is a partition of $A_0 \cup B_0$.
Then $H$ is $D$-balanced with respect to $(A,A'_0,B,B'_0)$.
\end{prop}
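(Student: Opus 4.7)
The plan is to verify the two defining conditions (B1) and (B2) directly for the new partition $(A,A'_0,B,B'_0)$, using only what is known about the old partition $(A,A_0,B,B_0)$ together with the identity $A'_0\cup B'_0=A_0\cup B_0$.

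First, (B2) is immediate: the set of vertices required to have degree $D$ is the same under both partitions, namely $A_0\cup B_0=A'_0\cup B'_0$, so (B2) for the new partition follows from (B2) for the old one. This step is essentially a bookkeeping observation and needs no calculation.

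The substantive step is (B1). Let $A':=A\cup A_0$, $B':=B\cup B_0$, $A'':=A\cup A'_0$, $B'':=B\cup B'_0$. Write $X:=A_0\cap B''$ (vertices that moved from the $A$-side exceptional set to the $B$-side exceptional set) and $Y:=B_0\cap A''$ (vertices that moved the other way), so that $A''=(A'\setminus X)\cup Y$ and $B''=(B'\setminus Y)\cup X$. Plan to evaluate $2e_H(A'')-2e_H(B'')$ via the identity
\[
2e_H(S) \;=\; \sum_{v\in S} d_H(v) \;-\; e_H(S,V(H)\setminus S),
\]
applied with $S=A''$ and $S=B''$; since $A''$ and $B''$ partition $V(H)$, the cross-term $e_H(A'',B'')$ cancels, leaving
\[
2e_H(A'')-2e_H(B'') \;=\; \sum_{v\in A''} d_H(v) \;-\; \sum_{v\in B''} d_H(v).
\]
The same identity for the old partition gives $2e_H(A')-2e_H(B')=\sum_{v\in A'}d_H(v)-\sum_{v\in B'}d_H(v)$, which equals $(|A'|-|B'|)D$ by the hypothesis (B1). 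Compare the two displayed sums: they differ only on $X$ and $Y$, and since $X\subseteq A_0$ and $Y\subseteq B_0$, (B2) tells us every vertex of $X\cup Y$ has degree exactly $D$. Hence the difference contributed by the swap is $2D(|Y|-|X|)$, and $|A''|-|B''|=(|A'|-|B'|)+2(|Y|-|X|)$, so both sides change by the same amount and (B1) for the new partition follows.

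There is no real obstacle here — the whole point is that the "degree $D$" hypothesis on $A_0\cup B_0$ is precisely what makes (B1) invariant under rearranging these exceptional vertices between the two sides. The only thing to be careful about is keeping track of signs when computing $|A''|-|B''|$ and the corresponding degree-sum change, but this is a one-line verification once the decomposition $A''=(A'\setminus X)\cup Y$ is written down.
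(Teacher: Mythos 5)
Your proof is correct, and it takes a mildly different route from the paper's. The paper first observes that it suffices to handle a single swap — moving one vertex $v$ from $B_0$ to $A_0$ — and then verifies (B1) for that one-vertex move directly, by computing $e_H(A'\cup\{v\})-e_H(B'\setminus\{v\})=e_H(A')+d_H(v,A')-e_H(B')+d_H(v,B')$ and using $d_H(v,A')+d_H(v,B')=d_H(v)=D$; the general case follows by iterating such moves. You instead handle an arbitrary repartition in one step via the degree-sum identity $\sum_{v\in S}d_H(v)=2e_H(S)+e_H(S,V(H)\setminus S)$, which makes the cross-term $e_H(A'',B'')$ cancel against $e_H(B'',A'')$ and reduces (B1) to comparing degree sums over $A'',B''$ with those over $A',B'$. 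Both arguments hinge on exactly the same fact, namely that by (B2) every vertex in $A_0\cup B_0$ has degree exactly $D$, so each side of (B1) changes by the same amount under the repartition. The paper's one-vertex reduction keeps the bookkeeping to a minimum; yours avoids the implicit induction at the modest cost of introducing and tracking the sets $X$ and $Y$.
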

\proof
Observe that the general result follows if we can show that $H$ is $D$-balanced with respect to $(A,A'_0,B,B'_0)$, where
$A'_0=A_0 \cup \{ v\}$, $B'_0=B_0 \setminus \{v\}$ and $v \in B_0$.
(B2) is trivially satisfied in this case, so we only need to check (B1) for the new partition.
For this, let $A':=A_0 \cup A$ and $B':=B_0 \cup B$. Now note that (B1) for the original partition implies that
\begin{align*}
e_H(A'_0 \cup A)-e_H(B'_0 \cup B) & = e_H(A') +d_H(v,A') -(e_H(B')-d_H(v,B'))  \\
& = (|A'|-|B'|)D/2 +D =(|A'_0 \cup A|-|B'_0 \cup B|)D/2.
\end{align*}
Thus (B1) holds for the new partition.
\endproof

Suppose that $G$ is a graph and  $A',B'$ is a partition of $V(G)$.
For every vertex $v\in A'$ we call $d_G(v,A')$ the \emph{internal degree
of~$v$} in~$G$. Similarly, for every vertex $v\in B'$ we call $d_G(v,B')$ the \emph{internal degree
of~$v$} in~$G$.

Given a graph $F$ and a spanning subgraph $G$ of $F$ , 
we say that $(F,G,A,A_0,B,B_0)$ is an \emph{$(\eps,\eps',K,D)$-weak framework} if the following holds,
where $A':=A_0\cup A$, $B':=B_0\cup B$ and $n:=|G|=|F|$: 
\begin{itemize}
\item[{\rm (WF1)}] $A,A_0,B,B_0$ forms a partition of $V(G)=V(F)$;
\item[{\rm (WF2)}] $G$ is $D$-balanced with respect to $(A,A_0,B,B_0)$;
\item[{\rm (WF3)}] $e_G(A'), e_G(B')\le \eps n^2$;
\item[{\rm (WF4)}] $|A|=|B|$ is divisible by $K$. Moreover,
 $a+b\le \eps n$, where $a:=|A_0|$ and $b:=|B_0|$;
\item[{\rm (WF5)}] all vertices in $A \cup B$  have internal degree at most $\eps' n$ in $F$;
\item[{\rm (WF6)}] any vertex $v$ has internal degree at most $d_G(v)/2$ in $G$.%
\COMMENT{NEW: Got rid of error term here.}
\end{itemize}
 Throughout the paper, when referring to internal degrees without mentioning the partition, we always mean with
respect to the partition $A'$, $B'$, where $A'=A_0\cup A$ and $B'=B_0\cup B$. Moreover, $a$ and $b$ will always denote $|A_0|$ and $|B_0|$.

We say that $(F,G,A,A_0,B,B_0)$ is an \emph{$(\eps,\eps',K,D)$-pre-framework} if it satisfies (WF1)--(WF5).
The following observation states that pre-frameworks are preserved if we remove suitable balanced subgraphs.

\begin{prop} \label{WFpreserve}
Let $\eps, \eps ' >0$ and $K,D_G,D_{H} \in \mathbb N$.%
	\COMMENT{AL:changed $D_{G'}$ to $D_{H}$}
Let $(F,G,A,A_0,B,B_0)$ be an $(\eps,\eps',K,D_G)$-pre framework.
Suppose that $H$ is a $D_H$-regular spanning subgraph of $F$ such that
 $G\cap H$ is $D_H$-balanced with respect to $(A,A_0,B,B_0)$. 
Let $F':=F-H$ and $G':=G-H$. Then $(F',G',A,A_0,B,B_0)$ is an $(\eps,\eps',K,D_G-D_H)$-pre framework.
\end{prop}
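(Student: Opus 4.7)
The plan is to verify the five conditions (WF1)--(WF5) for $(F',G',A,A_0,B,B_0)$ one by one, since the partition $(A,A_0,B,B_0)$ is unchanged by passing from $(F,G)$ to $(F',G')$. Most of the conditions are immediate from the corresponding conditions for $(F,G,A,A_0,B,B_0)$, and the only one requiring a small argument is (WF2), the balancedness of $G'$.

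For (WF1), nothing needs to be checked: $A,A_0,B,B_0$ still partitions $V(F')=V(F)=V(G)=V(G')$ since $H$ (and hence $G \cap H$) is spanning. For (WF3), observe $G'\subseteq G$ so $e_{G'}(A')\le e_G(A')\le \eps n^2$ and likewise $e_{G'}(B')\le \eps n^2$. Condition (WF4) is a statement purely about the partition, so it transfers verbatim. For (WF5), every vertex $v\in A\cup B$ has internal degree in $F'=F-H$ bounded above by its internal degree in $F$, which is at most $\eps' n$ by hypothesis.

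The main (but still routine) step is (WF2). Write $G' = G - H = G - (G\cap H)$, the last equality because $G'$ only loses edges of $G$ that actually belong to $H$. Now $G\cap H$ is a spanning subgraph of $G$ which, by assumption, is $D_H$-balanced with respect to $(A,A_0,B,B_0)$. Since $G$ is $D_G$-balanced with respect to $(A,A_0,B,B_0)$, the observation recorded just before Proposition~\ref{edge_number} (that the difference of two balanced graphs is balanced with the corresponding parameters) applies and yields that $G' = G-(G\cap H)$ is $(D_G-D_H)$-balanced with respect to $(A,A_0,B,B_0)$, which is (WF2).

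There is no real obstacle here: the proposition is essentially a bookkeeping statement, and the only ingredient beyond the definitions is the elementary subtraction principle for balancedness already noted in the text. Combining the five verifications shows that $(F',G',A,A_0,B,B_0)$ is an $(\eps,\eps',K,D_G-D_H)$-pre-framework, as required.
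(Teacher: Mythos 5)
Your proof is correct and follows essentially the same route as the paper's own (much terser) argument: conditions (WF1), (WF3), (WF4), (WF5) are unaffected by deleting edges, and (WF2) follows because $G' = G - (G\cap H)$ is the difference of a $D_G$-balanced graph and a $D_H$-balanced spanning subgraph, hence $(D_G-D_H)$-balanced by the subtraction observation noted before Proposition~\ref{edge_number}.
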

\proof
Note that all required properties except possibly (WF2) are not affected by removing edges.
But $G'$ satisfies (WF2) since $G\cap H$ is $D_H$-balanced with respect to $(A,A_0,B,B_0)$.
\endproof

\begin{lemma}\label{bip_decomp}
Let $0<1/n\ll \varepsilon \ll  \varepsilon', 1/K\ll 1$ and let $D\ge n/200$. 
Suppose that $F$ is a  graph on $n$ vertices which is $\eps$-bipartite and that $G$ is a $D$-regular spanning subgraph of $F$.
Then there is a partition $A,A_0,B,B_0$ of $V(G)=V(F)$ so that 
$(F,G,A,A_0,B,B_0)$ is an $(\eps^{1/3},\eps',K,D)$-weak framework.
\end{lemma}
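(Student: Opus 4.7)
The plan is to begin with the partition $S_1, S_2$ of $V(F)$ witnessing that $F$ is $\eps$-bipartite, and then modify it in three stages to produce the desired $(A, A_0, B, B_0)$. Note that $e_G(S_i) \le e_F(S_i) \le \eps n^2$ and $||S_1| - |S_2|| \le 1$. Condition (WF2) is immediate from Proposition~\ref{edge_number} since $G$ is $D$-regular, so the real work is to secure (WF3), (WF5), and (WF6) while simultaneously forcing $|A| = |B|$ divisible by~$K$ for (WF4).

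Stage 1 (enforcing (WF6)): starting from $(S_1, S_2)$, iteratively swap any vertex $v$ with $d_G(v,X) > D/2$, where $X$ is its current side, to the opposite side. Each such swap strictly decreases $\Phi := e_G(A') + e_G(B')$ (by $d_G(v,A') - d_G(v,B') \ge 1$), so the process terminates at a partition $(A^\dagger, B^\dagger)$ in which every vertex satisfies (WF6) and $\Phi \le 2\eps n^2 \ll \eps^{1/3} n^2$, delivering (WF3) with ample slack.

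The heart of the proof is controlling the net displacement $|S_1 \triangle A^\dagger|$, and this is where I expect the main difficulty. Writing $X := S_1 \cap B^\dagger$ and $Y := S_2 \cap A^\dagger$, I will lower-bound $e_G(A^\dagger) \ge e_G(S_1 \setminus X, Y) \ge |Y|D - 2\eps n^2 - |X||Y|$, using that $\sum_{v \in Y} d_G(v, S_2) \le 2 e_G(S_2) \le 2\eps n^2$. Combined with $\Phi \le 2\eps n^2$, this forces $(|X|+|Y|)D - 2|X||Y| \le 6\eps n^2$. The feasible region for $(|X|,|Y|)$ under this inequality splits into two disconnected components: a small component containing $(0,0)$ in which $|X|,|Y| \le D/2$, and a spurious one near $(D,D)$, separated by an infeasible corridor. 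Because the iteration starts at $(|X|,|Y|) = (0,0)$, changes $(|X|,|Y|)$ only by unit steps, and maintains $\Phi \le 2\eps n^2$ throughout, it cannot cross the corridor and must terminate in the small component. On that component $|X|+|Y| \le D$ and so $2|X||Y| \le D(|X|+|Y|)/2$, which gives $|X|+|Y| \le 12\eps n^2/D \le 2400\eps n$, using $D \ge n/200$.

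Stages 2 and 3 (enforcing (WF5) and (WF4)): set $A_0 := \{v \in A^\dagger : d_F(v, A^\dagger) > \eps' n\}$ and similarly $B_0$. Since $A^\dagger$ differs from $S_1$ by only $O(\eps n)$ vertices, the bound $e_F(A^\dagger) \le e_F(S_1) + e_F(S_2) + |Y| \cdot (n/2) = O(\eps n^2)$ follows, yielding $|A_0|, |B_0| = O(\eps n / \eps')$; by construction (WF5) holds for $A := A^\dagger \setminus A_0$ and $B := B^\dagger \setminus B_0$. Finally, transfer at most $O(\eps n/\eps') + K$ further vertices from the larger of $A, B$ into the corresponding exceptional set, so as to achieve $|A| = |B|$ divisible by $K$. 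This leaves the bipartition $A' = A^\dagger$ unchanged, so (WF3), (WF5), and (WF6) all survive, while $|A_0| + |B_0| \le \eps^{1/3} n$ holds by the hierarchy $\eps \ll \eps', 1/K \ll 1$, securing (WF4).
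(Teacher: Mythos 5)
Your proof is correct, but the heart of it — bounding the displacement of the final bipartition $(A^\dagger, B^\dagger)$ from $(S_1, S_2)$ — proceeds by a genuinely different mechanism than the paper's. The paper never confronts your ``two-component'' issue because it only ever swaps vertices from the set $S$ of vertices with high internal $F$-degree (those $x \in S_i$ with $d_F(x, S_i) \ge \sqrt{\eps}n$, a set of size at most $4\sqrt{\eps}n$). This yields $A' \bigtriangleup S_1 \subseteq S$ automatically, and (WF6) for vertices outside $S$ then follows from the trivial estimate $d_G(v,A') \le d_F(v, S_1) + |S| < \eps' n < D/2$ without any iteration. Your variant swaps every vertex violating (WF6), so you must separately argue that the process cannot drift far; your ``corridor'' argument --- deriving the quadratic constraint $(|X|+|Y|)D - 2|X||Y| \le 6\eps n^2$ at every intermediate step via the monotonicity of $\Phi$, noting that its feasible lattice set decomposes into two components separated by a gap of width $\gg 1$, and then invoking unit-step connectivity from $(0,0)$ --- is correct but heavier machinery. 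The paper's restriction-to-$S$ device buys a one-line bound at the cost of a slightly coarser displacement estimate ($4\sqrt{\eps}n$ versus your $2400\eps n$), and it also lets $A'_0 \cup B'_0 \subseteq S$ for free; your approach recovers the same (WF4) bound via the hierarchy $\eps \ll \eps'$ making $\eps n/\eps' \ll \eps^{1/3}n$. Your Stages 2--3 match the paper's: defining $A_0, B_0$ by high internal $F$-degree and then shifting $O(\eps n/\eps') + K$ vertices to equalize and secure divisibility, using the edge-balance identity (or, as you do, the displacement bound directly) to control $||A^\dagger|-|B^\dagger||$.
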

\proof
Let $S_1,S_2$ be a partition of $V(F)$ which is guaranteed by the assumption that $F$ is $\eps$-bipartite.
Let $S$ be the set of all those vertices $x \in S_1$ with $d_F(x,S_1) \ge \sqrt{\eps} n$
together with all those vertices $x \in S_2$ with $d_F(x,S_2) \ge \sqrt{\eps} n$.
Since $F$ is $\eps$-bipartite, it follows that $|S| \le 4\sqrt{\eps} n$.%
    \COMMENT{Have $4\sqrt{\eps} n$ instead of $2\sqrt{\eps} n$ since we get $|S\cap S_i|\sqrt{\eps} n\le 2e(S_i)\le 2\eps n^2$.}

Given a partition $X,Y$ of $V(F)$, we say that $v \in X$ is \emph{bad for $X,Y$} if $d_G(v,X)>d_G(v,Y)$
and similarly that  $v \in Y$ is \emph{bad for $X,Y$} if $d_G(v,Y)>d_G(v,X)$.
Suppose that there is a vertex $v \in S$ which is bad for $S_1$, $S_2$. 
Then we move $v$ into the class which does not currently contain $v$ to obtain a new partition $S'_1$, $S'_2$.
We do not change the set $S$.
If there is a vertex $v' \in S$ which is bad for $S'_1$, $S'_2$, then again we move it into the other class.

We repeat this process. After each step, the number of edges in $G$ between the two classes increases, so this process has to terminate with some partition $A'$, $B'$
such that $A' \bigtriangleup S_1 \subseteq  S$ and $B' \bigtriangleup S_2 \subseteq  S$.
Clearly, no vertex in $S$ is now bad for $A'$, $B'$. Also, for any $v \in A' \setminus S$ we have 
\begin{align}\label{eq:dvA'}
d_G(v,A') & \le d_F (v,A') \le d_F(v,S_1)+|S|\le \sqrt{\eps} n + 4\sqrt{\eps} n< \eps'n \\
& < D/2= d_G(v)/2. \nonumber
\end{align}
 Similarly, $d_G(v,B') < \eps' n< d_G(v)/2$ for all $v \in B' \setminus S$.
Altogether this implies that no vertex is bad for $A'$, $B'$ and thus (WF6) holds.
 Also note that
$e_G(A',B')\ge e_G(S_1,S_2)\ge e(G)-2\eps n^2$.%
	\COMMENT{AL: replaced $2\eps$ with $4\eps$ same with equation below, DO changed these back again...}
So 
\begin{equation}\label{A'B'edge}
e_G(A'),e_G(B') \le 2\eps n^2.
\end{equation}
This implies~(WF3).

Without loss of generality we may assume that $|A'|\ge |B'|$. 
Let $A'_0$ denote the set of all those vertices $v\in A'$ for which $d_F(v, A')\ge \varepsilon' n$.
Define $B'_0\subseteq B'$ similarly.
We will choose sets $A \subseteq A' \setminus A'_0$ and $A_0\supseteq A'_0$
and sets $B\subseteq B'\setminus B'_0$ and $B_0\supseteq B'_0$ such that $|A|=|B|$ is 
divisible by $K$ and so that $A,A_0$ and $B,B_0$ are partitions of $A'$ and $B'$ respectively.
We obtain such sets by moving at most 
$\left| |A'\setminus A'_0|-|B'\setminus B'_0|\right| +K$ vertices from 
$A'\setminus A'_0$ to $A'_0$ and at most $\left| |A'\setminus A'_0|-|B'\setminus B'_0|\right| +K$ vertices from $B'\setminus B'_0$ to $B'_0$.
The choice of $A,A_0,B,B_0$ is such that (WF1) and  (WF5) hold.  Further, since $|A|=|B|$, Proposition~\ref{edge_number} implies~(WF2).

In order to verify (WF4), it remains to show that $a+b=|A_0\cup B_0|\le  \eps^{1/3} n$.
But (\ref{eq:dvA'}) together with its analogue for the vertices in $B'\setminus S$ implies that $A'_0\cup B'_0\subseteq S$.
Thus $|A'_0|+ |B'_0|\le |S|\le 4\sqrt{\eps} n$. 
Moreover, (WF2), (\ref{A'B'edge}) and our assumption that $D\ge n/200$ together imply that
$$|A'|-|B'| =(e_G(A')-e_G(B'))/(D/2)\le 2\eps n^2/(D/2) \le 800 \eps n.$$
So altogether, we have 
\begin{align*}
a+b & \le |A'_0\cup B'_0|+2\left| |A'\setminus A'_0|-|B'\setminus B'_0|\right| +2K \\ &
\le
4\sqrt{\eps} n+2\left| |A'|-|B'|-(|A'_0|-|B'_0|)\right|+2K \\ &
 \le 4\sqrt{\eps} n+1600 \eps n+ 8\sqrt{\eps} n+2K
\le \eps^{1/3}n.
\end{align*}
Thus (WF4) holds.
\endproof

Throughout this and the next section, we will often use the following result, which is a simple consequence of 
Vizing's theorem and was first  observed by McDiarmid and independently by de Werra (see e.g.~\cite{west}).%
\COMMENT{osthus removed proof}
\begin{prop}\label{basic_matching_dec}\
 Let $H$ be a graph with maximum degree at most $\Delta.$
Then $E(H)$ can be decomposed into $\Delta+1$ edge-disjoint matchings $M_1, \ldots, M_{\Delta+1}$
such that $||M_i|-|M_j||\le 1$ for all $i, j\le \Delta+1$.
\end{prop}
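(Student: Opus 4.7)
The plan is to start from Vizing's theorem and then rebalance the resulting matchings by successive augmenting-path swaps.

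First, I would apply Vizing's theorem to obtain a proper edge-colouring of $H$ using $\Delta+1$ colours; this gives a decomposition of $E(H)$ into $\Delta+1$ edge-disjoint matchings $M_1, \dots, M_{\Delta+1}$. If this decomposition already satisfies $||M_i|-|M_j||\le 1$ for all $i,j$, we are done. Otherwise, choose $i,j$ with $|M_i|\ge |M_j|+2$ and consider the symmetric difference $F:=M_i\triangle M_j$. Since $M_i$ and $M_j$ are both matchings, every vertex of $F$ has degree at most $2$, so $F$ is a disjoint union of paths and even cycles whose edges alternate between $M_i$ and $M_j$.

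Each even cycle in $F$ contributes equally many edges from $M_i$ and $M_j$, and each even-length path does likewise; odd-length paths contribute one extra edge to exactly one of the two matchings. Because $|M_i|-|M_j|\ge 2>0$, there must exist an odd-length path $P\subseteq F$ whose endpoints are incident to an edge of $M_i$ (and not to one of $M_j$). Now perform the swap along $P$: replace $M_i$ by $M_i':=(M_i\setminus E(P))\cup (E(P)\cap M_j)$ and $M_j$ by $M_j':=(M_j\setminus E(P))\cup (E(P)\cap M_i)$. Since $P$ is a path alternating in the two colour classes, both $M_i'$ and $M_j'$ are again matchings, they remain disjoint from all other $M_k$, and together they still cover $M_i\cup M_j$; moreover $|M_i'|=|M_i|-1$ and $|M_j'|=|M_j|+1$.

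To see that iterating terminates, consider the potential $\Phi:=\sum_{k=1}^{\Delta+1}|M_k|^2$. Since $e(H)=\sum_k |M_k|$ is preserved by any swap, and $(x-1)^2+(y+1)^2<x^2+y^2$ whenever $x\ge y+2$, each swap strictly decreases $\Phi$. As $\Phi$ is a non-negative integer, the process must stop, and it can only stop when no pair $(i,j)$ has $|M_i|\ge |M_j|+2$, i.e.\ precisely when $||M_i|-|M_j||\le 1$ for all $i,j$, as required. The only place where one needs to be careful is verifying the existence of an odd alternating path whose removal decreases $|M_i|-|M_j|$ by exactly two; but the parity count on the components of $M_i\triangle M_j$ given above makes this immediate from $|M_i|>|M_j|$.
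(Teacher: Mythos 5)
Your proof is correct. The paper itself does not give a proof: it states the result as a known fact, attributing it to McDiarmid and independently de Werra, and refers to West's textbook. So there is no proof in the paper to compare against; what you have supplied is essentially the standard proof of the McDiarmid--de Werra equalized edge-colouring result.

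Your argument is complete and sound. Starting from a Vizing colouring and rebalancing via alternating $M_i/M_j$ paths is exactly the right idea. The key step is verified correctly: in $M_i\triangle M_j$ every component is an alternating path or even cycle, each even cycle and each even path contributes equally to both matchings, and since $|M_i|-|M_j|=|M_i\setminus M_j|-|M_j\setminus M_i|>0$ there must be a component that is an odd path whose end-edges both lie in $M_i$; swapping along it shifts one edge from $M_i$ to $M_j$ while all $M_k$ remain matchings and remain pairwise edge-disjoint. The potential $\Phi=\sum_k |M_k|^2$ is a non-negative integer that strictly decreases under each such swap (because $(x-1)^2+(y+1)^2<x^2+y^2$ when $x\ge y+2$), so the process terminates, and at termination the size condition $||M_i|-|M_j||\le 1$ holds. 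One tiny remark: you only need $|M_i|>|M_j|$ to guarantee the existence of the desired odd path; the hypothesis $|M_i|\ge |M_j|+2$ is used only to guarantee that the swap actually decreases $\Phi$.
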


Our next goal is to cover the edges of $G[A_0 , B_0]$ by edge-disjoint Hamilton cycles.
To do this, we will first decompose $G[A_0 , B_0]$ into a collection of matchings. We will then extend each such matching into a system of
vertex-disjoint paths such that altogether these paths cover every vertex in $G[A_0 , B_0]$, each path has its endvertices in $A\cup B$ and
the path system is $2$-balanced. Since our path system will only contain a
small number of nontrivial paths, we can then extend the path system into a Hamilton cycle (see Lemma~\ref{extendpaths}).  

We will call the path systems we are working with $A_0B_0$-path systems. More precisely, an \emph{$A_0B_0$-path system (with respect to
$(A,A_0,B,B_0)$)} is a path system $Q$ satisfying the following properties:
\begin{itemize}
\item Every vertex in $A_0\cup B_0$ is an internal vertex of a path in $Q$.
\item $A\cup B$ contains the endpoints of each path in $Q$ but no internal vertex of a path in $Q$.
\end{itemize}
The following observation (which motivates the use of the word `balanced') will often be helpful.%
\COMMENT{Deryk added bracket}

\begin{prop} \label{balpathcheck}
Let $A_0,A,B_0,B$ be a partition of a vertex set $V$.
Then an $A_0B_0$-path system $Q$ with $V(Q)\subseteq V$ is $2$-balanced with respect to $(A,A_0,B,B_0)$ if and only if
the number of vertices in $A$ which are endpoints of nontrivial paths
in $Q$ equals the number of vertices in $B$ which are endpoints of nontrivial paths in~$Q$.
\end{prop}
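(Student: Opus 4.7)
The plan is to verify the two conditions (B1) and (B2) defining ``$D$-balanced'' with $D=2$. Condition (B2) is immediate from the definition of an $A_0B_0$-path system: every vertex in $A_0\cup B_0$ is an internal vertex of some path of $Q$, and hence has degree exactly $2$ in $Q$. All the work therefore lies in establishing (B1), namely $e_Q(A')-e_Q(B')=|A'|-|B'|$, where $A':=A\cup A_0$ and $B':=B\cup B_0$. Note also that the two clauses in the definition of an $A_0B_0$-path system together force the set of internal vertices of the (necessarily nontrivial) paths of $Q$ to be \emph{exactly} $A_0\cup B_0$; I will use this below.

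The main step is a short telescoping computation along each nontrivial path. Assign weights $s(v):=+1$ if $v\in A'$ and $s(v):=-1$ if $v\in B'$. Then for any edge $xy$ of $Q$ a case check gives $\tfrac{1}{2}(s(x)+s(y))=+1,\,-1,$ or $0$ according to whether $xy$ lies inside $A'$, inside $B'$, or is crossing. Summing this identity along a nontrivial path $P=v_0v_1\cdots v_\ell$ telescopes to
\[
e_P(A')-e_P(B')\;=\;\tfrac{1}{2}\bigl(s(v_0)+s(v_\ell)\bigr)+\sum_{i=1}^{\ell-1}s(v_i).
\]
Summing over all nontrivial $P\in Q$, the endpoint part contributes $\tfrac{1}{2}(V_A-V_B)$, where $V_A$ (respectively $V_B$) denotes the number of vertices of $A$ (respectively $B$) that are endpoints of nontrivial paths of $Q$; here I am using that distinct nontrivial paths are vertex-disjoint, so each such endpoint is counted once. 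The internal part contributes $|A_0|-|B_0|=a-b$ by the structural observation above.

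Putting this together yields $e_Q(A')-e_Q(B')=(a-b)+\tfrac{1}{2}(V_A-V_B)$. Since $|A'|-|B'|=(a-b)+(|A|-|B|)$ and since $|A|=|B|$ in the applications of this proposition (this is built into the framework via (WF4)), condition (B1) is equivalent to $V_A=V_B$, as claimed. I do not foresee any real obstacle: the only care required is in the bookkeeping of endpoints versus internal vertices, together with the remark that trivial paths contribute nothing to either side of the identity (their single vertex must lie in $A\cup B$ and they carry no edges, so they affect neither $e_Q(A')-e_Q(B')$ nor $V_A-V_B$).
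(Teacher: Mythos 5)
Your argument is correct and reaches the same conclusion via a somewhat different mechanism. The paper's own proof is a vertex-by-vertex degree count: it writes $2e_Q(A')+e_Q(A',B')=\sum_{v\in A'}d_Q(v)=2a+n_A$ and the analogous identity for $B'$, then subtracts to obtain $n_A-n_B=2\bigl(e_Q(A')-e_Q(B')-a+b\bigr)$. Your argument instead assigns a weight $s(v)=\pm1$, observes that summing $\tfrac12(s(x)+s(y))$ over edges computes $e_Q(A')-e_Q(B')$, and telescopes this sum along each nontrivial path to split the total into an endpoint contribution $\tfrac12(V_A-V_B)$ and an internal-vertex contribution $a-b$. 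The two computations are really the edge-centric and vertex-centric forms of the same double count (indeed $\sum_e\tfrac12(s(x_e)+s(y_e))=\tfrac12\sum_v s(v)\,d_Q(v)$), so the difference is one of presentation rather than substance; your version arguably makes the separation into endpoint and internal contributions slightly more transparent. You also catch a point that the paper glosses over: the equivalence of $n_A=n_B$ with (B1) requires $|A'|-|B'|=a-b$, i.e.\ $|A|=|B|$, since $|A'|-|B'|=(|A|-|B|)+(a-b)$. This hypothesis is not written into the proposition, and the paper's final equivalence ``$\dots$ if and only if $Q$ satisfies (B1)'' silently uses it; as you note, $|A|=|B|$ holds in every application via (FR4)/(WF4), so the gap is harmless, but your explicit remark is the more careful statement.
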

\proof
Note that by definition any $A_0B_0$-path system satisfies (B2), so we only need to consider (B1).
Let $n_A$ be the number of  vertices in $A$ which are endpoints of nontrivial paths
in $Q$ and define $n_B$ similarly.
Let $a:=|A_0|$, $b:=|B_0|$, $A':=A\cup A_0$ and $B':=B\cup B_0$.
Since $d_Q(v) =2 $ for all $v \in A_0$ and since every vertex in $A$ is either an endpoint of a nontrivial path
in $Q$ or has degree zero in~$Q$, we have
\begin{align*}
	2e_Q(A') + e_Q(A',B') = \sum_{v \in A'} d_Q (v) = 2a +n_A.
\end{align*}
So $n_A = 2(e_Q(A')-a) + e_Q(A',B')$, and similarly $n_B = 2(e_Q(B')-b) + e_Q(A',B') $.
Therefore, $n_A = n_B$ if and only if $2( e_Q(A') - e_Q(B')-a +b) = 0 $ if and only if $Q$ satisfies~(B1), as desired.
\endproof

The next observation shows that if we have a suitable path system satisfying (B1), we can extend it into a path system which 
also satisfies (B2).%
\COMMENT{Deryk added new sentence}

\begin{lemma}\label{balpathextend}
Let $0<1/n\ll \alpha \ll 1$.
Let $G$ be a graph on $n$ vertices such that there is a partition $A',B'$ of $V(G)$ which satisfies the following properties:
\begin{itemize}
\item[{\rm (i)}] $A'=A_0\cup A$, $B'=B_0\cup B$ and $A_0,A,B_0,B$ are disjoint;
\item[{\rm (ii)}] $|A|=|B|$ and $a+b \le \alpha n$, where $a:=|A_0|$ and $b:=|B_0|$;
\item[{\rm (iii)}] if $v \in A_0$ then $d_G(v,B) \ge 4 \alpha n$ and if $v \in B_0$ then $d_G(v,A) \ge 4 \alpha  n$.
\end{itemize}
Let $Q'\subseteq G$ be a path system consisting of at most $\alpha n$ nontrivial
paths such that $A \cup B$ contains no internal vertex of a path
in $Q'$ and $e_{Q'}(A')   - e_{Q'}(B')= a -b $.
Then $G$ contains a $2$-balanced $A_0B_0$-path system $Q$ (with respect to $(A,A_0,B,B_0)$) which extends~$Q'$ and
consists of at most $2 \alpha n$ nontrivial paths.
Furthermore, $E(Q)\setminus E(Q') $ consists of $A_0B$- and $AB_0$-edges only.
\end{lemma}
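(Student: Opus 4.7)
The plan is to build $Q$ from $Q'$ by greedy edge addition. Initialise $Q:=Q'$. Process each vertex $v\in A_0\cup B_0$ in turn: while $d_Q(v)<2$, add to $Q$ an edge $vw$, where $w$ is a neighbour of $v$ in $G$ lying on the opposite side of $v$ (that is, $w\in B$ if $v\in A_0$ and $w\in A$ if $v\in B_0$) with $d_Q(w)=0$ in the current $Q$. Handle all vertices of $A_0$ first, then those of $B_0$; note that the $A_0$-phase adds only $A_0B$-edges and so does not change the degree of any $A$-vertex.

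The crux is to show that an appropriate $w$ always exists. Suppose we are about to add an edge incident to some $v\in A_0$. The non-isolated vertices of $B$ in the current $Q$ are (i) the endpoints in $B$ of the nontrivial paths of $Q'$, of which there are at most $2\alpha n$ since $Q'$ has at most $\alpha n$ nontrivial paths, and (ii) the $B$-endpoints of the new $A_0B$-edges added so far, of which there are strictly fewer than $2a$ (since at most $2a$ such edges are ever added and at least one more is yet to be added). Hence the number of non-isolated $B$-vertices is strictly less than $2\alpha n+2a\le 4\alpha n\le d_G(v,B)$, so at least one neighbour of $v$ in $B$ is isolated. The case $v\in B_0$ is symmetric. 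Moreover, since every added edge has an endpoint that was isolated at the moment of its addition, no cycle is ever created and no vertex's degree ever exceeds~$2$.

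Once the procedure terminates, every $v\in A_0\cup B_0$ has degree exactly~$2$ and is therefore internal to a path of $Q$, while every vertex of $A\cup B$ has degree at most~$1$. Thus $Q$ is an $A_0B_0$-path system extending $Q'$, and its new edges are all $A_0B$- or $AB_0$-edges, as required. Property (B2) is immediate. Since the new edges go between $A'$ and $B'$, they contribute nothing to $e_Q(A')$ or $e_Q(B')$, so $e_Q(A')-e_Q(B')=e_{Q'}(A')-e_{Q'}(B')=a-b$, which gives (B1); hence $Q$ is $2$-balanced. Finally, the degree-$1$ vertices of $Q$ are the old endpoints of $Q'$ lying in $A\cup B$ (contributing at most $2\alpha n$) together with the $B$- and $A$-endpoints of the new edges (at most $2a+2b\le 2\alpha n$ in total), for a total of at most $4\alpha n$ degree-$1$ vertices; dividing by~$2$ gives at most $2\alpha n$ nontrivial paths. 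The only delicate point is the existence argument in the second paragraph: the hypotheses $a+b\le\alpha n$ and $d_G(v,B),d_G(v,A)\ge 4\alpha n$ are used essentially tightly there, with slack just~$1$ at the worst step.
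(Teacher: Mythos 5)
Your proof is correct and takes essentially the same approach as the paper's: greedily join each exceptional vertex to fresh vertices on the opposite side using (iii) and the bound on the number of vertices already used, then verify (B1) is preserved since only $A'B'$-edges are added. Your version just spells out the degree-availability bookkeeping (and the ``no cycle created'' invariant) slightly more explicitly than the paper does.
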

\proof
Since $A \cup B$ contains no internal vertex of a path in $Q'$ and since $Q'$ contains at most $\alpha n$ nontrivial
paths, it follows that at most $2 \alpha n$ vertices in $A \cup B$ lie on nontrivial paths in $Q'$.
We will now extend $Q'$ into an $A_0B_0$-path system $Q$ consisting of at most $a+b +  \alpha n \le 2 \alpha n$ nontrivial paths as follows:
\begin{itemize}
\item for every vertex $v \in A_0$, we join $v$ to $2-d_{Q'}(v)$ vertices in $B$;
\item for every vertex $v \in B_0$, we join $v$ to $2-d_{Q'}(v)$ vertices in $A$.
\end{itemize}
Condition~(iii) and the fact that at most $2\alpha n$ vertices in $A\cup B$ lie on nontrivial paths in $Q'$ together ensure that we can extend
$Q'$ in such a way that the endvertices in $A\cup B$ are distinct for different paths in $Q$.
Note that $e_Q(A')   - e_Q(B') = e_{Q'}(A')   - e_{Q'}(B')= a -b $.
Therefore, $Q$ is $2$-balanced with respect to $(A,A_0,B,B_0)$.
\endproof

The next lemma constructs a small number of $2$-balanced $A_0B_0$-path systems covering the edges of $G[A_0, B_0]$.
Each of these path systems will later be extended%
\COMMENT{osthus corrected new sentence}
into a Hamilton cycle.%
\COMMENT{In Lemma~\ref{coverA0B01}: We really do use tightness of (WF6) but crucially when applying lemma 
in Chapter 8 $G$ is not regular so can't use this property in this lemma.}

\begin{lemma}\label{coverA0B01}
Let $0<1/n\ll \varepsilon \ll  \varepsilon',1/K\ll \alpha\ll 1$.
Let $F$ be a graph on $n$ vertices and let $G$ be a spanning subgraph of $F$.%
	\COMMENT{AL:add condition for $F$ and $G$}
Suppose that $(F,G,A,A_0,B,B_0)$
is an $(\eps,\eps',K,D)$-weak framework with $\delta(F)\ge (1/4+\alpha )n$
and  $D\geq n/200$.
Then for some $r^* \le \eps n$ the graph $G$ contains $r^*$ edge-disjoint
$2$-balanced $A_0B_0$-path systems $Q_1, \dots , Q_{r^*}$ which satisfy the following
properties:
\begin{itemize}
\item[{\rm (i)}] Together $Q_1,\dots,Q_{r^*}$  cover all edges in $G[A_0, B_0]$;
\item[{\rm (ii)}] For each $i \leq r^*$, $Q_i$ contains at most $2 \eps n$ nontrivial paths;
\item[{\rm (iii)}] For each $i \leq r^*$, $Q_i$ does not contain any edge from $G[A,B]$.
\end{itemize}
\end{lemma}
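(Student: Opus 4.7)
The plan is to decompose $G[A_0,B_0]$ into a small number of matchings and then extend each matching into a $2$-balanced $A_0B_0$-path system by first adding a few ``balancing'' edges drawn from $G[A']$, and then completing the degrees at $A_0\cup B_0$ via cross edges to $B$ and $A$. Since $\Delta(G[A_0,B_0])\le\min(a,b)\le\eps n/2$, Proposition~\ref{basic_matching_dec} decomposes $E(G[A_0,B_0])$ into $r^*\le\eps n/2+1\le\eps n$ near-equal matchings $M_1,\dots,M_{r^*}$, each of size at most $\eps n/2$. Assume without loss of generality $a\ge b$ (the opposite case is symmetric) and set $g:=a-b$.

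The essential input from the framework is that $G$ is $D$-balanced and $|A|=|B|$, so Proposition~\ref{edge_number} yields $e_G(A')-e_G(B')=gD/2$. Combined with $D\ge n/200$ and $r^*\le\eps n$, this gives $e_G(A')\ge gD/2\ge r^*g$ for $\eps$ small enough. Hence we can select $r^*$ edge-disjoint matchings $R_1,\dots,R_{r^*}$ of size exactly $g$ inside $G[A']$, for instance by first carving out a subgraph $H\subseteq G[A']$ with $|E(H)|=r^*g$ and $\Delta(H)\le r^*-1$ via an averaging argument, and then applying Proposition~\ref{basic_matching_dec} to $H$. We moreover choose each $R_i$ so that as many of its edges as possible share an $A_0$-endpoint with an edge of $M_i$, so that $Q'_i:=M_i\cup R_i$ is a path system with no internal vertex in $A\cup B$, with balance $e_{Q'_i}(A')-e_{Q'_i}(B')=g=a-b$, and with at most $\max(|M_i|,g)\le\eps n$ nontrivial paths.

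Processing $i=1,\dots,r^*$ in turn, we apply Lemma~\ref{balpathextend} to $G-\bigcup_{j<i}E(Q_j)$ with $Q':=Q'_i$ and parameter $\alpha:=\eps$. The hypothesis $d(v,B)\ge 4\eps n$ for $v\in A_0$ (and its analogue for $v\in B_0$) follows since (WF6) gives $d_G(v,B')\ge D/2$, hence $d_G(v,B)\ge D/2-b\ge n/400-\eps n$, and the previously constructed $Q_j$ account for at most $2r^*\le 2\eps n$ edges at $v$. The lemma then produces a $2$-balanced $A_0B_0$-path system $Q_i$ extending $Q'_i$ with at most $2\eps n$ nontrivial paths, whose new edges are $A_0B$- or $AB_0$-edges only; since $R_i\subseteq G[A']$ also avoids $G[A,B]$, property~(iii) follows. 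Properties~(i) and~(ii) are then immediate from the construction. The principal obstacle is the construction of the matchings $R_i$ with the prescribed overlap: although the count $e_G(A')\ge r^*g$ guarantees enough $A'$-edges, distributing them into $r^*$ matchings of size $g$ whose $A_0$-endpoints align with $V(M_i)$ requires care, and parity issues (when $a-b$ is odd) or a possible scarcity of $A_0A$-edges may force $R_i$ to mix $A_0A_0$-, $A_0A$- and $AA$-edges.
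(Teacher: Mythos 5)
Your proposal follows essentially the same route as the paper: decompose $G[A_0,B_0]$ into matchings via Proposition~\ref{basic_matching_dec}, augment each with a matching of exactly $a-b$ edges inside $G[A']$ to reach the balance condition $e_{Q'}(A')-e_{Q'}(B')=a-b$, and then iterate Lemma~\ref{balpathextend}. The framework degree computation for condition~(iii) of Lemma~\ref{balpathextend} and the verification that the new edges avoid $G[A,B]$ are also as in the paper.

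However, the construction of the matchings $R_i$ of size $g=a-b$ in $G[A']$ is precisely where the work lies, and your sketch of it has a gap. You claim to ``carve out a subgraph $H\subseteq G[A']$ with $|E(H)|=r^*g$ and $\Delta(H)\le r^*-1$ via an averaging argument'', but the inequality $e_G(A')\ge r^*g$ alone does not yield such an $H$: if the edges of $G[A']$ were concentrated at a few vertices, no subgraph with $\Delta\le r^*-1$ could carry that many edges. What is needed, and what you never invoke, is the bound $\Delta(G[A'])\le D/2$, which follows from (WF2), (WF5) and (WF6) of the weak framework (indeed (WF2) guarantees $d_G(v)=D$ for $v\in A_0$, and (WF6) caps the internal degree at $d_G(v)/2$, while (WF5) caps it for $v\in A$). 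The paper applies Proposition~\ref{basic_matching_dec} to all of $G[A']$ with this max-degree bound to obtain $\lfloor D/2\rfloor+1$ matchings of near-equal size, and then a short counting argument using $e_G(A')\ge(a-b)D/2$ (from (WF2)) shows that at least $\eps n$ of them have size $\ge a-b$; $r^*$ of these are then trimmed to size exactly $a-b$. Your detour via $H$ with $\Delta(H)\le r^*-1$ is not needed and, as stated, not justified.

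Your closing worries about ``alignment'' of $A_0$-endpoints, parity when $a-b$ is odd, and mixing of $A_0A_0$-, $A_0A$- and $AA$-edges are not real obstacles. Since $M_i$ is a matching in $G[A_0,B_0]$ and $R_i$ a matching in $G[A']$, the union $M_i\cup R_i$ is automatically acyclic: any cycle would have to alternate between the two matchings, but an $M_i$-edge ends in $B_0\not\subseteq A'$ and hence cannot continue along an $R_i$-edge. The only vertices that can have degree two in $M_i\cup R_i$ lie in $A_0$, so no internal vertex lies in $A\cup B$. The number of nontrivial paths is at most $|M_i|+|R_i|\le b+(a-b)=a\le\eps n$ irrespective of any alignment. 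There is no parity constraint on matchings, and $R_i$ may freely contain edges of all three types.
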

\proof
(WF4) implies that $|A_0|+|B_0|\leq \eps n$. Thus, by Proposition~\ref{basic_matching_dec}, there exists a collection $M'_1,
\dots , M'_{r^*}$ of $r^*$ edge-disjoint matchings in $G[A_0,B_0]$ that together cover all the
edges in $G[A_0,B_0]$, where $r^* \leq \eps n$. 

We may assume that $a \geq b$ (the case when $b>a$ follows analogously).
We will use  edges in $G[A']$ to extend each $M'_i$ into a $2$-balanced $A_0B_0$-path system. 
(WF2) implies that $e_G (A') \geq (a-b)D/2$. Since $d_G (v)=D$ for all $v \in A_0 \cup B_0$ by (WF2),
(WF5) and (WF6) imply that $\Delta (G[A']) \leq D/2$.
Thus Proposition~\ref{basic_matching_dec} implies that $E(G[A'])$ can be decomposed
into $\lfloor D/2 \rfloor+1$ edge-disjoint matchings $M_{A,1}, \dots , M_{A,\lfloor D/2 \rfloor+1}$ such that $||M_{A,i}|-|M_{A,j}|| \leq 1$ for all $i,j \leq \lfloor D/2 \rfloor+1$.

Notice that at least $\eps n$ of the matchings $M_{A,i}$ are such that $|M_{A,i}|\geq a-b$.
Indeed, otherwise we have that 
\begin{align*}
(a-b)D/2 \leq e_G (A') & \leq \eps n (a-b) +(a-b-1)(D/2+1-\eps n) 
\\ & = (a-b)D/2 + a-b-D/2-1+\eps n \\ &< (a-b)D/2 +2\eps n -D/2 < (a-b)D/2 ,
\end{align*}
a contradiction. (The last inequality follows since $D \geq n/200$.)
In particular, this implies that $G[A']$ contains $r^*$ edge-disjoint matchings $M''_1, \dots ,M''_{r^*}$ 
that each consist of precisely $a-b$ edges. 

For each $i \leq r^*$, set $M_i:=M'_i \cup M'' _i$. So for each $ i \leq r^*$, $M_i$ is a path
system consisting of at most $b+(a-b)=a \leq \eps n$ nontrivial paths such that $A \cup B$ contains
no internal vertex of a path in $M_i$ and $e_{M_i} (A')-e_{M_i} (B') =e_{M''_i} (A') =a-b$.

Suppose for some $0 \leq r < r^*$ we have already found a collection $Q_1, \dots , Q_r$ of $r$
edge-disjoint $2$-balanced $A_0B_0$-path systems which satisfy the following properties for
each $i \leq r$:
\begin{itemize}
\item[($\alpha$)$_i$] $Q_i$ contains at most $2\eps n$ nontrivial paths;
\item[($\beta$)$_i$]  $M_i \subseteq Q_i$;
\item[($\gamma$)$_i$] $Q_i$ and $M_j$ are edge-disjoint for each $j\leq r^*$ such that $i \not =j$;
\item[($\delta$)$_i$] $Q_i$ contains no edge from $G[A,B]$.
\end{itemize}
(Note that ($\alpha$)$_0$--($\delta$)$_0$ are vacuously true.)
Let $G'$ denote the spanning subgraph of $G$ obtained from $G$ by  deleting  the edges
lying in $Q_1 \cup \dots \cup Q_r$. (WF2), (WF4) and (WF6) imply that, if $v \in A_0$, $d_{G'} (v,B) \geq D/2-\eps n-2r \geq
4 \eps n$ and if $v \in B_0$ then $d_{G'} (v,A) \geq 4 \eps n$.
Thus Lemma~\ref{balpathextend} implies that $G'$ contains a $2$-balanced $A_0B_0$-path system 
$Q_{r+1}$ that satisfies ($\alpha$)$_{r+1}$--($\delta$)$_{r+1}$.

So we can proceed in this way in order to obtain edge-disjoint $2$-balanced $A_0B_0$-path systems
$Q_1, \dots , Q_{r^*}$ in $G$ such that ($\alpha$)$_{i}$--($\delta$)$_{i}$ hold for each $i \leq r^*$.
Note that (i)--(iii) follow immediately from these conditions, as desired.
\endproof


The next lemma (Corollary~5.4 in~\cite{3con}) allows us to extend a $2$-balanced path system into a Hamilton cycle.
Corollary~5.4 concerns so-called `$(A,B)$-balanced'-path systems rather than $2$-balanced $A_0B_0$-path systems.
But the latter satisfies the requirements of the former by Proposition~\ref{balpathcheck}.%
\COMMENT{Deryk changed sentence and deleted `balanced' in from of $H$ in the lemma below, Daniela deleted $n_0$ in the lemma below}
\begin{lemma}\label{3conlem}
Let $0<1/n\ll   \varepsilon' \ll \alpha \ll 1.$ 
Let $F$ be a graph and suppose that $A_0,A,B_0,B$ is a partition of $V(F)$ such
that $|A|=|B|=n$. Let $H$ be a  bipartite subgraph of $F$
with vertex classes $A$ and $B$ such that $\delta (H) \geq (1/2+\alpha)n$.
Suppose that $Q$ is a $2$-balanced $A_0B_0$-path system with respect to $(A,A_0,B,B_0)$ in $F$ which consists of at most $\eps' n$
nontrivial paths. Then $F$ contains a Hamilton cycle $C$ which satisfies the following properties:
\begin{itemize}
\item $Q \subseteq C$;
\item $E(C)\setminus E(Q) $ consists of edges from $H$.
\end{itemize}

\end{lemma}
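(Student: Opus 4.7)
The plan is to reduce the statement to a Hamilton extension problem in a balanced bipartite graph. First, contract each nontrivial path $P_i$ of $Q$ (where $i \le k \le \eps'n$) to a single ``dummy'' edge $e_i$ joining its two endpoints, which lie in $A\cup B$; since every internal vertex of $P_i$ lies in $A_0\cup B_0$, this yields an auxiliary multigraph $R$ on $A\cup B$ with $|A|=|B|=n$. Any Hamilton cycle of $H\cup R$ that uses every $e_i$ uncontracts to a Hamilton cycle $C$ of $F$ with $Q\subseteq C$ and $E(C)\setminus E(Q)\subseteq E(H)$, as required.

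Next, classify the dummies: let $a_{AA},a_{BB},a_{AB}$ denote the numbers of $e_i$ lying inside $A$, inside $B$, and across $A,B$ respectively. The 2-balance hypothesis, via Proposition~\ref{balpathcheck}, says that the numbers of nontrivial-path endpoints in $A$ and in $B$ agree, i.e.\ $2a_{AA}+a_{AB}=2a_{BB}+a_{AB}$, and so $a_{AA}=a_{BB}$. Pair each AA-dummy $a_1a_2$ with a BB-dummy $b_1b_2$, and bridge them by an $H$-edge $a_2b_1$ (after a suitable labeling of endpoints). Since $\delta(H)\ge (1/2+\alpha)n$ and $a_{AA}+a_{BB}\le k\le\eps'n$, these bridging edges can be selected greedily so that they are vertex-disjoint from each other and from the remaining dummy endpoints. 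After this step, the dummies together with the bridging edges form a linear forest $R^+$ on $A\cup B$ of at most $\eps'n$ components, each of which is either a single AB-dummy or a path of shape $a_1a_2b_1b_2$; in particular each component has one endpoint in $A$ and one in $B$.

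It therefore suffices to extend $R^+$ to a Hamilton cycle of $H\cup R^+$ using only edges of $H$. Contracting each component of $R^+$ to a single edge produces the task of finding a Hamilton cycle in a balanced bipartite graph on $2(n-o(n))$ vertices whose minimum degree is still at least $(1/2+\alpha/2)n$ after deleting the $O(\eps'n)$ $H$-edges already used as bridges. A standard rotation-extension argument, or equivalently a bipartite Chv\'atal--Moon--Moser-type Hamiltonicity criterion combined with an absorbing-path technique to enforce inclusion of the prescribed ``super-edges'', yields such a Hamilton cycle. Uncontracting recovers the required Hamilton cycle of $F$.

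The genuinely hard step is the last one: extending a prescribed small linear forest across a balanced bipartite graph of minimum degree just above $n/2$ into a Hamilton cycle while using only host-graph edges. The preceding contraction and AA/BB pairing steps are essentially bookkeeping: they use the 2-balanced hypothesis (through Proposition~\ref{balpathcheck}) and the smallness of $k$ to convert the task into a purely bipartite Hamilton extension problem, where classical rotation--extension technology, applied in the large-degree regime guaranteed by $\delta(H)\ge (1/2+\alpha)n$, can be pushed through.
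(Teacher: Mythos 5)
The paper does not give a direct proof of Lemma~\ref{3conlem}: it is cited as Corollary~5.4 of~\cite{3con}, and the only work done in the paper is to observe (via Proposition~\ref{balpathcheck}) that a $2$-balanced $A_0B_0$-path system satisfies the hypotheses of that external result. You are therefore attempting something the paper does not do, namely a self-contained proof — which is fine in principle, but as written your argument has two genuine gaps.

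The first gap is in the bridging step. You claim that, given an AA-dummy with endpoints $\{a_1,a_2\}\subseteq A$ and a BB-dummy with endpoints $\{b_1,b_2\}\subseteq B$, one can choose a labelling so that $a_2b_1\in E(H)$, justified by $\delta(H)\ge(1/2+\alpha)n$ and the smallness of $k$. But the degree condition does not guarantee that \emph{any} of the four candidate edges $a_ib_j$ exists: the set $B\setminus\bigl(N_H(a_1)\cup N_H(a_2)\bigr)$ can have size close to $(1/2-\alpha)n$, and both $b_1,b_2$ may lie inside it. Choosing the AA/BB pairing cleverly does not help either, since a single AA-dummy could fail to be $H$-adjacent to every BB-dummy endpoint. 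This is fixable — for instance by bridging with a short $H$-path of the form $a_2b'a'b_1$ (length three), which the degree condition does guarantee greedily and which still removes equally many interior vertices from $A$ and from $B$ — but the argument you give does not close it, and your framing of the pairing step as "essentially bookkeeping" understates the issue.

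The second, and more serious, gap is the last step, which you explicitly acknowledge as "genuinely hard" but then dispatch with "a standard rotation-extension argument, or equivalently a bipartite Chv\'atal--Moon--Moser-type Hamiltonicity criterion combined with an absorbing-path technique." What you actually need is: a balanced bipartite graph $H'$ on $A'\cup B'$ with $\delta(H')\ge(1/2+\alpha/2)|A'|$ together with a prescribed vertex-disjoint set $M$ of at most $\eps'n$ $A'B'$-``super-edges'' (not themselves edges of $H'$) has a Hamilton cycle in $H'\cup M$ using all of $M$. This is not a direct corollary of Moon--Moser, since the required edges constrain which rotations are legal, and it is precisely the content of the result you are trying to prove — in essence it is what Corollary~5.4 of~\cite{3con} supplies. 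Asserting that a standard argument finishes the job, without supplying it, leaves the crux of the lemma unproved.
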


Now we can apply Lemma~\ref{3conlem} to  extend a $2$-balanced $A_0B_0$-path system in
a pre-framework into a Hamilton cycle.%
  \COMMENT{AT: I have dropped $\delta (G)\geq D$ condition in this lemma.}

\begin{lemma}\label{extendpaths}
Let $0<1/n\ll \varepsilon \ll  \varepsilon',1/K \ll \alpha \ll 1$.
Let $F$ be a graph on $n$ vertices and let $G$ be a spanning subgraph of $F$.%
	\COMMENT{AL:add condition for $F$ and $G$}
Suppose that $(F,G,A,A_0,B,B_0)$ is an  $(\eps,\eps',K,D)$-pre-framework, i.e.~it
satisfies (WF1)--(WF5).%
   \COMMENT{this exception is mainly needed for applications in later sections}
Suppose also that  $\delta(F) \ge (1/4+\alpha )n$. 
Let $Q$ be a $2$-balanced $A_0B_0$-path system with respect to $(A,A_0,B,B_0)$ in $G$ which consists of at most $\eps' n$
nontrivial paths. Then $F$ contains a Hamilton cycle $C$ which satisfies the following properties:
\begin{itemize}
\item[{\rm (i)}] $Q\subseteq C$;
\item[{\rm (ii)}] $E(C)\setminus E(Q)$ consists of $AB$-edges;
\item[{\rm (iii)}] $C\cap G$ is $2$-balanced with respect to $(A,A_0,B,B_0)$.
\end{itemize}
\end{lemma}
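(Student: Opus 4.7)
The plan is to apply Lemma~\ref{3conlem} with the bipartite graph $H := F[A,B]$ as the dense bipartite subgraph it requires. The first step is to verify the minimum degree condition of Lemma~\ref{3conlem}. Writing $m := |A| = |B|$, (WF4) gives $m \ge (n-\eps n)/2$. For any $v \in A$, (WF5) bounds $d_F(v,A') \le \eps' n$, and (WF4) bounds $d_F(v,B_0) \le |B_0| \le \eps n$. Hence
\[
d_H(v) \;=\; d_F(v) - d_F(v,A') - d_F(v,B_0) \;\ge\; \left(\tfrac14+\alpha\right)n - \eps' n - \eps n \;\ge\; \left(\tfrac12+\tfrac{\alpha}{2}\right)m,
\]
where the last inequality uses the hierarchy $\eps \ll \eps' \ll \alpha$. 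The symmetric calculation holds for $v \in B$, so $\delta(H) \ge (1/2+\alpha/2)m$.

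Next I would apply Lemma~\ref{3conlem} to $F$ with this choice of $H$ (with $\alpha/2$ playing the role of $\alpha$) and the given $2$-balanced $A_0B_0$-path system $Q$, which consists of at most $\eps' n \le 3\eps' m$ nontrivial paths. This yields a Hamilton cycle $C$ of $F$ with $Q \subseteq C$ and $E(C)\setminus E(Q) \subseteq E(H)$, i.e.~consisting of $AB$-edges only. Properties (i) and (ii) follow immediately.

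For (iii), note that $C$ is $2$-regular, so Proposition~\ref{edge_number} applied to $C$ with $D=D'=2$ yields $e_C(A') - e_C(B') = |A'|-|B'| = a-b$, using $|A|=|B|$. Since every edge of $E(C)\setminus E(Q)$ is an $AB$-edge (and so lies in neither $E(A')$ nor $E(B')$), and every edge of $Q$ lies in $G$, we obtain
\[
e_{C\cap G}(A') - e_{C\cap G}(B') \;=\; e_Q(A') - e_Q(B') \;=\; a - b,
\]
the last equality being (B1) for $Q$. This is exactly (B1) for $C\cap G$ with $D=2$. For (B2), any edge of $C$ incident to a vertex of $A_0 \cup B_0$ must lie in $E(Q)$ (since the remaining edges of $C$ are $AB$-edges, which avoid $A_0 \cup B_0$), and thus in $G$; therefore every $v \in A_0 \cup B_0$ has $d_{C\cap G}(v) = d_C(v) = 2$. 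Hence $C\cap G$ is $2$-balanced with respect to $(A,A_0,B,B_0)$, proving (iii). The proof presents no real obstacle: all the work is already done by Lemma~\ref{3conlem}, and the only quantitative check is the minimum degree bound on $F[A,B]$; balancedness of $C \cap G$ is forced once the added edges are known to be $AB$-edges.
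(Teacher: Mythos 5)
Your proof is correct and follows essentially the same route as the paper's: estimate $\delta(F[A,B])$ using (WF4), (WF5) and $\delta(F)\ge(1/4+\alpha)n$, apply Lemma~\ref{3conlem} with $H=F[A,B]$, and then observe that since $E(C)\setminus E(Q)$ consists of $AB$-edges, the balance identity (B1) and the degree condition (B2) for $C\cap G$ follow directly from those of $Q$. The aside invoking Proposition~\ref{edge_number} for $C$ itself is a harmless redundancy, and your explicit handling of the constant rescaling ($\eps' n \le 3\eps' m$) is a detail the paper leaves implicit.
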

\proof
Note that (WF4), (WF5) and our assumption that $\delta (F) \ge (1/4+\alpha )n$ together imply that
every vertex $x \in A$ satisfies
$$d_{F} (x,B) \geq d_F(x,B')-|B_0|\geq d_F (x) -\eps ' n -|B_0| \geq (1/4+\alpha /2)n
\geq (1/2+ \alpha /2)|B|.$$
Similarly, $d_F (x,A)\geq (1/2+\alpha /2)|A|$ for all $x \in B$. Thus,
$\delta (F[A,B]) \geq (1/2+\alpha /2)|A|$. Applying Lemma~\ref{3conlem} with
$F[A,B]$ playing the role of $H$, we obtain a Hamilton cycle $C$ in $F$ that
satisfies (i) and (ii). To verify (iii), note that (ii) and the 2-balancedness of $Q$ together imply that
$$e_{C \cap G}(A')-e_{C \cap G}(B')=e_Q(A')-e_Q(B')=a-b.$$
Since every vertex $v\in A_0\cup B_0$ satisfies $d_{C\cap G}(v)=d_Q(v)=2$, (iii) holds.
\endproof

We now combine Lemmas~\ref{coverA0B01} and~\ref{extendpaths} to find a collection of edge-disjoint Hamilton cycles covering all the edges
in $G[A_0, B_0]$.%
\COMMENT{AT: Again I have dropped that $G$ is $D$-regular. Need this relaxation for Chapter 8} 
\begin{lemma}\label{coverA0B0}
Let $0<1/n\ll \varepsilon \ll  \varepsilon',1/K\ll \alpha\ll 1$ and let $D\geq n/100$.
Let $F$ be a graph on $n$ vertices and let $G$ be a spanning subgraph of $F$.%
	\COMMENT{AL:add condition for $F$ and $G$}
Suppose that $(F,G,A,A_0,B,B_0)$ is an $(\eps,\eps',K,D)$-weak framework with $\delta(F)\ge (1/4+\alpha )n$. 
Then for some $r^* \le \eps n$ the graph $F$ contains edge-disjoint Hamilton cycles $C_1,\dots,C_{r^*}$ which
satisfy the following properties:
\begin{itemize}
\item[{\rm (i)}] Together $C_1,\dots,C_{r^*}$  cover all edges in $G[A_0, B_0]$;
\item[{\rm (ii)}] $(C_1\cup \dots \cup C_{r^*})\cap G$ is $2r^*$-balanced with respect to $(A,A_0,B,B_0)$.
\end{itemize}
\end{lemma}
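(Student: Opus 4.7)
The plan is to combine Lemmas~\ref{coverA0B01} and~\ref{extendpaths}: first decompose the edges in $G[A_0,B_0]$ into edge-disjoint $2$-balanced $A_0B_0$-path systems, and then grow each of them into a Hamilton cycle of $F$ using only $AB$-edges. Applying Lemma~\ref{coverA0B01} yields some $r^*\le \eps n$ and edge-disjoint $2$-balanced $A_0B_0$-path systems $Q_1,\dots,Q_{r^*}$ which together cover $E(G[A_0,B_0])$, each of which has at most $2\eps n$ nontrivial paths and, crucially by property (iii) of that lemma, contains no edges from $G[A,B]$. This last property is what will let the extensions of the distinct $Q_i$'s remain edge-disjoint.

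Next, I would construct the Hamilton cycles $C_1,\dots,C_{r^*}$ inductively. Suppose $C_1,\dots,C_{i-1}$ have already been found; set $F_i:=F-(C_1\cup\dots\cup C_{i-1})$ and $G_i:=G-(C_1\cup\dots\cup C_{i-1})$. Since each $C_j$ is $2$-regular and (by property (iii) of Lemma~\ref{extendpaths}) $C_j\cap G$ is $2$-balanced with respect to $(A,A_0,B,B_0)$, the union $C_1\cup\dots\cup C_{i-1}$ is $2(i-1)$-regular and its intersection with $G$ is $2(i-1)$-balanced. Proposition~\ref{WFpreserve} then gives that $(F_i,G_i,A,A_0,B,B_0)$ is an $(\eps,\eps',K,D-2(i-1))$-pre-framework. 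Since $r^*\le \eps n\ll\alpha n$, we also retain $\delta(F_i)\ge(1/4+\alpha/2)n$. Because $Q_i$ uses no edge of $G[A,B]$ and is edge-disjoint from the other $Q_j$'s (and since $C_j\setminus Q_j$ consists only of $AB$-edges of $F$), we have $Q_i\subseteq G_i$; moreover $Q_i$ consists of at most $2\eps n\le \eps' n$ nontrivial paths. Thus Lemma~\ref{extendpaths} applied to $Q_i$ in this pre-framework produces a Hamilton cycle $C_i$ of $F_i$ containing $Q_i$, with $E(C_i)\setminus E(Q_i)$ consisting of $AB$-edges and $C_i\cap G$ being $2$-balanced.

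Conclusion (i) is then immediate from the covering property of the $Q_i$'s. For (ii), the $C_i$'s are pairwise edge-disjoint, each $C_i\cap G$ is $2$-balanced, and a union of edge-disjoint $D_j$-balanced subgraphs is clearly $\bigl(\sum_j D_j\bigr)$-balanced (both defining conditions (B1) and (B2) add over edge-disjoint unions), so $(C_1\cup\dots\cup C_{r^*})\cap G$ is $2r^*$-balanced. The main obstacle, and essentially the only thing to check with care, is the bookkeeping that keeps the hypotheses of Lemma~\ref{extendpaths} valid throughout the induction: that is, that deletion of the earlier Hamilton cycles preserves the pre-framework structure and the minimum-degree condition. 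The key ingredients making this work are property (iii) of Lemma~\ref{coverA0B01} (ensuring $Q_i\subseteq G_i$), Proposition~\ref{WFpreserve} (preserving the pre-framework under removal of balanced subgraphs), and the bound $r^*\le \eps n$ with $\eps\ll\eps',\alpha$ (so that the minimum degree of $F$ and the number of nontrivial paths in each $Q_i$ stay within the tolerances required by Lemma~\ref{extendpaths}).
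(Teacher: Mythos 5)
Your proposal is correct and follows essentially the same route as the paper: apply Lemma~\ref{coverA0B01}, then inductively extend each $Q_i$ via Lemma~\ref{extendpaths}, using Proposition~\ref{WFpreserve} to preserve the pre-framework after removing each Hamilton cycle, and checking that the minimum degree and the containment $Q_i\subseteq G_i$ (via Lemma~\ref{coverA0B01}(iii) and Lemma~\ref{extendpaths}(ii)) are maintained. The final step—summing $2$-balancedness over the edge-disjoint cycles—is also exactly what the paper does.
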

\proof
Apply Lemma~\ref{coverA0B01} to obtain a collection of $r^* \leq \eps n$ edge-disjoint $2$-balanced
$A_0B_0$-path systems $Q_1, \dots , Q_{r^*}$ in $G$ which satisfy Lemma~\ref{coverA0B01}(i)--(iii).
We will extend each $Q_i$ to a Hamilton cycle $C_i$.

Suppose that for some $0 \le r < r^*$ we have found a collection $C_1,\dots,C_r$ of $r$ edge-disjoint Hamilton
cycles in $F$ such that the following holds for each $0 \leq i \leq r$:
\begin{itemize}
\item[($\alpha$)$_i$] $Q_i \subseteq C_i$;
\item[($\beta$)$_i$] $E(C_i) \setminus E(Q_i)$ consists of $AB$-edges;
\item[($\gamma$)$_i$] $G \cap C_i $ is $2$-balanced with respect to $(A,A_0,B,B_0)$.
\end{itemize}
(Note that ($\alpha$)$_0$--($\gamma$)$_0$ are vacuously true.)
Let $H_r:=C_1\cup \dots \cup C_r$ (where $H_0:=(V(G),\emptyset)$). So $H_r$ is $2r$-regular. 
Further, since  $G \cap C_i$ is $2$-balanced for each $i \leq r$, $G \cap H_r$ is $2r$-balanced. Let
$G_r:=G-H_r$ and $F_r:=F-H_r$. Since $(F,G,A,A_0,B,B_0)$ is an $(\eps,\eps',K,D)$-pre-framework,
Proposition~\ref{WFpreserve} implies that $(F_r,G_r,A,A_0,B,B_0)$ is an $(\eps,\eps',K,D-2r)$-pre-framework. 
Moreover, $\delta (F_r) \geq \delta (F)-2r \geq (1/4+\alpha/2)n$.
Lemma~\ref{coverA0B01}(iii) and ($\beta$)$_1$--($\beta$)$_r$ together%
    \COMMENT{Daniela added more detail}
imply that $Q_{r+1}$ lies in $G_r$.
Therefore, Lemma~\ref{extendpaths} implies that $F_r$ contains a Hamilton cycle
$C_{r+1}$ which satisfies ($\alpha$)$_{r+1}$--($\gamma$)$_{r+1}$.

So we can proceed in this way in order to obtain $r^*$ edge-disjoint Hamilton cycles 
$C_1, \dots , C_{r^*}$ in $F$ such that for each $i\leq r^*$, ($\alpha$)$_{i}$--($\gamma$)$_{i}$
hold. Note that this implies that (ii) is satisfied. Further, the choice of $Q_1, \dots, Q_{r^*}$
ensures that (i) holds.
\endproof

Given a graph $G$, we say that $(G,A,A_0,B,B_0)$ is an \emph{$(\eps,\eps',K,D)$-framework} if the following holds,
where $A':=A_0\cup A$, $B':=B_0\cup B$ and $n:=|G|$:
\begin{itemize}
\item[{\rm (FR1)}]  $A,A_0,B,B_0$ forms a partition of $V(G)$;
\item[(FR2)] $G$ is $D$-balanced with respect to $(A,A_0,B,B_0)$;
\item[{\rm (FR3)}] $e_G(A'), e_G(B')\le \varepsilon n^2$;
\item[{\rm (FR4)}] $|A|=|B|$ is divisible by $K$. Moreover, $b\le a$ and $a+b\le \eps n$, where $a:=|A_0|$ and $b:=|B_0|$;
\item[{\rm (FR5)}] all vertices in $A \cup B$ have internal degree at most $\varepsilon' n$ in $G$;
\item[{\rm (FR6)}] $e(G[A_0, B_0])=0$;
\item[{\rm (FR7)}] all vertices $v\in V(G)$ have internal degree at most $d_G(v)/2+\eps n$ in $G$.
\end{itemize}
Note that the main differences to a weak framework are (FR6) and the fact that a weak framework involves an additional graph $F$.
In particular (FR1)--(FR4) imply (WF1)--(WF4).%
\COMMENT{Deryk: additional 2 sentences}
Suppose that $\eps_1\ge \eps$, $\eps'_1\ge \eps'$ and that $K_1$ divides $K$.
Then note that every $(\eps,\eps',K,D)$-framework is also an $(\eps_1,\eps'_1,K_1,D)$-framework.%
\COMMENT{AL:removed `$(F,G,A,A_0,B,B_0)$ is an $(\eps,\eps',K,D)$-weak framework, thus $F$ satisfies (WF5)
with respect to the partition $A,A_0,B,B_0$.' in the next lemma}

\begin{lemma}\label{coverA0B02}
Let $0<1/n\ll \varepsilon \ll \varepsilon',1/K\ll \alpha \ll 1$ and let
$ D \ge n/100$.
Let $F$ be a graph on $n$ vertices and let $G$ be a spanning subgraph of $F$.%
	\COMMENT{AL:add condition for $F$ and $G$}
Suppose that $(F,G,A,A_0,B,B_0)$ is an $(\eps,\eps',K,D)$-weak framework. Suppose also that $\delta(F) \ge (1/4+\alpha)n$ and $|A_0|\geq |B_0|$.
 Then the following properties hold:
\begin{itemize}
\item[(i)] there is an $(\eps,\eps',K,D_{G'})$-framework
$(G',A,A_0,B,B_0)$ such that $G'$ is a spanning subgraph of $G$ with
$D_{G'} \ge D -2\eps n$;%
\COMMENT{AL: removed. `and such that $F$ satisfies (WF5)
(with respect to the partition $A,A_0,B,B_0$)'.}
\item[(ii)] there is a set of $(D-D_{G'})/2 \leq  \eps  n$
edge-disjoint Hamilton cycles in $F-G'$ containing all edges of $G-G'$.
In particular, if $D$ is even then $D_{G'}$ is even.%
\COMMENT{we do need the explicit statement of the number of ham cycles here.}
\end{itemize}
\end{lemma}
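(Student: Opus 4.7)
The plan is to apply Lemma~\ref{coverA0B0} directly to extract a small set of edge-disjoint Hamilton cycles covering all of $G[A_0,B_0]$, remove them from $G$ to obtain $G'$, and then verify that the resulting structure is a framework.

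More precisely, first I would apply Lemma~\ref{coverA0B0} to the $(\eps,\eps',K,D)$-weak framework $(F,G,A,A_0,B,B_0)$ (noting $D \ge n/100$ and $\delta(F) \ge (1/4+\alpha)n$) to obtain some $r^* \le \eps n$ and edge-disjoint Hamilton cycles $C_1,\dots,C_{r^*}$ in $F$ such that together they cover all edges in $G[A_0,B_0]$ and $(C_1\cup\dots\cup C_{r^*})\cap G$ is $2r^*$-balanced with respect to $(A,A_0,B,B_0)$. Then I set $G' := G - (C_1\cup\dots\cup C_{r^*})$ and $D_{G'} := D - 2r^*$. Since each $C_i$ is edge-disjoint from $G'$ by construction and the $C_i$ are pairwise edge-disjoint, assertion~(ii) follows immediately, and the parity statement holds because $D_{G'}=D-2r^*$.

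The bulk of the work is verifying (FR1)--(FR7) for $(G',A,A_0,B,B_0)$. Properties (FR1), (FR4) (using the assumption $|A_0|\ge |B_0|$ to get $b\le a$) transfer verbatim from (WF1), (WF4). For (FR2), the fact that $G$ is $D$-balanced together with the $2r^*$-balancedness of $(C_1\cup\dots\cup C_{r^*})\cap G$ shows $G'$ is $D_{G'}$-balanced. Properties (FR3) and (FR5) are monotone under edge deletion and so follow from (WF3) and (WF5). Property (FR6) is precisely what the Hamilton cycles were designed to achieve, since every $A_0B_0$-edge of $G$ lies in some $C_i$ and is therefore deleted.

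The only slightly delicate point is (FR7). For any vertex $v$, let $e_v$ denote the number of edges at $v$ in $C_1\cup\dots\cup C_{r^*}$ that are internal with respect to the partition $A',B'$. Then the internal degree of $v$ in $G'$ equals its internal degree in $G$ minus $e_v$, while $d_{G'}(v) = d_G(v) - 2r^*$. Using (WF6) one has internal degree of $v$ in $G$ at most $d_G(v)/2$, so the internal degree in $G'$ is at most $d_G(v)/2 - e_v = d_{G'}(v)/2 + r^* - e_v \le d_{G'}(v)/2 + r^* \le d_{G'}(v)/2 + \eps n$, where the last inequality uses $r^* \le \eps n$. This is exactly (FR7). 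I expect this verification of (FR7) to be the only nontrivial point, and it works precisely because Lemma~\ref{coverA0B0} produces at most $\eps n$ Hamilton cycles, giving the slack $\eps n$ appearing in (FR7).
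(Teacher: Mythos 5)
Your proposal is correct and follows essentially the same route as the paper: apply Lemma~\ref{coverA0B0}, set $G' := G - (C_1\cup\dots\cup C_{r^*})$, and verify (FR1)--(FR7) with (FR6) coming from the coverage of $G[A_0,B_0]$, (FR2) from $2r^*$-balancedness, and (FR7) from (WF6) plus the bound $r^*\le\eps n$.

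One small inaccuracy in your (FR7) argument: the identity $d_{G'}(v) = d_G(v) - 2r^*$ need not hold for $v \in A \cup B$, because the $C_i$ live in $F$ and may contain $AB$-edges of $F$ that are not in $G$; thus $G' = G - (C_1\cup\dots\cup C_{r^*})$ removes at most (not exactly) $2r^*$ edges at such a vertex. What one actually has is $d_{G'}(v) \ge d_G(v) - 2r^*$, and this inequality is all you need: combined with (WF6) it gives internal degree in $G'$ at most $d_G(v)/2 \le (d_{G'}(v)+2r^*)/2 \le d_{G'}(v)/2 + \eps n$, as in the paper's proof. (The exact identity does hold for $v \in A_0\cup B_0$ by the $2r^*$-balancedness, which is what makes (FR2) work, but that is the only place one needs it.)
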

\proof
Lemma~\ref{coverA0B0} implies that there exists some $r^* \leq \eps  n$ such that
$F$ contains a spanning subgraph $H$ satisfying the following properties:
\begin{itemize}
\item[(a)] $H$ is $2r^*$-regular;
\item[(b)] $H$ contains all the edges in $G[A_0,B_0]$;
\item[(c)] $G \cap H$ is $2r^*$-balanced with respect to $( A,A_0,B,B_0)$;
\item[(d)] $H$ has a decomposition into $r^*$ edge-disjoint Hamilton cycles.
\end{itemize}

Set $G':=G-H$. Then $(G',A,A_0,B,B_0)$ is an $(\eps,\eps',K,D_{G'})$-framework
where $D_{G'} :=D-2r^* \geq D- 2\eps n$. Indeed, since $(F,G,A,A_0,B,B_0)$ is an $(\eps ,\eps',K,D)$-weak framework,
(FR1) and (FR3)--(FR5) follow from (WF1) and (WF3)--(WF5). Further, (FR2) follows
from (WF2) and (c) while (FR6) follows from (b). (WF6) implies that all
vertices $v \in V(G)$ have internal degree at most $d_G (v)/2$ in $G$.
Thus  all vertices $v \in V(G')$ have internal degree at most $d_G (v)/2\leq (d_{G'} (v)+2r^*)/2
\leq d_{G'} (v)/2 +\eps n$ in $G'$. So (FR7) is satisfied. Hence, (i) is satisfied.

Note that by definition of $G'$, $H$ contains all edges of $G-G'$. So since 
$r^* =(D-D_{G'})/2 \leq \eps n$, (d) implies (ii).
\endproof

The following result follows immediately from Lemmas~\ref{bip_decomp} and~\ref{coverA0B02}.%
\COMMENT{AT: It is convenient to state both Lemma~\ref{coverA0B02} and Corollary~\ref{coverA0B02c}.
We apply Lemma~\ref{coverA0B02} in a situation where $G$ is not regular. But elsewhere it is handy
to apply Corollary~\ref{coverA0B02c} instead of first applying Lemma~\ref{bip_decomp} and then
Lemma~\ref{coverA0B02}.}
\begin{cor}\label{coverA0B02c}
Let $0<1/n\ll \varepsilon \ll  \eps^*\ll \varepsilon',1/K\ll \alpha \ll 1$ and let
$ D \ge n/100$. Suppose that $F$ is an $\eps$-bipartite graph on $n$ vertices with $\delta(F) \ge (1/4+\alpha)n$.%
	\COMMENT{AL: added $n$ vertices.}
Suppose that $G$ is a $D$-regular spanning subgraph of $F$. Then the following properties hold:
\begin{itemize}
\item[(i)] there is an $(\eps^*,\eps',K,D_{G'})$-framework%
\COMMENT{Deryk: it seems odd to have $\eps^*$ here and $\eps ^{1/3}$ in the other two bounds? Daniela: yes, but in this way
we don't have to say that a $(\eps^{1/3},\eps',K,D_{G'})$-framework is a $(\eps^*,\eps',K,D_{G'})$-framework later one.}
$(G',A,A_0,B,B_0)$ such that $G'$ is a spanning subgraph of $G$,
$D_{G'} \ge D -2\eps^{1/3}n$ and such that $F$ satisfies (WF5)
(with respect to the partition $A,A_0,B,B_0$);
\item[(ii)] there is a set of $(D-D_{G'})/2 \leq  \eps ^{1/3} n$
edge-disjoint Hamilton cycles in $F-G'$ containing all edges of $G-G'$.
In particular, if $D$ is even then $D_{G'}$ is even.%
\COMMENT{we do need
the explicit statement of the number of ham cycles here. Further, note that this implies that $\delta (G') \geq D_{G'}$}
\end{itemize}
\end{cor}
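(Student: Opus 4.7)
The corollary is essentially a two–line consequence of chaining Lemma~\ref{bip_decomp} into Lemma~\ref{coverA0B02}, with one small symmetry step to arrange the required inequality $|A_0|\ge|B_0|$. The plan is as follows.

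First I would invoke Lemma~\ref{bip_decomp} with the given $\eps,\eps',K$: since $F$ is $\eps$-bipartite on $n\ge n_0$ vertices and $G$ is a $D$-regular spanning subgraph with $D\ge n/100\ge n/200$, all hypotheses are satisfied. This produces a partition $A,A_0,B,B_0$ of $V(F)=V(G)$ such that $(F,G,A,A_0,B,B_0)$ is an $(\eps^{1/3},\eps',K,D)$-weak framework; in particular, $F$ satisfies (WF5) with respect to this partition. The definition of a weak framework is invariant under simultaneously swapping $(A,A_0)$ with $(B,B_0)$: the only nontrivial check is (WF2), and this is symmetric by the observation (made immediately after the definition of $D$-balanced) that a graph is $D$-balanced w.r.t.\ $(A,A_0,B,B_0)$ if and only if it is $D$-balanced w.r.t.\ $(B,B_0,A,A_0)$. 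So, after relabelling if necessary, I may assume $|A_0|\ge|B_0|$.

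Next I would apply Lemma~\ref{coverA0B02} with the role of its $\eps$ played by $\eps^{1/3}$. From the hierarchy $\eps\ll \eps^*\ll \eps',1/K\ll\alpha$ the constant $\eps$ may be taken small enough that $\eps^{1/3}\le \eps^*$, whence in particular $\eps^{1/3}\ll \eps',1/K\ll\alpha$; together with $D\ge n/100$ and $\delta(F)\ge(1/4+\alpha)n$ this makes the hypotheses of Lemma~\ref{coverA0B02} available. That lemma then gives a spanning subgraph $G'\subseteq G$ such that $(G',A,A_0,B,B_0)$ is an $(\eps^{1/3},\eps',K,D_{G'})$-framework with $D_{G'}\ge D-2\eps^{1/3}n$, and a set of $(D-D_{G'})/2\le \eps^{1/3}n$ edge-disjoint Hamilton cycles in $F-G'$ that together cover all edges of $G-G'$, with $D_{G'}$ even whenever $D$ is. Invoking the monotonicity comment in the paper (every $(\eps_1,\eps'_1,K_1,D)$-framework is also an $(\eps_2,\eps'_2,K_2,D)$-framework whenever $\eps_2\ge\eps_1$, $\eps'_2\ge\eps'_1$ and $K_2\mid K_1$), the framework is in particular an $(\eps^*,\eps',K,D_{G'})$-framework. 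Property (WF5) of $F$ with respect to $A,A_0,B,B_0$ carries over from the first step because the partition was not modified. This gives conclusions (i) and (ii).

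There is no real obstacle in this argument; both substantive lemmas have already been established. The only items worth double–checking are the compatibility of the hierarchies (which is precisely why the intermediate constant $\eps^*$ is introduced in the statement, so that the output parameter $\eps^{1/3}$ of Lemma~\ref{bip_decomp} can be absorbed) and the harmless symmetry swap used to arrange $|A_0|\ge|B_0|$ before Lemma~\ref{coverA0B02} is applied.
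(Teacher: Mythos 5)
Your proof is correct and matches the paper's approach exactly: the paper simply asserts that the corollary follows immediately from Lemmas~\ref{bip_decomp} and~\ref{coverA0B02}, and you have supplied precisely the chaining the authors intended, including the two small points they left implicit (the symmetry swap to arrange $|A_0|\ge|B_0|$ and the monotonicity $\eps^{1/3}\le\eps^*$ to upgrade the framework parameter).
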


\section{Finding path systems which cover all the edges within the classes}\label{findBES}

The purpose of this section is to prove Corollary~\ref{BEScor} which, given a framework $(G,A,A_0,B,B_0)$,
guarantees a set $\cC$ of edge-disjoint Hamilton cycles and a set $\cJ$ of suitable edge-disjoint
$2$-balanced $A_0B_0$-path systems such that the graph $G^*$ obtained from $G$ by deleting the edges in
all these Hamilton cycles and path systems is bipartite with vertex classes $A'$ and $B'$ and
$A_0\cup B_0$ is isolated in $G^*$. Each of the path systems in $\cJ$ will later be extended into
a Hamilton cycle by adding suitable edges between $A$ and~$B$.
The path systems in $\cJ$ will need to be `localized' with respect to a given partition.
We prepare the ground for this in the next subsection.%
\COMMENT{osthus added sentence and repositioned convention on E(S) etc}

Throughout this section, given sets $S,S'\subseteq V(G)$ we often write $E(S)$, $E(S,S')$, $e(S)$ and $e(S,S')$ for
$E_G(S)$, $E_G(S,S')$, $e_G(S)$ and $e_G(S,S')$ respectively.

\subsection{Choosing the partition and the localized slices}\label{partition}  

Let $K,m\in\mathbb{N}$ and $\eps>0$.
A \emph{$(K,m,\eps)$-partition} of a set $V$ of vertices is a partition of $V$ into sets $A_0,A_1,\dots,A_K$
and $B_0,B_1,\dots,B_K$ such that $|A_i|=|B_i|=m$ for all $1 \le i \le K $%
\COMMENT{AL: replaced $i \ge 1$ with $1 \le i \le K $}
 and $|A_0\cup B_0|\le \eps |V|$.
We often write $V_0$ for $A_0\cup B_0$ and think of the vertices in $V_0$ as `exceptional vertices'.
The sets $A_1,\dots,A_K$ and $B_1,\dots,B_K$ are called \emph{clusters} of the $(K,m,\eps_0)$-partition
and $A_0$, $B_0$ are called \emph{exceptional sets}. Unless stated otherwise,
when considering a $(K,m,\eps)$-partition $\mathcal P$ we denote the elements
of $\mathcal P$ by $A_0,A_1,\dots,A_K$ and $B_0,B_1,\dots,B_K$ as above.
Further, we will often write $A$ for $A_1 \cup \dots \cup A_K$ and 
$B$ for $B_1 \cup \dots \cup B_K$.

Suppose that $(G,A,A_0,B,B_0)$ is an $(\eps,\eps',K,D)$-framework with $|G|=n$ and that
$\eps _1, \eps _2 >0$.
We say that $\cP$ is a \emph{$(K,m,\eps,\eps_1,\eps_2)$-partition for $G$} if 
$\cP$ satisfies the following properties:
\begin{itemize}
\item[(P1)] $\cP$ is a $(K,m,\eps)$-partition of $V(G)$
such that the exceptional sets $A_0$ and $B_0$ in the partition $\cP$ are the same as the
sets $A_0$, $B_0$ which are part of the framework $(G,A,A_0,B,B_0)$. In particular, $m=|A|/K=|B|/K$;
\item[(P2)] $d(v, A_i)=(d(v, A)\pm \eps_1 n)/K$  for all $1 \le i\le K$ and $v\in V(G)$;
\item[(P3)] $e(A_i, A_j)=2(e(A) \pm  \eps_2 \max\{n, e(A)\} )/K^2$ for all $1\le i < j\le K$;
\item[(P4)] $e(A_i)=(e(A) \pm \eps_2\max\{n, e(A)\})/K^2$ for all $1 \le i\le K$;
\item[(P5)] $e(A_0, A_i)=(e(A_0, A)\pm \eps_2\max\{n, e(A_0, A)\})/K$ for all $1 \le i\le K$;
\item[(P6)] $e(A_i, B_j)=(e(A, B) \pm 3\eps_2 e(A, B))/K^2$ for all $1 \le i, j \le K$;%
   \COMMENT{Previously had $\max\{n, e(A, B)\}$ instead of just $e(A, B)$. But since we are always assuming that $D$ is
linear, we don't need the max here.}
\end{itemize}
and the analogous assertions hold if we replace $A$ by $B$ (as well as $A_i$ by $B_i$ etc.) in (P2)--(P5).\COMMENT{Check we never need e.g. a mixed condition
like $e(A_0^s, B_i)=...$ in (P5)}

Our first aim is to show that for every framework we can find such a partition with suitable parameters
(see Lemma~\ref{part}). To do this, we need the following lemma.

\begin{lemma}\label{lma:partition2}
Suppose that $0 <  1/n \ll \eps , \eps_1 \ll \eps_2  \ll 1/K \ll 1$, that $r \le 2K$,
that $Km \ge n/4$ and that $r,K,n,m \in \mathbb N$.%
\COMMENT{Andy: added "and that $r,K,n,m \in \mathbb N$."}
Let $G$ and $F$ be graphs on $n$ vertices with $V(G)=V(F)$.
Suppose that there is a vertex partition of $V(G)$ into $U,R_1, \dots, R_r$ with the following properties:
\begin{itemize}
                \item $|U| = K m $.
                \item $\delta(G[U]) \ge \eps n$ or $\Delta(G[U]) \le  \eps n $.
                \item For each $j \le r$ we either have $d_G(u,R_j)\le \eps n$ for all $u\in U$ or $d_G(x,U)\ge \eps n$ for all $x\in R_j$.
\end{itemize}
Then there exists a partition of $U$ into $K$ parts $U_1, \dots, U_K$
satisfying the following properties:
\begin{itemize}
                \item[\rm (i)] $|U_{i}|  = m $ for all $i \le K$.
                \item[\rm (ii)] $d_G(v,U_i)  = (d_G(v,U) \pm \eps_1 n) /K $ for all $v \in V(G)$ and all $i\le K$.
                \item[\rm (iii)] $e_G( U_i, U_{i'} )  = 2 (e_G(U) \pm \eps_2 \max\{  n, e_G(U) \} ) / K^2$ for all $1\le i \neq i' \le K$.
                \item[\rm (iv)] $e_G( U_i)  = (e_G(U) \pm \eps_2 \max\{  n, e_G(U) \} ) / K^2$ for all $ i \le K$.
                \item[\rm (v)] $e_G( U_i, R_{j} ) = ( e_G(U,R_j) \pm   \eps_2 \max \{ n , e_G(U,R_j) \} ) /K  $ for all $i \le K$ and $j \le r$.
                \item[\rm (vi)] $d_F(v,U_i)  = (d_F(v,U) \pm \eps_1 n) /K $ for all $v \in V(F)$ and all $i\le K$.
\end{itemize}
\end{lemma}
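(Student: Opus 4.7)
The plan is to choose the partition of $U$ into $U_1,\dots,U_K$ uniformly at random among all ordered partitions into parts of size $m$, and verify (ii)--(vi) by showing that each fails with exponentially small probability, so that a union bound suffices. Equivalently, I expose the partition in stages, picking $U_i$ as a uniformly random $m$-subset of $U\setminus(U_1\cup\dots\cup U_{i-1})$ for $i=1,\dots,K-1$. Property (i) holds by construction, and each of the relevant random variables considered below is hypergeometric (conditional on the previous stages), so that Proposition~\ref{chernoff} applies.

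\textbf{Vertex-to-set degrees (ii), (v), (vi).} For fixed $v\in V(G)$ and $i\le K$, the variable $d_G(v,U_i)$ is hypergeometric with mean approximately $d_G(v,U)/K$, so Proposition~\ref{chernoff} bounds the probability that it deviates by more than $\eps_1 n/K$ by $2\exp(-\Omega(\eps_1^2 n))$, regardless of the size of $d_G(v,U)$ (since the additive slack $\eps_1 n/K$ dominates when the mean is tiny). The same argument settles (vi). For (v), write $e_G(U_i,R_j)=\sum_{x\in R_j}d_G(x,U_i)$, which, conditional on the partition of $U\setminus U_i$, is again hypergeometric with mean approximately $e_G(U,R_j)/K$. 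Here the dichotomy in the third hypothesis is crucial: if $d_G(u,R_j)\le\eps n$ for every $u\in U$, then $e_G(U,R_j)\le\eps nm$ and the additive slack $\eps_2 n/K$ absorbs any deviation, while if $d_G(x,U)\ge\eps n$ for every $x\in R_j$ then $e_G(U,R_j)\ge\eps n|R_j|$ is large enough for the multiplicative Chernoff bound to give exponentially small failure probability.

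\textbf{Edge counts (iii), (iv).} This is the main obstacle, because $e_G(U_i,U_{i'})$ and $e_G(U_i)$ depend on random sets in a way that makes the indicator variables of distinct edges correlated. I would handle (iii) by a two-stage exposure: first expose $U_i$, so that applying part (ii) of the conclusion within the subgraph on $U\setminus U_i$ gives, with high probability, $d_G(v,U_{i'})=(d_G(v,U\setminus U_i)\pm\eps_1 n)/(K-1)$ simultaneously for all $v\in U_i$; then sum over $v\in U_i$ and apply Proposition~\ref{chernoff} once more to $\sum_{v\in U_i}d_G(v,U_{i'})=e_G(U_i,U_{i'})$, whose expectation is approximately $2e_G(U)/K^2$. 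The first hypothesis on $G[U]$ is used in the same spirit as the third hypothesis was used for (v): in the dense case $\delta(G[U])\ge\eps n$ one has $e_G(U)=\Omega(\eps n^2)$ and multiplicative concentration suffices, while in the sparse case $\Delta(G[U])\le\eps n$ every vertex contributes at most $\eps n$ to the sum, so the additive slack $\eps_2 n/K^2$ dominates. Property (iv) follows by the same strategy applied to $e_G(U_i)=\tfrac12\sum_{v\in U_i}d_G(v,U_i\setminus\{v\})$, exposing $U_i$ one vertex at a time. A union bound over the $O(n^2)$ bad events for all choices of $v,i,i',j$ then yields a partition satisfying (i)--(vi) simultaneously.
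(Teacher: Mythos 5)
Your setup (random equipartition of $U$ into $K$ parts of size $m$) matches the paper's, and your treatment of (i), (ii) and (vi) via Proposition~\ref{chernoff} applied to the hypergeometric variables $d_G(v,U_i)$, $d_F(v,U_i)$ is essentially identical to the paper's. The problem is with (iii)--(v), which you yourself flag as the main obstacle, and the argument you sketch there does not work.

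The first difficulty is that the random variables in question are not hypergeometric, so Proposition~\ref{chernoff} cannot be invoked. For instance $e_G(U_i,R_j)=\sum_{u\in U}d_G(u,R_j)\,\mathbf{1}[u\in U_i]$ is a \emph{weighted} hypergeometric sum, not a hypergeometric one; similarly $e_G(U_i,U_{i'})$ is a sum of $\mathbf{1}[f\in E(U_i,U_{i'})]$ over edges $f$, and these indicators are dependent in a more delicate way. Proposition~\ref{chernoff} gives no bound here as stated.

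The second and more serious difficulty is quantitative, and your proposed two-stage exposure for (iii) makes the gap visible. After exposing $U_i$ and applying (ii) to $U\setminus U_i$, you get $d_G(v,U_{i'})=(d_G(v,U\setminus U_i)\pm\eps_1 n)/(K-1)$ for each of the $m$ vertices $v\in U_i$. Summing these pointwise estimates accumulates an error of order $m\cdot\eps_1 n/K\approx\eps_1 n^2/K^2$, whereas the target slack in the sparse case ($e_G(U)\le n$) is $\eps_2\max\{n,e_G(U)\}/K^2=\eps_2 n/K^2$, which is smaller by a factor of roughly $\eps_1 n/\eps_2\to\infty$. Your remark that "the additive slack $\eps_2 n/K^2$ dominates" is therefore false: boundedness of the individual terms ($\le\eps n$ each) gives a bound on the sum that is quadratic in $n$, while the target is linear in $n$. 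What saves the day is \emph{cancellation}, not boundedness, and your argument captures none of it. Even switching to a bounded-differences inequality (Azuma/McDiarmid) in place of Proposition~\ref{chernoff} would only give a variance proxy of order $m\Delta(G[U])^2$, which in the sparse case is $\approx \eps^2 n^3/K$ and still far too large.

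The paper's actual argument for (iii)--(v) uses Chebyshev's inequality with a careful variance computation. Writing $X=e_G(U_i,U_{i'})$ as a sum of edge-indicators $\mathbf{1}[E_f]$, the paper exploits that for disjoint edges $f,f'$ one has $\pr(E_{f'}\mid E_f)\le\pr(E_{f'})$ (negative correlation), so that the only positive contributions to $\mathrm{Var}(X)$ come from the at most $2\Delta(G[U])$ edges sharing an endpoint with a given $f$. This yields $\mathrm{Var}(X)\le e_G(U)\Delta(G[U])$, which is small enough for Chebyshev with failure probability $\eps^{1/2}$ to give the required deviation $\eps_2\max\{n,e_G(U)\}/(2K^2)$ in both the dense and sparse cases — exactly the cancellation your proposal is missing. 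The same second-moment method handles (iv) and (v). This variance bound via pairwise-correlation control is the key idea you would need to supply.
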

 
\begin{proof}
Consider an equipartition $U_1, \dots , U_K$ of $U$ which is chosen uniformly at random.%
     \COMMENT{the previous binomial approach had the problem that at some stage the argument involved a sum over a random variable.
The present one also seems simpler.}
So (i) holds by definition.
Note that for a given vertex $v \in V(G)$, $d_G(v,U_i)$ has the hypergeometric distribution
with mean $d_G(v, U)/K$.
So if $d_G(v, U) \ge \eps_1 n/K$,  Proposition~\ref{chernoff} implies that
$$
                \pr \left( \left| d_G(v,U_{i}) -  \frac{d_G(v, U)}{K} \right|  \ge \frac{\eps_1 d_G(v,U) }{K}\right) 
  \le 2 \exp \left(- \frac{\eps_1^2 d_G(v,U)}{3K} \right)
\le \frac{1}{n^2}.
$$
Thus we deduce that for all $v \in V(G)$ and all $i \le K$,
\begin{align*} 
                \pr \left( \left| d_G(v,U_{i}) -  d_G(v, U)/K \right|  \ge \eps_1 n /K \right) 
\le 1/n^2.
\end{align*}
Similarly, 
\begin{equation*} 
\pr \left( \left| d_F(v,U_{i}) -  d_F(v, U)/K \right|  \ge \eps_1 n /K \right) \le 1/n^2.
\end{equation*}
So with probability at least 3/4, both (ii) and (vi) are satisfied.
 
We now consider (iii) and (iv).
Fix $i,i' \le K$. If $i \neq i'$, let $X:=e_G(U_{i}, U_{i'})$.
If $i = i'$, let $X:=2e_G(U_{i})$.
For an edge $f \in E(G[U])$, let $E_f$ denote the event that $f \in E(U_i, U_{i'})$.
So if $f=xy$ and $i \neq i'$, then%
    \COMMENT{to derive these and other expressions, it's best to view the sample space as the set of all
injective mappings from $U$ to $[|U|]$, where
we assign $x$ to $U_i$ if it is mapped to $[(i-1)m+1,im]$. Then each equipartition has the same number of mappings which produce this equipartition.
So as probability space one can consider the uniform distribution over the above mappings.}
\begin{equation} \label{prf}
\pr (E_f) =2 \pr (x \in U_i) \pr (y \in U_{i'} \mid x \in U_i) = 2 \frac{m}{|U|} \cdot  \frac{m}{|U|-1}.
\end{equation}
Similarly, if $f$ and $f'$ are disjoint (that is, $f$ and $f'$ have no common 
endpoint)\COMMENT{B.Cs: the explanation for disjointness is given here} and $i \neq i'$, then%
   \COMMENT{note $\frac{m-1}{m} \frac{|U|}{|U|-2} \le (1-1/m)(1+3/|U|) \le 1$ and similarly for the second fraction.}
\begin{equation} \label{ff'}
\pr ( E_{f'} \mid E_f )=2\frac{m-1}{|U|-2} \cdot \frac{m-1}{|U|-3} \le 
2\frac{m}{|U|} \cdot \frac{m}{|U|-1} =
\pr ( E_{f'}).
\end{equation}
By (\ref{prf}), if $i \not =i'$, we also have
\begin{equation}  \label{exX}
\ex ( X ) = 2 \frac{e_G(U)}{K^2} \cdot \frac{|U|}{|U|-1}= \left( 1 \pm \frac{2}{|U|} \right) \frac{2e_G(U)}{K^2}= \left( 1 \pm \eps_2/4 \right)\frac{2e_G(U)}{K^2}.
\end{equation}
If%
\COMMENT{Deryk divided by 4 in the final equality as we need this in eq. devex}
$f=xy$ and $i = i'$, then  
\begin{equation} \label{prf2}
\pr (E_f) = \pr (x \in U_i) \pr (y \in U_{i} \mid x \in U_i) =  \frac{m}{|U|} \cdot  \frac{m-1}{|U|-1}.
\end{equation}
So if $i = i'$, similarly to (\ref{ff'}) we also obtain $\pr ( E_{f'} \mid E_f ) \le \pr (E_f)$ for disjoint $f$ and $f'$%
\COMMENT{$\pr ( E_{f'} \mid E_f )=\frac{m-2}{|U|-2} \cdot \frac{m-3}{|U|-3} \le
\frac{m}{|U|} \frac{m-1}{|U|-1} =\pr ( E_{f'}).$}
and we obtain the same bound as in (\ref{exX}) on $\ex ( X )$ (recall that $X=2e_G(U_i)$ in this case).%
     \COMMENT{Daniela added brackets}
 
Note that if $i\neq i'$ then%
   \COMMENT{To see the first inequality note that by (\ref{ff'}) we have $\pr(E_{f'} \mid E_f ) -   \pr ( E_{f'} )\le 0$ if $f$ and
$f'$ are disjoint. But there are at most $2 \Delta (G[U])$ edges $f$ which share an endvertex with $f'$ (this includes the
case $f=f'$) and each of these edges $f$ contributes at most 1.}
\begin{eqnarray*}
\textrm{Var} ( X ) 
    & = & \sum_{f \in E(U)}  \sum_{f' \in E(U)}\left(\pr ( E_f \cap E_{f'} )  -   \pr ( E_f ) \pr ( E_{f'} )\right) \\
                & = & \sum_{f \in E(U)} \pr ( E_f ) \sum_{f' \in E(U)}  \left( \pr ( E_{f'} \mid E_f )  -    \pr ( E_{f'} ) \right) \\  
                & \stackrel{(\ref{ff'})}{\le} & \sum_{f \in E(U)} \pr ( E_f )\cdot 2 \Delta (G[U])  
\stackrel{(\ref{exX})}{\le} \frac{3e_G(U)}{K^2}  \cdot 2\Delta (G[U]) 
    \le e_G(U)  \Delta (G[U]).
\end{eqnarray*}
Similarly, if $i=i'$ then
\begin{eqnarray*}
\textrm{Var} ( X ) 
     =  4\sum_{f \in E(U)}  \sum_{f' \in E(U)}\left(\pr ( E_f \cap E_{f'} )  -   \pr ( E_f ) \pr ( E_{f'} ) \right)
                \le e_G(U)  \Delta (G[U]).
\end{eqnarray*}
Let $a:= e_G(U)  \Delta (G[U])$. In both cases, from Chebyshev's inequality,%
   \COMMENT{$\pr \left( | X- \ex(X) |  \ge b  \right) \le Var(X)/b^2$}
it follows that
\begin{align}
\pr \left( | X- \ex(X) |  \ge \sqrt{ a/\eps^{1/2} } \right) \le \eps^{1/2}.  \nonumber
\end{align}
Suppose that $\Delta (G[U]) \le \eps n$.
If we also have have $e_G(U) \le  n$, then 
$\sqrt{ a/\eps^{1/2} } \le   \eps^{1/4}  n \le \eps_2n/2K^2$.
If $e_G(U) \ge n$, then $\sqrt{ a/ \eps^{1/2} } \le \eps^{1/4 }e_G(U) \le \eps_2 e_G(U)/2K^2$.

If we do not have $\Delta (G[U]) \le \eps n$, then our assumptions imply that $\delta (G[U]) \ge \eps n$.
So $\Delta(G[U]) \le n \le \eps e_G(G[U])$ with room to spare.
This in turn means that $\sqrt{ a/ \eps^{1/2} } \le \eps^{1/4} e_G(U) \le \eps_2 e_G(U)/2K^2$.
So in all cases, we have 
\begin{align}
                \pr \left(  \left| X - \ex(X) \right| \ge  \frac{\eps  _2 \max \{ n, e_G(U)\} }{2K^2}\right) \le \eps^{1/2}. \label{eqn:preU}
\end{align}
Now note that by (\ref{exX}) we have
\begin{equation} \label{devex}
\left| \ex(X)- \frac{2e_G(U)}{K^2} \right| \le \frac{\eps  _2 e_G(U) }{2K^2}.
\end{equation}
So (\ref{eqn:preU}) and~(\ref{devex}) together imply that for fixed $i,i'$ the bound in (iii) fails with probability
at most $\eps^{1/2}$. The analogue holds for the bound in~(iv).
By summing over all possible values of $i,i' \le K$, we have that~(iii) and~(iv) hold with probability at least $3/4$.

A similar argument shows that for all $i \le K$ and $j \le r$, we have%
    \COMMENT{
Fix $i \le K$, $j \le r$ and let $X:= e_G(U_i, R_j)$.
For an edge $f$, let $E_f$ denote the event that $f \in E(U_i, R_j)$.
Note $\pr (E_f)=m/|U|=1/K$ and that if $f,f'$ have different endpoints in $U$, we have
$\pr ( E_{f'} \mid E_f )= \frac{m-1}{|U|-1} \le \frac{m}{|U|} =\pr(E_f).
$
Clearly, $\ex ( X ) = e_G(U, R_j) /K$.
Also let $\Delta'(U,R_j))=\max_{u \in U} d (u,R_j)$.
Note that
\begin{eqnarray*}
\textrm{Var} ( X ) 
                & = & \sum_{f \in E(U,R_j)} \pr ( E_f ) \sum_{f' \in E(U,R_j)}  \left( \pr ( E_{f'}) | E_f )  -    \pr ( E_{f'} ) \right) \\       
                & \le & \sum_{f \in E(U,R_j)} \pr ( E_f )\Delta' (U,R_j) \le  \frac{e_G(U,R_j)}{K}  \cdot \Delta' (U,R_j)
\le  e_G(U,R_j)  \Delta' (U,R_j).
\end{eqnarray*}
Let $a:=e_G(U,R_j)\Delta' (U,R_j)$. From Chebyshev's inequality, it follows that
\begin{align}
\pr \left( | X- \ex(X) |  \ge \sqrt{ a/\eps^{1/2} } \right) \le \eps^{1/2}.  \nonumber
\end{align}
Suppose that $\Delta' (U,R_j) \le \eps n$.
If we also have have $e_G(U,R_j) \le  n$, then 
$\sqrt{ a/\eps^{1/2} } \le  \eps^{1/4} n \le \eps_2n/K$.
If $e_G(U,R_j) \ge n$, then $\sqrt{ a/ \eps^{1/2} } \le  \eps^{1/4} e_G(U,R_j) \le \eps_2 e_G(U,R_j)/K$.
If we do not have $\Delta' (U,R_j) \le \eps n$, then our assumptions imply that $d_G(x,U) \ge \eps n$
for every $x \in R_j$.
So $\Delta'(U,R_j) \le |R_j| \le \eps e_G(U,R_j)$ with room to spare.
This in turn means that $\sqrt{ a/ \eps^{1/2} } \le \eps^{1/4} e_G(U,R_j) \le \eps_2 e_G(U,R_j)/K$.
}
\begin{equation} \label{eqn:preUR}
                \pr \left(  \left| e_G(U_i, R_j) - \frac{e_G(U,R_j)}{K} \right| \ge  \frac{\eps_2 \max \{ n, e_G(U,R_j)\} }{K} \right)
\le \eps^{1/2}.  
\end{equation}
Indeed, fix $i \le K$, $j \le r$ and let $X:= e_G(U_i, R_j)$.
For an edge $f \in G[U,R_j]$, let $E_f$ denote the event that $f \in E(U_i, R_j)$.
Then $\pr (E_f)=m/|U|=1/K$ and so $\ex ( X ) = e_G(U, R_j) /K$.
The remainder of the argument proceeds as in the previous case
(with slightly simpler calculations).
 
So (v) holds with probability at least 3/4, by summing over all possible values of $i \le K$ and $j \le r$ again.
So with positive probability, the partition satisfies all requirements.
\end{proof}

\begin{lemma}\label{part} Let
$0<1/n\ll \eps\ll \eps'\ll \eps_1\ll \eps_2\ll 1/K\ll 1$.
Suppose that $(G,A,A_0,B,B_0)$ is an $(\eps,\eps',K,D)$-framework with $|G|=n$ and 
$\delta (G) \geq D\ge n/200$. 
Suppose that $F$ is a graph with $V(F)=V(G)$. Then there exists a partition 
$\cP=\{A_0,A_1, \dots , A_K, B_0, B_1, \dots , B_K \}$ of $V(G)$ so that 
\begin{itemize}
\item[(i)] $\cP$ is a $(K,m,\eps,\eps_1,\eps_2)$-partition for $G$.
\item[(ii)] $d_F(v, A_i)=(d_F(v, A)\pm \eps_1 n)/K$ and $d_F(v, B_i)=(d_F(v, B)\pm \eps_1 n)/K$ for all $1 \le i\le K$ and $v\in V(G)$.%
\COMMENT{Deryk rephrased this}
\end{itemize}
\end{lemma}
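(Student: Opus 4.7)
The plan is to apply Lemma~\ref{lma:partition2} twice: first with $U=A$ to obtain the partition $A_1,\dots,A_K$, then with $U=B$ to obtain the partition $B_1,\dots,B_K$. Throughout, I would take the parameter $\eps$ appearing in Lemma~\ref{lma:partition2} to equal the $\eps'$ of the framework (this fits the hierarchy $\eps' \ll \eps_1 \ll \eps_2 \ll 1/K$).

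For the first application, set $R_1:=A_0$, $R_2:=B_0$, $R_3:=B$, so $r=3\le 2K$. The hypotheses of Lemma~\ref{lma:partition2} are verified directly from the framework axioms: $\Delta(G[A]) \le \eps' n$ by (FR5); every $u\in A$ satisfies $d_G(u,A_0)\le d_G(u,A')\le \eps' n$ by (FR5); every $u\in A$ satisfies $d_G(u,B_0)\le |B_0| \le \eps n \le \eps' n$ by (FR4); and every $x\in B$ has $d_G(x,A) \ge D - d_G(x,B') - |A_0| \ge n/200 - 2\eps' n \ge \eps' n$ by $\delta(G)\ge D \ge n/200$, (FR5), and (FR4). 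The conclusions (i)--(iv) of Lemma~\ref{lma:partition2} yield (P1) (for the $A$-classes), (P2), (P3) and (P4) for~$A$, and conclusion (v) with $R_1=A_0$ supplies (P5) for~$A$. Conclusion (vi) is the $A_i$-half of statement (ii) of Lemma~\ref{part}. Conclusion (v) applied with $R_3=B$ also provides the key intermediate bound
$$e_G(A_i,B) \;=\; \bigl(e_G(A,B) \pm \eps_2\max\{n,e_G(A,B)\}\bigr)/K,$$
which will feed into the verification of (P6).

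The second application is analogous: set $U:=B$, $R_1:=A_0$, $R_2:=B_0$, and $R_{2+i}:=A_i$ for $1\le i\le K$, so that $r=K+2\le 2K$. The hypotheses go through exactly as before, with the third bullet for each $R_{2+i}=A_i$ holding via its second clause since every $x\in A_i$ satisfies $d_G(x,B) \ge \eps' n$ by the same calculation as above. This yields (P1)--(P5) for $B$ and the $B_i$-half of (ii), together with
$$e_G(A_i,B_j) \;=\; \bigl(e_G(A_i,B) \pm \eps_2\max\{n,e_G(A_i,B)\}\bigr)/K$$
for all $i,j\le K$.

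Finally, to obtain (P6), observe that every $v\in A$ has $d_G(v,B)\ge D-\eps' n - |B_0|\ge n/300$ by (FR4), (FR5), and $D\ge n/200$, so $e_G(A,B) \ge n^2/600$, and in particular $\max\{n,e_G(A,B)\}=e_G(A,B)$. Hence the first intermediate bound simplifies to $e_G(A_i,B) = (1\pm \eps_2)e_G(A,B)/K$, which is at least $n$; the second intermediate bound then simplifies to $e_G(A_i,B_j)=(1\pm \eps_2)e_G(A_i,B)/K$. Multiplying these and absorbing the $\eps_2^2$ term into a $3\eps_2$ slack delivers (P6). The only place requiring genuine care is this clean bookkeeping of the error terms in (P6) and the choice of the parameter $\eps$ for the two applications of Lemma~\ref{lma:partition2}; beyond that, the argument is essentially a double application of that lemma.
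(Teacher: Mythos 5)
Your proposal is correct and follows essentially the same route as the paper: two applications of Lemma~\ref{lma:partition2}, first with $U=A$ and $R_1,R_2,R_3=A_0,B_0,B$, then with $U=B$ and $R_j$ including the sets $A_i$ produced by the first application, taking $\eps'$ as the regularity parameter, and chaining the two resulting estimates via (v) to obtain (P6). The only cosmetic differences are that the paper invokes (FR7) rather than (FR5) to get the lower bound $d_G(x,U)\ge\eps'n$ (either works given $\delta(G)\ge D$), and you make explicit the bound $e_G(A,B)\ge n^2/600$ guaranteeing $\max\{n,e_G(A,B)\}=e_G(A,B)$, which the paper uses silently.
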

\proof In order to find the required partitions $A_1,\dots,A_K$ of $A$ and $B_1,\dots,B_K$ of $B$
we will apply Lemma~\ref{lma:partition2} twice, as follows. 
In the first application we let $U:=A,$ $R_1:=A_0$,   $R_2:=B_0$ and
$R_3:= B$. 
 Note that
$\Delta(G[U]) \le  \eps' n $ by (FR5) and $d_G(u,R_j)\le |R_j|\le \eps n\le \eps' n$ for all $u\in U$ and $j=1,2$ by (FR4).
Moreover, (FR4) and (FR7) together imply that $d_G(x,U)\ge D/3\ge \eps' n$ for each $x\in R_3=B$.
Thus we can apply Lemma~\ref{lma:partition2} with $\eps'$ playing the role of $\eps$ to obtain a partition 
$U_1, \dots, U_K$ of $U$. We let $A_i:=U_i$  for all $i\le K.$
Then the $A_i$ satisfy (P2)--(P5) and
\begin{equation}\label{eq:eAiB}
e_G(A_i, B)=(e_G(A, B) \pm \eps_2 \max\{n, e_G(A, B)\})/K=(1\pm \eps_2)e_G(A,B)/K.
\end{equation}
Further, Lemma~\ref{lma:partition2}(vi) implies that
$$d_F(v, A_i)=(d_F(v, A)\pm \eps_1 n)/K$$  for all $1 \le i\le K$ and $v\in V(G)$.

For the second application of Lemma~\ref{lma:partition2} we let $U:=B,$ $R_1:=B_0$, $R_2:=A_0$ 
and $R_j:=A_{j-2}$ for all $3 \le j \le K+2$. As before, $\Delta(G[U]) \le  \eps' n $ by (FR5)
and $d_G(u,R_j)\le \eps n\le \eps' n$ for all $u\in U$ and  $j=1,2$ by (FR4).
Moreover, (FR4) and (FR7) together imply that $d_G(x,U)\ge D/3\ge \eps' n$ for all $3 \le j \le K+2$ and each $x\in R_j=A_{j-2}$.
Thus we can apply Lemma~\ref{lma:partition2} with $\eps'$ playing the role of $\eps$ to obtain a partition 
$U_1, \dots, U_K$ of $U$. Let $B_i:=U_i$  for all $i \le K.$ Then the $B_i$ satisfy (P2)--(P5)
with $A$ replaced by~$B$, $A_i$ replaced by $B_i$, and so on. Moreover, for all $1 \leq i,j \leq K$,
\begin{eqnarray*}
e_G(A_i, B_j) & = & (e_G(A_i, B) \pm \eps_2 \max\{n, e_G(A_i, B)\})/K\\
& \stackrel{(\ref{eq:eAiB})}{=} & ((1\pm \eps_2)e_G(A,B) \pm \eps_2 (1+ \eps_2)e_G(A,B))/K^2\\
& = & (e_G(A, B) \pm 3\eps_2 e_G(A, B))/K^2,
\end{eqnarray*}
i.e.~(P6) holds. Since clearly (P1) holds as well, $A_0,A_1,\dots,A_K$ and $B_0,B_1,\dots,B_K$
together form a $(K,m,\eps,\eps_1,\eps_2)$-partition for $G$.
Further, Lemma~\ref{lma:partition2}(vi) implies that
$$d_F(v, B_i)=(d_F(v, B)\pm \eps_1 n)/K$$  for all $1 \le i\le K$ and $v\in V(G)$.
\endproof

The next lemma gives a decomposition of $G[A']$ and $G[B']$ into suitable smaller edge-disjoint subgraphs $H_{ij}^A$ and $H_{ij}^B$.
We say that the graphs $H_{ij}^A$ and $H_{ij}^B$ guaranteed by Lemma~\ref{rnd_slice} are \emph{localized slices} of~$G$.
Note that the order of the indices $i$ and $j$ matters here, i.e.~$H^A_{ij}\neq H^A_{ji}$.
Also, we allow $i=j$. %
\COMMENT{Deryk added this and osthus added lower bound $1 \le i,j\le$}

\begin{lemma}\label{rnd_slice}
Let $0<1/n\ll \eps\ll \eps'\ll \eps_1\ll \eps_2\ll 1/K\ll 1$.
Suppose that $(G,A,A_0,B,B_0)$ is an $(\eps,\eps',K,D)$-framework with $|G|=n$ and $D\ge n/200$.
Let $A_0,A_1,\dots,A_K$ and $B_0,B_1,\dots,B_K$ be a $(K,m,\eps,\eps_1,\eps_2)$-partition for $G$. 
Then for all $1 \le i,j\le K$ there are graphs $H_{ij}^A$ and $H_{ij}^B$ with the following properties:
\begin{enumerate}
\item[(i)] $H_{ij}^A$ is a spanning subgraph of  $G[A_0, A_i\cup A_j]\cup G[A_i,A_j]\cup G[A_0]$; 
\item[(ii)] The sets $E(H_{ij}^A)$ over all $1 \le i,j\le K$ form a partition of the edges of $G[A']$;
\item[(iii)] $e(H_{ij}^A)=(e(A') \pm 9\eps_2\max\{n, e(A')\})/K^2$ for all $1 \le i, j \le K$;
\item[(iv)] $e_{H_{ij}^A}(A_0, A_i\cup A_j)=(e(A_0, A)\pm 2\eps_2\max\{n, e(A_0, A)\})/K^2$ for all $1 \le i, j \le K$;
\item[(v)] $e_{H_{ij}^A}(A_i, A_j)=(e(A)\pm  2\eps_2 \max\{n,e(A)\})/K^2$ for all $1 \le i, j \le K$;
\item[(vi)] 
For all $1 \le i, j \le K$ and all $v\in A_0$ we have $d_{H_{ij}^A}(v)=d_{H_{ij}^A}(v, A_i\cup A_j)+d_{H_{ij}^A}(v, A_0) =(d(v, A)\pm 4\eps_1 n)/K^2$.
\end{enumerate} 
The analogous assertions hold if we replace $A$ by $B$, $A_i$ by $B_i$, and so on.
\end{lemma}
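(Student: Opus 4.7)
The plan is to construct the $H_{ij}^A$ via a random edge partition. Independently for each edge $e \in E(G[A'])$, assign $e$ to exactly one slice as follows. If $e \in G[A_i]$ (for some $i$), place $e \in H_{ii}^A$ deterministically. If $e \in G[A_i, A_j]$ with $i < j$, toss a fair coin and place $e$ in $H_{ij}^A$ or $H_{ji}^A$ accordingly. If $e = uv$ with $u \in A_0$ and $v \in A_i$, pick $j \in [K]$ uniformly at random; put $e \in H_{ii}^A$ if $j = i$, and otherwise flip a fair coin to decide between $H_{ij}^A$ and $H_{ji}^A$. Finally, if $e \in G[A_0]$, choose an ordered pair $(i,j) \in [K]^2$ uniformly at random and put $e \in H_{ij}^A$. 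By construction (i) and (ii) hold; it remains to check (iii)--(vi) hold with positive probability.

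Each relevant quantity is a sum of independent Bernoulli indicators, one per edge, so we apply Proposition~\ref{chernoff} and take a union bound. For (v) with $i \neq j$ we have $\mathbb{E}[e_{H_{ij}^A}(A_i,A_j)] = e(A_i,A_j)/2$, which by (P3) equals $e(A)/K^2$ up to an additive error of $\eps_2 \max\{n, e(A)\}/K^2$; for $i = j$ the quantity is deterministically $e(A_i)$ and (P4) gives the same bound. For (iv) with $i \neq j$ each $A_0A_i$-edge (resp.\ $A_0A_j$-edge) is placed in $H_{ij}^A$ with probability $1/(2K)$, so by (P5) the expectation is $(e(A_0,A_i) + e(A_0,A_j))/(2K) = e(A_0,A)/K^2$ up to the allowed error; the case $i = j$ is similar (here the relevant probability is $1/K$). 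For (iii), summing the expected contributions from $G[A_0]$-edges (uniform over the $K^2$ slices), the $A_0A$-edges, and the edges inside $A$ gives $e(A')/K^2$ up to the error in (iii).

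For (vi), fix $v \in A_0$. When $i \neq j$, each edge from $v$ to $A_i \cup A_j$ lands in $H_{ij}^A$ with probability $1/(2K)$, so by (P2),
\[
\mathbb{E}[d_{H_{ij}^A}(v, A_i \cup A_j)] = \frac{d(v,A_i) + d(v,A_j)}{2K} = \frac{d(v,A)}{K^2} \pm \frac{2\eps_1 n}{K^2},
\]
while $\mathbb{E}[d_{H_{ij}^A}(v, A_0)] = d(v,A_0)/K^2 \le \eps n/K^2$; the $i = j$ case is analogous via (P2) with probability $1/K$. Concentration again follows from Proposition~\ref{chernoff} applied to a sum of independent indicators, one per edge at $v$. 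A union bound over the $O(K^2 n)$ events covered by (iii)--(vi) shows that a random partition satisfies all conditions simultaneously with positive probability. The statement for $B$ follows by symmetry.

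The main obstacle is that when $e(A)$, $e(A_0,A)$, $e(A_0)$, or $e(A')$ is small compared with $n$, multiplicative Chernoff concentration does not directly yield deviations below the target $\eps_2 n/K^2$ or $\eps_2 e(\cdot)/K^2$. This is precisely why the lemma is phrased with the additive slack $\max\{n, \cdot\}$ in (iii)--(v): when $\mu$ (the relevant expected edge count) is $\ll n/K^2$, the sum is stochastically dominated by a Binomial of mean $\ll n/K^2$, and the upper-tail Chernoff bound gives that it is at most $\eps_2 n/K^2$ with probability $1 - \exp(-\Omega(\eps_2 n/K^2))$, while when $\mu \ge n/K^2$ the standard multiplicative form of Proposition~\ref{chernoff} suffices; in both regimes the failure probability is $o(1/(nK^2))$, which is more than enough for the union bound.
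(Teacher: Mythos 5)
Your construction is correct and your argument goes through, but it is a genuinely different randomization from the paper's. You assign every edge independently at random (a fair coin for each $A_iA_j$-edge, an independent uniform choice of slice for each $A_0A_i$-edge and each $A_0$-edge), whereas the paper uses \emph{exact} equipartitions wherever possible: $E(A_i,A_j)$ is split exactly in half deterministically, $E(A_0)$ is distributed exactly evenly, and $E(A_0,A_i)$ is split by a random equipartition into $K$ equal-sized sets (so the relevant degree statistics are hypergeometric, not binomial). The payoff of the paper's choice is that (iii)--(v) then hold deterministically — (v) is immediate from (P3)/(P4), (iv) from (P5), and (iii) follows by summing — and the only concentration argument needed is for the vertex degrees in (vi). Your i.i.d.\ scheme pushes concentration onto (iii)--(v) as well, forcing you to run a Chernoff-plus-union-bound over edge-count statistics and not just degree statistics. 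This works, but it is more work and more delicate: in particular, your last paragraph slightly misstates the boundary regime. The correct case split is on whether the expected count $\mu$ exceeds the target deviation $\delta := \eps_2\max\{n,e(\cdot)\}/K^2$ (not on $\mu \gtrless n/K^2$): if $\mu\le\delta$ the deterministic bound $X\le e(A_i,A_j)=2\mu$ already gives $|X-\mu|\le\mu\le\delta$, and if $\mu>\delta$ one applies Proposition~\ref{chernoff} with $a=\delta/\mu<1$, using $\mu\le 2\delta/\eps_2$ to show the exponent is $\Omega(\eps_2^2 n/K^2)$. (In the degree case (vi), the crude bound $X\le d(v,A_i)+d(v,A_j)=2K\mu$ is \emph{not} enough when $\mu$ is small, so there you genuinely need an upper-tail bound valid for $a\ge 3/2$, not only the form of Proposition~\ref{chernoff} stated in the paper; the paper itself sidesteps part of this by observing that one may assume $d(v,A)\ge\eps_1 n/K^2$, a reduction that you should state explicitly.) None of this is a fatal gap, but the paper's deterministic splits are what make the proof short; your version trades that for a cleaner description of the random model at the cost of several extra concentration checks.
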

\proof In order to construct the graphs $H^A_{ij}$
we perform the following procedure:
\begin{itemize}
\item Initially each $H_{ij}^A$ is an empty graph with vertex set $A_0\cup A_i\cup A_j$.
\item For all $1 \le i\le K$ choose a random partition $E(A_0, A_i)$ into $K$ sets $U_j$ of equal size
and let $E(H_{ij}^A):=U_j$. 
(If $E(A_0, A_i)$ is not divisible by $K$, first distribute up to $K-1$ edges arbitrarily among the $U_j$ to achieve divisibility.)%
\COMMENT{We need to mention divisibility as the number of edges may be small.}
\item For all $i\le K$, we add all the edges in $E(A_i)$ to $H_{ii}^A$. 
\item For all $i, j\le K$ with $i\neq j$, half of the edges in $E(A_i, A_j)$ are added to $H_{ij}^A$ and
the other half is added to  $H_{ji}^A$ (the choice of the edges is arbitrary). 
\item The edges in $G[A_0]$ are distributed equally amongst the $H^A _{ij}$.
(So $e_{H^A _{ij}} (A_0) = e(A_0)/K^2 \pm 1$.)
\end{itemize}
Clearly, the above procedure ensures that properties~(i) and (ii) hold.
(P5) implies (iv) and (P3) and (P4) imply (v).%
\COMMENT{The `2' could actually be omitted as the partition is now exact (if divisibility holds) all the numbers are large, so divisibility doesn't affect the numbers.
But it's probably easier to see as it is.}

Consider any $v \in A_0$.
To prove (vi), note that we may assume that $d(v,A) \ge  \eps_1 n/K^2$.
Let $X:=d_{H_{ij}^A}(v, A_i \cup A_j)$. Note that (P2) implies that%
    \COMMENT{Daniela replaced lots of $\ex [X]$ by $\ex (X)$}
$\ex (X)=(d(v, A)\pm 2\eps_1 n)/K^2$ and note that $\ex(X) \le n$. 
So the Chernoff-Hoeffding bound for the hypergeometric distribution in Proposition~\ref{chernoff} implies that
$$
\prob (|X-\ex (X)| > \eps_1 n/K^2) \le \prob (|X- \ex (X)| > \eps_1 \ex (X)/K^2 ) \le 2 e^{-\eps_1^2 \ex (X)/3K^4} \le 1/n^2. 
$$
Since $d_{H_{ij}^A}(v, A_0)\leq |A_0|\leq \eps _1 n/K^2$,
 a union bound implies the desired result.
Finally, observe that for any $a,b_1,\dots,b_4>0$, we have
$$
\sum_{i=1}^4 \max\{a,b_i\}\le 4\max\{a,b_1,\dots,b_4\} \le 4\max\{a,b_1+\dots +b_4\}.
$$
So (iii) follows from~(iv),~(v) and the fact that $e_{H^A _{ij}} (A_0) = e(A_0)/K^2 \pm 1$.
\endproof
Note that the construction implies that if $i\neq j$, then $H^A_{ij}$ will  contain edges between $A_0$ and $A_i$
but not between $A_0$ and $A_j$. However, this additional information is not needed in the subsequent argument.

\subsection{Decomposing the localized slices}\label{sec:slice} 
Suppose that $(G,A,A_0,B,B_0)$ is an $(\eps,\eps',K,D)$-framework. 
Recall that $a=|A_0|$, $b=|B_0|$ and $a\ge b$. Since $G$ is $D$-balanced by~(FR2),
we have $e(A')-e(B')=(a-b)D/2.$ 
So there are an integer $q \ge -b$ and a constant $0\le c<1$ such that
\begin{equation} \label{aqc}
e(A')=(a+q+c)D/2 \ \ \mbox{ and } \ \ e(B')=(b+q+c)D/2.
\end{equation} 
The aim of this subsection is to prove Lemma~\ref{matchingdec}, 
which guarantees a decomposition of each localized slice $H_{ij}^A$ into path systems
(which will be extended into $A_0B_0$-path systems in Section~\ref{besconstruct})%
\COMMENT{osthus modified this. Andy: deleted full stop}
and a sparse (but not too sparse) leftover graph $G_{ij}^A$.%
\COMMENT{AL: added `localized $A_0B_0$-'}

The following two results will be used in the proof of Lemma~\ref{matchingdec}.
\begin{lemma}\label{simple}
Let $0<1/n \ll \alpha, \beta , \gamma $ so that $\gamma<1/2$.  Suppose that $G$ is a graph on $n$ vertices such that
$\Delta (G) \leq \alpha n$ and $e(G) \geq \beta n$. Then $G$ contains a spanning subgraph
$H$ such that $e(H)= \lceil (1- \gamma) e(G)\rceil$ and $\Delta (G-H) \leq 6 \gamma \alpha n/5$.
\end{lemma}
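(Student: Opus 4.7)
The plan is to apply Proposition~\ref{basic_matching_dec} to decompose $E(G)$ into $\Delta+1$ matchings $M_1,\dots,M_{\Delta+1}$ whose sizes differ by at most one, order them so that $|M_1|\ge\cdots\ge|M_{\Delta+1}|$, and take $G-H$ to be $M_1\cup\dots\cup M_{k-1}\cup M'$, where $k$ is the smallest integer with $\sum_{i\le k}|M_i|\ge s$ (with $s:=e(G)-\lceil(1-\gamma)e(G)\rceil$) and $M'\subseteq M_k$ is any subset of size $s-\sum_{i<k}|M_i|$. Then $e(H)=\lceil(1-\gamma)e(G)\rceil$ holds by construction, and because $G-H$ is a union of $k$ matchings we automatically get $\Delta(G-H)\le k$. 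So the whole problem reduces to bounding $k\le 6\gamma\alpha n/5$.

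To bound $k$, set $p:=e(G)/(\Delta+1)$, so every $|M_i|\in\{\lfloor p\rfloor,\lceil p\rceil\}$, and let $q$ be the number of matchings of size $\lceil p\rceil$. Note that $s\le\gamma e(G)=\gamma p(\Delta+1)$. If $k\le q$, then $|M_1|=\cdots=|M_{k-1}|=\lceil p\rceil\ge p$, so $(k-1)p\le\sum_{i<k}|M_i|<s\le\gamma p(\Delta+1)$, whence $k-1<\gamma(\Delta+1)$. If instead $k>q$, then $M_{q+1},\dots,M_{k-1}$ each have size $\lfloor p\rfloor$, and substituting $e(G)=q\lceil p\rceil+(\Delta+1-q)\lfloor p\rfloor$ yields
\[
e(G)-\sum_{i<k}|M_i|=(\Delta+2-k)\lfloor p\rfloor\le(\Delta+2-k)p.
\]
Combined with $e(G)-\sum_{i<k}|M_i|>e(G)-s\ge(1-\gamma)e(G)=(1-\gamma)p(\Delta+1)$, this again gives $k-1<\gamma(\Delta+1)$.

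In both cases $k\le\gamma(\Delta+1)+1\le\gamma\alpha n+\gamma+1$, and since $1/n\ll\alpha,\gamma$ we have $\gamma+1\le\gamma\alpha n/5$, giving the required $k\le 6\gamma\alpha n/5$. I do not anticipate any real obstacle; the only subtlety is the split into the two cases $k\le q$ and $k>q$, and the edge case $\lfloor p\rfloor=0$ (i.e.~$e(G)<\Delta+1$) is handled automatically since the second case would then force the impossibility $0<e(G)-s\le 0$. The hypothesis $e(G)\ge\beta n$ is used only implicitly, to ensure $e(G)>0$ so that there is something to decompose.
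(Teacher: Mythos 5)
Your proof is correct, but it takes a genuinely different route from the paper's. The paper argues probabilistically: it includes each edge of $G$ independently with probability $11\gamma/10$, applies the Chernoff bound (Proposition~\ref{chernoff}) to show that with high probability the resulting subgraph $H'$ satisfies $\Delta(H')\le 6\gamma\alpha n/5$ and $e(H')\ge\gamma e(G)$, and then takes $H$ to be any spanning subgraph with $G-H'\subseteq H$ and $e(H)=\lceil(1-\gamma)e(G)\rceil$. You instead proceed deterministically via the McDiarmid/de Werra balanced matching decomposition (Proposition~\ref{basic_matching_dec}), peeling off matchings greedily until exactly $s:=e(G)-\lceil(1-\gamma)e(G)\rceil$ edges have been removed and bounding the number $k$ of matchings touched. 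Your case split on $k\le q$ versus $k>q$ is sound and both branches give $k-1<\gamma(\Delta+1)$, which combines with $\Delta\le\alpha n$ and $1/n\ll\alpha,\gamma$ to give $k\le 6\gamma\alpha n/5$; note that the final step $\gamma+1\le\gamma\alpha n/5$ needs $n\gg 1/(\alpha\gamma)$, which the hierarchy does supply. The deterministic route dispenses with concentration inequalities entirely and reuses a tool the paper already relies on, while the probabilistic route is quicker to state but genuinely needs $e(G)\ge\beta n$ to get concentration of $e(H')$; as you correctly observe, your argument barely uses that hypothesis.
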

\proof
Let $H'$ be a spanning subgraph of $G$ such that
\begin{itemize}
\item $\Delta (H') \leq 6\gamma \alpha n/5$;
\item $e(H') \geq \gamma e(G)$.
\end{itemize}
To see that such a graph $H'$ exists, consider a random subgraph of $G$ obtained by including each edge of
$G$ with probability $11 \gamma /10$.
Then $\mathbb E (\Delta (H')) \leq 11 \gamma \alpha n/10$ and  
$\mathbb E (e (H')) = 11 \gamma e(G)/10$. Thus applying Proposition~\ref{chernoff} we have that,
with high probability, $H'$ is as desired.%
   \COMMENT{We apply Chernoff in the usual way here.
For example, consider any $v \in V(G)$. If $d_G (v) \leq 6\gamma \alpha n/5$ then certainly $d_{H'} (v) \leq 6 \gamma \alpha n/5$.
Otherwise,
\begin{align*}
\mathbb P (d_{H'} (v) \geq 6\gamma \alpha n/5) & \leq \mathbb P \left(|d_{H'} (v) -\frac{11\gamma}{10} d_G (v)|\geq \frac{1}{11} 
\left ( \frac{11\gamma}{10} d_G (v) \right ) \right ) \\ &
\leq 2 \exp{\{-\frac{1}{363} \times \frac{11\gamma}{10} d_G (v)\} } \leq 2 
\exp{\{\frac{-11\gamma}{3630} \times 6\gamma \alpha n/5\} } \ll 1/n.
\end{align*}}

Define $H$ to be a spanning subgraph of $G$ such that $H\supseteq G-H'$%
	\COMMENT{AL: replaced $\subseteq$ with $\supseteq$.}
 and $e(H)= \lceil (1- \gamma) e(G)\rceil$.
Then $\Delta (G-H)\leq \Delta (H') \leq 6 \gamma \alpha n/5$, as required.
\endproof

\begin{lemma}\label{splittrick}
Suppose that $G$ is a graph such that $\Delta (G) \leq D-2$ where $D \in \mathbb N$ is even.
Suppose $A_0,A$ is a partition of $V(G)$ such that $d_G(x) \leq D/2-1$ for all $x \in A$ and
$\Delta (G[A_0]) \leq D/2-1$. 
Then $G$ has a decomposition into $D/2$ edge-disjoint path
systems $P_1, \dots , P_{D/2}$ such that the following conditions hold:
\begin{itemize}
\item[(i)] For each $i \leq D/2$, any internal vertex on a path in $P_i$ lies in $A_0$;
\item[(ii)] $|e(P_i)-e(P_j)|\leq 1 $ for all $i,j \leq D/2$.
\end{itemize}
\end{lemma}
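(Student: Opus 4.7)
My plan is to reduce the decomposition to a standard proper edge-colouring on an auxiliary graph $\tilde G$ obtained from $G$ by splitting each vertex in $A_0$ into two copies (the eponymous \emph{splitting trick}), apply Proposition~\ref{basic_matching_dec}, pull the colouring back to $G$, and finally repair the result by destroying any cycles that arise.

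For the construction of $\tilde G$, I would first $2$-colour the edges of $G[A_0]$ by making it Eulerian (pair up the odd-degree vertices via an auxiliary matching), taking an Eulerian tour, and alternating colours along it; this yields at each $u\in A_0$ two colour classes of sizes $\lceil d_{G[A_0]}(u)/2\rceil$ and $\lfloor d_{G[A_0]}(u)/2\rfloor$. Split $u$ into $u^{(1)},u^{(2)}$ and send each $A_0A_0$-edge at $u$ to the copy indicated by its colour. Then distribute the $A_0A$-edges at $u$ between $u^{(1)}$ and $u^{(2)}$ so that each copy ends with at most $D/2-1$ edges in total; this is feasible since $d_G(u)\le D-2$ and $d_{G[A_0]}(u)\le D/2-1$, so the remaining capacity $(D-2)-d_{G[A_0]}(u)$ at the two copies together is at least $d_G(u,A)$. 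The edges of $G[A]$ stay unchanged. The resulting graph $\tilde G$ has $\Delta(\tilde G)\le D/2-1$, because $d_G(v)\le D/2-1$ for $v\in A$ and each copy $u^{(i)}$ has degree at most $D/2-1$ by construction.

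Next I would apply Proposition~\ref{basic_matching_dec} to $\tilde G$ to decompose $E(\tilde G)$ into $D/2$ matchings $N_1,\dots,N_{D/2}$ with $\bigl||N_l|-|N_k|\bigr|\le 1$. Identifying $u^{(1)}$ and $u^{(2)}$ back to $u$ for every $u\in A_0$, each $N_l$ pulls back to a subgraph $P'_l\subseteq G$ in which $d_{P'_l}(v)\le 1$ for every $v\in A$ (since $N_l$ is a matching and $v$ was not split) and $d_{P'_l}(u)\le 2$ for every $u\in A_0$ (one edge from each of $u^{(1)},u^{(2)}$). Hence every $P'_l$ is a disjoint union of paths and cycles, any cycle lying entirely in $A_0$, and $\bigl|e(P'_l)-e(P'_k)\bigr|\le 1$, so condition (ii) already holds.

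The main remaining obstacle is to eliminate the cycles while preserving the size balance. For each cycle $C\subseteq P'_l$, I would pick an edge $e=uv\in C$ and try to perform a \emph{size-preserving} swap with another colour class $P'_k$: using $\sum_k d_{P'_k}(u)=d_G(u)\le D-2$ (and similarly for $v$), combined with a short alternating-path argument along $P'_l\cup P'_k$ in the spirit of Vizing's fan argument, locate an index $k$ and an edge $e'\in P'_k$ such that simultaneously moving $e$ into $P'_k$ and $e'$ into $P'_l$ breaks $C$, creates no new cycle, preserves the degree bounds ($\le 2$ on $A_0$, $\le 1$ on $A$), and leaves $|e(P'_l)|,|e(P'_k)|$ unchanged. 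Iterating this procedure over all cycles in all colour classes eventually yields $P_1,\dots,P_{D/2}$ satisfying (i) and (ii). I expect the key difficulty to be exactly this last step — ensuring the swap can always be arranged so that no new cycle is introduced and the near-equality of the sizes is maintained — which requires a careful choice of the alternating structure in $P'_l\cup P'_k$.
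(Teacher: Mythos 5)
There is a genuine gap, and it lies exactly where you predicted it would: the cycle-destroying "repair" step is not proved, and it is not clear that it can be carried out. The source of the problem is your choice of how to split the vertices in $A_0$. You send the edges of $G[A_0]$ to \emph{both} copies $u^{(1)},u^{(2)}$ (via the Eulerian $2$-colouring), which means a matching $N_l$ in $\tilde G$ can contain an $A_0A_0$-edge at $u^{(1)}$ \emph{and} one at $u^{(2)}$; after identification, $u$ has two $A_0A_0$-edges in $P'_l$. Consequently an even cycle of $G[A_0]$ whose edges alternate in colour can end up entirely inside a single $P'_l$. Your proposed fix — a size-preserving swap "in the spirit of Vizing's fan argument" that breaks the cycle without creating new ones, preserves the degree caps ($\le 2$ on $A_0$, $\le 1$ on $A$), and keeps the near-equal sizes — is stated as a hope, not established. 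In particular, you give no reason why a suitable partner edge $e'$ and index $k$ must exist, nor why the process terminates: a swap that breaks one cycle may create another in $P'_k$, and there is no potential function or termination argument. The lemma is used repeatedly elsewhere (e.g.\ in Lemmas~\ref{matchingdec} and~\ref{G-glob}), so this step cannot be left as a sketch.

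The paper avoids the issue entirely by a cleaner choice of split. Rather than balancing the two copies, it takes a maximal spanning subgraph $G_1\subseteq G$ with $G[A_0]\subseteq G_1$ and $\Delta(G_1)\le D/2-1$, sets $G_2:=G-G_1$ (which then consists only of $A_0A$-edges, with $\Delta(G_2)\le D/2-1$ by maximality and $\Delta(G)\le D-2$), and forms $G'$ by leaving $G_1$ at the original vertices $a_i$ and attaching the $G_2$-edges to new copies $a'_i$. The decisive consequence is that \emph{all} $A_0A_0$-edges are incident to the original vertices $a_i$, never to the copies. Hence any matching in $G'$ contains at most one $A_0A_0$-edge per vertex of $A_0$, and the pulled-back $P_i$ can contain no cycle of $G[A_0]$ — so no repair step is needed at all. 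Your construction is sound up to and including the degree-capacity check for distributing $A_0A$-edges, but you should replace the symmetric Eulerian split by this asymmetric one: put all of $G[A_0]$ on one copy (which your hypothesis $d_{G[A_0]}(u)\le D/2-1$ permits), and only then fill the two copies with $A_0A$-edges; the cycles then simply never arise.
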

\proof
Let $G_1$ be a maximal spanning subgraph of $G$ under the constraints that $G[A_0] \subseteq G_1$
and $\Delta (G_1) \leq D/2-1$.
Note that $G[A_0]\cup G[A] \subseteq G_1$. Set $G_2 := G-G_1$. So $G_2$ only contains $A_0A$-edges.
Further, since $\Delta (G) \leq D-2$, the maximality of $G_1$ implies that $\Delta (G_2) \leq
D/2-1$.

Define an auxiliary graph $G'$, obtained from $G_1$ as follows:
write $A_0=\{a_1, \dots , a_m\}$. Add a new vertex set $A'_0=\{ a'_1, \dots , a'_m\}$ to $G_1$.
For each $i \leq m$ and $x \in A$, we add an edge between $a'_i$ and $x$ if and only if $a_i x$
is an edge in $G_2$.

Thus $G'[A_0 \cup A]$ is isomorphic to $G_1$ and $G'[A'_0 , A]$ is isomorphic to $G_2$.
By construction and since $d_G (x) \leq D/2-1$ for all $x \in A$,  we have that $\Delta (G') \leq D/2-1$. Hence, Proposition~\ref{basic_matching_dec} implies
that $E(G')$ can be decomposed into $D/2$ edge-disjoint matchings $M_1, \dots , M_{D/2}$ such that
$||M_i|-|M_j||\leq 1$ for all $i,j \leq D/2$.

By identifying each vertex $a'_i \in A'_0$ with the corresponding vertex $a_i \in A_0$,
$M_1, \dots , M_{D/2}$ correspond to edge-disjoint subgraphs $P_1, \dots, P_{D/2}$ of $G$ such that
\begin{itemize}
\item $P_1, \dots, P_{D/2}$ together cover all the edges in $G$;
\item  $|e(P_i)-e(P_j)|\leq 1 $ for all $i,j \leq D/2$.
\end{itemize}
Note that $d_{M_i} (x) \leq 1$ for each $x \in V(G')$. Thus $d_{P_i} (x) \leq 1$ for each $x \in A$ and $d_{P_i} (x) \leq 2$ for each $x \in A_0$. This implies that any cycle in $P_i$ must
lie in $G[A_0]$. However, $M_i$ is a matching and $G'[A'_0]\cup G'[A_0,A'_0]$ contains no edges.%
   \COMMENT{Daniela replaced $G'[A'_0]$ by $G'[A'_0]\cup G'[A_0,A'_0]$}
Therefore, $P_i$ contains no cycle,
and so $P_i$ is a path system such that any internal vertex on a path in $P_i$ lies in $A_0$.
Hence $P_1, \dots , P_{D/2}$ satisfy (i) and (ii).
\endproof

\begin{lemma} \label{matchingdec}
Let $0<1/n\ll \eps\ll \eps'\ll \eps_1\ll \eps_2\ll \eps_3\ll \eps_4\ll 1/K\ll 1$.
Suppose that $(G,A,A_0,B,B_0)$ is an $(\eps,\eps',K,D)$-framework with $|G|=n$ and $D\ge n/200$.
Let $A_0,A_1,\dots,A_K$ and $B_0,B_1,\dots,B_K$ be a $(K,m,\eps,\eps_1,\eps_2)$-partition for $G$. 
Let $H_{ij}^A$ be a localized slice of $G$ as guaranteed by Lemma~\ref{rnd_slice}.
Define $c$ and $q$ as in~(\ref{aqc}).
Suppose that $t:=(1-20\eps_4)D/2K^2\in \mathbb{N}$.%
   \COMMENT{Later one we will also require that $t$ is divisible by $K^2$.}
If $e(B')\geq \eps _3 n$, set $t^*$ to be the largest integer which is at most $ct$ and is divisible by $K^2$. 
Otherwise, set $t^*:=0$.
Define
$$
\ell _a :=
\left\{
	\begin{array}{ll}
		0 & \mbox{if } e(A') < \eps _3 n ;\\
        a-b & \mbox{if } e(A') \geq \eps _3 n \mbox{ but } e(B') < \eps_ 3 n;\\
		a+q+c & \mbox{otherwise}  
	\end{array}
\right.
$$
and 
$$
\ell _b :=
\left\{
	\begin{array}{ll}
		0 & \mbox{if } e(B') < \eps _3 n ;\\
		b+q+c & \mbox{otherwise.}  
	\end{array}
\right.
$$
Then 
$H_{ij}^A$ has a decomposition into $t$ edge-disjoint path systems $P_1,\dots,P_{t}$ and a spanning subgraph $G_{ij}^A$
with the following properties:
\begin{itemize}
\item[(i)] For each $s \leq t$, any internal vertex on a path in $P_s$ lies in $A_0$;
\item[(ii)] $e(P_1)=\dots = e(P_{t^*}) =\lceil \ell_a \rceil$ and $e(P_{t^*+1}) = \dots =e(P_{t})= \lfloor \ell_a \rfloor$; 
\item[(iii)] $e(P_s)\le \sqrt{\eps} n$ for every $s\le t$;
\item[(iv)] $\Delta(G_{ij}^A)\le 13\eps_4 D/K^2$. 
\end{itemize}
The analogous assertion (with $\ell_a$ replaced by $\ell_b$ and $A_0$ replaced by $B_0$) holds for each localized slice~$H_{ij}^B$ of~$G$.
Furthermore, $\lceil \ell _a \rceil -\lceil \ell _b \rceil= \lfloor \ell _a \rfloor -\lfloor \ell _b \rfloor =a-b$.
\end{lemma}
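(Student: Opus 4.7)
The plan is to split $H^A_{ij}$ as $G^A_{ij} \cup H'$ where $G^A_{ij}$ is a sparse low-degree leftover and $H'$ has exactly
\[
T := t^*\lceil \ell_a \rceil + (t - t^*) \lfloor \ell_a \rfloor
\]
edges, and then apply Lemma~\ref{splittrick} to $H'$ to break it into $t$ path systems whose sizes differ by at most one. Because the total equals $T$, exactly $t^*$ of them will end up with $\lceil \ell_a \rceil$ edges and the rest with $\lfloor \ell_a \rfloor$, giving~(ii). The $H^B_{ij}$ decomposition is entirely symmetric, and the closing identity relating $\ell_a$ and $\ell_b$ will be verified by a short case analysis.

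When $e(A') < \eps_3 n$ we have $\ell_a = 0 = t^*$, so every $P_s$ is empty and I set $G^A_{ij} := H^A_{ij}$; Lemma~\ref{rnd_slice}(iii) then bounds $\Delta(G^A_{ij}) \le 2e(H^A_{ij}) \le 4\eps_3 n/K^2 \le 13\eps_4 D/K^2$ by the hierarchy and $D \ge n/200$. Otherwise, the key preliminary is $\Delta(H^A_{ij}) \le D/(2K^2) + o(D/K^2)$, which comes from (FR5) for $v \in A_i \cup A_j$ and from Lemma~\ref{rnd_slice}(vi) combined with (FR7) for $v \in A_0$. Setting $\gamma := 1 - T/e(H^A_{ij})$ and applying Lemma~\ref{simple} produces $H' \subseteq H^A_{ij}$ with exactly $T$ edges. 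The arithmetic $T = t\ell_a + O(K^2)$ (which uses the definition of $t^*$ and that $c$ is the fractional part of $\ell_a$ in Case~2b) combined with Lemma~\ref{rnd_slice}(iii) yields $\gamma = 20\eps_4 + o(\eps_4)$, so
\[
\Delta(G^A_{ij}) \le \tfrac{6}{5}\gamma \cdot \Delta(H^A_{ij}) \le 12\eps_4 D/K^2 \cdot (1+o(1)) \le 13\eps_4 D/K^2,
\]
which is (iv).

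Next, I apply Lemma~\ref{splittrick} to $H'$ with the bipartition $(A_0, A_i \cup A_j)$ and $D_{\rm split} := 2t$. The hypotheses are comfortable: $\Delta(H') \le D/(2K^2) + o(D/K^2) \le 2t - 2$ (since $1/2 < 1 - 20\eps_4$); $d_{H'}(x) \le \eps' n \le t - 1$ for $x \in A_i \cup A_j$; and $\Delta(H'[A_0]) \le |A_0| - 1 \le \eps n \le t - 1$, using that the conventional hierarchy $\eps, \eps' \ll 1/K$ allows one to take $\eps, \eps' \ll K^{-c}$ for any constant~$c$. Lemma~\ref{splittrick} then returns the required $t$ path systems, yielding (i) and, because $\sum_s e(P_s) = T$, also (ii) after relabelling; property (iii) follows from $\ell_a \le 2e(A')/D + 1 \le 400\eps n + 1 \le \sqrt\eps n$ via (FR3) and $D \ge n/200$. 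The final identity is a three-case check: Case~1 forces $a = b$ (otherwise $e(A') \ge D/2 > \eps_3 n$); Case~2a has $\ell_a = a - b \in \mathbb Z$ with $\ell_b = 0$; Case~2b has shared fractional part $c$ which cancels under both ceiling and floor. The delicate point is the calibration in the middle paragraph: the integrality $e(H') = T$ essentially forces $\gamma$, and only the inequality $\tfrac{6}{5}\cdot 20 \cdot \tfrac{1}{2} = 12 < 13$ keeps $\Delta(G^A_{ij})$ within budget, which is what pins down the specific $20\eps_4$ in the definition of $t$ against the $13\eps_4$ in~(iv).
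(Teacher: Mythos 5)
Your proposal is correct and follows essentially the same route as the paper's proof: in each of the three cases you extract (via Lemma~\ref{simple}) a subgraph $H'$ with exactly $T=t^*\lceil\ell_a\rceil+(t-t^*)\lfloor\ell_a\rfloor$ edges (which equals $(a-b)t$ in Case~2 and $(a+q)t+t^*$ in Case~3, matching the paper), apply Lemma~\ref{splittrick} with $D_{\rm split}=2t$, and use the balance of path-system sizes to deduce~(ii). Your observation that the constraint $\tfrac{6}{5}\cdot 20\cdot\tfrac12=12<13$ is exactly what calibrates the $20\eps_4$ in $t$ against the $13\eps_4$ in~(iv) is a correct reading of the bookkeeping; the paper's version of this is the bracket $19\eps_4<\gamma<21\eps_4$.
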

\proof Note that~(\ref{aqc}) and  (FR3) together imply that $\ell_a D/2\leq (a+q+c)D/2= e(A') \le \eps n^2$ and so
$\lceil \ell_a\rceil \le \sqrt{\eps} n$. Thus~(iii) will follow from~(ii). So it remains to prove~(i), (ii) and~(iv). We split the proof into three cases.

\medskip

\noindent
{\bf Case 1. $e(A') < \eps _3 n$}

(FR2) and (FR4) imply that $e(A')-e(B') =(a-b)D/2 \geq 0$. So $e(B') \leq e(A') < \eps _3 n$.
Thus $\ell _a =\ell _b =0$. Set $G^A _{ij}:=H^A _{ij}$ and $G^B _{ij}:=H^B _{ij}$.
Therefore, (iv) is satisfied as $\Delta(H_{ij}^A) \leq e(A') < \eps _3 n \leq 13\eps_4 D/K^2$.
Further, (i) and (ii) are vacuous (i.e. we set each $P_s$ to be the empty graph on $V(G)$). 

Note that $a=b$ since otherwise $a>b$ and therefore (FR2) implies that $e(A') \geq (a-b)D/2 \geq D/2 > \eps_3 n$, a contradiction.
Hence, $\lceil \ell _a \rceil -\lceil \ell _b \rceil= \lfloor \ell _a \rfloor -\lfloor \ell _b \rfloor =0=a-b$.

\medskip

\noindent
{\bf Case 2.} $e(A') \geq \eps _3 n$ and $e(B') < \eps _3 n$

Since $\ell _b=0$ in this case, we set $G^B _{ij}:=H^B _{ij}$ and each $P_s$ to be the empty graph on $V(G)$. Then as in Case~1, 
(i), (ii) and (iv) are satisfied with respect to $H^B _{ij}$. Further, clearly 
$\lceil \ell _a \rceil -\lceil \ell _b \rceil= \lfloor \ell _a \rfloor -\lfloor \ell _b \rfloor =a-b$.

Note that $a>b$ since otherwise $a=b$ and thus $e(A')=e(B')$ by (FR2), a contradiction
to the case assumptions. Since $e(A') -e(B') =(a-b)D/2$ by (FR2), Lemma~\ref{rnd_slice}(iii)
implies that 
\begin{align}
\nonumber
e (H^A _{ij})& \geq (1-9 \eps _2) {e(A')}/{K^2}-9 \eps _2 n /K^2  \geq 
(1-9 \eps _2) (a-b)D/(2K^2) - 9 \eps _2 n /K^2 \\ 
& \geq (1- \eps _3)(a-b) D/(2K^2) > (a-b)t. \label{gambound}
\end{align}
Similarly, Lemma~\ref{rnd_slice}(iii) implies that 
\begin{align}\label{gamup}
e (H^A _{ij}) \leq (1+\eps _4)(a-b)D/(2K^2). 
\end{align}
Therefore, (\ref{gambound}) implies that there exists a constant $\gamma >0$ such that
$$(1-\gamma)e (H^A _{ij}) =(a-b)t.$$
Since $(1-19\eps _4 )(1-\eps _3) >(1-20\eps _4)$, (\ref{gambound}) implies that
$\gamma > 19 \eps _4 \gg 1/n$. Further, since $(1+\eps _4)(1-21 \eps _4) < (1-20 \eps _4)$,
(\ref{gamup}) implies that $\gamma < 21 \eps _4$.

Note that (FR5), (FR7) and Lemma~\ref{rnd_slice}(vi) imply that 
\begin{align}\label{boundup}
\Delta (H^A _{ij}) \leq (D/2+5 \eps _1 n)/K^2.
\end{align}
Thus Lemma~\ref{simple} implies that $H^A _{ij}$ contains a spanning subgraph $H$ such that
$e(H)=(1- \gamma ) e(H^A _{ij})=(a-b)t$ and 
$$\Delta (H^A _{ij} -H) \leq 6 \gamma (D/2+5 \eps _1 n)/(5K^2)
\leq 13 \eps _4 D/K^2,$$
where the last inequality follows since $ \gamma <21 \eps _4$ and $\eps _1 \ll 1$.
Setting $G^A _{ij}:=H^A _{ij} -H$ implies that (iv) is satisfied.

Our next task is to decompose $H$ into $t$ edge-disjoint path systems so that (i) and (ii) 
are satisfied. Note that (\ref{boundup}) implies that
$$\Delta (H) \leq \Delta (H^A _{ij}) \leq (D/2+5 \eps _1 n)/K^2 <2t-2.$$
Further, (FR4) implies that $\Delta (H[A_0]) \leq |A_0| \leq \eps n < t-1$ and 
(FR5) implies that $d_H (x ) \leq \eps ' n <t-1$ for all $x \in A$. 
Since $e(H)=(a-b)t$,  Lemma~\ref{splittrick} implies that $H$ has a decomposition into $t$ edge-disjoint path systems $P_1,\dots,P_{t}$ satisfying (i) and so that $e(P_s)=a-b=\ell _a$ for all
$ s\leq t$. In particular, (ii) is satisfied.

\medskip

\noindent
{\bf Case 3.} $e(A'), e(B') \geq \eps _3 n$ 

By definition of $\ell _a $ and $\ell _b$, we have that
$\lceil \ell _a \rceil -\lceil \ell _b \rceil= \lfloor \ell _a \rfloor -\lfloor \ell _b \rfloor =a-b$.
Notice that since $e(A') \geq \eps _3 n$ and $\eps _2 \ll \eps _3$, certainly $\eps _3 e(A')/(2K^2) > 9 \eps _2 n/K^2$.
Therefore, Lemma~\ref{rnd_slice}(iii) implies that 
\begin{align}
\nonumber
e (H^A _{ij})& \geq (1-9 \eps _2) {e(A')}/{K^2}-9 \eps _2 n /K^2 \\ & \geq 
(1- \eps _3) e(A')/K^2 \label{gambound2} \\ 
& \geq \eps _3 n/(2K^2). \nonumber
\end{align}
Note that $1/n \ll \eps _3/(2K^2)$.\COMMENT{split calculation up into 2 parts here since it is important to show that
there are many edges in $H^A_{ij}$ so that we can apply Lemma~\ref{simple}.}
Further, (\ref{aqc}) and (\ref{gambound2}) imply that
\begin{align}
\nonumber
e (H^A _{ij})& \geq 
(1- \eps _3) e(A')/K^2  \\ 
& = (1- \eps _3) (a+q+c)D/(2K^2) > (a+q)t+t^* \label{gambound3} . 
\end{align}
Similarly, Lemma~\ref{rnd_slice}(iii) implies that 
\begin{align}\label{gambound4}
e (H^A _{ij}) \leq (1+\eps _3) (a+q+c) D/(2K^2).
\end{align}
By (\ref{gambound3}) there exists a constant $\gamma >0$ such that
$$(1-\gamma)e (H^A _{ij}) =(a+q)t+t^*.$$
Note that  (\ref{gambound3}) implies that $1/n \ll 19 \eps _4 < \gamma$ and (\ref{gambound4}) implies that $\gamma < 21 \eps _4$.%
\COMMENT{the upper bound is not completely obvious this time around since $t^*$ is a `floor': Indeed, since $e(A') \geq \eps _3n$,
(\ref{gambound4}) implies that $(a+q+c)D/2K^2 \geq \eps _3 n/K^2$. Thus,
$$(1-21 \eps _4)(1+\eps _3) (a+q+c) D/(2K^2) < (1-20 \eps _4) (a+q+c) D/(2K^2)-K^2 \leq (a+q)t+t^*.$$ }
Moreover, as in Case~2, (FR5), (FR7) and Lemma~\ref{rnd_slice}(vi) together show that 
\begin{align}\label{boundup1}
\Delta (H^A _{ij}) \leq (D/2+5 \eps _1 n)/K^2.
\end{align}
Thus (as in Case~2 again), Lemma~\ref{simple} implies that $H^A _{ij}$ contains a spanning subgraph $H$ such that
$e(H)=(1- \gamma ) e(H^A _{ij})=(a+q)t+t^* $ and%
\COMMENT{Deryk shortened calculation and referred to case 2:
where the last inequality follows since $ \gamma <21 \eps _4$ and $\eps _1 \ll 1$.}
$$\Delta (H^A _{ij} -H) \leq 6 \gamma (D/2+5 \eps _1 n)/(5K^2)
\leq 13 \eps _4 D/K^2.$$
Setting $G^A _{ij}:=H^A _{ij} -H$ implies that (iv) is satisfied.
Next we decompose $H$ into $t$ edge-disjoint path systems so that (i) and (ii) 
are satisfied. Note that (\ref{boundup1}) implies that
$$\Delta (H) \leq \Delta (H^A _{ij}) \leq (D/2+5 \eps _1 n)/K^2 <2t-2.$$
Further, (FR4) implies that $\Delta (H[A_0]) \leq |A_0| \leq \eps n < t-1$ and 
(FR5) implies that $d_H (x ) \leq \eps ' n <t-1$ for all $x \in A$. 
Since $e(H)=(a+q)t+t^*$,  Lemma~\ref{splittrick} implies that $H$ has a decomposition into $t$ edge-disjoint path systems $P_1,\dots,P_{t}$ satisfying (i) and (ii). 
An identical argument implies that (i), (ii) and (iv) are satisfied with respect to $H^B _{ij}$ also. 
\endproof


\subsection{Decomposing the global graph}\label{sec:global} 
Let $G_{glob}^A$  be the union of the graphs $G_{ij}^A$ guaranteed by Lemma~\ref{matchingdec} over all $1\le i,j\le K$.  
Define $G_{glob}^B$ similarly.
The next lemma gives a decomposition of both $G_{glob}^A$ and $G_{glob}^B$ into suitable path systems.
Properties~(iii) and~(iv) of the lemma guarantee that one can pair up each such path system $Q_A\subseteq G_{glob}^A$ with
a different path system $Q_B\subseteq G_{glob}^B$ such that $Q_A \cup Q_B$ is $2$-balanced 
(in particular $e(Q_A)-e(Q_B)=a-b$). This
property will then enable us to apply Lemma~\ref{extendpaths} to 
extend $Q_A\cup Q_B$ into a Hamilton cycle using only edges between $A'$ and $B'$.

\begin{lemma}\label{G-glob} 
Let $0<1/n\ll \eps\ll \eps'\ll \eps_1\ll \eps_2\ll \eps_3\ll \eps_4\ll 1/K\ll 1$.
Suppose that $(G,A,A_0,B,B_0)$ is an $(\eps,\eps',K,D)$-framework with $|G|=n$ and such that $D\ge n/200$
and $D$ is even.
Let $A_0,A_1,\dots,A_K$ and $B_0,B_1,\dots,B_K$ be a $(K,m,\eps,\eps_1,\eps_2)$-partition for $G$. 
Let $G_{glob}^A$  be the union of the graphs $G_{ij}^A$ guaranteed by Lemma~\ref{matchingdec} over all $1\le i,j\le K$.  
Define $G_{glob}^B$ similarly. Suppose that $k := 10 \eps_4 D\in\mathbb{N}$. Then the following properties hold:
\begin{itemize} 
\item[(i)] There is an integer $q'$ and a real number  $0\le c'<1$ so that
$e(G_{glob}^A)=(a+q'+c')k$ and  $e(G_{glob}^B)=(b+q'+c')k$.
\item[(ii)] $\Delta(G_{glob}^A), \Delta(G_{glob}^B)< 3k/2$.
\item[(iii)] Let $k^*:=c'k$. Then $G_{glob}^A$ has a decomposition into $k^*$ path systems, each containing $a+q'+1$ edges, and $k-k^*$ path systems,
each containing $a+q'$ edges. Moreover, each of these $k$ path systems~$Q$ satisfies $d_Q(x)\le 1$ for all $x\in A$.
\item[(iv)] $G_{glob}^B$ has a decomposition into $k^*$ path systems, each containing $b+q'+1$ edges, and $k-k^*$ path systems,
each containing $b+q'$ edges. Moreover, each of these $k$ path systems~$Q$ satisfies $d_Q(x)\le 1$ for all $x\in B$.
\item[(v)] Each of the path systems guaranteed in~(iii) and~(iv) contains at most $\sqrt{\eps}n$ edges.
\end{itemize}
\end{lemma}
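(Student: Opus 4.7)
The plan is to establish (i)--(v) in order. For (i), the key identity is
\[
e(G_{glob}^A) = \sum_{i,j} e(G_{ij}^A) = e(A') - \sum_{i,j}(\text{path-system edges in } H_{ij}^A).
\]
In Case~3 of Lemma~\ref{matchingdec} (where $e(A'), e(B') \ge \eps_3 n$), the $t$ path systems in each $H_{ij}^A$ use $t^*(a+q+1)+(t-t^*)(a+q)=(a+q)t+t^*$ edges, so using $e(A')=(a+q+c)D/2$ together with $K^2 t = (1-20\eps_4)D/2 = D/2-k$, one gets
\[
e(G_{glob}^A) = (a+q)k + Y, \qquad \text{where } Y := \tfrac{cD}{2} - K^2 t^*,
\]
and analogously $e(G_{glob}^B) = (b+q)k + Y$. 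Since $D$ is even, $cD/2 = e(A')-(a+q)D/2$ is an integer; and $K^2 t^* \le ctK^2 = c(D/2-k) \le cD/2$, so $Y$ is a non-negative integer. Set $q' := q + \lfloor Y/k\rfloor$ and $c' := Y/k - \lfloor Y/k\rfloor$. Then $q' \in \mathbb{Z}$, $c' \in [0,1)$, and $c'k = Y - k\lfloor Y/k\rfloor \in \mathbb{Z}$; moreover $(a+q'+c')k = (a+q)k+Y = e(G_{glob}^A)$ and similarly for $B$. Cases~1 and~2 of Lemma~\ref{matchingdec} are simpler: there $t^*=0$ and at least one of $G_{glob}^A, G_{glob}^B$ equals the corresponding $G[\cdot]$; one picks $q',c'$ directly from $e(G_{glob}^B)/k$, and the analogous identities follow (using $e(A')-e(B')=(a-b)D/2$ to relate $e(G_{glob}^A)$ and $e(G_{glob}^B)$).

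For (ii), Lemma~\ref{matchingdec}(iv) gives $\Delta(G_{ij}^A) \le 13\eps_4 D/K^2$. Since $V(G_{ij}^A) \subseteq A_0 \cup A_i \cup A_j$, a vertex $v \in A_s$ with $s \ge 1$ appears in $V(G_{ij}^A)$ only for the $2K-1$ pairs $(i,j)$ with $s\in\{i,j\}$, yielding $d_{G_{glob}^A}(v) \le (2K-1)\cdot 13\eps_4 D/K^2 \le 26\eps_4 D/K$; while $v \in A_0$ appears in all $K^2$ slices, yielding $d_{G_{glob}^A}(v) \le 13\eps_4 D$. Both bounds are strictly below $3k/2 = 15\eps_4 D$ using that $1/K$ is small, and the argument for $G_{glob}^B$ is identical.

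For (iii) and (iv), I apply Lemma~\ref{splittrick} to $G_{glob}^A$ with degree parameter $2k$ (which is even). The hypotheses follow easily: $\Delta(G_{glob}^A) < 3k/2 \le 2k-2$ from (ii); $d_{G_{glob}^A}(x) \le 26\eps_4 D/K \le k-1$ for $x\in A$ since $1/K$ is small; and $\Delta(G_{glob}^A[A_0]) \le |A_0| \le \eps n \le k-1$ since $\eps \ll \eps_4$ and $D\ge n/200$. Lemma~\ref{splittrick} produces $k$ edge-disjoint path systems whose internal vertices lie in $A_0$ (giving $d_Q(x)\le 1$ for $x\in A$) and whose sizes differ by at most~$1$. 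Since the total is $(a+q'+c')k$ and $c'k$ is an integer, the sizes must be exactly $a+q'+1$ and $a+q'$, occurring $k^*=c'k$ and $k-k^*$ times respectively. The argument for $G_{glob}^B$ is identical. Finally, (v) follows because $(a+q'+c')k = e(G_{glob}^A) \le e(A') \le \eps n^2$ by (FR3), and $k \ge 10\eps_4 \cdot n/200$, so each path system has at most $a+q'+1 \le \eps n^2/k + 1 \le 20\eps n/\eps_4 + 1 \le \sqrt{\eps}\,n$ edges (using $\eps \ll \eps_4$).

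The main obstacle is the bookkeeping in (i) --- specifically, isolating the ``defect'' $Y = cD/2 - K^2 t^*$, verifying it is a non-negative integer, and extracting from it a common pair $(q',c')$ that works for both $G_{glob}^A$ and $G_{glob}^B$ with $c'k \in \mathbb{Z}$. Once (i) is in hand, the remaining assertions are direct consequences of Lemmas~\ref{matchingdec}(iv) and~\ref{splittrick} together with the hierarchy.
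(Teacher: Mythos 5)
Your proof is correct and follows essentially the same route as the paper's: the same reduction to the balancedness identity, the same application of Lemma~\ref{splittrick} for (iii)--(iv), and the same (FR3) estimate for (v). Your only cosmetic departures are that you compute $e(G_{glob}^A)$ and $e(G_{glob}^B)$ explicitly via the ``defect'' $Y=cD/2-K^2t^*$ (the paper instead establishes $e(G_{glob}^A)-e(G_{glob}^B)=(a-b)k$ case by case and then takes $q',c'$ from the division algorithm), and that for the $A$-vertex degree bound in (iii) you count slices rather than invoking (FR5) directly --- both deliver the needed hypotheses of Lemma~\ref{splittrick}.
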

Note%
\COMMENT{osthus added sentence. Andy: changed A to B}
that in Lemma~\ref{G-glob} and several later statements the parameter $\eps_3$ is implicitly defined by the application
of Lemma~\ref{matchingdec} which constructs the graphs $G_{glob}^A$ and $G_{glob}^B$.
\proof
Let $t^*$ and $t$ be as defined in Lemma~\ref{matchingdec}. Our first task is to show that (i) is satisfied.
If $e(A'),e(B') < \eps _3 n$ then $G^A _{glob}=G[A']$ and $G^B _{glob} =G[B']$. Further, $a=b$ in this case since 
otherwise (FR4) implies that $a>b$ and so (FR2) yields that $e(A') \geq (a-b)D/2\geq D/2 > \eps _3 n$, a contradiction.
Therefore, (FR2) implies that
\begin{align*}
e(G_{glob}^A)  -e(G_{glob}^B) & =e(A')-e(B') {=} (a-b)D/2=0=(a-b)k.
\end{align*}

If $e(A')\ge \eps _3 n$ and $e(B') < \eps _3 n$ then $G^B _{glob} =G[B']$. Further, $G^A _{glob}$ is obtained from
$G[A']$ by removing $tK^2$ edge-disjoint path systems, each of which contains precisely $a-b$ edges.
Thus (FR2) implies that 
$$e(G_{glob}^A)  -e(G_{glob}^B)  =e(A')-e(B')-tK^2(a-b) =(a-b)(D/2-tK^2)=(a-b)k.$$

Finally, consider the case when $e(A'), e(B') > \eps _3 n$.
Then $G^A _{glob}$ is obtained from $G[A']$ by removing $t^*K^2$ edge-disjoint path systems, each of which contain exactly $a+q+1$ edges,
and by removing $(t-t^*)K^2$ edge-disjoint  path systems, each of which contain exactly $a+q$ edges.
Similarly, $G^B _{glob}$ is obtained from $G[B']$ by removing $t^*K^2$ edge-disjoint path systems, each of which contain exactly $b+q+1$ edges,
and by removing $(t-t^*)K^2$ edge-disjoint path systems, each of which contain exactly $b+q$ edges. So (FR2) implies that
\begin{align*}
e(G_{glob}^A)  -e(G_{glob}^B)  =e(A')-e(B') -(a-b)tK^2= (a-b)k.
\end{align*}
Therefore, in every case,
\begin{align}\label{abk}
e(G_{glob}^A)  -e(G_{glob}^B)  =(a-b)k.
\end{align}
Define the integer $q'$ and $0 \le c' <1$ by $e(G_{glob}^A)=(a+q'+c')k$. 
Then~(\ref{abk}) implies that $e(G_{glob}^B)=(b+q'+c')k$. This proves (i).
To prove (ii), note that Lemma~\ref{matchingdec}(iv) implies that
$\Delta(G_{glob}^A)\le 13\eps_4 D<3k/2$ and similarly $\Delta(G_{glob}^B)<3k/2$. 

Note that (FR5) implies that $d_{G_{glob}^A} (x) \leq \eps 'n < k-1$ for all $x \in A$ 
and $\Delta (G_{glob}^A[A_0]) \leq |A_0| \leq \eps n < k-1$.
Thus Lemma~\ref{splittrick} together with (i) implies that (iii) is satisfied.
(iv) follows from Lemma~\ref{splittrick} analogously.

(FR3) implies that $e(G^A_{glob}) \leq e_G (A') \leq \eps n^2$ and $e(G^B_{glob}) \leq e_G (B') \leq \eps n^2$.
Therefore, each path system from (iii) and (iv) contains at most $\lceil \eps n^2 /k \rceil \leq \sqrt{\eps} n$ edges.%
   \COMMENT{Daniela replaced "has size" by "contains ... edges"}
So (v) is satisfied.
\endproof

We say that a path system $P\subseteq G[A']$ is \emph{$(i,j,A)$-localized} if%
\COMMENT{ANDREW changed the definition here. This is needed because, in proof of
Lemma~\ref{balmatchextend} we need the fact that edges e.g. don't lie in $G[A_i]$ so that
condition (BES2) is satisfied.}
\begin{itemize}
\item[(i)] $E(P) \subseteq E(G[A_0, A_i \cup A_j]) \cup E(G[A_i,A_j])\cup E(G[A_0])$;
\item[(ii)] Any internal vertex on a path in $P$ lies in $A_0$.
\end{itemize}
We%
 \COMMENT{Previously had the stronger condition that $M\subseteq H^A_{ij}$ instead.}
introduce an analogous notion of \emph{$(i,j,B)$-localized} for path systems $P\subseteq G[B']$.

The following result is a straightforward consequence of Lemmas~\ref{rnd_slice}, \ref{matchingdec} and \ref{G-glob}.
It gives a decomposition of $G[A'] \cup G[B']$ into pairs of paths systems so that most of these are localized and so that each pair
can be extended into a Hamilton cycle by adding $A'B'$-edges. %
\COMMENT{Deryk added sentnce}
\begin{cor} \label{finalcor}
Let $0<1/n\ll \eps\ll \eps'\ll \eps_1\ll \eps_2\ll \eps_3\ll \eps_4\ll 1/K\ll 1$.
Suppose that $(G,A,A_0,B,B_0)$ is an $(\eps,\eps',K,D)$-framework with $|G|=n$ and such that $D\ge n/200$
and $D$ is even.
Let $A_0,A_1,\dots,A_K$ and $B_0,B_1,\dots,B_K$ be a $(K,m,\eps,\eps_1,\eps_2)$-partition for $G$. 
Let $t_K:=(1-20\eps_4)D/2K^4$ and $k:=10\eps_4 D$. Suppose that $t_K\in\mathbb{N}$.
Then there are $K^4$ sets $\cM_{i_1i_2i_3i_4}$, one for each $1 \le i_1,i_2,i_3,i_4 \le K$, such that each $\cM_{i_1i_2i_3i_4}$
consists of $t_K$ pairs of path systems and satisfies the following properties:
\begin{itemize} 
\item[(a)]  Let $(P,P')$ be a pair of path systems which forms an element of $\cM_{i_1i_2i_3i_4}$. Then
\begin{itemize}
\item[(i)] $P$ is an $(i_1,i_2,A)$-localized path system and $P'$ is an $(i_3,i_4,B)$-localized path system;
\item[(ii)] $e(P)-e(P')=a-b$;
\item[(iii)]  $e(P),e(P') \le \sqrt{\eps}n$.
\end{itemize}
\item[(b)] The $2t_K$ path systems in the pairs belonging to $\cM_{i_1i_2i_3i_4}$ are all pairwise edge-disjoint.
\item[(c)] Let $G(\cM_{i_1i_2i_3i_4})$ denote the spanning subgraph of $G$ whose edge set is the union of all the
path systems in the pairs belonging to $\cM_{i_1i_2i_3i_4}$. 
Then the $K^4$ graphs $G(\cM_{i_1i_2i_3i_4})$ are edge-disjoint.
Further, each $x \in A_0$ satisfies
$d_{G(\cM_{i_1i_2i_3i_4})}(x)\ge (d_G(x,A)-15\eps_4 D)/K^4$ while each $y \in B_0$ satisfies
$d_{G(\cM_{i_1i_2i_3i_4})}(y)\ge (d_G(y,B)-15\eps_4 D)/K^4$.%
    \COMMENT{Daniela: previously had "The analogous assertion holds for each $x \in B_0$ with $A$ replaced by $B$."}
\item[(d)] Let $G_{glob}$ be the subgraph of $G[A'] \cup G[B']$ obtained by removing all edges contained in $G(\cM_{i_1i_2i_3i_4})$ for all 
$1\le i_1,i_2,i_3,i_4 \le K$. Then $\Delta(G_{glob})\le 3k/2$. Moreover, $G_{glob}$ has a decomposition into
$k$ pairs of path systems $(Q_{1,A},Q_{1,B}),\dots,(Q_{k,A},Q_{k,B})$ so that
\begin{itemize}
\item[(i$'$)] $Q_{i,A}\subseteq G_{glob}[A']$ and $Q_{i,B}\subseteq G_{glob}[B']$ for all $i \le k$;
\item[(ii$'$)] $d_{Q_{i,A}}(x)\le 1$ for all $x\in A$ and $d_{Q_{i,B}}(x)\le 1$ for all $x\in B$;
\item[(iii$'$)] $e(Q_{i,A})-e(Q_{i,B})=a-b$ for all $i \le k$;
\item[(iv$'$)] $e(Q_{i,A}),e(Q_{i,B}) \le \sqrt{\eps}n$ for all $i \le k$.
\end{itemize}
\end{itemize}
\end{cor}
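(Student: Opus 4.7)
The plan is to combine Lemmas~\ref{rnd_slice}, \ref{matchingdec} and \ref{G-glob} and then to regroup the resulting path systems according to the $4$-tuple $(i_1,i_2,i_3,i_4)$. First I would invoke Lemma~\ref{rnd_slice} to decompose $G[A']$ into the $K^2$ edge-disjoint localized slices $H^A_{i_1i_2}$, and analogously decompose $G[B']$ into slices $H^B_{i_3i_4}$. Applying Lemma~\ref{matchingdec} to each slice then produces $t=t_KK^2$ edge-disjoint path systems $P^A_{i_1i_2,1},\dots,P^A_{i_1i_2,t}$ (whose internal vertices all lie in $A_0$ by part~(i), so that each is $(i_1,i_2,A)$-localized in view of Lemma~\ref{rnd_slice}(i)), together with a leftover subgraph $G^A_{i_1i_2}$ of maximum degree at most $13\eps_4D/K^2$. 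The analogous statements hold on the $B$-side with the \emph{same} threshold integer $t^*$, since $t^*$ depends only on the global quantities $e(B')$, $c$, $t$ and is divisible by $K^2$.

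Next, for each $(i_1,i_2)$, I would split the $t$ path systems coming from $H^A_{i_1i_2}$ into $K^2$ groups of size $t_K$ indexed by $(i_3,i_4)$, and symmetrically split the $t$ path systems from $H^B_{i_3i_4}$ into $K^2$ groups of size $t_K$ indexed by $(i_1,i_2)$; I would arrange each such group so that it contains exactly $t^*_K:=t^*/K^2$ of the $\lceil\ell_a\rceil$-edge (respectively $\lceil\ell_b\rceil$-edge) path systems and $t_K-t^*_K$ of the $\lfloor\ell_a\rfloor$-edge (respectively $\lfloor\ell_b\rfloor$-edge) ones. Defining $\cM_{i_1i_2i_3i_4}$ to consist of the $t_K$ pairs obtained by matching ceiling with ceiling and floor with floor, property (a)(ii) follows from the identity $\lceil\ell_a\rceil-\lceil\ell_b\rceil=\lfloor\ell_a\rfloor-\lfloor\ell_b\rfloor=a-b$ in Lemma~\ref{matchingdec}, while (a)(i) and (a)(iii) are immediate from Lemma~\ref{matchingdec}(i),(iii). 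The edge-disjointness statements in (b) and the first part of (c) are inherited from the edge-disjointness of the $H^A_{i_1i_2}$ and $H^B_{i_3i_4}$ (Lemma~\ref{rnd_slice}(ii)) together with Lemma~\ref{matchingdec}'s decomposition being edge-disjoint within each slice.

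The main obstacle is the degree bound in (c) at exceptional vertices. Consider $x\in A_0$; only $A$-path systems contribute to $d_{G(\cM_{i_1i_2i_3i_4})}(x)$, and by Lemma~\ref{rnd_slice}(vi) combined with Lemma~\ref{matchingdec}(iv) the $t$ path systems in the decomposition of $H^A_{i_1i_2}$ together account for at least $(d_G(x,A)-4\eps_1 n)/K^2-13\eps_4 D/K^2\ge(d_G(x,A)-14\eps_4D)/K^2$ edges at $x$. Since each individual path system contributes at most $2$ to $d(x)$, I would take the split of the $t$ path systems into $K^2$ groups uniformly at random; Proposition~\ref{chernoff} applied to the resulting hypergeometric count shows that with probability at least $1-1/n^2$ the $x$-degree in each group is within $\eps_4D/K^4$ of its mean. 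A union bound over the at most $\eps n$ exceptional vertices, together with the analogous independent random choice for the $B$-side, yields a simultaneous partition meeting the degree bound at every $x\in A_0$ and every $y\in B_0$. Finally, the leftover $G_{glob}$ equals $G^A_{glob}\cup G^B_{glob}$ by construction, so (d) is an immediate translation of Lemma~\ref{G-glob}(ii)--(v), with (i$'$) following from the fact that $G^A_{glob}\subseteq G[A']$ and $G^B_{glob}\subseteq G[B']$.
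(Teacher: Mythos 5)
Your proposal follows essentially the same route as the paper's own proof: decompose $G[A']$ and $G[B']$ via Lemma~\ref{rnd_slice}, apply Lemma~\ref{matchingdec} to each slice, randomly redistribute the large ($\lceil\ell_a\rceil$-edge) and small ($\lfloor\ell_a\rfloor$-edge) path systems into $K^2$ groups and pair large with large and small with small, then use Lemma~\ref{rnd_slice}(vi), Lemma~\ref{matchingdec}(iv) and a Chernoff-plus-union-bound argument for~(c), and Lemma~\ref{G-glob}(ii)--(v) for~(d). The argument is correct and matches the paper in structure and key estimates.
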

\proof
Apply Lemma~\ref{rnd_slice} to obtain localized slices $H^A_{ij}$ and $H^B_{ij}$ (for all $i,j\le K$).
Let $t:=K^2 t_K$ and let $t^*$ be as defined in Lemma~\ref{matchingdec}. Since $t/K^2, t^*/K^2\in\mathbb{N}$ we have $(t-t^*)/K^2\in\mathbb{N}$.
For all $i_1,i_2\le K$, let $\cM_{i_1i_2}^A$ be the set of $t$ path systems  in $H_{i_1i_2}^A$ guaranteed by Lemma~\ref{matchingdec}.
We call the $t^*$ path systems in $\cM_{i_1i_2}^A$ of size $\lceil \ell_a \rceil$ \emph{large} and the others \emph{small}.
We define $\cM_{i_3i_4}^B$ as well as large and small path systems in $\cM_{i_3i_4}^B$ analogously (for all $i_3,i_4 \leq K$).

We now construct the sets $\cM_{i_1i_2i_3i_4}$ as follows:
For all $i_1,i_2 \le K$, 
consider a random partition of the set of all large path systems in $\cM_{i_1i_2}^A$ into $K^2$ sets of equal size $t^*/K^2$ and assign
(all the path systems in) each of these sets to 
one of the $\cM_{i_1i_2i_3i_4}$ with $i_3,i_4 \le K$.
Similarly, randomly partition the set of small path systems in $\cM_{i_1i_2}^A$ into $K^2$ sets, each containing $(t-t^*)/K^2$ path systems.
Assign each of these $K^2$ sets to one of the $\cM_{i_1i_2i_3i_4}$ with $i_3,i_4 \le K$.
Proceed similarly for each $\cM_{i_3i_4}^B$ in order to assign each of its path systems randomly to some $\cM_{i_1i_2i_3i_4}$.
Then to each $\cM_{i_1i_2i_3i_4}$ we have assigned exactly $t^*/K^2$ large path systems from both $\cM_{i_1i_2}^A$ and $\cM_{i_3i_4}^B$.
Pair these off arbitrarily. Similarly, pair off the small path systems assigned to $\cM_{i_1i_2i_3i_4}$ arbitrarily.
Clearly, the sets $\cM_{i_1i_2i_3i_4}$ obtained in this way satisfy (a) and (b).

We now verify (c). By construction, the $K^4$ graphs $G(\cM_{i_1i_2i_3i_4})$ are edge-disjoint.%
   \COMMENT{Daniela: new sentence}
So consider any vertex $x\in A_0$ and
write $d:=d_G(x,A)$. Note that $d_{H_{i_1i_2}^A}(x)\ge (d-4 \eps_1 n)/K^2$
by Lemma~\ref{rnd_slice}(vi). Let $G(\cM_{i_1i_2}^A)$ be the spanning subgraph of $G$ whose edge set is the union of all the path systems
in $\cM_{i_1i_2}^A$. Then Lemma~\ref{matchingdec}(iv) implies that
$$
d_{G(\cM_{i_1i_2}^A)}(x)\ge d_{H_{i_1 i_2}^A}(x)-  \Delta (G^A _{i_1 i_2} )  \ge \frac{d-4\eps_1 n}{K^2} -\frac{13\eps_4D}{K^2} 
\ge \frac{d-14\eps_4 D}{K^2}.
$$
So a Chernoff-Hoeffding estimate for the hypergeometric distribution (Proposition~\ref{chernoff}) implies that%
\COMMENT{This equation follows by up to \emph{four} applications of Chernoff.
Consider the set of large path systems $\mathcal M^L$ in $\mathcal M ^A _{i_1,i_2}$.
Let $\mathcal M ^L _x $ denote the set of path systems in $\mathcal M^L$ that contain at least one edge at $x$. 
Let $\mathcal M ^{L2} _x $ denote the set of path systems in $\mathcal M^L$ that contain two edges at $x$. 
Let $X$ be (the random variable) the number of large path systems in $\mathcal M^L_x$ that are assigned to $\mathcal M_{\I}$.
So $X$ has hypergeometric distribution. In particular, $\mathbb E (X)=
|\mathcal M^L _x|/K^2$. 
Let $X_2$ be (the random variable) the number of large path systems in $\mathcal M^{L2}_x$ that are assigned to $\mathcal M_{\I}$.
So $X_2$ has hypergeometric distribution. In particular, $\mathbb E (X_2)=
|\mathcal M^{L2} _x|/K^2$. 
\\
Define $\mathcal M^S$, $\mathcal M^S _x$ and $\mathcal M^{S2} _x$ analogously for small path systems. Let $Y$ be (the random variable)
the number of small path systems in $\mathcal M^S _x$ that are assigned to $\mathcal M_{\I}$. So $Y$ has hypergeometric distribution. In particular, $\mathbb E (Y)=
|\mathcal M^S _x|/K^2$.
Let $Y_2$ be (the random variable) the number of small path systems in $\mathcal M^{S2} _x$ that are assigned to $\mathcal M_{\I}$.
So $Y_2$ has hypergeometric distribution. In particular, $\mathbb E (Y_2)=
|\mathcal M^{S2} _x|/K^2$.
\\
Further, note that $X+Y+X_2+Y_2=d_{G(\cM_{i_1i_2i_3i_4})}(x)$ and 
$|\mathcal M^S_x|+|\mathcal M^L_x|+|\mathcal M^{S2}_x|+|\mathcal M^{L2}_x| = d_{G(\cM_{i_1i_2}^A)}(x) \geq \frac{d-14\eps_4 D}{K^2}.$
The result now follows by applying Chernoff-Hoeffding at most four times. (We don't need to apply it e.g. if
$|\mathcal M^L_x| \leq \eps n/4 $ say.)}
$$
d_{G(\cM_{i_1i_2i_3i_4})}(x)\ge \frac{1}{K^2} \left( \frac{d-14\eps_4 D}{K^2} \right)  -\eps n
\ge \frac{d-15\eps_4 D}{K^4}.
$$
(Note that we only need to apply the Chernoff-Hoeffding bound if $d \ge \eps n$ say, as (c) is vacuous otherwise.)

It remains to check condition~(d). First note that $k\in\mathbb{N}$ since $t_K, D/2\in\mathbb{N}$.
Thus we can apply Lemma~\ref{G-glob} to obtain a decomposition of both $G^A_{glob}$ and $G^B_{glob}$
into path systems.
Since $G_{glob}=G_{glob}^A\cup G_{glob}^B$, (d) is an immediate consequence of Lemma~\ref{G-glob}(ii)--(v).
\endproof

\subsection{Constructing the localized balanced exceptional systems} \label{besconstruct}

The localized path systems obtained from Corollary~\ref{finalcor} do not yet cover all of the exceptional vertices.
This is achieved via the following lemma: we extend the path systems to achieve this additional property, while maintaining the property of being balanced.%
   \COMMENT{Deryk added 2 sentences}
More precisely, let
$$\cP:=\{A_0,A_1,\dots,A_K,B_0,B_1,\dots,B_K\}$$
be a $(K,m,\eps)$-partition%
    \COMMENT{Daniela replaced $(K,m,\eps_0)$-partition by $(K,m,\eps)$-partition to make clear that the $\eps$ need
not be the same as the parameter $\eps_0$ of the BES}
of a set $V$ of $n$ vertices.
Given $1\leq \I \leq K$ and $\eps_0>0$, an \emph{$\i$-balanced exceptional system with respect to $\cP$ and parameter~$\eps_0$}
is a path system $J$ with $V(J)\subseteq A_0\cup B_0\cup A_{i_1} \cup A_{i_2} \cup B_{i_3} \cup B_{i_4}$
such that the following conditions hold:
\begin{itemize}
\item[(BES$1$)] Every vertex in $A_0\cup B_0$ is an internal vertex of a path in~$J$.
Every vertex $v\in A_{i_1} \cup A_{i_2} \cup B_{i_3} \cup B_{i_4}$ satisfies $d_J(v)\le 1$.%
    \COMMENT{Can write that $v$ is an endpoint of a (possible trivial) path in~$J$ if we allow
$V(J)\subseteq A_0\cup B_0\cup A_{i_1} \cup A_{i_2} \cup B_{i_3} \cup B_{i_4}$ (instead of
$V(J)=A_0\cup B_0\cup A_{i_1} \cup A_{i_2} \cup B_{i_3} \cup B_{i_4}$). Also,
the BES we construct in Lemma~\ref{balmatchextend} have the property that none of these paths has both endpoints
in $A_{i_1} \cup A_{i_2}$ and its midoint in $A_0$ (and the analogue holds with for $B$).}   
\item[(BES$2$)]
Every edge of $J[A \cup B]$ is either an $A_{i_1}A_{i_2}$-edge or a $B_{i_3}B_{i_4}$-edge.%
	\COMMENT{AL: had `Every (maximal) path of length one in~$J$ has either one endpoint in $A_{i_1}$ and the other endpoint in $A_{i_2}$
or one endpoint in $B_{i_3}$ and the other endpoint in $B_{i_4}$.'
}
\item[(BES$3$)] The edges in $J$ cover precisely the same number of vertices in $A$ as in $B$.%
   \COMMENT{Cannot write $J$ instead of "edges in $J$" here since we don't want to count the vertices of degree 0 which lie in $V(J)$.}
\item[(BES$4$)] $e(J)\le \eps_0 n$.
\end{itemize}
To shorten the notation, we will often refer to $J$ as an \emph{$(i_1,i_2, i_3,i_4)$-BES}.
If $V$ is the vertex set of a graph $G$ and $J\subseteq G$, we also say that $J$ is an \emph{$(i_1,i_2, i_3,i_4)$-BES in~$G$}. 
Note that (BES2) implies that an $(i_1,i_2, i_3,i_4)$-BES does not contain edges between $A$ and $B$.
Furthermore, an $\i$-BES is also, for example, an $(i_2,i_1, i_4,i_3)$-BES.
We will sometimes omit the indices $i_1,i_2, i_3,i_4$ and just refer to a balanced exceptional system (or a BES for short).%
   \COMMENT{Daniela deleted: Throughout the paper, usually our balanced exceptional systems will have parameter $\eps_0$,
so in this case we will often omit mentioning the parameter.}
We will sometimes also omit the partition $\cP$, if it is clear from the context.

(BES1) implies that each balanced exceptional system is an $A_0B_0$-path system as defined before Proposition~\ref{balpathcheck}.
(However, the converse is not true since, for example, a 2-balanced $A_0B_0$-path system need not satisfy~(BES4).)
So (BES3) and Proposition~\ref{balpathcheck} imply that each balanced exceptional system is also $2$-balanced.

We now extend each set $\cM_{i_1i_2i_3i_4}$ obtained from Corollary~\ref{finalcor} into a set
$\mathcal{J}_{i_1i_2i_3i_4}$ of $(i_1,i_2,i_3,i_4)$-BES.

\begin{lemma}\label{balmatchextend}
Let $0<1/n\ll \eps\ll \eps_0\ll \eps'\ll \eps_1\ll \eps_2\ll \eps_3\ll \eps_4\ll 1/K\ll 1$.
Suppose that $(G,A,A_0,B,B_0)$ is an $(\eps,\eps',K,D)$-framework with $|G|=n$ and such that $D\ge n/200$
and $D$ is even. Let $\cP:=\{A_0,A_1,\dots,A_K,B_0,B_1,\dots,B_K\}$ be a $(K,m,\eps,\eps_1,\eps_2)$-partition for $G$. 
Suppose that $t_K:=(1-20\eps_4)D/2K^4\in \mathbb{N}$. Let
$\cM_{i_1i_2i_3i_4}$ be the sets returned by Corollary~\ref{finalcor}.
Then for all $1\leq \I \leq K$ there is a set $\mathcal{J}_{i_1i_2i_3i_4}$ which satisfies the
following properties:
\begin{itemize}
\item[(i)] $\mathcal{J}_{i_1i_2i_3i_4}$ consists of $t_K$ edge-disjoint $(i_1,i_2,i_3,i_4)$-BES in $G$ with respect to $\cP$
and with parameter $\eps_0$.
\item[(ii)] For each of the $t_K$ pairs of path systems $(P,P')\in \cM_{i_1i_2i_3i_4}$, 
there is a unique $J\in \mathcal{J}_{i_1i_2i_3i_4}$ which contains all the edges in $P\cup P'$. Moreover, all edges in
$E(J)\setminus E(P\cup P')$ lie in $G[A_0,B_{i_3}]\cup G[B_0,A_{i_1}]$.
\item[(iii)] Whenever $(i_1,i_2,i_3,i_4)\neq (i'_1,i'_2,i'_3,i'_4)$, $J \in \mathcal{J}_{i_1i_2i_3i_4}$ and $J' \in \mathcal{J}_{i'_1i'_2i'_3i'_4}$,
then $J$ and $J'$ are edge-disjoint.
\end{itemize}
\end{lemma}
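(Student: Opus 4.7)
The plan is to turn each pair $(P,P') \in \cM_{i_1i_2i_3i_4}$ into a BES $J\supseteq P\cup P'$ by adding, for every $v\in A_0$ with $d_P(v)<2$, exactly $2-d_P(v)$ extra edges $vw$ with $w\in B_{i_3}$ (and symmetrically for $v\in B_0$, using neighbours in $A_{i_1}$). Given the existence of such extras, properties (BES2)--(BES4) of the resulting $J$ follow quickly: Lemma~\ref{matchingdec}(i) places all internal vertices of $P$ (resp.~$P'$) in $A_0$ (resp.~$B_0$), so every edge of $(P\cup P')[A\cup B]$ is an $A_{i_1}A_{i_2}$- or $B_{i_3}B_{i_4}$-edge while each extra has an end in $A_0\cup B_0$, giving~(BES2); the extras contribute nothing to $e_J(A')-e_J(B')$, so Corollary~\ref{finalcor}(a)(ii) together with Proposition~\ref{balpathcheck} yields~(BES3); and $e(J)\le e(P)+e(P')+2|A_0\cup B_0|\le 3\sqrt{\eps}n\le \eps_0 n$ gives~(BES4). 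For~(BES1), every vertex in $A_0\cup B_0$ has degree exactly~$2$ by design, and acyclicity follows because any cycle in $J$ would need only degree-$2$ vertices and hence lie inside $A_0\cup B_0$; (FR6) rules out $A_0B_0$-edges, so the cycle would sit entirely in $A_0$ or entirely in $B_0$, contradicting the fact that $P$ and $P'$ are path systems.

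I would construct the extras by processing the $K^4$ quadruples in an arbitrary order and, within each, the $t_K$ pairs one after the other. For the current pair I would choose, for each $v\in A_0$ (in some order), the required $2-d_P(v)$ edges $vw$ subject to: (a)~$vw\in E(G)$; (b)~$vw$ was not used in any previously constructed BES; (c)~$d_{P'}(w)=0$; (d)~$w$ was not already selected in the current BES. The analogous procedure handles each $v\in B_0$, choosing neighbours in $A_{i_1}$. Condition~(c) forces $d_J(w)\le 1$ for each $w\in B_{i_3}$ and (d) prevents double use, so~(BES1) is met. Edge-disjointness across quadruples with distinct $i_3$ is automatic since their extras live in disjoint edge pools $G[A_0,B_{i_3}]$ (and similarly for distinct $i_1$); within a fixed $i_3$ (resp.~$i_1$) it is enforced by condition~(b); the $P\cup P'$ parts are already pairwise edge-disjoint by Corollary~\ref{finalcor}(b),(c).

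The main obstacle is proving that this greedy procedure never gets stuck. Fix $v\in A_0$ and $i_3$. Summing $d_P(v)$ over the $t_K$ pairs in $\cM_{i_1i_2i_3i_4}$ equals $d_{G(\cM_{i_1i_2i_3i_4})}(v)$, which by Corollary~\ref{finalcor}(c) is at least $(d_G(v,A)-15\eps_4 D)/K^4$; summing over the $K^3$ quadruples sharing~$i_3$ therefore yields
\[
\sum_{\text{BES with this }i_3}(2-d_P(v))\le \frac{(1-5\eps_4)D-d_G(v,A)}{K}.
\]
By~(P2), $d_G(v,B_{i_3})\ge d_G(v,B)/K-\eps_1 n$; and since $d_G(v)=D$ by~(B2), $d_G(v,B_0)=0$ by~(FR6) and $d_G(v,A_0)\le|A_0|\le \eps n$, we obtain $d_G(v,A)+d_G(v,B)\ge D-\eps n$. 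Hence $d_G(v,B_{i_3})$ exceeds the total demand at $v$ by $\Omega(\eps_4 D/K)$, and the further losses from the constraints $d_{P'}(w)=0$ and ``$w$ not yet used in this BES'' cost at most $2e(P')+2|A_0|\le 3\sqrt{\eps}n$, easily absorbed by the slack. A symmetric count handles the $B_0$-extensions: for $v\in B_0$ the same manipulation using (FR6) gives $d_G(v,A)+d_G(v,B)\ge D-\eps n$, so that availability $d_G(v,A_{i_1})\ge d_G(v,A)/K-\eps_1 n$ again beats the demand $((1-5\eps_4)D-d_G(v,B))/K$ by $\Omega(\eps_4 D/K)$, completing the argument.
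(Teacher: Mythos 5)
Your proposal is correct and follows essentially the same route as the paper's own proof: a greedy construction processing the $K^4$ quadruples and then the $t_K$ pairs in turn, verifying (BES1)--(BES4) as you do, and showing the greedy process never gets stuck by the identical availability-versus-demand count at each $v\in A_0$ (using Corollary~\ref{finalcor}(c), property (P2), and (FR2)/(FR6) to bound $d_G(v,B_{i_3})$ against the total number of extra edges placed at $v$ across all $K^3$ quadruples sharing $i_3$, with the $\Omega(\eps_4 D/K)$ slack absorbing the local losses from avoiding vertices of $P'$ and already-chosen vertices). The only cosmetic difference is that the paper tracks the residual degree $d_{G'}(x_j,B_{i_3})$ explicitly as edges get deleted, and you should cite Corollary~\ref{finalcor}(a)(i) (the $(i_1,i_2,A)$-localization of $P$) rather than Lemma~\ref{matchingdec}(i) when verifying~(BES2), since you need the restriction on where the edges of $P$ lie, not just on internal vertices.
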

We let $\mathfrak{J}$ denote the union of the sets $\mathcal{J}_{i_1i_2i_3i_4}$ over all $1\leq \I \leq K$.

\proof
We will construct the sets $\mathcal{J}_{i_1i_2i_3i_4}$ greedily by extending each pair of path systems $(P,P')\in \cM_{i_1i_2i_3i_4}$
in turn into an $(i_1,i_2,i_3,i_4)$-BES containing $P\cup P'$. For this, 
consider some arbitrary ordering of the $K^4$ $4$-tuples $(i_1,i_2,i_3,i_4)$.  
Suppose that we have already constructed the sets ${\mathcal J}_{i'_1i'_2i'_3i'_4}$ for all 
$(i'_1,i'_2,i'_3,i'_4)$ preceding $(i_1,i_2,i_3,i_4)$ so that (i)--(iii) are satisfied. So our aim now is to construct ${\mathcal J}_{i_1i_2i_3i_4}$.
Consider an enumeration $(P_1,P'_1),\dots,(P_{t_K},P'_{t_K})$ of the pairs of path systems%
    \COMMENT{Daniela replaced matchings by paths systems}
in $\cM_{i_1i_2i_3i_4}$.
Suppose that for some $i\le t_K$ we%
   \COMMENT{Daniela added for some $i\le t_K$}
have already constructed edge-disjoint $(i_1,i_2,i_3,i_4)$-BES $J_1,\dots,J_{i-1}$, so that for each $i'<i$
the following conditions hold:
\begin{itemize}
\item $J_{i'}$ contains the edges in $P_{i'}\cup P'_{i'}$;
\item all edges in $E(J_{i'})\setminus E(P_{i'}\cup P'_{i'})$ lie in $G[A_0,B_{i_3}]\cup G[B_0,A_{i_1}]$;
\item $J_{i'}$ is edge-disjoint from all the balanced exceptional systems in $\bigcup_{(i'_1,i'_2,i'_3,i'_4)} {\mathcal J}_{i'_1i'_2i'_3i'_4}$,
where the union is over all $(i'_1,i'_2,i'_3,i'_4)$ preceding $(i_1,i_2,i_3,i_4)$.
\end{itemize}
We will now construct $J:=J_i$. For this, we need to add suitable edges to $P_i\cup P'_i$ to ensure that all vertices  of $A_0 \cup B_0$ 
have degree two. We start with $A_0$. Recall that $a=|A_0|$ and write  $A_0=\{x_1,\dots,x_a\}$. 
Let $G'$ denote the subgraph of $G[A',B']$ obtained by removing all the edges lying in $J_1,\dots,J_{i-1}$ as well as all
those edges lying in the balanced exceptional systems belonging to $\bigcup_{(i'_1,i'_2,i'_3,i'_4)} {\mathcal J}_{i'_1i'_2i'_3i'_4}$ (where as before the union
is over all $(i'_1,i'_2,i'_3,i'_4)$ preceding $(i_1,i_2,i_3,i_4)$). 
We will choose the new edges incident to $A_0$ in $J$ inside $G'[A_0,B_{i_3}]$.

Suppose we have already found suitable edges for $x_1,\dots,x_{j-1}$ and let $J(j)$ be the set of all these edges.
We will first show that the degree of $x_j$ inside $G'[A_0,B_{i_3}]$ is still large.
Let $d_j:=d_G(x_j,A')$. Consider any $(i'_1,i'_2,i'_3,i'_4)$ preceding $(i_1,i_2,i_3,i_4)$.
Let $G({\mathcal J}_{i'_1i'_2i'_3i'_4})$ denote the union of the $t_K$ balanced exceptional systems belonging to ${\mathcal J}_{i'_1i'_2i'_3i'_4}$.
Thus $d_{G({\mathcal J}_{i'_1i'_2i'_3i'_4})}(x_j)=2t_K$. However, Corollary~\ref{finalcor}(c) implies that
$d_{G({\mathcal M}_{i'_1i'_2i'_3i'_4})}(x_j)\ge (d_j-15\eps_4 D)/K^4$. So altogether, when constructing (the balanced exceptional systems in)
${\mathcal J}_{i'_1i'_2i'_3i'_4}$, we have added at most $2t_K-(d_j-15\eps_4 D)/K^4$ new edges
at $x_j$, and all these edges join $x_j$ to vertices in $B_{i'_3}$. Similarly, when constructing
$J_1,\dots,J_{i-1}$, we have added at most $2t_K-(d_j-15\eps_4 D)/K^4$ new edges at $x_j$.
Since the number of $4$-tuples $(i'_1,i'_2,i'_3,i'_4)$ with $i'_3=i_3$ is $K^3$, it follows that%
   \COMMENT{Daniela: previously had $\le $ instead of the first $=$}
\begin{align*}
d_G(x_j,B_{i_3})-d_{G'}(x_j,B_{i_3})  
& \le K^3 \left(2t_K - \frac{d_j-15\eps_4 D}{K^4}\right) \\
& = \frac{1}{K} \left( (1-20\eps_4)D -d_j+15\eps_4D \right) \\
& = \frac{1}{K} \left( D-d_j -5 \eps_4D \right). 
\end{align*}
Also, (P2) with $A$ replaced by~$B$ implies that%
  \COMMENT{AL:changed $d_G(x_j,B_{i_3})=\frac{d_G(x_j,B)\pm \eps_1 n}{K}$ to $d_G(x_j,B_{i_3})\ge\frac{d_G(x_j,B)- \eps_1 n}{K}$}
$$d_G(x_j,B_{i_3}) \ge \frac{d_G(x_j,B) -  \eps_1 n}{K}\ge \frac{d_G(x_j)-d_G(x_j,A')-\eps_1 n}{K} = \frac{D-d_j -\eps_1 n}{K},
$$
where here we use (FR2) and (FR6).
So altogether, we have%
\COMMENT{AL: changed $\eps_4 n/25K$ with $\eps_4 n/50K$}
$$
d_{G'}(x_j,B_{i_3})  \ge (5\eps_4 D- \eps_1n)/K\ge \eps_4 n/50K.
$$
Let $B'_{i_3}$ be the set of vertices in $B_{i_3}$ not covered by the edges of $J(j)\cup P'_i$.%
\COMMENT{AL: added edges of}
Note that $|B'_{i_3}| \ge |B_{i_3}|- 2|A_0|-2e(P'_i)\ge |B_{i_3}|-3\sqrt{\eps}n$ since 
$a=|A_0|\le \eps n$ by (FR4)
and $e(P'_i)\le \sqrt{\eps}n$ by Corollary~\ref{finalcor}(a)(iii).
So $d_{G'}(x_j,B'_{i_3}) \ge \eps_4 n/51K$. We can add up to two of these edges to~$J$ in order to ensure that $x_j$ has degree two in~$J$.
This completes the construction of the edges of $J$ incident to $A_0$. The edges incident to $B_0$ are found similarly.

Let $J$ be the graph on $A_0\cup B_0 \cup A_{i_1} \cup A_{i_2} \cup B_{i_3} \cup B_{i_4}$ whose edge set is constructed in this way.
By construction, $J$ satisfies (BES1) and (BES2) since $P_j$ and $P'_j$ are
$(i_1,i_2,A)$-localized and $(i_3,i_4,B)$-localized respectively.%
	\COMMENT{AL: deleted follows}
We now verify (BES3). As mentioned before the statement of the lemma, 
(BES1) implies that $J$ is an $A_0B_0$-path system (as defined before Proposition~\ref{balpathcheck}).
Moreover, Corollary~\ref{finalcor}(a)(ii) implies that $P_i \cup P'_i$ is a path system which satisfies (B1)
in the definition of $2$-balanced.
Since $J$ was obtained by adding only $A'B'$-edges, (B1) is preserved in $J$.
Since by construction $J$ satisfies (B2), it follows that $J$ is $2$-balanced.
So Proposition~\ref{balpathcheck} implies (BES3). 

 Finally, we verify (BES4). For this, note that Corollary~\ref{finalcor}(a)(iii) implies
that $e(P_i),e(P'_i)\le \sqrt{\eps}n$. Moreover, the number of edges added to $P_i\cup P'_i$ when constructing $J$
is at most $2(|A_0|+|B_0|)$, which is at most $2\eps n$ by (FR4).
Thus $e(J)\le 2\sqrt{\eps}n+2\eps n\le \eps_0 n$.
\endproof

\subsection{Covering $G_{glob}$ by edge-disjoint Hamilton cycles}

We now find a set of edge-disjoint Hamilton cycles covering the edges of the `leftover' graph obtained from $G-G[A,B]$ by deleting all
those edges lying in balanced exceptional systems belonging to $\mathfrak{J}$.

\begin{lemma}\label{lem:HCglob}
Let $0<1/n\ll \eps\ll \eps_0\ll \eps'\ll \eps_1\ll \eps_2\ll \eps_3\ll \eps_4\ll 1/K\ll 1$.
Suppose that $(G,A,A_0,B,B_0)$ is an $(\eps,\eps',K,D)$-framework with $|G|=n$ and such that $D\ge n/200$ and%
   \COMMENT{Daniela: had $\delta (G) \geq D\ge n/200$}
$D$ is even. Let $\cP:=\{A_0,A_1,\dots,A_K,B_0,B_1,\dots,B_K\}$ be a $(K,m,\eps,\eps_1,\eps_2)$-partition for $G$. 
Suppose that $t_K:=(1-20\eps_4)D/2K^4\in \mathbb{N}$. Let $\mathfrak{J}$ be as defined after Lemma~\ref{balmatchextend}
and let $G(\mathfrak{J})\subseteq G$ be the union of all the balanced exceptional systems lying in $\mathfrak{J}$.
Let $G^*:=G-G(\mathfrak{J})$, let $k:=10\eps_4 D$ and let $(Q_{1,A},Q_{1,B}),\dots,(Q_{k,A},Q_{k,B})$ be as in Corollary~\ref{finalcor}(d).
\begin{itemize}
\item[(a)] The graph $G^*-G^*[A,B]$ can be decomposed into $k$ $A_0B_0$-path systems $Q_1,\dots,Q_k$ which are $2$-balanced and satisfy the
following properties:
\begin{itemize}
\item[(i)] $Q_i$ contains all edges of $Q_{i,A} \cup Q_{i,B}$; 
\item[(ii)]  $Q_1,\dots,Q_k$ are pairwise edge-disjoint;
\item[(iii)] $e(Q_i) \le 3\sqrt{\eps}n$.
\end{itemize}
\item[(b)] Let $Q_1,\dots,Q_k$ be as in~(a). Suppose that $F$ is a graph on $V(G)$ such that $G\subseteq F$, $\delta(F)\ge 2n/5$ and such that $F$ satisfies
(WF5) with respect to $\eps'$. Then there are edge-disjoint Hamilton cycles $C_1,\dots,C_k$ in $F-G(\mathfrak{J})$
such that $Q_i \subseteq C_i$ and  $C_i \cap G$ is $2$-balanced for each $i\le k$. 
\end{itemize}
\end{lemma}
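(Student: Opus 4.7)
The plan is to construct the $Q_i$'s explicitly in part~(a) by decomposing a known subgraph of $G^*$ and distributing leftover edges, and in part~(b) to extend each $Q_i$ to a Hamilton cycle by iteratively invoking Lemma~\ref{extendpaths}.

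For (a), I first analyse $G^* - G^*[A,B]$. Since $G(\mathfrak{J})$ contains no $AB$-edges by (BES2), $G^*[A,B] = G[A,B]$; using $e_G(A_0,B_0)=0$ from (FR6) gives $G^* - G^*[A,B] = (G[A'] \cup G[B'] \cup G[A_0, B] \cup G[A, B_0]) - G(\mathfrak{J})$. By Lemma~\ref{balmatchextend}(ii) the edges of $G(\mathfrak{J})$ inside $A'$ or $B'$ coincide with $\bigcup_{\i} G(\cM_{\i})$, so the within-$A'$-or-$B'$ part of $G^* - G^*[A,B]$ equals $G_{glob}$ from Corollary~\ref{finalcor}(d), which is already decomposed into the pairs $(Q_{i,A}, Q_{i,B})$. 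Write $L := (G[A_0, B] \cup G[A, B_0]) - G(\mathfrak{J})$ for the remaining leftover edges. Counting degrees: for $x \in A_0$, (BES1) and the $D$-regularity of $G$ give $d_{G^*}(x) = D - 2|\mathfrak{J}| = D - (1-20\eps_4)D = 2k$, whence $d_{G_{glob}}(x) + d_L(x,B) = 2k$. Setting the ``deficiency'' $n_i^A(x) := 2 - d_{Q_{i,A}}(x)$ and summing over $i$ yields $\sum_i n_i^A(x) = d_L(x,B)$, and symmetrically for $y \in B_0$.

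The distribution task is then to partition $L[A_0, B]$ into $L^{A_0}_1, \ldots, L^{A_0}_k$ satisfying $d_{L^{A_0}_i}(x) = n_i^A(x)$ at each $x \in A_0$, $d_{L^{A_0}_i}(y) \le 1$ at each $y \in B$, and avoiding the endpoints of $Q_{i,B}$ in $B$, and analogously for $L[A, B_0]$. Since the degree sums match, the per-pair obstruction in $B$ has size at most $2e(Q_{i,B}) \le 2\sqrt{\eps}n$, every $y \in B$ satisfies $d_L(y) \le |A_0| \le \eps n$, and $k \gg \sqrt{\eps}n$, such a partition exists via a standard bipartite edge-decomposition argument (for instance, decompose $L[A_0, B]$ into nearly-equal matchings by Proposition~\ref{basic_matching_dec} and re-bundle them into $k$ colour classes, or use a Hall-type greedy procedure). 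Setting $Q_i := Q_{i,A} \cup Q_{i,B} \cup L^{A_0}_i \cup L^{B_0}_i$ gives $d_{Q_i}(v) = 2$ for $v \in A_0 \cup B_0$ and $d_{Q_i}(v) \le 1$ for $v \in A \cup B$; acyclicity follows because any cycle in $Q_i$ would be confined to $A_0 \cup B_0$ by the degree bound and, since $L^{A_0}_i \cup L^{B_0}_i$ carries no $A_0B_0$-edges, would lie entirely in $Q_{i,A}$ or $Q_{i,B}$, each of which is a path system. Since the new edges lie in $E(A', B')$, $e(Q_i[A']) - e(Q_i[B']) = e(Q_{i,A}) - e(Q_{i,B}) = a-b$ by Corollary~\ref{finalcor}(d)(iii'), and Proposition~\ref{balpathcheck} gives 2-balancedness. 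Properties~(i),~(ii) are immediate and (iii) holds because $e(Q_i) \le 2\sqrt{\eps}n + 2(a+b) \le 3\sqrt{\eps}n$.

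For (b), induct on $i$. First, $(F, G, A, A_0, B, B_0)$ is an $(\eps, \eps', K, D)$-pre-framework, since (FR1)--(FR4) yield (WF1)--(WF4) and (WF5) is given. Each $J \in \mathfrak{J}$ is 2-balanced and $|\mathfrak{J}| = K^4 t_K$, so $G(\mathfrak{J})$ is $(1-20\eps_4)D$-balanced, making $G - G(\mathfrak{J})$ $2k$-balanced; (WF5) persists for $F - G(\mathfrak{J})$, so $(F - G(\mathfrak{J}), G - G(\mathfrak{J}), A, A_0, B, B_0)$ is an $(\eps, \eps', K, 2k)$-pre-framework. Suppose $C_1, \ldots, C_{i-1}$ have been constructed with each $C_j \cap G$ 2-balanced; set $F_i := F - G(\mathfrak{J}) - \bigcup_{j<i} C_j$ and $G_i := G - G(\mathfrak{J}) - \bigcup_{j<i}(C_j \cap G)$. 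Iterated applications of Proposition~\ref{WFpreserve}, valid because each $C_j$ is 2-regular and $C_j \cap G$ is 2-balanced, show that $(F_i, G_i, A, A_0, B, B_0)$ is a pre-framework. For the degree hypothesis of Lemma~\ref{extendpaths}, each $v \in A \cup B$ lies in at most $(2K-1)K^2 t_K \le D/K$ BES (only those $\i$ with $v \in A_{i_1} \cup A_{i_2} \cup B_{i_3} \cup B_{i_4}$) with degree $\le 1$ in each by (BES1), so $d_{G(\mathfrak{J})}(v) \le D/K$; adding the loss of $2$ per removed $C_j$ gives $d_{F_i}(v) \ge 2n/5 - D/K - 2k \ge (1/4+\alpha)n$ for some $\alpha > 0$. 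Also $Q_i \subseteq G_i$ (edge-disjoint from the earlier $Q_j$ by (a)(ii) and from each $C_j - Q_j \subseteq F[A,B]$). Lemma~\ref{extendpaths} then yields a Hamilton cycle $C_i \subseteq F_i$ containing $Q_i$ with $E(C_i) \setminus E(Q_i) \subseteq E(F[A,B])$; since $C_i - Q_i$ consists of $AB$-edges, $C_i \cap G$ inherits 2-balancedness from $Q_i$, closing the induction. The main technical point is the constrained bipartite decomposition in (a), made possible by the comfortable slack $k \gg \sqrt{\eps}n$; part (b) is then a routine iteration using Proposition~\ref{WFpreserve}.
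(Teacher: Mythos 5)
Your overall architecture matches the paper: extend the pairs $(Q_{i,A},Q_{i,B})$ to $2$-balanced $A_0B_0$-path systems $Q_i$ by distributing the remaining $A_0B$- and $AB_0$-edges, then iteratively apply Lemma~\ref{extendpaths} together with Proposition~\ref{WFpreserve}. The degree-sum computation $\sum_i n_i^A(x) = d_L(x,B) = d_{G^*}(x,B')$, the identification $G^*[A'] = G_{glob}[A']$ via Lemma~\ref{balmatchextend}(ii), and the acyclicity argument are all sound. For part (b), working directly in $F - G(\mathfrak{J})$ (after noting it is an $(\eps,\eps',K,2k)$-pre-framework) is a mild and correct variant of what the paper does; the paper instead applies Lemma~\ref{extendpaths} to $(F,G)$ and uses conclusion (ii) of that lemma — that $E(C_i)\setminus E(Q_i)$ consists of $AB$-edges, which by (BES2) are never in $G(\mathfrak{J})$ — to conclude a posteriori that the Hamilton cycles avoid $G(\mathfrak{J})$. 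Either route works.

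The genuine gap is in the distribution step of part (a). Your first suggested method — decompose $L[A_0,B]$ into near-equal matchings via Proposition~\ref{basic_matching_dec} and ``re-bundle them into $k$ colour classes'' — does not achieve the required per-vertex exact degrees $d_{L^{A_0}_i}(x) = n_i^A(x)$ at each $x\in A_0$: Vizing-type matchings give no control over how the two edges at a fixed $x$ are split among colours, and the required pattern $n_i^A(\cdot)$ varies in $i$ depending on which $Q_{i,A}$ already touch $x$. Only the second method (the ``Hall-type greedy procedure'') is viable, and it needs to be made precise: the paper processes the vertices $x_1,\dots,x_a$ of $A_0$ one at a time, and for each $x_{j+1}$ builds an auxiliary bipartite graph between $N_{G^*}(x_{j+1})\cap B$ and a \emph{multiset} $Z_{j+1}$ that contains $Q_{i,B}$ with multiplicity $n_i(x_{j+1})\in\{0,1,2\}$, with an edge $vQ_{i,B}$ present iff $v$ is not already an endvertex in $Q_{i,B}$ or in the partial assignment at $x_1,\dots,x_j$. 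The feasibility is then a perfect-matching argument: both sides of this auxiliary graph have minimum degree at least half the class size, using $\Delta(G_{glob}[A'])\le 3k/2$ (so $|Z_{j+1}| \geq k/2$), $e(Q_{i,B})\le\sqrt{\eps}n\ll k$, $d_G(v,B')\le\eps'n\ll k$ and $|A_0|\le\eps n\ll k$. This per-vertex matching argument is the technical heart of Claim~1 in the paper's proof, and it is what your bounds on the obstructions are implicitly feeding into, but as written you assert rather than prove that a suitable global decomposition of $L[A_0,B]$ exists.
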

\proof
We first prove (a). The argument is similar to that of Lemma~\ref{G-glob}. Roughly speaking, we will extend
each $Q_{i,A}$ into a path system $Q_{i,A}'$ by adding suitable $A_0B$-edges which ensure that 
every vertex in $A_0$ has degree exactly two in $Q_{i,A}'$. 
Similarly, we will extend each $Q_{i,B}$ into $Q'_{i,B}$ by adding suitable $AB_0$-edges. 
We will ensure that no vertex is an endvertex of both an edge in $Q'_{i,A}$ and an edge in $Q'_{i,B}$ and take $Q_i$ to
be the union of these two path systems. We first construct all the $Q_{i,A}'$.

\medskip

\noindent
{\bf Claim~1.} \emph{$G^*[A']\cup G^*[A_0,B]$ has a decomposition into edge-disjoint path systems $Q'_{1,A},\dots,Q'_{k,A}$ such that
\begin{itemize}
\item $Q_{i,A}\subseteq Q'_{i,A}$ and $E(Q'_{i,A})\setminus E(Q_{i,A})$ consists of $A_0B$-edges in $G^*$ (for each $i\le k$);
\item $d_{Q'_{i,A}}(x)=2$ for every $x\in A_0$ and $d_{Q'_{i,A}}(x)\le 1$ for every $x\notin A_0$;
\item no vertex is an endvertex of both an edge in $Q'_{i,A}$ and an edge in $Q_{i,B}$ (for each $i\le k$).%
    \COMMENT{Cannot simply say that $Q'_{i,A}$ and $Q_{i,B}$ are vertex-disjoint since we allow trivial paths in a path system
and so we might e.g. have that $V(Q_{i,B})=B'$.}
\end{itemize}
}

\medskip

\noindent
To prove Claim~1, let $G_{glob}$ be as defined in Corollary~\ref{finalcor}(d). Thus $G_{glob}[A']=Q_{1,A}\cup \dots \cup Q_{k,A}$.
On the other hand, Lemma~\ref{balmatchextend}(ii) implies that $G^*[A']=G_{glob}[A']$.
Hence,
\begin{equation}\label{eq:G*A'}
G^*[A']=G_{glob}[A']=Q_{1,A}\cup \dots \cup Q_{k,A}.
\end{equation}
Similarly, $G^*[B']=G_{glob}[B']=Q_{1,B}\cup \dots \cup Q_{k,B}$. Moreover, $G_{glob}=G^*[A']\cup G^*[B']$.
Consider any vertex $x \in A_0$. Let $d_{glob}(x)$ denote the degree of $x$ in $Q_{1,A}\cup \dots \cup Q_{k,A}$.
So $d_{glob}(x)=d_{G^*}(x,A')$ by~(\ref{eq:G*A'}). Let
\begin{align}
d_{loc}(x) & :=d_G(x,A')-d_{glob}(x)\label{eq:dloc1}\\
& =d_G(x,A')-d_{G^*}(x,A')=d_{G(\mathfrak{J})}(x,A')\label{eq:dloc2}.
\end{align}
Then
\begin{equation}\label{eq:dsum}
d_{loc}(x)+d_G(x,B')+d_{glob}(x)\stackrel{(\ref{eq:dloc1})}{=}d_G(x)=D,
\end{equation}
where the final equality follows from (FR2).
Recall that $\mathfrak{J}$ consists of $K^4t_K$ edge-disjoint balanced exceptional systems.
Since $x$ has two neighbours in each of these balanced exceptional systems, 
the degree of $x$ in $G(\mathfrak{J})$ is $2K^4 t_K=D-2k$.
Altogether this implies that
\begin{eqnarray} \label{dB'x}
d_{G^*}(x,B') & = & d_{G}(x,B')-d_{G(\mathfrak{J})}(x,B')=d_{G}(x,B')-(d_{G(\mathfrak{J})}(x)-d_{G(\mathfrak{J})}(x,A'))\nonumber\\
& \stackrel{(\ref{eq:dloc2})}{=} & d_{G}(x,B')-(D-2k-d_{loc}(x))\stackrel{(\ref{eq:dsum})}{=} 2k-d_{glob}(x).
\end{eqnarray}
Note that this is precisely the total number of edges at $x$ which we need to add to $Q_{1,A},\dots,Q_{k,A}$
in order to obtain $Q'_{1,A},\dots,Q'_{k,A}$ as in Claim~1.

We can now construct the path systems $Q'_{i,A}$.  For each $x \in A_0$, let $n_i(x)=2-d_{Q_{i,A}}(x)$.
So $0\le n_i(x)\le 2$ for all $i\le k$.
Recall that $a:=|A_0|$ and consider an ordering $x_1,\dots,x_{a}$ of the vertices in $A_0$.
Let $G^*_j:=G^*[\{x_1,\dots,x_j\},B]$. Assume that for some 
$0 \le j < a$, we have already found a decomposition of $G^*_j$ into edge-disjoint path systems $Q_{1,j},\dots,Q_{k,j}$
satisfying the following properties (for all $i\le k$): 
\begin{itemize}
\item[(i$'$)] no vertex is an endvertex of both an edge in $Q_{i,j}$ and an edge in $Q_{i,B}$;
\item[(ii$'$)] $x_{j'}$ has degree $n_{i}(x_{j'})$ in $Q_{i,j}$ for all $j' \le j$
and all other vertices have degree at most one in $Q_{i,j}$.
\end{itemize}
We call this assertion ${\mathcal A}_j$. We will show that ${\mathcal A}_{j+1}$ holds 
(i.e.~the above assertion also holds with $j$ replaced by $j+1$).
This in turn implies Claim~1 if we let $Q'_{i,A}:=Q_{i,a}\cup Q_{i,A}$ for all $i\le k$.

To prove ${\mathcal A}_{j+1}$, consider the following bipartite auxiliary graph $H_{j+1}$.
The vertex classes of $H_{j+1}$ are $N_{j+1}:=N_{G^*}(x_{j+1})\cap B$ and $Z_{j+1}$, where
$Z_{j+1}$ is a multiset whose elements are chosen from $Q_{1,B},\dots,Q_{k,B}$.%
\COMMENT{AL: joined the two sentences together.}
Each $Q_{i,B}$ is included exactly $n_i(x_{j+1})$ times in $Z_{j+1}$. Note that
$N_{j+1}=N_{G^*}(x_{j+1})\cap B'$ since $e(G[A_0,B_0])=0$ by (FR6).
Altogether this implies that
\begin{eqnarray}
|Z_{j+1}| & = & \sum_{i=1}^k n_i(x_{j+1})=2k-\sum_{i=1}^k d_{Q_{i,A}}(x_{j+1})=2k-d_{glob}(x_{j+1})\label{eq:Zjplus1}\\
& \stackrel{(\ref{dB'x})}{=} & d_{G^*}(x_{j+1},B') =|N_{j+1}| \ge k/2\nonumber.
\end{eqnarray}
The final inequality follows from (\ref{dB'x}) since
$$d_{glob}(x_{j+1})\stackrel{(\ref{eq:G*A'})}{\le} \Delta(G_{glob}[A']) \le 3k/2$$
by Corollary~\ref{finalcor}(d).
We include an edge in $H_{j+1}$ between $v \in N_{j+1}$ and $Q_{i,B} \in Z_{j+1}$ if $v$ is not an endvertex of an edge in 
$Q_{i,B}\cup Q_{i,j}$.

\medskip

\noindent
{\bf Claim~2.} \emph{$H_{j+1}$ has a perfect matching $M'_{j+1}$.}

\medskip

\noindent
Given the perfect matching guaranteed by the claim, we construct $Q_{i,j+1}$ 
from $Q_{i,j}$ as follows:
the edges of $Q_{i,j+1}$ incident to $x_{j+1}$ are precisely the edges $x_{j+1}v$ where $vQ_{i,B}$ is an edge of $M'_{j+1}$
(note that there are up to two of these). Thus Claim~2 implies that ${\mathcal A}_{j+1}$ holds. 
(Indeed, (i$'$)--(ii$'$) are immediate from the definition of~$H_{j+1}$.)
 
\medskip

\noindent
To prove Claim~2, consider any vertex $v \in N_{j+1}$. Since $v \in B$, the number of path systems $Q_{i,B}$ containing an edge at $v$ is 
at most $d_{G}(v,B')$. The number of indices $i$ for which $Q_{i,j}$ contains an edge at $v$ is at most $d_{G}(v,A_0) \le |A_0|$. 
Since each path system $Q_{i,B}$ occurs at most twice
in the multiset $Z_{j+1}$, it follows that the degree of $v$ in $H_{j+1}$ is at least
$|Z_{j+1}|-2d_{G}(v,B')-2|A_0|$. Moreover, $d_G(v,B') \le \eps' n \le k/16$ (say) by (FR5).
Also, $|A_0| \le \eps n\le k/16$ by (FR4).
So $v$ has degree at least $|Z_{j+1}|-k/4\ge |Z_{j+1}|/2$ in $H_{j+1}$.

Now consider any path system $Q_{i,B} \in Z_{j+1}$. Recall that $e(Q_{i,B})\le \sqrt{\eps}n\le k/16$ (say),
where the first inequality follows from Corollary~\ref{finalcor}(d)(iv$'$).
Moreover, $e(Q_{i,j}) \le 2|A_0|\le 2 \eps n \le k/8$,
where the second inequality follows from (FR4). 
Thus the degree of $Q_{i,B}$ in $H_{j+1}$ is at least
$$
|N_{j+1}|-2e(Q_{i,B})-e(Q_{i,j})\ge |N_{j+1}|-k/4\ge |N_{j+1}|/2.
$$ 
Altogether this implies that $H_{j+1}$ has a perfect matching $M'_{j+1}$, as required.

\medskip
This completes the construction of $Q'_{1,A},\dots,Q'_{k,A}$. Next we construct $Q'_{1,B},\dots,Q'_{k,B}$ using the same approach.%
\COMMENT{Deryk+AL: added sentences}

\medskip 

\noindent
{\bf Claim~3.} \emph{$G^*[B']\cup G^*[B_0,A]$ has a decomposition into edge-disjoint path systems $Q'_{1,B},\dots,Q'_{k,B}$ such that
\begin{itemize}
\item $Q_{i,B}\subseteq Q'_{i,B}$ and $E(Q'_{i,B})\setminus E(Q_{i,B})$ consists of $B_0A$-edges in $G^*$ (for each $i\le k$);
\item $d_{Q'_{i,B}}(x)=2$ for every $x\in B_0$ and $d_{Q'_{i,B}}(x)\le 1$ for every $x\notin B_0$;
\item no vertex is an endvertex of both an edge in $Q'_{i,A}$ and an edge in $Q'_{i,B}$ (for each $i\le k$).
\end{itemize}
}
\medskip

\noindent
The proof of Claim~3 is similar to that of Claim~1.%
\COMMENT{osthus changed 2 to 1}
The only difference is that when constructing $Q'_{i,B}$, we need to avoid the endvertices of all the edges in $Q'_{i,A}$
(not just the edges in $Q_{i,A}$). However, $e(Q'_{i,A}-Q_{i,A})\le 2|A_0|$, so this does not affect the calculations
significantly. 

\medskip

\noindent
We now take $Q_i:=Q'_{i,A}\cup Q'_{i,B}$ for all $i\le k$. Then the $Q_i$ are pairwise edge-disjoint
and $$e(Q_i)\le e(Q_{i,A})+e(Q_{i,B})+2|A_0\cup B_0|\le 2\sqrt{\eps}n+2\eps n\le 3\sqrt{\eps}n$$
by Corollary~\ref{finalcor}(d)(iv$'$) and (FR4).
Moreover, Corollary~\ref{finalcor}(d)(iii$'$) implies that
\begin{equation}\label{eq:edgediff}
e_{Q_i}(A')-e_{Q_i}(B')=e(Q_{i,A})-e(Q_{i,B})=a-b.
\end{equation}
Thus each $Q_i$ is a 2-balanced $A_0B_0$-path system. Further, $Q_1,\dots,Q_k$ form a decomposition of
$$G^*[A']\cup G^*[A_0,B]\cup G^*[B']\cup G^*[B_0,A]=G^*-G^*[A,B].$$
(The last equality follows since $e(G[A_0,B_0])=0$ by (FR6).)
This completes the proof of~(a).

To prove (b), note that $(F,G,A,A_0,B,B_0)$ is an $(\eps,\eps',D)$-pre-framework, i.e.~it satisfies (WF1)--(WF5).
Indeed, recall that (FR1)--(FR4) imply (WF1)--(WF4) and that (WF5) holds by assumption.
So we can apply Lemma~\ref{extendpaths} (with $Q_1$ playing the role of $Q$) 
to extend $Q_1$ into a Hamilton cycle $C_1$.
Moreover, Lemma~\ref{extendpaths}(iii) implies that
$C_1 \cap G$ is $2$-balanced, as required. (Lemma~\ref{extendpaths}(ii) guarantees%
   \COMMENT{Daniela added ref to Lemma~\ref{extendpaths}(ii)}
that $C_1$ is edge-disjoint from $Q_2, \dots , Q_k$ and $G(\mathfrak{J})$.)

Let $G_1:=G-C_1$ and $F_1:=F-C_1$.%
\COMMENT{Daniela: previously had "Let $G_1$ be obtained from $G$ by removing the edges of $C_1$ and $F_1$ be obtained by the removing the edges of $C_1$."}
Proposition~\ref{WFpreserve} (with $C_1$ playing the role of $H$)
implies that $(F_1,G_1,A,A_0,B,B_0)$ is an $(\eps,\eps',D-2)$-pre-framework.
So we can now apply Lemma~\ref{extendpaths}%
    \COMMENT{Daniela replaced Lemma~\ref{coverA0B0} by Lemma~\ref{extendpaths}}
to $(F_1,G_1,A,A_0,B,B_0)$ to extend $Q_2$ into a Hamilton cycle $C_2$,
where $C_2 \cap G$ is also $2$-balanced.

We can continue this way to find $C_3,\dots, C_k$.
Indeed, suppose that we have found $C_1,\dots,C_i$ for $i<k$.
Then we can still apply Lemma~\ref{extendpaths}
since $\delta(F)-2i \ge \delta(F)-2k\ge n/3$.%
   \COMMENT{Daniela deleted " and since $D-2k\ge n/300$" - don't see why we need this} 
Moreover, $C_j \cap G$ is $2$-balanced for all $j \le i$, so $(C_1 \cup \dots \cup C_i) \cap G$ is $2i$-balanced.
This in turn means that 
Proposition~\ref{WFpreserve} (applied with $C_1 \cup \dots \cup C_i$ playing the role of $H$)
implies that after removing $C_1,\dots,C_i$, we still have an $(\eps,\eps',D-2i)$-pre-framework and can find $C_{i+1}$.
\endproof

 We can now put everything together to find a set of localized balanced exceptional systems
 and a set of Hamilton cycles which altogether cover all edges of $G$ outside $G[A,B]$.
 The localized balanced exceptional systems will be extended to Hamilton cycles later on.%
 \COMMENT{Deryk added sentences}
\begin{cor}\label{BEScor}
Let $0<1/n\ll \eps\ll \eps_0\ll \eps'\ll \eps_1\ll \eps_2\ll \eps_3\ll \eps_4\ll 1/K\ll 1$.
Suppose that $(G,A,A_0,B,B_0)$ is an $(\eps,\eps',K,D)$-framework with $|G|=n$ and such that $D\ge n/200$ and%
   \COMMENT{Daniela: had $\delta (G) \geq D\ge n/200$}
$D$ is even. Let $\cP:=\{A_0,A_1,\dots,A_K,B_0,B_1,\dots,B_K\}$ be a $(K,m,\eps,\eps_1,\eps_2)$-partition for $G$. 
Suppose that $t_K:=(1-20\eps_4)D/2K^4\in \mathbb{N}$ and let $k:=10\eps_4 D$.
Suppose that $F$ is a graph on $V(G)$ such that $G\subseteq F$, $\delta(F)\ge 2n/5$ and such that $F$ satisfies
(WF5) with respect to $\eps '$. Then there are $k$ edge-disjoint Hamilton cycles $C_1,\dots,C_k$ in $F$
and for all $1\leq \I \leq K$ there is a set $\mathcal{J}_{i_1i_2i_3i_4}$ such that the
following properties are satisfied:
\begin{itemize}
\item[(i)] $\mathcal{J}_{i_1i_2i_3i_4}$ consists of $t_K$ $(i_1,i_2,i_3,i_4)$-BES in $G$ with respect to $\cP$ and
with parameter $\eps_0$ which are edge-disjoint from each other
and from $C_1\cup\dots\cup C_k$.
\item[(ii)] Whenever $(i_1,i_2,i_3,i_4)\neq (i'_1,i'_2,i'_3,i'_4)$, $J \in \mathcal{J}_{i_1i_2i_3i_4}$ and $J' \in \mathcal{J}_{i'_1i'_2i'_3i'_4}$,
then $J$ and $J'$ are edge-disjoint.
\item[(iii)] Given any $i \leq k$ and $v \in A_0 \cup B_0$, the two edges incident 
to $v$ in $C_i$ lie in $G$.
\item[(iv)] Let $G^\diamond$ be the subgraph of $G$ obtained by deleting the edges of all the $C_i$ and all
the balanced exceptional systems in $\mathcal{J}_{i_1i_2i_3i_4}$ (for all $1\leq \I \leq K$).
Then $G^\diamond$ is bipartite with vertex classes $A'$, $B'$ and $V_0=A_0 \cup B_0$%
\COMMENT{osthus added $A_0 cup B_0$} 
is an isolated set in $G^\diamond$.%
\COMMENT{Originally it was written that:
Moreover, $G^\diamond$ is $(D-2k-2K^4t_K)$-balanced with respect to $(A,A_0,B,B_0)$.
But note this is vacuous (i.e. $0$-balanced).}
\end{itemize}
\end{cor}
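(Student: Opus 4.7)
The plan is to stitch together Lemma~\ref{balmatchextend} and Lemma~\ref{lem:HCglob}, which were set up for precisely this purpose: the former produces the sets of BES covering most of the edges inside $A'$ and $B'$, while the latter extends the remaining edges inside the classes into Hamilton cycles.

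First, apply Lemma~\ref{balmatchextend} to produce the sets $\mathcal{J}_{i_1i_2i_3i_4}$ of $(i_1,i_2,i_3,i_4)$-BES (with parameter $\eps_0$). Parts~(i) and~(iii) of that lemma immediately deliver property~(ii) of the corollary together with the internal edge-disjointness in~(i); the only remaining part of~(i) is edge-disjointness from the Hamilton cycles, which will be automatic once the $C_i$ are built in the complement of $G(\mathfrak{J})$.

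Next, let $\mathfrak{J}$ be the union of the $\mathcal{J}_{i_1i_2i_3i_4}$, let $G(\mathfrak{J})\subseteq G$ be the union of the BES in $\mathfrak{J}$, and set $G^*:=G-G(\mathfrak{J})$. Apply Lemma~\ref{lem:HCglob}(a), fed with the pairs $(Q_{i,A},Q_{i,B})$ from Corollary~\ref{finalcor}(d), to decompose $G^*-G^*[A,B]$ into $k$ edge-disjoint $2$-balanced $A_0B_0$-path systems $Q_1,\dots,Q_k$. Then apply Lemma~\ref{lem:HCglob}(b), with the graph $F$ given in the corollary (which satisfies $G\subseteq F$, $\delta(F)\ge 2n/5$, and (WF5) by hypothesis), to extend each $Q_i$ into a Hamilton cycle $C_i$ inside $F-G(\mathfrak{J})$. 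Since the $C_i$ live in $F-G(\mathfrak{J})$, they are edge-disjoint from every BES, completing~(i). For~(iii), each $v\in A_0\cup B_0$ has degree $2$ in the $A_0B_0$-path system $Q_i$, while $C_i\setminus Q_i$ consists entirely of $AB$-edges (by Lemma~\ref{extendpaths}(ii), invoked inside Lemma~\ref{lem:HCglob}(b)); hence the two edges of $C_i$ at $v$ are the two edges of $Q_i$ at $v$, which lie in $G^*\subseteq G$.

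The only slightly non-routine point is~(iv), which is really bookkeeping. Every BES avoids $G[A,B]$ (by the remark after~(BES2)), and since $e_G(A_0,B_0)=0$ by~(FR6), all edges of $G(\mathfrak{J})$ lie in $G[A']\cup G[B']\cup G[A_0,B]\cup G[B_0,A]$. The $Q_i$ decompose $G^*-G^*[A,B]=G-G(\mathfrak{J})-G[A,B]$, so $G(\mathfrak{J})$ together with $Q_1\cup\dots\cup Q_k$ covers \emph{every} edge of $G$ outside $G[A,B]$. Removing the BES and the cycles $C_1,\dots,C_k$ from $G$ therefore leaves only a subgraph of $G[A,B]$, which is bipartite with vertex classes $A\subseteq A'$ and $B\subseteq B'$ and in which $V_0=A_0\cup B_0$ is isolated, as required.
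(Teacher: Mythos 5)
Your proposal is correct and follows the same route as the paper: apply Lemma~\ref{balmatchextend} to produce the $\mathcal{J}_{i_1i_2i_3i_4}$, then Lemma~\ref{lem:HCglob} to build the $C_i$ in $F-G(\mathfrak{J})$, with (iii) and (iv) following from the structural information provided by those lemmas. The paper's own proof is terser (it states (i)--(iii) as "clear" and only sketches (iv)), but the bookkeeping you spell out is exactly what makes those steps work.
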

\proof
This follows immediately from Lemmas~\ref{balmatchextend} and~\ref{lem:HCglob}(b). Indeed, clearly~(i)--(iii) are satisfied.
To check~(iv), note that $G^\diamond$ is obtained from the graph $G^*$ defined in Lemma~\ref{lem:HCglob}
by deleting all the edges of the Hamilton cycles $C_i$. But Lemma~\ref{lem:HCglob} implies that the $C_i$ together cover all the edges in
$G^*-G^*[A,B]$. Thus this implies that $G^\diamond$ is bipartite with vertex classes $A'$, $B'$
and $V_0$ is an isolated set in $G^\diamond$. 
\endproof


\section{Special factors and balanced exceptional factors}\label{sec:spec}

As discussed in the proof sketch, the proof of Theorem~\ref{1factbip} proceeds as follows.
First we find an approximate decomposition of the given graph $G$ and finally we find a decomposition of the (sparse) leftover from the approximate decomposition
(with the aid of a `robustly decomposable' graph we removed earlier).
Both the approximate decomposition as well as the actual decomposition steps assume that we work with a bipartite graph
on $A \cup B$ (with $|A|=|B|$). So in both steps,
we would need $A_0 \cup B_0$ to be empty, which we clearly cannot assume. On the other hand, in both steps, one can specify 
`balanced exceptional path systems' (BEPS) in $G$ with the following crucial property:
one can replace each BEPS with a path system BEPS$^*$ so that
\begin{itemize}
\item[($\alpha_1$)] BEPS$^*$ is bipartite with vertex classes $A$ and $B$;
\item[($\alpha_2$)] a Hamilton cycle $C^*$ in $G^*:=G[A,B] + {\rm BEPS}^*$ which%
    \COMMENT{Daniela replaced $G^*:=G[A,B] \cup {\rm BEPS}^*$ by $G^*:=G[A,B] + {\rm BEPS}^*$}
contains BEPS$^*$ corresponds to a Hamilton cycle $C$ in $G$
which contains BEPS (see Section~\ref{BESstar}).
\end{itemize}
Each BEPS will contain one of the balanced exceptional sequences BES constructed in Section~\ref{findBES}.
BEPS$^*$ will then be obtained by replacing the edges in BES by suitable `fictive' edges (i.e.~which are not necessarily contained in $G$). 

So, roughly speaking, this allows us to work with 
$G^*$ rather than $G$ in the two steps.
A convenient way of specifying and handling these balanced exceptional path systems is to combine them into `balanced exceptional factors' BF
(see Section~\ref{sec:BEPS} for the definition).

One complication is that the `robust decomposition lemma' (Lemma~\ref{rdeclemma}) we use from~\cite{Kelly}
deals with digraphs rather than undirected graphs. 
So to be able to apply it, we need a suitable orientation of  the edges of $G$ and so we will actually consider directed path systems
BEPS$^*_{\rm dir}$ instead of BEPS$^*$ above (whereas the path systems BEPS are undirected).

The formulation of the robust decomposition lemma is quite general and rather than guaranteeing ($\alpha_2$) directly, it
 assumes the existence of certain directed `special paths systems' SPS which are combined into `special factors' SF.
These are introduced in Section~\ref{sec:SF}.
Each of the Hamilton cycles produced by the lemma then contains exactly one of these special path systems.
So to apply the lemma, it suffices to check separately that each BEPS$^*_{\rm dir}$ satisfies the conditions required of a special path system 
and that it also satisfies ($\alpha_2$).%
\COMMENT{Andy: changed title of section 6.1}

\subsection{Constructing the graphs $J^*$ from the balanced exceptional systems~$J$} \label{BESstar}

Suppose that $J$ is a balanced exceptional system in a graph~$G$ with respect to a $(K,m,\eps_0)$-partition
$\cP=\{A_0,A_1,\dots,A_K,B_0,B_1,\dots,B_K\}$ of $V(G)$. We will now use $J$ to define an auxiliary matching $J^*$.
Every edge of $J^*$ will have one endvertex in $A$ and its other endvertex in $B$.
We will regard $J^*$ as being edge-disjoint from the original graph~$G$. So 
even if both $J^*$ and $G$ have an edge between the same pair of endvertices, we will regard these as different edges.
The edges of such a $J^*$ will be called \emph{fictive edges}. Proposition~\ref{CES-H}(ii) below shows that a Hamilton cycle in $G[A\cup B]+ J^*$ containing
all edges of $J^*$ in a suitable order will correspond to a Hamilton cycle in~$G$ which contains~$J$.
So when finding our Hamilton cycles, this property will enable us to ignore all the vertices in $V_0=A_0\cup B_0$ and to consider
a bipartite (multi-)graph between $A$ and $B$ instead.

We construct $J^{*}$ in two steps.
First we will construct a matching $J^*_{AB}$ on $A \cup B$
and then $J^{*}.$%
	   \COMMENT{AL: I have rephrased the definition/construction of $J^*$. $J^*$ is still the same as before. The construction is similar to the two cliques case.  This is from paper 4.}
Since each maximal path in $J$ has endpoints in $A \cup B$ and internal vertices in $V_0$ by (BES1), a balanced exceptional system $J$ naturally induces a matching $J^*_{AB}$ on $A \cup B$.
More precisely, if $P_1, \dots ,P_{\ell'}$ are the non-trivial paths in~$J$ and $x_i, y_i$ are the endpoints of $P_i$, then we define $J^*_{AB} := \{x_iy_i : i  \le \ell'\}$. 
Thus $J^*_{AB}$ is a matching by~(BES1) and $e(J^*_{AB}) \le e(J)$.%
    \COMMENT{Daniela deleted " by~(BES4)"}
Moreover, $J^*_{AB}$ and $E(J)$ cover exactly the same vertices in $A$. 
Similarly, they cover exactly the same vertices in $B$. 
So (BES3) implies that $e(J^*_{AB}[A])=e(J^*_{AB}[B])$.
We can write $E(J^*_{AB}[A])=\{x_1x_2, \dots, x_{2s-1}x_{2s}\}$,
$E(J^*_{AB}[B])=\{y_1y_2, \dots, y_{2s-1}y_{2s}\}$ and $E(J^*_{AB}[A,B])=\{x_{2s+1}y_{2s+1}, \dots, x_{s'}y_{s'}\}$, where $x_i \in A$ and $y_i \in B$.
Define $J^*:= \{ x_iy_i : 1 \le i \le s' \}$.
Note that $	e(J^*) =  e(J^*_{AB}) \le e(J)$.
All edges of $J^*$ are called \emph{fictive edges}.

As mentioned before, we regard $J^*$ as being edge-disjoint from the original graph~$G$. 
Suppose that $P$ is an orientation of a subpath of (the multigraph) $G[A\cup B]+J^*$. 
We say that $P$ is \emph{consistent with $J^{*}$} if $P$ contains all the edges of $J^{*}$
and $P$ traverses the vertices $x_1,y_1,x_2,\dots,y_{s'-1},x_{s'},y_{s'}$ in this order.%
     \COMMENT{we need to prescribe a vertex rather than an edge ordering as it is important in the next lemma}
(This ordering will be crucial for the vertices $x_1,y_1,\dots,x_{2s},y_{2s}$,%
   \COMMENT{Daniela replaced "added in step (BES$^*$1)" by "$x_1y_1,\dots,x_{2s}y_{2s}$". Also defined when a cycle is consistent}
but it is also convenient to have an ordering
involving all vertices of~$J^*$.) 
Similarly, we say that a cycle $D$ in $G[A\cup B]+J^*$ is \emph{consistent with $J^{*}$} if $D$ contains all the edges of $J^{*}$
and there exists some orientation of $D$ which
traverses the vertices $x_1,y_1,x_2,\dots,y_{s'-1},x_{s'},y_{s'}$ in this order.

The next result shows that if $J$ is a balanced exceptional system and $C$ is a Hamilton cycle on
$A \cup B$ which is consistent with $J^*$, then the graph obtained from $C$ by replacing $J^*$
with $J$ is a Hamilton cycle on $V(G)$ which contains~$J$, see Figure~\ref{fig3}.
When choosing our Hamilton cycles, this property will enable us ignore all the vertices in $V_0$ and edges in $A$ and $B$ and to consider
the (almost complete) bipartite graph with vertex classes $A$ and $B$ instead.%
   \COMMENT{Daniela rephrased (ii) and also adjusted the proof to bring it in line with Allan's changed def of BES}

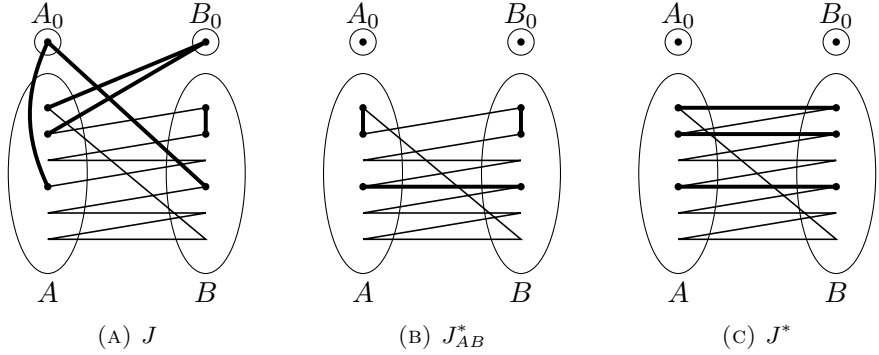
\begin{figure}[tbp]
\centering
\subfloat[$J$]{
\begin{tikzpicture}[scale=0.35]
			\draw  (-3,0) ellipse (1.5  and 3.8 );
			\draw (3,0) ellipse (1.5  and 3.8 );
			\node at (-3,-4.5) {$A$};
			\node at (3,-4.5) {$B$}; 			
			\draw (-3,5) circle(0.5);
			\draw (3,5) circle (0.5);
			\node at (-3,6) {$A_0$};
			\node at (3,6) {$B_0$};
			\fill (-3,5) circle (4pt);
			\fill (3,5) circle (4pt);
			\begin{scope}[start chain]
			\foreach \i in {2.5,1.5,-0.5}
			\fill (-3,\i) circle (4pt);
			\end{scope}
			\begin{scope}[start chain]
			\foreach \i in {2.5,1.5,-0.5}
			\fill (3,\i) circle (4pt);
			\end{scope}

			\begin{scope}[line width=1.5pt]
			\draw (-3,-0.5) to [out=115, in=-115] (-3,5)--(3,-0.5);
			\draw (-3,1.5)--(3,5)--(-3,2.5);
			\draw (3,2.5)--(3,1.5);
			\end{scope}
			
			\begin{scope}[line width=0.6pt]
			\draw (3,2.5)-- (-3,1.5);
			\draw (3,1.5)-- (-3,0.5)--(3,0.5)  -- (-3,-0.5);
			\draw (3,-0.5)--(-3,-1.5) -- (3,-1.5) --(-3,-2.5) -- (3,-2.5) -- (-3,2.5);
			\end{scope}
\end{tikzpicture}
}
\qquad
\subfloat[$J_{AB}^*$]{
\begin{tikzpicture}[scale=0.35]
			\draw  (-3,0) ellipse (1.5  and 3.8 );
			\draw (3,0) ellipse (1.5  and 3.8 );
			\node at (-3,-4.5) {$A$};
			\node at (3,-4.5) {$B$}; 			
			\draw (-3,5) circle(0.5);
			\draw (3,5) circle (0.5);
			\node at (-3,6) {$A_0$};
			\node at (3,6) {$B_0$};
			\fill (-3,5) circle (4pt);
			\fill (3,5) circle (4pt);
			\begin{scope}[start chain]
			\foreach \i in {2.5,1.5,-0.5}
			\fill (-3,\i) circle (4pt);
			\end{scope}
			\begin{scope}[start chain]
			\foreach \i in {2.5,1.5,-0.5}
			\fill (3,\i) circle (4pt);
			\end{scope}

			\begin{scope}[line width=1.5pt]
			\draw (-3,-0.5) --(3,-0.5);
			\draw (-3,1.5)--(-3,2.5);
			\draw (3,2.5)--(3,1.5);
			\end{scope}
			
			\begin{scope}[line width=0.6pt]
			\draw (3,2.5)-- (-3,1.5);
			\draw (3,1.5)-- (-3,0.5)--(3,0.5)  -- (-3,-0.5);
			\draw (3,-0.5)--(-3,-1.5) -- (3,-1.5) --(-3,-2.5) -- (3,-2.5) -- (-3,2.5);
			\end{scope}
\end{tikzpicture}
}
\qquad
\subfloat[$J^*$]{
\begin{tikzpicture}[scale=0.35]
			\draw  (-3,0) ellipse (1.5  and 3.8 );
			\draw (3,0) ellipse (1.5  and 3.8 );
			\node at (-3,-4.5) {$A$};
			\node at (3,-4.5) {$B$}; 			
			\draw (-3,5) circle(0.5);
			\draw (3,5) circle (0.5);
			\node at (-3,6) {$A_0$};
			\node at (3,6) {$B_0$};
			\fill (-3,5) circle (4pt);
			\fill (3,5) circle (4pt);
			\begin{scope}[start chain]
			\foreach \i in {2.5,1.5,-0.5}
			\fill (-3,\i) circle (4pt);
			\end{scope}
			\begin{scope}[start chain]
			\foreach \i in {2.5,1.5,-0.5}
			\fill (3,\i) circle (4pt);
			\end{scope}

			\begin{scope}[line width=1.5pt]
			\draw (-3,-0.5)--(3,-0.5);
			\draw (-3,1.5)--(3,1.5);
			\draw (3,2.5)--(-3,2.5);
			\end{scope}

			\begin{scope}[line width=0.6pt]
			\draw (3,2.5)-- (-3,1.5);
			\draw (3,1.5)-- (-3,0.5)--(3,0.5)  -- (-3,-0.5);
			\draw (3,-0.5)--(-3,-1.5) -- (3,-1.5) --(-3,-2.5) -- (3,-2.5) -- (-3,2.5);
			\end{scope}
\end{tikzpicture}
}
\caption{The thick lines illustrate the edges of $J$, $J_{AB}^*$ and $J^*$ respectively.}
\label{fig3}
\end{figure}

\begin{prop}\label{CES-H}
Let $\cP=\{A_0,A_1,\dots,A_K,B_0,B_1,\dots,B_K\}$ be a $(K,m,\eps)$-partition of a vertex set $V$.
Let $G$ be a graph on $V$ and let $J$ be a balanced exceptional system with respect to~$\cP$.
\begin{itemize}
\item[{\rm (i)}] Assume that $P$ is an orientation of a subpath of $G[A\cup B]+ J^*$ such that $P$ is consistent with~$J^{*}.$ 
Then the graph obtained from $P-J^{*}+J$ by ignoring the orientations of the edges is a path on
$V(P) \cup V_0$ whose endvertices are the same as those of~$P$.
\item[{\rm (ii)}] If $J\subseteq G$ and
$D$ is a Hamilton cycle of $G[A\cup B]+ J^*$ which is consistent with~$J^{*},$ 
then $D-J^*+J$ is a Hamilton cycle of $G$.
\end{itemize}
\end{prop}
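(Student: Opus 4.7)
The plan is to prove both parts by directly tracing the resulting graphs. Write $s := e(J^*_{AB}[A]) = e(J^*_{AB}[B])$ and let $Q^A_j \subseteq J$ denote the path from $x_{2j-1}$ to $x_{2j}$ for $1 \le j \le s$, let $Q^B_j$ denote the path from $y_{2j-1}$ to $y_{2j}$ for $1 \le j \le s$, and let $Q^{AB}_i$ denote the path from $x_i$ to $y_i$ for $2s+1 \le i \le s'$. By the definition of $J^*_{AB}$ (as the matching that pairs the endpoints of each nontrivial path in $J$) and the construction of $J^*$, these are exactly the nontrivial paths of $J$, and (BES1) guarantees that they are internally vertex-disjoint, with interiors contained in $V_0$ whose union is all of $V_0$. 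By the consistency hypothesis, $P$ decomposes as
\[
P \;=\; R_0 \cdot x_1 y_1 \cdot R_1 \cdot x_2 y_2 \cdot R_2 \cdots R_{s'-1} \cdot x_{s'} y_{s'} \cdot R_{s'},
\]
where each $R_i \subseteq G[A \cup B]$ is a (possibly trivial) subpath: $R_0$ joins the initial endpoint of $P$ to $x_1$, $R_i$ joins $y_i$ to $x_{i+1}$ for $1 \le i \le s'-1$, and $R_{s'}$ joins $y_{s'}$ to the terminal endpoint of $P$.

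For part (i), set $H := P - J^* + J$. I will show that $H$ is traced out by the walk starting at the initial endpoint of $P$ and successively following: $R_0$; then, for each $j = 1, 2, \ldots, s$ in turn, the paths $Q^A_j$, $R_{2j-1}$ reversed, $Q^B_j$, and $R_{2j}$; then, for each $i = 2s+1, 2s+2, \ldots, s'$ in turn, the paths $Q^{AB}_i$ and $R_i$; the walk ends at the terminal endpoint of $P$ after traversing $R_{s'}$. Because the $Q$-paths have interiors contained in $V_0$ and are pairwise internally disjoint from each other and from $V(P)$, and because the $R_i$ are pairwise internally disjoint subpaths of $P$ with interiors disjoint from $V(J)$, this walk uses every edge of $H$ exactly once and visits every vertex of $V(P) \cup V_0$ exactly once. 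Hence $H$ is a path on $V(P) \cup V_0$ whose endpoints are those of $P$. (As a consistency check, a degree count gives the same conclusion up to ruling out cycle components: every $V_0$-vertex has degree $2$ in $H$ because it is internal to some $Q$-path; each $x_i$ and $y_i$ retains its $P$-degree, since it loses one $J^*$-edge and gains one $J$-edge; and all other vertices retain their $P$-degree, so $H$ has exactly two degree-$1$ vertices, namely the endpoints of $P$, with all others of degree $2$.)

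For part (ii), apply the cyclic analogue. Orient $D$ consistently with $J^*$ and decompose $D = x_1 y_1 \cdot R_1 \cdot x_2 y_2 \cdot R_2 \cdots x_{s'} y_{s'} \cdot R_{s'}$, where $R_{s'}$ is now a subpath in $G[A \cup B]$ from $y_{s'}$ back to $x_1$. The cyclic tracing begins at $x_1$, alternates $Q$-paths with $R_i$-subpaths in exactly the same pattern as in part (i), and returns to $x_1$ after visiting every vertex of $V(D) \cup V_0 = V(G)$ exactly once (using that $D$ is Hamiltonian in $G[A \cup B] + J^*$). This yields a cycle on $V(G)$; since $J \subseteq G$ by hypothesis and $D - J^* \subseteq G[A \cup B] \subseteq G$, this cycle is contained in $G$ and hence is a Hamilton cycle of $G$. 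The main bookkeeping obstacle is to verify the tracing in degenerate corner cases: when $s = 0$ (no $Q^A_j$ or $Q^B_j$ appear), when $s' = 2s$ (no $Q^{AB}_i$ appear), or when an endpoint of $P$ coincides with some $x_i$ or $y_i$ (so that $R_0$ or $R_{s'}$ is trivial); in each such case the corresponding portion of the walk is simply omitted, and the argument otherwise goes through unchanged.
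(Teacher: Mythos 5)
Your proposal is correct and takes essentially the same approach as the paper: the key idea — rerouting through the paths of $J$ in place of the fictive $J^*$-edges while traversing the intervening subpaths $R_{2j-1}$ in reverse — is the same. The paper organises this as two sequential substitutions (first exchanging $J^*$ for $J^*_{AB}$ by the reversed-subpath trick, then replacing each $J^*_{AB}$-edge by the corresponding path in $J$, one edge at a time) and derives (ii) from (i) by deleting and then reinstating a single non-fictive edge of $D$, whereas you perform the tracing in a single pass and handle the cycle case directly, but the underlying argument is identical.
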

\proof
We first prove~(i). 
Let $s:=e(J^*_{AB}[A]) = e(J^*_{AB}[B])$ and $J^\diamond:=\{x_1y_1,\dots,x_{2s}y_{2s}\}$ (where the $x_i$ and $y_i$ are
as in the definition of $J^*$). So
$J^*:=J^\diamond\cup \{x_{2s+1}y_{2s+1},\dots,x_{s'}y_{s'}\}$, where $s':=e(J^*)$.
Let $P^c$ denote the path obtained from $P=z_1\dots z_2$ by reversing its direction. (So $P^c=z_2\dots z_1$ traverses
the vertices $y_{s'}, x_{s'}, y_{2s'-1}, \dots, x_2, y_1,x_1$ in this order.) 
First note
$$
P':=z_1Px_1x_2P^cy_1y_2Px_3x_4P^cy_3y_4\dots x_{2s-1}x_{2s}P^cy_{2s-1}y_{2s}Pz_2
$$
is a path on $V(P)$.
Moreover, the underlying undirected graph of $P'$ is%
\COMMENT{osthus added $=P-J^*+J^*_{AB}.$} 
precisely 
$$P-J^\diamond+(J^*_{AB}[A]\cup J^*_{AB}[B])=P-J^*+J^*_{AB}.
$$
In particular, $P'$ contains $J^*_{AB}$.
Now recall that if $w_1w_2$ is an edge in $J^*_{AB}$, then the vertices $w_1$ and $w_2$ are the endpoints of some path $P^*$ in $J$
(where the internal vertices on $P^*$ lie in $V_0$).
Clearly, $P'-w_1w_2+P^*$ is also a path. Repeating this step for every edge $w_1w_2$ of $J^*_{AB}$
gives a path $P''$ on $V(P)\cup V_0$. Moreover, $P''=P-J^{*}+J$. This completes the proof of~(i).

 (ii) now follows immediately from~(i).
\endproof


\subsection{Special path systems and special factors}\label{sec:SF}
As mentioned earlier, in order to apply Lemma~\ref{rdeclemma}, we first need  to prove the existence of certain `special path systems'.
These are defined below.

Suppose that 
$$
\mathcal{P}=\{A_0,A_1,\dots,A_K,B_0,B_1,\dots,B_K\}
$$ is a $(K,m,\eps_0)$-partition of a vertex set~$V$ and $L,m/L\in\mathbb{N}$.
We say that $(\mathcal{P},\mathcal{P'})$%
\COMMENT{the roles of $\mathcal{P},\mathcal{P'}$ were interchanged in a recent version. There may be places where this was overlooked.}
 is a \emph{$(K, L, m, \epszero)$-partition of~$V$} if $\mathcal{P'}$ is 
obtained from $\mathcal{P}$ by partitioning  $A_i$ 
into $L$ sets $A_{i,1},\dots,A_{i,L}$ of size~$m/L$ for all $1 \leq i \leq K$ and partitioning  $B_i$  into $L$ sets $B_{i,1},\dots,B_{i,L}$ of size~$m/L$ for all $1 \leq i \leq K$.
(So $\cP'$ consists of the exceptional sets $A_0$, $B_0$, the $KL$ clusters $A_{i,j}$ and the $KL$ clusters $B_{i,j}$.)
Unless stated otherwise, whenever considering a $(K, L, m, \epszero)$-partition $(\mathcal{P},\mathcal{P'})$ of a vertex set $V$
we use the above notation to denote the elements of $\mathcal P$ and $\mathcal P'$.

Let $(\mathcal{P},\mathcal{P'})$ be a $(K,L,m, \epszero)$-partition of~$V$.
Consider a spanning cycle $C = A_1 B_1 \dots A_K B_K$ on the clusters of $\mathcal{P}$.
Given an integer $f$ dividing $K$, the \emph{canonical interval partition} $\mathcal{I}$ of $C$ into $f$ intervals
consists of the intervals $$A_{(i-1)K/f+1} B_{(i-1)K/f+1} A_{(i-1)K/f+2} \dots B_{iK/f} A_{iK/f+1}$$ for all $i\le f$.
(Here $A_{K+1}:=A_1$.)

Suppose that $G$ is a digraph on~$V\setminus V_0$ and%
    \COMMENT{Have $V\setminus V_0$ instead of $V$ here since this is what we use later.}
$h\le L$. Let $I=A_jB_jA_{j+1}\dots A_{j'}$ be an interval in~$\mathcal{I}$.
A \emph{special path system $SPS$ of style~$h$ in~$G$ 
spanning the interval $I$} consists of precisely $m/L$ (non-trivial)
vertex-disjoint directed paths $P_1,\dots,P_{m/L}$ such that the following conditions hold:
\begin{itemize}
\item[(SPS1)] Every $P_s$ has its initial vertex in $A_{j,h}$ and its final vertex in $A_{j',h}$.
\item[(SPS2)] $SPS$ contains a matching ${\rm Fict}(SPS)$ such that all the edges in ${\rm Fict}(SPS)$ avoid the
endclusters $A_j$ and $A_{j'}$ of $I$ and such that $E(P_s)\setminus {\rm Fict}(SPS)\subseteq E(G)$.
\item[(SPS3)] The vertex set of $SPS$ is $A_{j,h}\cup B_{j,h}\cup A_{j+1,h}\cup \dots \cup B_{j'-1,h}\cup A_{j',h}$.
\end{itemize}
The edges in ${\rm Fict}(SPS)$ are called \emph{fictive edges of} $SPS$.

Let $\mathcal{I}=\{I_1,\dots,I_f\}$ be the canonical interval partition of $C$ into $f$ intervals.%
	\COMMENT{AL: added `the canonical interval partition of $C$ into $f$ intervals'}
A \emph{special factor $SF$ with parameters $(L,f)$ in $G$ (with respect to $C$, $\mathcal P'$)}
is a $1$-regular digraph on $V\setminus V_0$
which is the union of $Lf$ digraphs $SPS_{j,h}$ (one for all $j\le f$ and $h\le L$) such that each $SPS_{j,h}$ is
a special path system of style $h$ in $G$ which spans~$I_j$.
We write ${\rm Fict}(SF)$ for the union of the sets ${\rm Fict}(SPS_{j,h})$ over all $j\le f$ and $h\le L$
and call the edges in ${\rm Fict}(SF)$ \emph{fictive edges of $SF$}.

We will always view fictive edges as being distinct from each other and from the edges in other digraphs.
So if we say that special factors $SF_1,\dots,SF_r$ are pairwise edge-disjoint from each other and from some digraph $Q$ on $V\setminus V_0$,
then%
   \COMMENT{It's not enough to have this for all $Q\subseteq G$ since in the robust decomposition lemma $H$ need not be a subgraph of $G$.}
this means that $Q$ and all the $SF_i- {\rm Fict}(SF_i)$
are pairwise edge-disjoint, but for example there could be an edge from $x$ to $y$ in $Q$ as well as in ${\rm Fict}(SF_i)$ 
for several indices $i\le r$.
But these are the only instances of multiedges that we allow, i.e.~if there is more than one edge from $x$ to $y$, then all but
at most one of these edges are fictive edges.

\subsection{Balanced exceptional path systems and balanced exceptional factors}\label{sec:BEPS}
We now define balanced exceptional path systems BEPS. 
It will turn out that they (or rather their bipartite directed versions BEPS$^*_{\rm dir}$ involving fictive edges)
will satisfy the conditions of the special path systems defined above. 
Moreover, (bipartite) Hamilton cycles containing BEPS$^*_{\rm dir}$ correspond to Hamilton cycles in the `original' graph $G$
(see Proposition~\ref{prop:CEPSbiparite}).

Let $(\mathcal{P},\mathcal{P'})$ be a $(K,L,m, \epszero)$-partition of a vertex set $V$.
Suppose that $K/f\in\mathbb{N}$ and $h\le L$.
Consider a spanning cycle $C = A_1 B_1 \dots A_K B_K$ on the clusters of $\mathcal{P}$.
Let $\mathcal{I}$ be the canonical interval partition of $C$ into~$f$ intervals of equal size.
Suppose that $G$ is an oriented bipartite graph with vertex classes $A$ and $B$.
Suppose that $I=A_jB_j\dots A_{j'}$ is an interval in~$\mathcal{I}$.%
	\COMMENT{AL: changed line, Daniela added in~$\mathcal{I}$}
A \emph{balanced exceptional path system $BEPS$ of style~$h$ for $G$ spanning  $I$}
consists of precisely $m/L$ (non-trivial) vertex-disjoint undirected paths $P_1,\dots,P_{m/L}$ such that the following conditions hold:
\begin{enumerate}
\item[(BEPS1)] Every $P_s$ has one endvertex in $A_{j,h}$ and its other endvertex in $A_{j',h}$.
\item[(BEPS2)] $J := BEPS - BEPS[A,B]$ is a balanced exceptional system with respect to~$\cP$%
\COMMENT{AL: we cannot write 'There is a balanced exceptional system $J$\dots'. Because we want ALL balanced exceptional systems in BEPS to avoid $A_{j,h}$ and $A_{j',h}$.}
such that $P_1$ contains all edges of~$J$ and
so that the edge set of $J$ is disjoint from $A_{j,h}$ and $A_{j',h}$.%
    \COMMENT{Cannot just write $J\subseteq P_1$ since $J$ might contain vertices of degree 0 which don't belong to $P_1$.}
Let $P_{1,{\rm dir}}$ be the path obtained by orienting $P_1$ towards its endvertex in $A_{j',h}$
and let $J_{\rm dir}$ be the orientation of $J$ obtained in this way. Moreover, let $J^*_{\rm dir}$ be obtained from $J^*$ by orienting every edge in $J^*$
towards its endvertex in $B$. Then $P^*_{1,{\rm dir}}:=P_{1,{\rm dir}}-J_{\rm dir}+J^*_{\rm dir}$ is a directed path from $A_{j,h}$ to $A_{j',h}$ which is consistent with $J^*$.
\item[(BEPS3)] The vertex set of $BEPS$ is $V_0\cup A_{j,h}\cup B_{j,h}\cup A_{j+1,h}\cup \dots \cup B_{j'-1,h}\cup A_{j',h}$.
\item[(BEPS4)] For each $2\le s\le m/L$, define $P_{s,{\rm dir}}$ similarly as $P_{1,{\rm dir}}$.
Then $E(P_{s,{\rm dir}})\setminus E(J_{\rm dir}) \subseteq E(G)$ for every $1\le s\le m/L$.  
\end{enumerate}

Let%
    \COMMENT{Daniela deleted: Recall (BES2) that a balanced exceptional system does not contain $AB$-edges.
Since $G$ is bipartite, (BEPS2) and (BEPS4) implies that $J := BEPS - BEPS[A,B]$.} 
$BEPS^*_{\rm dir}$ be the path system consisting of $P^*_{1,{\rm dir}},P_{2,{\rm dir}},\dots,P_{m/L,{\rm dir}}$.
Then $BEPS^*_{\rm dir}$ is a special path system of style $h$ in $G$ which spans the interval~$I$ and such that ${\rm Fict}(BEPS^*_{\rm dir})=J^*_{\rm dir}$.

Let $\mathcal{I}=\{I_1,\dots,I_f\}$ be the canonical interval partition of $C$ into $f$ intervals.%
	\COMMENT{AL: added `the canonical interval partition of $C$ into $f$ intervals'}
A \emph{balanced exceptional factor $BF$ with parameters $(L,f)$ for $G$ (with respect to $C$, $\mathcal P'$)} is 
the union of $Lf$ undirected graphs $BEPS_{j,h}$ (one for all $j\le f$ and $h\le L$) such that each $BEPS_{j,h}$ is
a balanced exceptional path system of style $h$ for $G$ which spans~$I_j$. We write $BF^*_{\rm dir}$ for the union of $BEPS_{j,h,{\rm dir}}^*$ over all
$j\le f$ and $h\le L$. Note that $BF^*_{\rm dir}$ is a special factor with parameters $(L,f)$ in $G$ (with respect to $C$, $\mathcal P'$)
such that ${\rm Fict}(BF^*_{\rm dir})$ is the union of $J^*_{j,h,{\rm dir}}$ over all $j\le f$ and $h\le L$, where
$J_{j,h}=BEPS_{j,h}-BEPS_{j,h}[A,B]$ is%
    \COMMENT{Daniela added $=BEPS_{j,h}-BEPS_{j,h}[A,B]$}
the balanced exceptional system contained in $BEPS_{j,h}$ (see condition (BEPS2)). In particular, $BF^*_{\rm dir}$ is a $1$-regular digraph on $V\setminus V_0$ 
while  $BF$ is an undirected graph on $V$ with 
\begin{align}\label{EFdeg}
	d_{BF}(v) = 2 \ \ \text{for all } v \in V \setminus V_0 \ \ \ \textrm{ and } \ \ \ d_{BF}(v) = 2Lf \ \ \text{for all } v \in V_0.	
\end{align}

Given a balanced exceptional path system $BEPS$, let $J$ be as in (BEPS2) and
let $BEPS^*:=BEPS-J+J^*$. So $BEPS^*$ consists of $P^*_1:=P_1-J+J^*$ as well as $P_2,\dots,P_{m/L}$.
The following is an immediate consequence of (BEPS2) and Proposition~\ref{CES-H}.

\begin{prop} \label{prop:CEPSbiparite}
Let $(\cP,\cP')$ be a $(K,L, m , \epszero)$-partition of a vertex set $V$.
Suppose that $G$ is a graph on~$V\setminus V_0$, that $G_{\rm dir}$ is an orientation of $G[A,B]$ and that $BEPS$ is a balanced exceptional
path system for~$G_{\rm dir}$. Let $J$ be as in (BEPS2). Let $C$ be a Hamilton cycle of $G+J^*$ which
contains $BEPS^*$. Then $C - BEPS^*+BEPS$ is a Hamilton cycle of $G\cup J$.
\end{prop}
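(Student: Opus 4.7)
The plan is to reduce the statement directly to Proposition~\ref{CES-H}(ii) applied with $G\cup J$ in place of the ambient graph.

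First I would verify the algebraic identity
\[
C - BEPS^* + BEPS \;=\; C - J^* + J.
\]
This uses only the definition $BEPS^* := BEPS - J + J^*$. The paths $P_2,\dots,P_{m/L}$ of $BEPS$ are already contained in $G[A,B]$ by (BEPS4) (they involve no edges of $J$), hence they appear unchanged in $BEPS^*$; only $P_1$ is modified, with $J$ replaced by $J^*$. So $BEPS$ and $BEPS^*$ differ precisely in the edge sets $J$ and $J^*$, and the identity follows.

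Next I would check the hypotheses of Proposition~\ref{CES-H}(ii) with ambient graph $G\cup J$. Trivially $J\subseteq G\cup J$, and $C$ is a Hamilton cycle of $(G\cup J)[A\cup B]+J^*$ because $V(G)=V\setminus V_0=A\cup B$ and $C$ is by assumption a Hamilton cycle of $G+J^*$. The only substantive point is to show that $C$ is \emph{consistent} with $J^*$. By (BEPS2), the directed path $P^*_{1,\mathrm{dir}}$ contains all edges of $J^*$ and traverses the prescribed vertex sequence $x_1,y_1,x_2,\dots,x_{s'},y_{s'}$; moreover, no edge of $J^*$ appears on any of the other components $P_{s,\mathrm{dir}}$ for $s\ge 2$, since by (BEPS4) these lie entirely in $G_{\mathrm{dir}}$ and so are disjoint from the fictive edges. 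Hence, after choosing the orientation of $C$ extending that of $P^*_{1,\mathrm{dir}}$, the cycle $C$ traverses the vertices of $J^*$ in the required order, i.e.~it is consistent with $J^*$.

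Proposition~\ref{CES-H}(ii) then gives that $C - J^* + J$ is a Hamilton cycle of $G\cup J$, and by the identity above this is precisely $C - BEPS^* + BEPS$. The main (in fact only) non-bookkeeping step is the consistency check, but condition (BEPS2) was tailored exactly so that this falls out directly from the definitions; the rest is just unwinding what $BEPS^*$ means.
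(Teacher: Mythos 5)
Your proposal is correct and follows essentially the same route as the paper: establish the identity $C-BEPS^*+BEPS=C-J^*+J$, observe via (BEPS2) that $C$ is consistent with $J^*$, and invoke Proposition~\ref{CES-H}(ii) with $G\cup J$ in place of $G$. The only tiny quibble is attribution: the fact that $P_2,\dots,P_{m/L}$ carry no edges of $J$ is really a consequence of (BEPS2) (which places all of $J$'s edges on $P_1$), with (BEPS4) then putting those remaining edges inside $G_{\rm dir}$; this does not affect the validity of the argument.
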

\proof
Note that $C - BEPS^*+BEPS=C-J^*+J$. Moreover, (BEPS2) implies that $C$ contains all edges of $J^*$ and is
consistent with $J^*$.%
   \COMMENT{Daniela: had "traverses the
vertices $x_1,y_1,x_2,\dots,y_{s'-1},x_{s'},y_{s'}$ in this order" instead of "is consistent with $J^*$"}
So the proposition follows from Proposition~\ref{CES-H}(ii)
applied with $G\cup J$ playing the role of~$G$.
\endproof


\subsection{Finding balanced exceptional factors in a scheme}\label{sec:findBF}
The following definition of a `scheme' captures the `non-exceptional' part of the graphs we are working with.%
\COMMENT{osthus added para}
For example, this will be the structure within which we find the edges needed to extend a balanced exceptional system into a balanced exceptional path system.

Given an oriented graph $G$ and partitions $\mathcal P$ and $\cP'$ of a vertex set $V$, we call $(G, \mathcal{P},\mathcal{P}')$ a
\emph{$[K,L,m,\eps_0,\eps]$-scheme} if the following properties hold:
\begin{itemize}
\item[(Sch$1'$)] $(\mathcal{P},\mathcal{P}')$ is a $(K,L,m,\eps_0)$-partition of $V$. Moreover, $V(G)=A\cup B$.
\item[(Sch$2'$)] Every edge of $G$ has one endvertex in $A$ and its other endvertex in~$B$.
\item[(Sch$3'$)] $G[A_{i,j},B_{i',j'}]$ and $G[B_{i',j'}, A_{i,j}]$ are $[\eps, 1/2]$-superregular
for all $i, i'\le K$ and all $j, j' \leq L$. Further, $G[A_i,B_j]$ and $G[B_j,A_i]$ are $[\eps, 1/2]$-superregular
for all $i,j \leq K$.
\item[(Sch$4'$)] $|N_{G}^+(x)\cap N_{G}^-(y)\cap B_{i,j}|\ge (1-\eps) m/5L$ for all distinct $x,y\in A$, all $i\le K$ and all $j\le L$.
Similarly, $|N_{G}^+(x)\cap N_{G}^-(y)\cap A_{i,j}|\ge(1-\eps)  m/5L$ for all distinct $x,y\in B$, all $i\le K$ and all $j\le L$.\COMMENT{NOTE! added error term here!}
\end{itemize}
If $L=1$ (and so $\cP=\cP'$), then (Sch$1'$) just says that
$\mathcal{P}$ is a $(K,m,\eps_0)$-partition of $V(G)$.%
    \COMMENT{AL:removed notation for $[K,m, \eps_0, \eps]$}

The next lemma allows us to extend a suitable balanced exceptional system into a balanced exceptional path system.%
   \COMMENT{It does not seem to be possible to unify this in a simple way with the balancing lemma of the approximate cover chapter}
Given $h\le L$, we say that an $(i_1, i_2 , i_3 , i_4)$-BES $J$ has \emph{style $h$ (with respect to the $(K,L,m,\eps_0)$-partition
$(\mathcal{P},\mathcal{P}')$)} if all the edges of $J$
have their endvertices in $V_0\cup A_{i_1,h}\cup A_{i_2,h}\cup B_{i_3,h}\cup B_{i_4,h}$.

\begin{lemma} \label{lma:bipartite:CEPS}
Suppose that $K, L, n, m/L \in \mathbb{N}$, that $0 <1/n  \ll \eps, \eps_0\ll 1$ and $\eps_0\ll 1/K,1/L$.
Let $(G,\mathcal{P}, \mathcal{P}')$ be a $[K, L, m,\eps_0,\eps]$-scheme with $|V(G)\cup V_0|=n$.
Consider a spanning cycle $C = A_1 B_1 \dots A_K B_K$ on the clusters of $\mathcal{P}$ and let 
$I  = A_j B_j A_{j+1} \dots  A_{j'}$ be an interval on~$C$ of length at least~$10$.
Let $J$ be an $(i_1, i_2 , i_3 , i_4)$-BES of style $h\le L$ with parameter $\eps_0$%
    \COMMENT{We use $\eps_0$ for both the $(K,m, \epszero)$-partition $\cP'$ and the bound on $e(J)$. But this seems
to be ok for our applications. Also, can probably replace the 10 for the interval length by something smaller, but 10 seems safe.}
(with respect to $(\cP,\cP')$), for some $i_1, i_2, i_3 , i_4 \in \{j+1, \dots, j'-1\}$.
Then there exists a balanced exceptional path system of style $h$ for $G$
which spans the interval $I$ and contains all edges in~$J$.
\end{lemma}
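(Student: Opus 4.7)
My plan is to explicitly construct the $m/L$ paths $P_1,\dots,P_{m/L}$ of the desired balanced exceptional path system, with $P_1$ containing all edges of $J$ and $P_2,\dots,P_{m/L}$ lying entirely in~$G$. By Proposition~\ref{CES-H}(i) (invoked via (BEPS2)), it is cleaner to first build the directed bipartite path $P^*_{1,\mathrm{dir}}$ in $G+J^*$ (where $J^*$ is the fictive matching associated to $J$) and then recover $P_1$ by replacing $J^*$ with~$J$.

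For $P^*_{1,\mathrm{dir}}$: recall $J^*$ is an $A$-$B$ matching $\{x_iy_i:1\le i\le s'\}$ with $x_i\in A_{i_1,h}\cup A_{i_2,h}$, $y_i\in B_{i_3,h}\cup B_{i_4,h}$, and $s'\le e(J)\le \eps_0 n\ll m/L$ (using $\eps_0\ll 1/(KL)$). I fix $a_0\in A_{j,h}$, $a_e\in A_{j',h}$, and greedily select directed $G$-paths $R_0,R_1,\dots,R_{s'}$ of length one or three connecting, respectively, $a_0$ to $x_1$, $y_i$ to $x_{i+1}$ ($1\le i<s'$), and $y_{s'}$ to $a_e$. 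The existence of such a short directed path through any prescribed intermediate subcluster follows from the superregularity condition (Sch$3'$) and the co-degree condition (Sch$4'$), and vertex-disjointness is preserved throughout because only $O(s')=o(m/L)$ vertices are ever used. I route the $R_i$ so that $P^*_{1,\mathrm{dir}}$ visits each non-special interior subcluster of $I$ exactly once and each of the four special subclusters $A_{i_1,h},A_{i_2,h},B_{i_3,h},B_{i_4,h}$ a common number of times $c\le s'$; since the interval has length at least~$10$, there are enough interior subclusters to spread the connecting paths across.

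Next I build $P_2,\dots,P_{m/L}$ as paths in $G$ from $A_{j,h}$ to $A_{j',h}$ covering all remaining vertices in $I$'s subclusters. After removing $V(P^*_{1,\mathrm{dir}})$, each non-special subcluster retains $m/L-1$ vertices and each special subcluster retains $m/L-c$ vertices; by Proposition~\ref{superslice} the corresponding reduced bipartite pairs are still $[2\sqrt{\eps},1/2]$-superregular. I extract $m/L-c$ vertex-disjoint \emph{canonical} paths, each traversing one vertex per subcluster along $A_{j,h},B_{j,h},A_{j+1,h},\dots,A_{j',h}$, by successively pulling out perfect matchings between consecutive reduced subclusters and chaining them into paths. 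I then extract $c-1$ \emph{skip} paths, each of which avoids all four special subclusters but traverses every other $A$- and $B$-subcluster of $I$ in order; these exist for the same superregularity reason. The vertex counts balance exactly, since each of the $m/L$ paths contributes $+1$ to the discrepancy $|A_h|-|B_h|=m/L$, where $A_h:=\bigcup_{k=j}^{j'}A_{k,h}$ and $B_h:=\bigcup_{k=j}^{j'-1}B_{k,h}$.

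Finally, setting $P_1:=P^*_{1,\mathrm{dir}}-J^*+J$ (and forgetting orientations) yields the desired $P_1$ by Proposition~\ref{CES-H}(i), and the union $\mathrm{BEPS}:=P_1\cup P_2\cup\cdots\cup P_{m/L}$ is the required balanced exceptional path system. Conditions (BEPS1)--(BEPS4) follow directly from the construction, using that the edges of $J$ avoid $A_{j,h}$ and $A_{j',h}$ since $i_1,i_2\in\{j+1,\dots,j'-1\}$, and that $P^*_{1,\mathrm{dir}}$ is consistent with $J^*$ by design. The main obstacle is the careful bookkeeping in the first step which guarantees that $P^*_{1,\mathrm{dir}}$ visits each subcluster the ``correct'' number of times so that the matching extraction in the second step succeeds; the slack $\eps_0 n\ll m/L$ together with the minimum interval length $10$ is precisely what provides the room for this routing.
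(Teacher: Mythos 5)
Your high-level strategy matches the paper's: first build the directed path $P^*_{1,\mathrm{dir}}$ in $G+J^*_{\mathrm{dir}}$ by threading the fictive edges of $J^*$ together with $G$-edges, and then complete the remaining $m/L-1$ paths via superregular matchings. However, there is a genuine gap in the middle step, precisely where you depart from the paper.

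The paper routes $P^*_{1,\mathrm{dir}}$ so that the intermediate vertices between $y_r$ and $x_{r+1}$ lie in the \emph{companion} subclusters $A(y_r)$ and $B(x_{r+1})$. This has the effect that $P^*_{1,\mathrm{dir}}$ hits $A_{i_k,h}$ and $B_{i_k,h}$ each exactly $m_k$ times, where $m_k$ is the number of $J$-vertices in $A_{i_k,h}\cup B_{i_k,h}$ — and these $m_k$ may all be different. The paper then takes $m_k-1$ "skip" paths that avoid only the $k$-th special pair, i.e.\ a \emph{tailored} skip pattern per $k$, so the different $m_k$'s are absorbed without any balancing. You instead use a \emph{uniform} skip pattern ($c-1$ paths avoiding all four special subclusters), which forces $P^*_{1,\mathrm{dir}}$ to visit all four of $A_{i_1,h},A_{i_2,h},B_{i_3,h},B_{i_4,h}$ a common number $c$ of times. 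But $P^*_{1,\mathrm{dir}}$ must contain all vertices of $J^*$, which contribute $m_1$ visits to $A_{i_1,h}$, $m_2$ to $A_{i_2,h}$, $m_3$ to $B_{i_3,h}$ and $m_4$ to $B_{i_4,h}$. When these differ, the connecting paths must be routed through exactly $c-m_k$ extra vertices in each special subcluster to reach the common count, a nontrivial balancing act that you assert but never carry out. Worse, your stated constraint that the connecting paths $R_i$ have ``length one or three'' makes this balancing impossible and also contradicts the requirement that $P^*_{1,\mathrm{dir}}$ visit every one of the non-special interior subclusters of $I$ exactly once (already for $J$ small, $R_0$ or $R_{s'}$ must be long enough to wander through those subclusters). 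The companion-subcluster trick and the per-$k$ skip counts are what the paper uses to avoid the balancing problem entirely; if you want the uniform-skip variant, you need to replace ``length one or three'' by a careful routing argument, and verify that the $c-m_k$ extra vertices can be inserted consistently (respecting the alternation $A,B,A,B,\dots$ and the fixed order $x_1,y_1,\dots,x_{s'},y_{s'}$ imposed by consistency with $J^*$).
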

\begin{proof}
For each $k\le 4$, let $m_k$ denote the number of vertices in $A_{i_k,h} \cup B_{i_k,h}$ which are incident to edges of~$J$.%
     \COMMENT{Cannot just write $m_k := |V(J) \cap (A_{i_k,h} \cup B_{i_k,h})|$ since $V(J)$ contains vertices of degree zero.}
We only consider the case when $i_1$, $i_2$, $i_3$ and $i_4$ are distinct and $m_k>0$ for each $k \le 4$, as the other cases can be
proved by similar arguments.%
\COMMENT{The argument is essentially identical in the other cases. Only now, we don't consider 
$m_1,\dots, m_4$ but a subset of them. For example, consider the case when
$i_1,\dots, i_4$ are all distinct except $i_3=i_4$ and each $m_i >0$. Then note that
$m_3=m_4$. So we now follow the argument as before but only use $m_1,m_2,m_3$.
So for example, we now have
\begin{align*}
	 |V(P^*_{1,{\rm dir}}) \cap A_{i,h}| &= 
	 \begin{cases}
	1	& \textrm{for $i \in \{j, \dots, j'\} \setminus \{i_1, i_2, i_3 \}$,}\\
	m_k	& \textrm{for $i = i_k$ and $k \le 3$,}\\
	0	& \textrm{otherwise.}
	 \end{cases}
\end{align*}
Further, now for each $k \le 3$, we choose $m_k-1$ paths $P_1^{k}, \dots, P_{m_{k}-1}^{k}$ in $G$. So now $Q$ is a path system consisting of $m_1+m_2+m_3 - 2 $ vertex-disjoint directed paths from $A_{j,h}$ to $A_{j',h}$.
\\
In cases where one of the $m_i=0$, we similarly `ignore' $m_i$ and follow the corresponding
calculations. } 
Clearly $m_1+\dots+m_4 \le  2\epszero n$ by (BES4).
For every vertex $x \in A$, we define $B(x)$ to be the cluster $B_{i,h}\in\mathcal{P}'$ such that $A_i$ contains $x$.
Similarly, for every $y \in B$, we define $A(y)$ to be the cluster $A_{i,h}\in\mathcal{P}'$ such that $B_i$ contains~$y$.%
     \COMMENT{Daniela replaced $\mathcal{P}$ by $\mathcal{P}'$ (twice)}

Let $x_1y_1, \dots, x_{s'} y_{s'}$ be the edges of $J^*$, with $x_i \in A$ and $y_i \in B$ for all $i \le s'$.
(Recall that the ordering of these edges is fixed in the definition of $J^*$.)
Thus $s' = (m_1+\dots+m_4)/2 \le  \epszero n $. Moreover, our assumption that $\eps_0\ll 1/K,1/L$ implies that
$\eps_0 n\le m/100L$ (say). Together with (Sch$4'$) this in turn ensures that 
for every $r \le s'$, we can pick vertices $w_r \in B (x_r) $ and $z_r \in A(y_r)$ such that $w_rx_r$, $y_rz_r$ and $z_r w_{r+1}$ are
(directed) edges in $G$
and such that all the $4s'$ vertices $x_r,y_r,w_r,z_r$ (for $r\le s'$) are distinct from each other.
Let $P_1'$ be the path $w_1 x_1 y_1 z_1 w_2 x_2 y_2 z_2 w_3 \dots y_{s'} z_{s'}$.
Thus $P_1'$ is a directed path from $B$ to $A$ in $G + J^*_{\rm dir}$ which is consistent with~$J^*$. 
(Here $J^*_{\rm dir}$ is obtained from $J^*$ by orienting every edge towards~$B$.%
\COMMENT{Deryk})
Note that $|V(P'_1) \cap A_{i_k,h}| = m_k = |V(P'_1) \cap B_{i_k,h}|$ for all $k \le 4$.
(This follows from our assumption that $i_1$, $i_2$, $i_3$ and $i_4$ are distinct.) Moreover,
$V(P'_1) \cap ( A_i \cup B_i ) = \emptyset$ for all $i \notin \{i_1, i_2, i_3, i_4 \}$.

Pick a vertex $z'$ in $A_{j,h}$ so that $z' w_1 $ is an edge of $G$.
Find a path $P''_1$ from $z_{s'}$ to $A_{j',h}$ in $G$ such that the vertex set of $P''_1$ consists of $z_{s'}$ and precisely one vertex in
each $A_{i,h}$ for all $i \in \{j+1, \dots, j' \} \setminus \{i_1, i_2, i_3, i_4 \}$ 
and one vertex in each $B_{i,h}$ for all $i \in \{j, \dots, j'-1 \} \setminus \{i_1, i_2, i_3, i_4 \}$ and no other vertices.
(Sch$4'$) ensures that this can be done greedily.%
    \COMMENT{Daniela replaced (Sch$3'$) by (Sch$4'$)}
Define $P^*_{1,{\rm dir}}$ to be the concatenation of $z'w_1$, $P'_1$ and $P''_1$.
Note that $P^*_{1,{\rm dir}}$ is a directed path from $A_{j,h}$ to $A_{j',h}$ in $G + J^*_{\rm dir}$ which is consistent with $J^*$.
Moreover, $V(P^*_{1,{\rm dir}}) \subseteq \bigcup_{i \le K} A_{i,h} \cup B_{i,h}$,%
\COMMENT{AL: added $V(P^*_{1,{\rm dir}}) \subseteq \bigcup_{i \le K} A_{i,h} \cup B_{i,h}$}
\begin{align*}
	 |V(P^*_{1,{\rm dir}}) \cap A_{i,h}| &= 
	 \begin{cases}
	1	& \textrm{for $i \in \{j, \dots, j'\} \setminus \{i_1, i_2, i_3, i_4 \}$,}\\
	m_k	& \textrm{for $i = i_k$ and $k \le 4$,}\\
	0	& \textrm{otherwise,}
	 \end{cases}
\end{align*}
while	 
\begin{align*}
	  	 |V(P^*_{1,{\rm dir}}) \cap B_{i,h}| & = 
	 \begin{cases}
	1	& \textrm{for $i \in \{j, \dots, j'-1\} \setminus \{i_1, i_2, i_3, i_4 \}$},\\
	m_k	& \textrm{for $i = i_k$ and $k \le 4$},\\
	0	& \textrm{otherwise.}
	 \end{cases}
\end{align*}
(Sch$4'$) ensures that for each $k \le 4$, there exist $m_k-1$ (directed) paths $P_1^{k}, \dots, P_{m_{k}-1}^{k}$ in $G$ such that 
\begin{itemize}
	\item $P_r^{k}$ is a path from $A_{j,h}$ to $A_{j',h}$ for each $r\le m_k-1$ and $k \le 4$;
	\item each $P_r^{k}$ contains precisely one vertex in $A_{i,h}$ for each $i\in \{j, \dots, j'\} \setminus \{i_k\}$,
one vertex in $B_{i,h}$ for each $i\in \{j, \dots, j'-1\} \setminus \{i_k\}$ and no other vertices;
	\item $P^*_{1,{\rm dir}},P_1^{1},\dots, P_{ m_{1}-1}^{1},P_1^2, \dots, P_{m_4-1}^4$  are vertex-disjoint.
\end{itemize}
Let $Q$ be the union of $P^*_{1,{\rm dir}}$ and all the $P_r^{k}$ over all $k \le 4$ and $r \le m_k-1$.
Thus $Q$ is a path system consisting of $m_1+\dots+m_4 - 3 $ vertex-disjoint directed paths from $A_{j,h}$ to $A_{j',h}$.
Moreover, $V(Q)$ consists of precisely $m_1+\dots+m_4 - 3 \le  2\epszero n$ vertices in $A_{i,h}$
for every $j\le i\le j'$ and precisely $m_1+\dots+m_4-3$ vertices in $B_{i,h}$ for every $j\le i<j'$. 
Set $A'_{i,h}: = A_{i,h} \setminus V(Q)$ and $B'_{i,h}:= B_{i,h} \setminus V(Q)$ for all $i \le K$.
Note that, for all $j \leq i \leq j'$,
\begin{equation}\label{eq:sizeAih}
|  A'_{i,h}|= \frac{m}{L}-(m_1+\dots+m_4-3)\ge \frac{m}{L}-2\eps_0 n\ge \frac{m}{L}-5\eps_0 mK\ge (1-\sqrt{\eps_0})\frac{m}{L}
\end{equation}
since $\eps_0\ll 1/K,1/L$. Similarly, $|  B'_{i,h}| \geq (1-\sqrt{\eps_0}){m}/{L}$ for all
 $j \leq i < j'$.
Pick a new constant $\eps'$ such that $\eps,\eps_0 \ll \eps'\ll 1$.
Then (Sch$3'$) and (\ref{eq:sizeAih}) together with Proposition~\ref{superslice} imply that 
$G[A'_{i,h}, B'_{i,h}]$ is still $[\eps',1/2]$-superregular 
and so we can find a perfect matching in $G[A_{i,h}',B_{i,h}']$
for all $j \le i < j'$. Similarly, we can find a perfect matching in $G[B_{i,h}',A_{i+1,h}']$ for all $j \le i < j'$.
The union $Q'$ of all these matchings forms $m/L-(m_1+\dots+m_4) +3$ vertex-disjoint directed paths.

Let $P_1$ be the undirected graph obtained from $P^*_{1,{\rm dir}}-J^*_{\rm dir}+J$ by ignoring the
directions of all the edges. Proposition~\ref{CES-H}(i) implies that $P_1$
is a path on $V(P^*_{1,{\rm dir}})\cup V_0$ with the same endvertices as $P^*_{1,{\rm dir}}$.
Consider the path system obtained from $(Q\cup Q') \setminus \{P^*_{1,{\rm dir}}\}$ by ignoring the directions of the edges on all
the paths. Let $BEPS$ be the union of this path system and $P_1$. Then $BEPS$ is a balanced exceptional path system for $G$, as required.
\end{proof}

The next lemma shows that we can obtain many edge-disjoint balanced exceptional factors by extending balanced exceptional systems with suitable properties.

\begin{lemma} \label{lma:EF-bipartite}
Suppose that $L,f,q,n,m/L,K/f \in \mathbb{N}$, that $K/f\ge 10$, that $0 <1/n \ll \eps,\eps_0  \ll 1$,
that $\eps_0\ll 1/K,1/L$ and $Lq/m\ll 1$.
Let $(G,\mathcal{P}, \mathcal{P}')$ be a $[K, L, m,\eps_0,\eps]$-scheme with $|V(G)\cup V_0|=n$.
Consider a spanning cycle $C = A_1 B_1 \dots A_K B_K$ on the clusters of $\mathcal{P}$.
Suppose that there exists a set $\mathcal{J}$ of $Lf q $ edge-disjoint balanced exceptional systems
with parameter $\eps_0$ such that 
\begin{itemize}
	\item for all $i \le f$ and all $h \le L$, $\mathcal{J}$ contains precisely $q$ $(i_1,i_2,i_3,i_4)$-BES of style $h$
(with respect to $(\cP,\cP')$) for which $i_1,i_2,i_3,i_4 \in \{ (i-1)K/f+2, \dots, iK/f \}$.
\end{itemize}
Then there exist $q$ edge-disjoint balanced exceptional factors with parameters~$(L,f)$ for $G$ (with respect to $C$, $\mathcal P'$) covering all
edges in~$\bigcup\mathcal{J}$.
\end{lemma}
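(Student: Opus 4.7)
\medskip\noindent\textbf{Proof proposal.}
The plan is as follows. First, for each pair $(i,h)$ with $i\le f$ and $h\le L$, arbitrarily enumerate the $q$ BES in $\mathcal{J}$ of style $h$ whose four indices lie in $\{(i-1)K/f+2,\dots,iK/f\}$; for each $s\le q$ let $\mathcal{J}^{(s)}$ consist of the $s$-th BES in each of these $Lf$ enumerations. Then $\mathcal{J}^{(s)}$ contains exactly one BES per pair $(i,h)$, and the BES in $\mathcal{J}=\bigcup_s\mathcal{J}^{(s)}$ are pairwise edge-disjoint by hypothesis. The $s$-th balanced exceptional factor $BF_s$ will be obtained by extending every BES in $\mathcal{J}^{(s)}$ to a balanced exceptional path system spanning its corresponding interval $I_i$; taking the union over all $(i,h)$ yields, by definition, a BF with parameters $(L,f)$.

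The extensions are carried out one BES at a time, processing $\mathcal{J}^{(1)}$ first, then $\mathcal{J}^{(2)}$, and so on. When it is the turn of some $J\in\mathcal{J}^{(s)}$, let $G^\star\subseteq G$ denote the graph obtained by deleting all $G$-edges already used in previously extended BEPS (both in $BF_1,\dots,BF_{s-1}$ and in the BEPS already produced at stage $s$). I would then apply Lemma~\ref{lma:bipartite:CEPS} to $(G^\star,\mathcal{P},\mathcal{P}')$ and $J$ (whose four indices lie strictly inside the interval $I_i$ associated with $J$, as required) to obtain a BEPS of style $h$ spanning $I_i$ that contains $J$ and whose non-$J$ edges lie in $G^\star$. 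Different BEPS inside $BF_s$ are automatically edge-disjoint: distinct styles use disjoint sub-clusters of $\mathcal{P}'$, while distinct intervals of the same style share at most one $A$-cluster, and at that shared cluster the two relevant BEPS use edges going to different $B$-clusters.

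The key step is verifying that $(G^\star,\mathcal{P},\mathcal{P}')$ remains a $[K,L,m,\eps_0,\eps']$-scheme (for some $\eps\ll\eps'\ll 1$) every time Lemma~\ref{lma:bipartite:CEPS} is applied. The key counting fact is that a single BEPS of style $h$ spanning $I_i$ uses at most one edge per vertex in each bipartite graph $G[A_{i',h},B_{i',h}]$ and $G[B_{i',h},A_{i'+1,h}]$ with $A_{i'},B_{i'}\subseteq I_i$, plus at most $O(\eps_0 n)$ further edges per vertex used in order to detour around the fictive edges of its underlying BES; these extra edges are spread across many different cluster-pairs and contribute negligibly at any fixed pair. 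Summing over the at most $q$ BEPS that can ever touch a given pair $G[X_{i',h},Y_{i'',h}]$ between sub-clusters of style $h$, the number of deleted edges at each vertex is bounded by $q+o(m/L)$. Since $Lq/m\ll 1$, Proposition~\ref{superslice} (with $d'$ a suitable multiple of $q/(m/L)$) shows that each such $G^\star[X_{i',h},Y_{i'',h}]$ remains $[\eps',1/2]$-superregular, and (Sch$4'$) in $G^\star$ follows from the same bound via a routine common-neighbourhood estimate. Thus the scheme hypotheses persist throughout the construction.

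The main obstacle is therefore not the existence of any individual extension---that is exactly the content of Lemma~\ref{lma:bipartite:CEPS}---but the uniform bookkeeping that keeps $G^\star$ a scheme through all $Lfq$ extractions. Once the above counting is in place, the resulting $BF_1,\dots,BF_q$ are pairwise edge-disjoint, each has parameters $(L,f)$, and together they cover every edge of $\bigcup\mathcal{J}$ (every BES $J\in\mathcal{J}^{(s)}$ sits inside the BEPS built around it in $BF_s$), completing the proof.
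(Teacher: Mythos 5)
Your proposal follows essentially the same route as the paper: partition $\mathcal{J}$ into $q$ subfamilies each containing exactly one $(i_1,i_2,i_3,i_4)$-BES of each style $h$ for each interval $I_i$, then greedily extend each BES into a BEPS via Lemma~\ref{lma:bipartite:CEPS}, maintaining a shrinking scheme $G^\star$ and invoking Proposition~\ref{superslice} to keep superregularity.

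The one place where your bookkeeping drifts is the degree estimate for $G-G^\star$. You write that a single BEPS uses ``at most one edge per vertex'' in the relevant consecutive cluster-pair ``plus at most $O(\eps_0 n)$ further edges per vertex'' for the detour around the fictive edges. The figure $O(\eps_0 n)$ is the total number of detour \emph{edges} in the BEPS, not the degree contributed at a single vertex; a BEPS is a path system, so every vertex has degree at most $2$ in it, detour or not. The paper exploits precisely this: by (BEPS3) each vertex of $A\cup B$ lies in exactly one BEPS of each balanced exceptional factor, so by equation~\eqref{EFdeg} $d_{BF}(v)=2$ for every $v\in A\cup B$, and hence $\Delta(G-G^\star)\le 2j<3q$ when processing $BF_j$. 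Since $3q = 3(Lq/m)\cdot(m/L)$ and $Lq/m\ll 1$, Proposition~\ref{superslice} applies directly, with no need to analyse the detours separately. Your final bound $q+o(m/L)$ is of the right order, so the argument still goes through, but the intermediate claim as stated is misleading; replacing it with the $d_{BF}(v)=2$ observation makes the proof both shorter and airtight.

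Your observation that BEPS within the same $BF_s$ are automatically pairwise edge-disjoint (disjoint style-$h$ subclusters, or shared endcluster but edges into disjoint $B$-clusters, plus pairwise edge-disjointness of the underlying BES) is correct and mirrors the structural facts the paper implicitly relies on; the paper nevertheless also deletes the edges of the partially built $BF_j$ when forming $G^\star$, which is the safer formulation and costs nothing.
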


Recall that the canonical interval partition $\mathcal{I}$ of $C$ into $f$ intervals consists of the intervals
$$A_{(i-1)K/f+1} B_{(i-1)K/f+1} A_{(i-1)K/f+2} \dots A_{iK/f+1}$$ for all $i\le f$. So the condition on~$\mathcal{J}$ ensures
that for each interval $I\in \mathcal{I}$ and each $h\le L$, the set $\mathcal{J}$ contains precisely $q$
balanced exceptional systems of style $h$ whose edges are only incident to vertices in $V_0$ and vertices
belonging to clusters in the interior of $I$. We will use Lemma~\ref{lma:bipartite:CEPS} to extend each such balanced exceptional system into
a balanced exceptional path system of style $h$ spanning~$I$.

\removelastskip\penalty55\medskip\noindent{\bf Proof of Lemma~\ref{lma:EF-bipartite}. }
Choose a new constant $\eps'$ with $\eps,Lq/m\ll \eps'\ll 1$.
Let $\mathcal{J}_{1}, \dots, \mathcal{J}_{q }$ be a partition of $\mathcal{J}$ such that for all $j\le q$, $h \le L$ and $i \le f$, 
the set $\mathcal{J}_j$ contains precisely one $(i_1,i_2,i_3,i_4)$-BES of style $h$ with $i_1,i_2,i_3,i_4 \in \{ (i-1)K/f+2, \dots, iK/f \}$.
Thus each $\mathcal{J}_j$ consists of $Lf$ balanced exceptional systems.
For each $j\le q$ in turn, we will choose a balanced exceptional factor $EF_j$ with parameters $(L,f)$ for $G$
such that $BF_j$ and $BF_{j'}$ are edge-disjoint for all $j' < j$ and $BF_j$ contains all edges
of the balanced exceptional systems in $\mathcal{J}_j$.
Assume that we have already constructed $BF_1, \dots, BF_{j-1}$. In order to construct $BF_j$, we will choose the $Lf$ balanced exceptional path systems
forming $BF_j$ one by one, such that each of these balanced exceptional path systems is edge-disjoint from $BF_1,\dots,BF_{j-1}$ and
contains precisely one of the balanced exceptional systems in $\mathcal{J}_j$. Suppose that we have already chosen some of these
balanced exceptional path systems and that next we wish to choose a balanced exceptional path system of style $h$ which spans the interval $I\in \mathcal{I}$
of $C$ and contains $J\in \mathcal{J}_j$.
Let $G'$ be the oriented graph obtained from $G$ by deleting all the edges in the balanced path systems already chosen for $BF_j$
as well as deleting all the edges in $BF_1,\dots,BF_{j-1}$.
Recall from (Sch1$'$) that $V(G)=A\cup B$. Thus $\Delta(G - G') \le 2j < 3q $ by~\eqref{EFdeg}.%
\COMMENT{Deryk added ref to Sch1'}
Together with Proposition~\ref{superslice} this implies that $(G',\mathcal{P},\mathcal{P}')$ is still a $[K, L, m, \epszero, \eps']$-scheme.
(Here we use that $\Delta(G - G') < 3 q=3Lq/m\cdot m/L$ and
$\eps,Lq/m\ll \eps'\ll 1$.) So we can apply Lemma~\ref{lma:bipartite:CEPS} with $\eps'$ playing the role of $\eps$ to obtain a
balanced exceptional path system of style $h$ for $G'$ (and thus for $G$) which spans $I$ and contains all edges of $J$.
This completes the proof of the lemma.
\endproof


\section{The robust decomposition lemma}\label{sec:robust}

The robust decomposition lemma (Corollary~\ref{rdeccor}) allows us to transform an approximate Hamilton decomposition into an exact one.
As discussed in Section~\ref{sec:sketch}, it will only be used in the proof of Theorem~\ref{1factbip} (and not in the proof of Theorem~\ref{NWmindegbip}).
In the next subsection, we introduce the necessary concepts. In particular, Corollary~\ref{rdeccor} relies on the existence of a
so-called bi-universal walk.%
    \COMMENT{Deryk added new para and osthus added final sentence from 2clique paper}
The (proof of the) robust decomposition lemma then uses
edges guaranteed by this universal walk to `balance out' edges of the graph $H$ when constructing the Hamilton decomposition of
$G^{\rm rob}+H$.
\subsection{Chord sequences and bi-universal walks}
Let $R$ be a digraph whose vertices are $V_1,\dots,V_k$ and suppose that $C=V_1\dots V_k$ is a Hamilton cycle of $R$.
(Later on the vertices of $R$ will be clusters. So we denote them by capital letters.)

A \emph{chord sequence $CS(V_i,V_j)$} from $V_i$ to $V_j$ in $R$ is an ordered sequence of edges of the form
\[ CS(V_i,V_j) = (V_{i_1-1} V_{i_2}, V_{i_2-1} V_{i_3},\dots, V_{i_t-1} V_{i_{t+1}}),\]
where $V_{i_1}=V_i$, $V_{i_{t+1}} = V_j$ and the edge $V_{i_s-1} V_{i_{s+1}}$ belongs to $R$ for each $s\le t$.

If $i=j$ then we consider the empty set to be a chord sequence from $V_i$ to $V_j$. 
Without loss of generality, we may assume that $CS(V_i,V_j)$ does not contain any edges of $C$.
(Indeed, suppose that  $V_{i_s-1} V_{i_{s+1}}$ is an edge of $C$.
Then $i_s=i_{s+1}$ and so we can obtain a chord sequence from $V_i$ to $V_j$ with fewer edges.) 
For example, if $V_{i-1}V_{i+2}\in E(R)$, then the edge $V_{i-1}V_{i+2}$ is a chord sequence from $V_i$ to $V_{i+2}$.

The crucial property of chord sequences is that they satisfy a `local balance' condition. 
Suppose that $CS$ is obtained by concatenating several chord sequences 
$$CS(V_{i_1},V_{i_2}),CS(V_{i_2},V_{i_3}),\dots,CS(V_{i_{\ell-1}},V_{i_\ell}), CS(V_{i_{\ell}},V_{i_{\ell+1}})$$ 
where $V_{i_1}=V_{i_{\ell+1}}$.
Then for every $V_i$, the number of edges of $CS$ leaving $V_{i-1}$ equals the number of edges entering $V_i$.
We will not use this property explicitly, but it underlies the proofs of e.g.~Lemma~\ref{rdeclemma} 
and appears implicitly e.g.~in~(BU3) below.

A closed walk $U$ in $R$ is a \emph{bi-universal walk for $C$
with parameter $\ell'$} if the following conditions hold:
\begin{itemize}
\item[(BU1)] The edge set of $U$ has a partition into $U_{\rm odd}$ and $U_{\rm even}$.
For every $1\le i\le k$ there is a chord sequence $ECS^{\rm bi}(V_i,V_{i+2})$
from $V_i$ to $V_{i+2}$ such that $U_{\rm even}$ contains all edges of all these chord sequences for even $i$ (counted with multiplicities)
and $U_{\rm odd}$ contains all edges of these chord sequences for odd $i$.
All remaining edges of $U$ lie on $C$.
\item[(BU2)] Each $ECS^{\rm bi}(V_i,V_{i+2})$ consists of at most $\sqrt{\ell'}/2$ edges.
\item[(BU3)] $U_{\rm even}$ enters every cluster
$V_i$ exactly $\ell'/2$ times and it leaves every cluster $V_i$ exactly $\ell'/2$ times.
The same assertion holds for $U_{\rm odd}$.%
\COMMENT{So need to make sure $\ell'$ is even when we apply this.}
\end{itemize} 
Note that condition~(BU1) means that if an edge $V_iV_j\in E(R)\setminus E(C)$ occurs in total 5 times (say) in
$ECS^{{\rm bi}}(V_1,V_3),\dots,ECS^{{\rm bi}}(V_{k},V_2)$ then it occurs precisely 5 times in $U$. We will identify each occurrence of $V_iV_j$ in
$ECS^{{\rm bi}}(V_1,V_3),\dots,ECS^{{\rm bi}}(V_{k},V_2)$ with a (different) occurrence of $V_iV_j$ in $U$. 
Note that the edges of $ECS^{{\rm bi}}(V_i,V_{i+2})$ are allowed to appear in a different order 
within $U$.

\begin{lemma}\label{lem:univwalk}
Let $R$ be a digraph with vertices $V_1,\dots,V_k$ where $k \geq 4$ is even.
 Suppose that $C=V_1\dots V_k$ is a Hamilton cycle of $R$
and that $V_{i-1}V_{i+2}\in E(R)$ for every $1\le i\le k$. Let $\ell'\ge 4$ be an even integer.%
\COMMENT{need 4 rather than 2 to ensure $1 \le \sqrt{\ell'}/2$.}
 Let $U_{{\rm bi},\ell'}$ denote the multiset obtained from
$\ell'-1$ copies of $E(C)$ by adding
$V_{i-1}V_{i+2}\in E(R)$ for every $1\le i\le k$. Then the edges in $U_{{\rm bi},\ell'}$ can be ordered so that the
resulting sequence forms a bi-universal walk for $C$ with parameter~$\ell'$.
\end{lemma}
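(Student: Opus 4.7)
The plan is to exhibit explicit chord sequences and an explicit partition of the edges of $U_{{\rm bi},\ell'}$ into $U_{\rm odd}$ and $U_{\rm even}$, and then to derive the closed walk from Euler's theorem.

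For each $1\le i\le k$, I would take $ECS^{\rm bi}(V_i,V_{i+2})$ to consist of the single edge $V_{i-1}V_{i+2}$ (all indices mod $k$). This is a valid chord sequence from $V_i$ to $V_{i+2}$ in the sense of the definition (with $t=1$, $i_1=i$, $i_2=i+2$), it exists by the hypothesis $V_{i-1}V_{i+2}\in E(R)$, and it contains no edge of $C$ since $k\ge 4$. In particular (BU2) holds trivially, because $1\le \sqrt{\ell'}/2$ when $\ell'\ge 4$.

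Next I would define the partition. Place the chord edge $V_{i-1}V_{i+2}$ into $U_{\rm even}$ if $i$ is even and into $U_{\rm odd}$ if $i$ is odd; this matches the requirement in (BU1) on the chord sequences. For each $1\le j\le k$ and each of the $\ell'-1$ copies of the $C$-edge $V_{j-1}V_j$, place $a_j$ copies into $U_{\rm even}$ and the remaining $\ell'-1-a_j$ copies into $U_{\rm odd}$, where
\[ a_j=\begin{cases}\ell'/2-1 & \text{if $j$ is even,}\\ \ell'/2 & \text{if $j$ is odd.}\end{cases} \]
A direct count shows that at an even cluster $V_j$, $U_{\rm even}$ receives one chord edge entering and none leaving, so the numbers of $U_{\rm even}$-edges entering and leaving $V_j$ are $1+a_j=\ell'/2$ and $0+b_j=\ell'/2$ respectively, where $b_j=a_{j+1}$ plays the symmetric role for the $C$-edge $V_jV_{j+1}$ leaving $V_j$; the case $j$ odd is analogous, and the complementary counts for $U_{\rm odd}$ are forced. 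Hence (BU3) is satisfied.

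Finally I would verify that the multiset $U_{{\rm bi},\ell'}$ admits a closed Eulerian walk. At each cluster $V_j$, the total in-degree in $U_{{\rm bi},\ell'}$ is $(\ell'-1)+1=\ell'$ ($\ell'-1$ copies of $V_{j-1}V_j$ together with the single chord $V_{j-3}V_j$), and the total out-degree is also $(\ell'-1)+1=\ell'$. The underlying multigraph is connected because it contains the Hamilton cycle $C$. Therefore Euler's theorem yields an ordering of the edges of $U_{{\rm bi},\ell'}$ as a closed walk $U$, and by construction this walk satisfies (BU1)--(BU3), so it is a bi-universal walk for $C$ with parameter $\ell'$. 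The only mild subtlety is the bookkeeping needed to match each appearance of a chord edge $V_{i-1}V_{i+2}$ in $U$ with the corresponding use in $ECS^{\rm bi}(V_i,V_{i+2})$, but this is immediate since each such chord appears exactly once on both sides.
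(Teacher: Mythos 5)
Your proof is correct and follows essentially the same approach as the paper: you use the same chord sequences $ECS^{\rm bi}(V_i,V_{i+2})=V_{i-1}V_{i+2}$, your explicit allocation of $C$-edges (assigning $\ell'/2$ copies of $V_{j-1}V_j$ to $U_{\rm even}$ when $j$ is odd and $\ell'/2-1$ when $j$ is even) produces exactly the same partition the paper obtains by taking $E_{\rm even}$ plus the even-indexed chords (a $1$-factor) together with $\ell'/2-1$ further copies of $E(C)$, and both arguments establish the closed-walk ordering from the fact that every cluster has equal in- and out-degree $\ell'$ and the underlying graph is connected (you cite Euler's theorem; the paper builds the Eulerian circuit explicitly via a $1$-factorization of $U_{{\rm bi},\ell'}$ minus one copy of $E(C)$).
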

In the remainder of the paper, we will also write $U_{{\rm bi},\ell'}$ for the bi-universal walk guaranteed by Lemma~\ref{lem:univwalk}. 

\proof
Let us first show that the edges in $U_{{\rm bi},\ell'}$ can be ordered so that the resulting sequence forms a closed walk in~$R$.
To see this, consider the multidigraph $U$ obtained from $U_{{\rm bi},\ell'}$ by deleting one copy of~$E(C)$.
Then $U$ is $(\ell'-1)$-regular and thus has a decomposition into 1-factors. We order the edges of $U_{{\rm bi},\ell'}$ as follows: 
We first traverse all cycles of the 1-factor decomposition of $U$ which contain the cluster $V_1$.
Next, we traverse the edge $V_1V_2$ of $C$. Next we traverse all those cycles of the 1-factor decomposition which contain
$V_2$ and which have not been traversed so far. Next we traverse the edge $V_2V_3$ of $C$ and so on
until we reach $V_1$ again. 

Recall that, for each $1\le i\le k$, the edge $V_{i-1}V_{i+2}$ is a chord sequence from $V_i$ to $V_{i+2}$. Thus we can take
$ECS^{{\rm bi}}(V_i,V_{i+2}):=V_{i-1}V_{i+2}$. Then $U_{{\rm bi},\ell'}$ satisfies (BU1)--(BU3).
Indeed, (BU2) is clearly satisfied. Partition one of the copies of $E(C)$ in $U_{{\rm bi},\ell'}$
into $E_{\rm even}$ and $E_{\rm odd}$ where $E_{\rm even}=\{V_iV_{i+1} | \ i \text{ even}\}$
and $E_{\rm odd}=\{V_iV_{i+1} | \ i \text{ odd}\}$. Note that the union of $E_{\rm even}$ together
with all $ECS^{{\rm bi}}(V_i,V_{i+2})$ for even $i$ is a $1$-factor in $R$. Add $\ell'/2-1$ of the
remaining copies of $E(C)$ to this $1$-factor to obtain $U_{\rm even}$. Define $U_{\rm odd}$
to be $E(U_{{\rm bi},\ell'})\setminus U_{\rm even}$. By construction of $U_{\rm even}$ and $U_{\rm odd}$,
(BU1) and (BU3) are satisfied.
\endproof

\subsection{Bi-setups and the robust decomposition lemma} 
The%
\COMMENT{osthus adapted para from 2clique paper}
aim of this subsection is to state the robust decomposition lemma (Lemma~\ref{rdeclemma}, proved in~\cite{Kelly})
and derive Corollary~\ref{rdeccor}, which we shall use later on.
The robust decomposition lemma guarantees the existence of a `robustly decomposable' digraph $G^{\rm rob}_{\rm dir}$
within a `bi-setup'. Roughly speaking, a bi-setup is a digraph $G$ together with its `reduced digraph' $R$,
which contains a Hamilton cycle $C$ and a universal walk $U$.
In our application, $G[A,B]$ will play the role of $G$ and $R$ will be the complete bipartite digraph.
To define a bi-setup formally, we first need to define certain `refinements' of partitions.

Given a digraph $G$ and a partition $\cP$ of $V(G)$ into $k$ clusters $V_1,\dots,V_k$ of equal size,
we say that a partition $\cP'$ of $V(G)$%
   \COMMENT{Daniela: had $V$ instead of $V(G)$}
is an \emph{$\ell'$-refinement of $\cP$} if $\cP'$ is obtained by splitting each $V_i$
into $\ell'$ subclusters of equal size. (So $\cP'$ consists of $\ell'k$ clusters.)
$\cP'$ is an \emph{$\eps$-uniform $\ell'$-refinement}
of $\cP$ if it is an $\ell'$-refinement of $\cP$ which satisfies the following condition:
Whenever $x$ is a vertex of $G$, $V$ is a cluster in $\cP$ and $|N^+_G(x)\cap V|\ge \eps |V|$
then $|N^+_G(x)\cap V'|=(1\pm \eps)|N^+_G(x)\cap V|/\ell'$%
\COMMENT{AL: added prime, Daniela added two more primes}
 for each cluster $V'\in \cP'$ with $V'\subseteq V$.
The inneighbourhoods of the vertices of $G$ satisfy an analogous condition.%
   \COMMENT{Daniela added new lemma since we need it in the proof of Lemma~\ref{lem:bisetup}, the lemma in~\cite{Kelly} actually allows
to have $V_0$, so I changed it slightly}
We will use the following lemma from~\cite{Kelly}.   

\begin{lemma}\label{randompartition}
Suppose that $0<1/m \ll 1/k,\eps \ll \eps', d,1/\ell \le 1$ and that $k,\ell, m/\ell\in\mathbb{N}$.
Suppose that $G$ is a digraph and that $\cP$ is a partition of $V(G)$ into
$k$ clusters of size $m$. Then there exists an $\eps$-uniform $\ell$-refinement of $\cP$. Moreover, any
$\eps$-uniform $\ell$-refinement $\cP'$ of $\cP$ automatically satisfies
the following condition:
\begin{itemize}
\item Suppose that $V$, $W$ are clusters in $\cP$ and $V',W'$ are clusters in $\cP'$ with $V'\subseteq V$ and
$W'\subseteq W$. If $G[V,W]$ is $[\eps,d']$-superregular for some $d'\ge d$ then $G[V',W']$ is $[\eps',d']$-superregular.
\end{itemize}
\end{lemma}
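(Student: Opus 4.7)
My plan is to prove the two assertions of Lemma~\ref{randompartition} separately.

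\textbf{Existence via random refinement.} For the first assertion I would take, independently for each cluster $V\in\cP$, a uniformly random partition of $V$ into $\ell$ subclusters of size $m/\ell$. Fix a vertex $x$ of $G$, a cluster $V\in\cP$ with $|N^+_G(x)\cap V|\ge \eps|V|=\eps m$, and a subcluster $V'\subseteq V$ of the random refinement. The random variable $X:=|N^+_G(x)\cap V'|$ is hypergeometric with mean $\mu:=|N^+_G(x)\cap V|/\ell\ge \eps m/\ell$. Applying Proposition~\ref{chernoff} with $a=\eps$ gives
\[
\pr\bigl(|X-\mu|\ge \eps\mu\bigr)\le 2\exp(-\eps^2\mu/3)\le 2\exp(-\eps^3 m/(3\ell))\le 1/n^3,
\]
since $1/m\ll 1/k,\eps,1/\ell$ (and $n\le km$). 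A union bound over all $x\in V(G)$, all $V\in\cP$ and all $V'\subseteq V$ in $\cP'$, together with the analogous estimate for inneighbourhoods, shows that with positive probability the random refinement is $\eps$-uniform.

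\textbf{Superregularity is preserved.} For the second assertion, suppose $G[V,W]$ is $[\eps,d']$-superregular with $d'\ge d$, and let $V'\subseteq V$, $W'\subseteq W$ be clusters of the $\eps$-uniform refinement $\cP'$. The superregularity yields $d^+_G(v,W)=(d'\pm\eps)|W|\ge(d/2)|W|\ge\eps m$ for every $v\in V$, so the uniformity condition applies and gives
\[
d^+_G(v,W')=(1\pm\eps)\,d^+_G(v,W)/\ell=(d'\pm 3\eps)|W'|
\]
for every $v\in V$, and in particular for every $v\in V'$. The analogous bound $d^-_G(w,V')=(d'\pm 3\eps)|V'|$ holds for every $w\in W'$ by the inneighbourhood version of the uniformity condition. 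Thus the two degree conditions for $[\eps',d']$-superregularity of $G[V',W']$ hold since $\eps\ll\eps'$.

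\textbf{Regularity of $G[V',W']$.} It remains to check $\eps'$-regularity. Summing the degree estimate above gives $d(V',W')=d'\pm 3\eps$ and $d(V,W)=d'\pm\eps$. Now take $X\subseteq V'$ with $|X|\ge\eps'|V'|=\eps' m/\ell$ and $Y\subseteq W'$ with $|Y|\ge\eps' m/\ell$. Since $\eps\ll\eps',1/\ell$ we have $|X|,|Y|\ge\eps m$, so $X\subseteq V$ and $Y\subseteq W$ are admissible sets for the $\eps$-regularity of $G[V,W]$, giving $|d(X,Y)-d(V,W)|<\eps$. Combining,
\[
|d(X,Y)-d(V',W')|\le |d(X,Y)-d(V,W)|+|d(V,W)-d(V',W')|\le \eps+4\eps<\eps',
\]
which establishes $\eps'$-regularity. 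The main (mild) obstacle is just being careful that the $\eps$-regularity of the coarse pair transfers to the refined pair; as shown, this works because the uniformity condition pins down $d(V',W')$ close to $d(V,W)$, and $\eps'/\ell\gg\eps$ ensures that subsets of $V',W'$ of relative size $\eps'$ remain of absolute size $\ge\eps m$.
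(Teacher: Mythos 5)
The paper does not prove Lemma~\ref{randompartition}; it simply cites the result from~\cite{Kelly}, so there is no proof here to compare against. Your argument is correct and is the natural one: a uniformly random equipartition of each cluster gives the $\eps$-uniform refinement via the hypergeometric Chernoff bound (Proposition~\ref{chernoff}) and a union bound over all vertices, clusters and subclusters, and the superregularity assertion follows by direct calculation from the uniformity condition, the key observations being that the uniformity pins down both $d^+_G(v,W')$ and $d(V',W')$ to within $O(\eps)$ of $d'$, while subsets of $V',W'$ of relative size $\ge\eps'$ are already of absolute size $\ge\eps m$ and hence controlled by the $\eps$-regularity of $G[V,W]$.

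Two small points worth spelling out. First, you pass from $|X|\ge\eps'|V'|=\eps'm/\ell$ to $|X|\ge\eps m$ by invoking $\eps\ll\eps',1/\ell$; since $\eps'$ and $1/\ell$ are at the same level of the hierarchy, this does not literally follow from $\eps\ll\eps'$ and $\eps\ll 1/\ell$ taken separately, but the hierarchy convention lets you assume $\eps\le\min(\eps',d,1/\ell)^2\le\eps'/\ell$, which is what you need. Second, the quantity $n$ in your probability bound is not in the lemma statement; it should be read as $n:=|V(G)|=km$, which is consistent with the hierarchy since $1/m\ll 1/k$. With those clarifications, the proof is complete.
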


We will also need the following definition from~\cite{Kelly}, which describes the structure within which the robust decomposition lemma
finds the robustly decomposable graph.%
\COMMENT{Deryk added sentence}
$(G,\cP,\cP',R,C,U,U')$ is called an \emph{$(\ell',k,m,\eps,d)$-bi-setup} if the following properties are satisfied:
\begin{itemize}
\item [(ST1)] $G$ and $R$ are digraphs. $\mathcal{P}$ is a partition of $V(G)$ into
$k$ clusters of size $m$ where $k$ is even. The vertex set of $R$ consists of these clusters.
\item[(ST2)] For every edge $VW$ of $R$, the corresponding pair $G[V,W]$ is $(\eps,\ge d)$-regular.
\item[(ST3)] $C=V_1\dots V_{k}$ is a Hamilton cycle of $R$ and for every edge $V_iV_{i+1}$ of $C$ the corresponding pair $G[V_i,V_{i+1}]$ is $[\eps,\ge d]$-superregular.
\item[(ST4)] $U$ is a bi-universal walk for $C$ in $R$ with parameter~$\ell'$ and $\cP'$ is an $\eps$-uniform $\ell'$-refinement%
   \COMMENT{It was decided by DK+DO we need this notion of refinement here}
of $\cP$.
\item[(ST5)] Let $V_j^1,\dots,V_j^{\ell'}$ denote the clusters in $\cP'$ which are contained
in $V_j$ (for each $1\le j\le k$). Then $U'$ is a closed walk on the clusters in $\cP'$ which is obtained from $U$ as follows:
When $U$ visits $V_j$ for the $a$th time, we let $U'$ visit the subcluster $V_j^a$ (for all $1\le a\le \ell'$).
\item[(ST6)] For every edge $V_{i}^jV_{i'}^{j'}$ of $U'$ the corresponding pair $G[V_{i}^j,V_{i'}^{j'}]$ is $[\eps,\ge d]$-superregular.%
	\COMMENT{AL: added more details}
\end{itemize}
In~\cite{Kelly}, in a bi-setup, the digraph $G$ could also contain an exceptional set, but since we are only using
the definition in the case when there is no such exceptional set, we have only stated it in this special case.

Suppose that $(G,\mathcal{P},\mathcal{P}')$ is a $[K,L,m,\eps_0,\eps]$-scheme and that $C=A_1B_1\dots A_KB_K$ is a spanning cycle
on the clusters of $\mathcal{P}$. Let $\mathcal{P}_{{\rm bi}}:=\{A_1,\dots,A_K,B_1,\dots,B_K\}$. Suppose that $\ell',m/\ell'\in\mathbb{N}$ with
$\ell'\ge 4$. Let $\mathcal{P}''_{{\rm bi}}$ be an $\eps$-uniform $\ell'$-refinement of $\cP_{{\rm bi}}$ (which exists by Lemma~\ref{randompartition}).%
    \COMMENT{Daniela added brackets}
Let $C_{{\rm bi}}$ be the directed cycle obtained from $C$
in which the edge $A_1B_1$ is oriented towards $B_1$ and so on. Let $R_{{\rm bi}}$ be the complete bipartite digraph
whose vertex classes are $\{A_1,\dots,A_K\}$ and $\{B_1,\dots,B_K\}$.%
   \COMMENT{Daniela: had "whose vertices are the clusters in $\cP$"}
Let $U_{{\rm bi},\ell'}$ be a bi-universal walk for $C$
with parameter $\ell'$ as defined in Lemma~\ref{lem:univwalk}. Let $U'_{{\rm bi},\ell'}$ be the closed walk
obtained from $U_{{\rm bi},\ell'}$ as described in~(ST5). We will call
$$
(G,\cP_{{\rm bi}},\cP''_{{\rm bi}},R_{{\rm bi}},C_{{\rm bi}},U_{{\rm bi},\ell'},U'_{{\rm bi},\ell'})$$ the \emph{bi-setup
associated to~$(G,\mathcal{P},\mathcal{P}')$}. The following lemma shows that it is indeed a bi-setup.%
   \COMMENT{Daniela changed lemma below to bring it in line with Lemma~\ref{randompartition}}

\begin{lemma}\label{lem:bisetup}
Suppose that $K,L,m/L, \ell', m/\ell'\in\mathbb{N}$ with $\ell'\ge 4$, $K \geq 2$ and $0<1/m \ll 1/K,\eps \ll \eps',1/\ell'$.
Suppose that $(G,\mathcal{P},\mathcal{P}')$ is a $[K,L,m,\eps_0,\eps]$-scheme and that $C=A_1B_1\dots A_KB_K$ is a spanning cycle
on the clusters of $\mathcal{P}$. Then $$(G,\cP_{{\rm bi}},\cP''_{{\rm bi}},R_{{\rm bi}},C_{{\rm bi}},U_{{\rm bi},\ell'},U'_{{\rm bi},\ell'})$$
is an $(\ell',2K,m,\eps',1/2)$-bi-setup.
\end{lemma}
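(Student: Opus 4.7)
The plan is to verify (ST1)--(ST6) in turn for the tuple $(G,\cP_{\rm bi},\cP''_{\rm bi},R_{\rm bi},C_{\rm bi},U_{{\rm bi},\ell'},U'_{{\rm bi},\ell'})$. The conditions (ST1), (ST3)--(ST5) are essentially immediate: $\cP_{\rm bi}$ consists of $2K$ clusters each of size $m$ (with $2K$ even), and all edges of $C_{\rm bi}$ correspond to pairs of the form $G[A_i,B_i]$ or $G[B_i,A_{i+1}]$, which are $[\eps,1/2]$-superregular by (Sch$3'$); $\cP''_{\rm bi}$ is an $\eps$-uniform $\ell'$-refinement of $\cP_{\rm bi}$ by choice; and $U'_{{\rm bi},\ell'}$ is obtained from $U_{{\rm bi},\ell'}$ exactly as described in (ST5).

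To verify that $U_{{\rm bi},\ell'}$ is a bi-universal walk for $C_{\rm bi}$ in $R_{\rm bi}$ (as required by (ST4)), I first check the hypotheses of Lemma~\ref{lem:univwalk}. Writing the cyclic ordering on $C_{\rm bi}$ as $V_1=A_1,V_2=B_1,V_3=A_2,\dots,V_{2K}=B_K$, odd positions carry $A$-clusters and even positions carry $B$-clusters. For any $i$, the indices $i-1$ and $i+2$ have opposite parities, so the chord $V_{i-1}V_{i+2}$ always goes between an $A$-cluster and a $B$-cluster; hence it lies in the complete bipartite digraph $R_{\rm bi}$. Since $2K\ge 4$ is even and $\ell'\ge 4$ is even, Lemma~\ref{lem:univwalk} applies and produces a valid bi-universal walk, giving (ST4). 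Condition (ST2) is then immediate from (Sch$3'$), because every edge of the complete bipartite digraph $R_{\rm bi}$ has the form $A_iB_j$ or $B_jA_i$, and $[\eps,1/2]$-superregularity implies $(\eps,\ge 1/2)$-regularity.

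The only step requiring a little work is (ST6). Every edge of $U_{{\rm bi},\ell'}$ is either an edge of $C_{\rm bi}$ (handled by (Sch$3'$) as above) or a chord $V_{i-1}V_{i+2}$; a direct case check (as in the preceding paragraph) shows that such a chord is of the form $B_{j-1}A_{j+1}$ or $A_jB_{j+1}$, so (Sch$3'$) again gives that the corresponding pair in $G$ is $[\eps,1/2]$-superregular. Thus every edge of $U_{{\rm bi},\ell'}$ corresponds to an $[\eps,1/2]$-superregular pair between two clusters of $\cP_{\rm bi}$. Since $\cP''_{\rm bi}$ is an $\eps$-uniform $\ell'$-refinement of $\cP_{\rm bi}$ and the hierarchy $1/m \ll 1/K,\eps \ll \eps',1/\ell'$ is in place, the ``moreover'' part of Lemma~\ref{randompartition} implies that for each edge $V_i^j V_{i'}^{j'}$ of $U'_{{\rm bi},\ell'}$, the pair $G[V_i^j,V_{i'}^{j'}]$ is $[\eps',1/2]$-superregular, yielding (ST6).

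The main (very modest) obstacle is just keeping track of the bipartite indexing of $C_{\rm bi}$ to confirm that chords $V_{i-1}V_{i+2}$ indeed stay inside the bipartite digraph $R_{\rm bi}$; once this is observed, the rest is a transcription of (Sch$3'$) and Lemma~\ref{randompartition}.
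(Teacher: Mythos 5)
Your proof is correct and follows essentially the same route as the paper's (which is terse and simply cites (Sch$3'$), Lemma~\ref{lem:univwalk} and Lemma~\ref{randompartition} for the relevant conditions). The one thing you make explicit that the paper leaves implicit — and which is worth noting — is the parity check showing that the chords $V_{i-1}V_{i+2}$ always join an $A$-cluster to a $B$-cluster, so they genuinely lie in $R_{\rm bi}$; this is exactly the hypothesis of Lemma~\ref{lem:univwalk} that needs to hold and is also what makes (ST6) go through via (Sch$3'$) and Lemma~\ref{randompartition}. Your handling of the remaining conditions matches the paper's.
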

\proof
Clearly, $(G,\cP_{{\rm bi}},\cP''_{{\rm bi}},R_{{\rm bi}},C_{{\rm bi}},U_{{\rm bi},\ell'},U'_{{\rm bi},\ell'})$ satisfies~(ST1).
(Sch$3'$) implies that (ST2) and~(ST3) hold.%
    \COMMENT{This is now immediate because of change of def of (Sch3$'$)}
Lemma~\ref{lem:univwalk} implies~(ST4). (ST5) follows from the definition of $U'_{{\rm bi},\ell'}$. Finally, (ST6) follows from (Sch3$'$) and
Lemma~\ref{randompartition} since $\cP''_{{\rm bi}}$ is an $\eps$-uniform $\ell'$-refinement of~$\cP_{{\rm bi}}$.
\endproof

We now state the robust decomposition lemma from~\cite{Kelly}.
This guarantees the existence of a `robustly decomposable' digraph $G^{\rm rob}_{\rm dir}$, whose crucial property
is that $H + G^{\rm rob}_{\rm dir}$ has a Hamilton decomposition for any sparse bipartite regular digraph~$H$
which is edge-disjoint from $G^{\rm rob}_{\rm dir}$.%
    \COMMENT{Daniela added "which is..."}

$G^{\rm rob}_{\rm dir}$ consists of digraphs $CA_{{\rm dir}}(r)$ (the `chord absorber') and $PCA_{{\rm dir}}(r)$ 
(the `parity extended cycle switcher')
together with some special factors. $G^{\rm rob}_{\rm dir}$ is constructed in two steps:
given a suitable set $\mathcal{SF}$ of special factors, the lemma first `constructs' $CA_{{\rm dir}}(r)$ and then,
given another suitable set $\mathcal{SF}'$ of special factors, the lemma `constructs' $PCA_{{\rm dir}}(r)$.
The reason for having two separate steps is that in~\cite{Kelly}, it is not clear how to construct $CA_{{\rm dir}}(r)$
after constructing $\mathcal{SF}'$ (rather than before), as the removal of $\mathcal{SF}'$ from the digraph under consideration 
affects its properties considerably.
\begin{lemma} \label{rdeclemma}
Suppose that $0<1/m\ll 1/k\ll \eps \ll 1/q \ll 1/f \ll r_1/m\ll d\ll 1/\ell',1/g\ll 1$ 
where $\ell '$ is even and%
   \COMMENT{In the Kelly paper have $1/n$ instead of $1/m$ in the hierarchy. But since $V_0=\emptyset$ in our setting,
it doesn't make that much sense to introduce $n$ here. So I replaced $1/n$ by $1/m$.}
that $rk^2\le m$. Let
$$r_2:=96\ell'g^2kr, \ \ \ r_3:=rfk/q, \ \ \ r^\diamond:=r_1+r_2+r-(q-1)r_3, \ \ \ s':=rfk+7r^\diamond
$$
and suppose that $k/14, k/f, k/g, q/f, m/4\ell', fm/q, 2fk/3g(g-1) \in \mathbb{N}$.
Suppose that $(G,\cP,\cP',R,C,U,U')$ is an $(\ell',k,m,\eps,d)$-bi-setup and $C=V_1\dots V_k$.
Suppose that $\cP^*$ is a $(q/f)$-refinement
of $\cP$ and that $SF_1,\dots, SF_{r_3}$ are edge-disjoint special factors with parameters $(q/f,f)$ 
with respect to $C$, $\cP^*$ in $G$. Let $\mathcal{SF}:=SF_1+\dots +SF_{r_3}$.
Then there exists a digraph $CA_{{\rm dir}}(r)$ for which the following holds:
\begin{itemize}
\item[\rm (i)] $CA_{{\rm dir}}(r)$ is an $(r_1+r_2)$-regular spanning subdigraph of $G$ which is edge-disjoint from $\mathcal{SF}$.
\item[\rm (ii)] Suppose that $SF'_1,\dots, SF'_{r^\diamond}$ are special factors with parameters $(1,7)$
with respect to $C$, $\cP$ in $G$ which are edge-disjoint from each other and from $CA_{{\rm dir}}(r)+ \mathcal{SF}$.%
   \COMMENT{In the Kelly paper we write $CA_{{\rm dir}}(r)\cup \mathcal{SF}$ instead of $CA_{{\rm dir}}(r)+ \mathcal{SF}$
(and similarly below). But with our def of $+$ and $\cup $ in this paper, we have to use $+$ since we allow for a fictive edge
$xy$ in $\mathcal{SF}$ to also occur in $CA_{{\rm dir}}(r)$, and in this case $CA_{{\rm dir}}(r)+ \mathcal{SF}$
willl contain two copies of that edge.} 
Let $\mathcal{SF}':=SF'_1+\dots +SF'_{r^\diamond}$.
Then there exists a digraph $PCA_{{\rm dir}}(r)$ for which the following holds:
\begin{itemize}
\item[\rm (a)] $PCA_{{\rm dir}}(r)$ is a $5r^\diamond$-regular spanning subdigraph of $G$ which
is edge-disjoint from $CA_{{\rm dir}}(r)+ \mathcal{SF}+ \mathcal{SF}'$.
\item[(b)] Let $\mathcal{SPS}$ be the set consisting of all the $s'$ special path systems
contained in $\mathcal{SF}+ \mathcal{SF}'$. Let $V_{\rm even}$ denote the union of all $V_i$ over all even $1\le i\le k$
and define $V_{\rm odd}$ similarly.
Suppose that $H$ is an $r$-regular bipartite digraph on $V(G)$ with vertex classes $V_{\rm even}$ and $V_{\rm odd}$
which is edge-disjoint from $G^{\rm rob}_{\rm dir}:=CA_{{\rm dir}}(r)+ PCA_{{\rm dir}}(r)+ \mathcal{SF}+ \mathcal{SF}'$.
Then $H+G^{\rm rob}_{\rm dir}$ has a decomposition into $s'$
edge-disjoint Hamilton cycles $C_1,\dots,C_{s'}$.
Moreover, $C_i$ contains one of the special path systems from $\mathcal{SPS}$, for each $i\le s'$.
\end{itemize}
\end{itemize}
\end{lemma}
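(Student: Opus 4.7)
The plan is to construct $CA_{\rm dir}(r)$ and $PCA_{\rm dir}(r)$ as carefully selected sparse subdigraphs of $G$ supported on the edges of the bi-universal walk $U$ (and its refinement $U'$ via $\cP'$), and then use them together with the special factors $\mathcal{SF}$ and $\mathcal{SF}'$ to absorb an arbitrary sparse bipartite regular $H$ into a Hamilton decomposition. The driving intuition is that (BU1)--(BU3) prescribe exactly how often each chord $V_{i-1}V_{i+2}$ appears in $U$, and hence how many times $CA_{\rm dir}(r)$ can mimic the chord-sequence swap ``replace $V_{i-1}V_i\cup V_{i+1}V_{i+2}$ by $V_{i-1}V_{i+2}$''; the number of chord-sequence swaps available is in turn what controls how many $1$-factors we can turn into Hamilton cycles.

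First I would select $CA_{\rm dir}(r)$ by randomly picking, for each occurrence of an edge $VW$ in $U$, a regular subdigraph of $G[V,W]$ of out- and in-degree roughly $r_1/m$ (so that the total multiplicity of each chord in $CA_{\rm dir}(r)$ matches its multiplicity in $U$), and then adding an additional $r_2$-regular piece distributed along the edges of $U'$ to serve as the interface between $CA_{\rm dir}(r)$ and $PCA_{\rm dir}(r)$. Applying Proposition~\ref{chernoff} together with Proposition~\ref{superslice} and Lemma~\ref{randompartition} shows that, with positive probability, the resulting $(r_1+r_2)$-regular digraph is edge-disjoint from $\mathcal{SF}$ and leaves every $C$-pair $G[V_i,V_{i+1}]\setminus CA_{\rm dir}(r)$ still $[\eps,\ge d]$-superregular (which uses the assumptions $r_1/m\ll d$ and $rk^2\le m$). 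This yields~(i). Once $\mathcal{SF}'$ is also fixed, the same machinery inside $G-CA_{\rm dir}(r)-\mathcal{SF}-\mathcal{SF}'$ produces $PCA_{\rm dir}(r)$; its role is to supply, for each $SF'_j$, a bounded family of ``switch gadgets'' (short alternating paths) capable of reversing the local parity of a Hamilton-like $1$-factor, which is precisely the purpose of the factor $5$ in $5r^\diamond$.

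The heart of~(ii)(b) is the following merging procedure. Given $H$ as in the statement, $H+CA_{\rm dir}(r)+\mathcal{SF}$ is regular, and I would decompose it into $rfk$ edge-disjoint $1$-factors $F_1,\dots,F_{rfk}$ in such a way that $F_i$ contains precisely one special path system $SPS_i$ from $\mathcal{SF}$ (this is possible by a Hall-type argument using that $H$ is bipartite with parts $V_{\rm even},V_{\rm odd}$ and that $\mathcal{SF}$ is itself a union of $1$-factors). For each $F_i$ I would then perform a sequence of chord-sequence swaps, each using one previously unused copy of some $V_{i-1}V_{i+2}$ inside $CA_{\rm dir}(r)$, to replace the ``wrong'' chords of $F_i$ by segments of $C$; the choice of swaps is driven by the cycle structure of $F_i$ and the correspondence between edges of $U$ and edges of $CA_{\rm dir}(r)$ given by (BU1). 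The same construction applied with $PCA_{\rm dir}(r)$ and $\mathcal{SF}'$ produces the remaining $7r^\diamond$ Hamilton cycles, with the parity switcher invoked exactly when the $1$-factor has a cycle partition inconsistent with the alternation along $C_{\rm bi}$.

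The main obstacle is the accounting step verifying that the merging procedure actually terminates in a single Hamilton cycle for every $F_i$. Each chord-sequence swap consumes exactly one copy of each chord in $ECS^{\rm bi}(V_i,V_{i+2})$, so one must show cluster-by-cluster that the supply of chords in $CA_{\rm dir}(r)+PCA_{\rm dir}(r)$ matches the demand generated by (a) the fictive edges of $\mathcal{SPS}$, (b) the ``misaligned'' edges of $H$, and (c) the parity switches required by $\mathcal{SF}'$. This is precisely where the arithmetic of the hierarchy (the values $r_2=96\ell'g^2kr$, $r_3=rfk/q$, $r^\diamond=r_1+r_2+r-(q-1)r_3$) and the divisibility hypotheses on $k,f,g,q,\ell'$ enter, and verifying supply-equals-demand simultaneously for the even and odd halves of $C$ — so that the final object is one Hamilton cycle rather than two edge-disjoint cycles covering $V_{\rm even}$ and $V_{\rm odd}$ separately — is the most delicate part of the whole argument.
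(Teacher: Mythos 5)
The paper does not prove Lemma~\ref{rdeclemma} at all: it is quoted verbatim as a black box from reference~\cite{Kelly}, where it is the main technical result and its proof occupies dozens of pages. The present paper only \emph{uses} the lemma (via the derived Corollary~\ref{rdeccor}); there is therefore no ``paper's own proof'' for your proposal to be compared against. You should have spotted the sentence ``We now state the robust decomposition lemma from~\cite{Kelly}'' immediately preceding the statement, and the remark in the sketch section that the lemma ``was proved in~\cite{Kelly}''.

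As for the proposal itself: it correctly identifies the general flavor of the argument in~\cite{Kelly} (a chord absorber supported on the universal walk, a parity extended cycle switcher to fix cycle-structure parity, and a merging procedure that upgrades $1$-factors into Hamilton cycles via chord-sequence swaps), but it is an annotated table of contents rather than a proof. The construction of $CA_{\rm dir}(r)$ in~\cite{Kelly} is not a single random sparse subgraph: it is built in several interacting layers (including a separate ``cycle absorber'' which your sketch does not mention) precisely so that the later merging step can be made to work; a single application of Chernoff and superregularity is nowhere near enough. More seriously, the step you yourself flag as ``the heart'' and ``the most delicate part'' — proving that the supply of chords in $CA_{\rm dir}(r)+PCA_{\rm dir}(r)$ exactly matches the demand created by the misaligned edges of $H$, the fictive edges of the special path systems, and the parity corrections, simultaneously on $V_{\rm even}$ and $V_{\rm odd}$ — is not argued at all; you only assert that ``one must show'' it. That accounting is essentially the entire content of the lemma, and until it is carried out the sketch establishes nothing. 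In short, the proposal has a genuine gap: it omits the entire substance of the argument and is not an acceptable substitute for the cited proof.
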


Recall from Section~\ref{sec:SF} that we always view fictive edges in special factors as being distinct from each other and
from the edges in other graphs. So for example, saying that $CA_{{\rm dir}}(r)$ and $\mathcal{SF}$ are edge-disjoint in
Lemma~\ref{rdeclemma} still allows for a fictive edge $xy$ in $\mathcal{SF}$ to occur in $CA_{{\rm dir}}(r)$ as well
(but $CA_{{\rm dir}}(r)$ will avoid all non-fictive edges in $\mathcal{SF}$).
 
We will use the following `undirected' consequence of Lemma~\ref{rdeclemma}.

\begin{cor} \label{rdeccor}
Suppose that $0<1/m\ll \eps_0,1/K\ll \eps \ll 1/L \ll 1/f \ll r_1/m\ll 1/\ell',1/g\ll 1$ 
where $\ell '$ is even and
that $4rK^2\le m$.%
\COMMENT{osthus: changed from $4rK^2$} Let
$$r_2:=192\ell'g^2Kr, \ \ \ r_3:=2rK/L, \ \ \ r^\diamond:=r_1+r_2+r-(Lf-1)r_3, \ \ \ s':=2rfK+7r^\diamond
$$
and suppose that $L, K/7, K/f, K/g, m/4\ell', m/L, 4fK/3g(g-1) \in \mathbb{N}$.%
   \COMMENT{Daniela added $L\in \mathbb{N}$}
Suppose that $(G_{\rm dir},\cP,\cP')$ is a $[K,L,m,\eps_0,\eps]$-scheme and let $G'$ denote the underlying undirected graph
of $G_{\rm dir}$. Let $C=A_1B_1\dots A_KB_K$ be a spanning cycle on the clusters in $\cP$. 
Suppose that $BF_1,\dots, BF_{r_3}$ are edge-disjoint balanced exceptional factors with parameters $(L,f)$ for $G_{\rm dir}$ 
(with respect to $C$, $\mathcal P'$). Let $\mathcal{BF}:=BF_1+\dots + BF_{r_3}$.
Then there exists a graph $CA(r)$ for which the following holds:
\begin{itemize}
\item[(i)] $CA(r)$ is a $2(r_1+r_2)$-regular spanning subgraph of $G'$ which is edge-disjoint from $\mathcal{BF}$.
\item[(ii)] Suppose that $BF'_1,\dots, BF'_{r^\diamond}$ are balanced exceptional factors with parameters $(1,7)$ for $G_{\rm dir}$
(with respect to $C$, $\mathcal P$) which are edge-disjoint from each other and from $CA(r)+ \mathcal{BF}$.
Let $\mathcal{BF}':=BF'_1+\dots + BF'_{r^\diamond}$.
Then there exists a graph $PCA(r)$ for which the following holds:
\begin{itemize}
\item[(a)] $PCA(r)$ is a $10r^\diamond$-regular spanning subgraph of $G'$ which
is edge-disjoint from $CA(r)+ \mathcal{BF}+ \mathcal{BF}'$.
\item[(b)] Let $\mathcal{BEPS}$ be the set consisting of all the $s'$ balanced exceptional path systems
contained in $\mathcal{BF}+ \mathcal{BF}'$.
Suppose that $H$ is a $2r$-regular bipartite graph on $V(G_{\rm dir})$ with vertex classes $\bigcup_{i=1}^K A_i$ and $\bigcup_{i=1}^K B_i$
which is edge-disjoint from $G^{\rm rob}:=CA(r)+ PCA(r)+ \mathcal{BF}+ \mathcal{BF}'$.
Then $H+ G^{\rm rob}$ has a decomposition into $s'$
edge-disjoint Hamilton cycles $C_1,\dots,C_{s'}$.
Moreover, $C_i$ contains one of the balanced exceptional path systems from $\mathcal{BEPS}$, for each $i\le s'$.
\end{itemize}
\end{itemize}
\end{cor}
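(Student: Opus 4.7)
\medskip

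\textbf{Proof proposal for Corollary~\ref{rdeccor}.} The plan is to apply the directed robust decomposition lemma (Lemma~\ref{rdeclemma}) to the bi-setup associated with $(G_{\rm dir},\cP,\cP')$, using the special factors one obtains by replacing each balanced exceptional factor $BF$ with its bipartite directed counterpart $BF^*_{\rm dir}$. More precisely, pick an auxiliary $\eps'$ with $\eps\ll \eps'\ll 1/L, 1/\ell'$ and invoke Lemma~\ref{lem:bisetup} to conclude that $(G_{\rm dir},\cP_{\rm bi},\cP''_{\rm bi},R_{\rm bi},C_{\rm bi},U_{{\rm bi},\ell'},U'_{{\rm bi},\ell'})$ is an $(\ell',2K,m,\eps',1/2)$-bi-setup, where $C_{\rm bi}$ is the cyclic orientation of $C$. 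Set $k:=2K$ and $q:=Lf$; then a direct check gives $r_2=96\ell' g^2 kr=192\ell' g^2 Kr$, $r_3=rfk/q=2rK/L$, $r^\diamond=r_1+r_2+r-(q-1)r_3$ and $s'=rfk+7r^\diamond=2rfK+7r^\diamond$, so the parameters of the corollary match those of Lemma~\ref{rdeclemma}, and the divisibility conditions $k/14=K/7$, $k/f=2(K/f)$, $k/g=2(K/g)$, $q/f=L$, $fm/q=m/L$, $2fk/3g(g-1)=4fK/3g(g-1)$ are all integral by hypothesis. The refinement $\cP'$ plays the role of $\cP^*$ (it is an $L=q/f$-refinement of $\cP_{\rm bi}$), and each $SF_i:=BF^*_{i,\rm dir}$ is a special factor with parameters $(q/f,f)=(L,f)$ with respect to $C_{\rm bi},\cP'$ by the discussion in Section~\ref{sec:BEPS}.

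For part~(i), apply Lemma~\ref{rdeclemma}(i) with this data to obtain the digraph $CA_{\rm dir}(r)$, an $(r_1+r_2)$-regular spanning subdigraph of $G_{\rm dir}$ edge-disjoint from $\mathcal{SF}:=\sum_i SF_i$. Take $CA(r)$ to be the underlying undirected graph of $CA_{\rm dir}(r)$; since $G_{\rm dir}$ is oriented, $CA(r)$ is $2(r_1+r_2)$-regular. Edge-disjointness from $\mathcal{BF}$ follows from edge-disjointness of $CA_{\rm dir}(r)$ from $\mathcal{SF}$ once one observes that all non-fictive edges of $\mathcal{SF}$ are exactly the $A$--$B$ edges of $\mathcal{BF}$, while the remaining edges of $\mathcal{BF}$ lie in $G_{\rm dir}[A_0\cup B_0, V(G_{\rm dir})]$ and are therefore automatically avoided by $CA_{\rm dir}(r)\subseteq G_{\rm dir}$. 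An entirely analogous argument, applied to $SF'_i:=(BF'_i)^*_{\rm dir}$, yields $PCA_{\rm dir}(r)$ from Lemma~\ref{rdeclemma}(ii)(a); define $PCA(r)$ as its underlying undirected graph and verify (a) exactly as above.

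For~(b), let $H$ be as in the statement. Since $H$ is a $2r$-regular bipartite graph, each connected component is Eulerian, so $H$ admits an orientation $H_{\rm dir}$ in which every vertex has indegree and outdegree $r$; by construction $H_{\rm dir}$ is an $r$-regular bipartite digraph with vertex classes $\bigcup A_i$ and $\bigcup B_i$. Because $V_{\rm odd}=\bigcup A_i$ and $V_{\rm even}=\bigcup B_i$ in the associated bi-setup, $H_{\rm dir}$ satisfies the hypothesis of Lemma~\ref{rdeclemma}(ii)(b), yielding edge-disjoint directed Hamilton cycles $C^{\rm dir}_1,\ldots,C^{\rm dir}_{s'}$ of $H_{\rm dir}+G^{\rm rob}_{\rm dir}$ on $A\cup B$, each containing exactly one special path system from $\mathcal{SPS}$. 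By construction, the elements of $\mathcal{SPS}$ are precisely the digraphs $BEPS^*_{\rm dir}$ as $BEPS$ ranges over $\mathcal{BEPS}$. Passing to the underlying undirected graph turns each $C^{\rm dir}_i$ into a cycle $C'_i$ of $H+G^{\rm rob}+\bigcup J^*$ on $A\cup B$ containing some $BEPS^*_i$; Proposition~\ref{prop:CEPSbiparite} (applied with $G=H+G^{\rm rob}$) then shows that $C_i:=C'_i-BEPS^*_i+BEPS_i$ is a Hamilton cycle of $H+G^{\rm rob}$ containing $BEPS_i\in\mathcal{BEPS}$. Edge-disjointness of the $C_i$ follows from edge-disjointness of the $C^{\rm dir}_i$ together with the fact that distinct BEPSs in $\mathcal{BEPS}$ are edge-disjoint in $G^{\rm rob}$ (as their fictive replacements arise from distinct SPSs in the directed decomposition).

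The main technical points will be (a) the bookkeeping verification that $k=2K$ and $q=Lf$ reproduce the exact parameters $r_2,r_3,r^\diamond,s'$ stated in the corollary, and (b) the careful handling of fictive edges when transferring the directed decomposition of $H_{\rm dir}+G^{\rm rob}_{\rm dir}$ back to an undirected decomposition of $H+G^{\rm rob}$: the fictive edges appearing in the SPSs must match, up to the $BEPS^*\leftrightarrow BEPS$ swap, the actual $V_0$-incident edges of the BEPSs inside $\mathcal{BF}+\mathcal{BF}'$, so that Proposition~\ref{prop:CEPSbiparite} produces genuine Hamilton cycles of $H+G^{\rm rob}$ rather than cycles using phantom edges. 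Everything else reduces to the already-verified bi-setup assumptions and routine degree counts.
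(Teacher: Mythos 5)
Your proposal matches the paper's proof essentially step by step: pass to the associated bi-setup via Lemma~\ref{lem:bisetup}, apply the directed robust decomposition lemma with $k=2K$, $q=Lf$ and the special factors $BF^*_{i,\rm dir}$, then deorient and use Proposition~\ref{prop:CEPSbiparite} to translate directed Hamilton cycles containing the $BEPS^*_{\rm dir}$'s back into undirected Hamilton cycles of $H+G^{\rm rob}$ containing the $BEPS$'s. The only cosmetic difference is that you orient $H$ via Eulerian circuits of its components rather than via Petersen's theorem; both are valid.

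One small slip worth fixing: the edges of $\mathcal{BF}$ other than the $AB$-edges do not ``lie in $G_{\rm dir}[A_0\cup B_0, V(G_{\rm dir})]$'' --- indeed $A_0\cup B_0\not\subseteq V(G_{\rm dir})$, and moreover the balanced exceptional systems contain $A_{i_1}A_{i_2}$- and $B_{i_3}B_{i_4}$-edges that involve no vertex of $V_0$ at all. The correct observation (as in the paper) is simply that $E(CA_{\rm dir}(r))\subseteq E(G_{\rm dir})$ consists only of $AB$-edges by (Sch$2'$), while a balanced exceptional system contains no $AB$-edge by (BES$2$), so $CA(r)$ automatically avoids every edge of every $J$ in $\mathcal{BF}$.
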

We remark that we write%
\COMMENT{osthus adapted sentence from paper4} 
$A_1,\dots,A_K,B_1,\dots,B_K$ for the clusters in $\cP$. Note that the vertex set of each of $\mathcal{EF}$, $\mathcal{EF}'$, $G^{\rm rob}$
includes $V_0$ while that of $G_{\rm dir}$, $CA(r)$, $PCA(r)$, $H$ does not.
Here $V_0=A_0\cup B_0$, where $A_0$ and $B_0$ are the exceptional sets of $\cP$.

\proof
Choose new constants $\eps'$ and $d$ such that $\eps\ll \eps'\ll 1/L$ and $r_1/m\ll d\ll 1/\ell',1/g$.%
   \COMMENT{Daniela: introduced $\eps'$ which is needed since Lemma~\ref{lem:bisetup} changed. Before we had that
$(G_{\rm dir},\cP_{{\rm bi}},\cP''_{{\rm bi}},R_{{\rm bi}},C_{{\rm bi}},U_{{\rm bi},\ell'},U'_{{\rm bi},\ell'})$
is an $(\ell',2K,m,\eps^{1/2},1/2)$-bi-setup instead of just an $(\ell',2K,m,\eps',1/2)$-bi-setup. So check whether I made all the necessary changes below.}
Consider the bi-setup $(G_{\rm dir},\cP_{\rm bi},\cP''_{{\rm bi}},R_{{\rm bi}},C_{{\rm bi}},U_{{\rm bi},\ell'},U'_{{\rm bi},\ell'})$
associated to $(G_{\rm dir},\cP,\cP')$. By Lemma~\ref{lem:bisetup}, 
$(G_{\rm dir},\cP_{{\rm bi}},\cP''_{{\rm bi}},R_{{\rm bi}},C_{{\rm bi}},U_{{\rm bi},\ell'},U'_{{\rm bi},\ell'})$
is an $(\ell',2K,m,\eps',1/2)$-bi-setup and thus also an $(\ell',2K,m,\eps',d)$-bi-setup.
Let $BF^*_{i,{\rm dir}}$ be as defined in Section~\ref{sec:BEPS}.
Recall from there that, for each $i\le r_3$, $BF^*_{i,{\rm dir}}$ is a special factor
with parameters $(L,f)$ with respect to $C$, $\mathcal P'$ in $G_{\rm dir}$ such that
${\rm Fict}(BF^*_{i,{\rm dir}})$ consists of all the edges in the $J^*$ for all the $Lf$ balanced exceptional systems $J$ contained in $BF_i$.
Thus we can apply Lemma~\ref{rdeclemma} to $(G_{\rm dir},\cP_{{\rm bi}},\cP''_{{\rm bi}},R_{{\rm bi}},C_{{\rm bi}},U_{{\rm bi},\ell'},U'_{{\rm bi},\ell'})$
with $2K$, $Lf$, $\eps'$ playing the roles of $k$, $q$, $\eps$
in order to obtain a spanning subdigraph $CA_{{\rm dir}}(r)$
of $G_{\rm dir}$ which satisfies Lemma~\ref{rdeclemma}(i). Hence the underlying undirected graph $CA(r)$ of
$CA_{{\rm dir}}(r)$ satisfies Corollary~\ref{rdeccor}(i). Indeed, to check that $CA(r)$ and $\mathcal{BF}$ are edge-disjoint, by Lemma~\ref{rdeclemma}(i)%
\COMMENT{osthus added by Lemma~\ref{rdeclemma}(i)} 
it suffices to
check that $CA(r)$ avoids all edges in all the balanced exceptional systems $J$ contained in $BF_i$ (for all $i\le r_3$). But this
follows since $E(G_{\rm dir})\supseteq E(CA(r))$ consists only of $AB$-edges by (Sch2$'$) and since no balanced exceptional system
contains an $AB$-edge by~(BES2).%
   \COMMENT{Daniela added the last 2 sentences}

Now let $BF'_1,\dots, BF'_{r^\diamond}$ be balanced exceptional factors as described in Corollary~\ref{rdeccor}(ii).
Similarly as before, for each $i\le r^\diamond$, $(BF'_i)^*_{\rm dir}$ is a special factor
with parameters $(1,7)$ with respect to $C$, $\mathcal P$ in $G_{\rm dir}$ such that
${\rm Fict}((BF'_i)^*_{\rm dir})$ consists of all the edges in the $J^*$ over all the $7$ balanced exceptional systems $J$ contained in $BF'_i$.
Thus we can apply Lemma~\ref{rdeclemma} to obtain a spanning subdigraph $PCA_{{\rm dir}}(r)$
of $G_{\rm dir}$ which satisfies Lemma~\ref{rdeclemma}(ii)(a) and~(ii)(b). Hence the underlying undirected graph
$PCA(r)$ of $PCA_{{\rm dir}}(r)$ satisfies Corollary~\ref{rdeccor}(ii)(a).

It remains to check that Corollary~\ref{rdeccor}(ii)(b) holds too. Thus let $H$ be as described in Corollary~\ref{rdeccor}(ii)(b).
Let $H_{{\rm dir}}$ be an $r$-regular orientation of $H$. (To see that such an orientation exists, apply Petersen's theorem to obtain
a decomposition of $H$ into $2$-factors and then orient each $2$-factor to obtain a (directed) $1$-factor.)%
     \COMMENT{However, even if $H\subseteq G'$, this orientation might not agree with $G_{\rm dir}$, i.e.~we might not have that
$H\subseteq G_{\rm dir}$. This is the reason why we cannot assume that $H\subseteq G$ in Lemma~\ref{rdeclemma}(ii)(b).
($H\subseteq G$ would fit better to the remainder of the Kelly paper.)}
Let $\mathcal{BF}^*_{\rm dir}$ be the union of the $BF^*_{i,{\rm dir}}$ over all $i\le r_3$
and let $(\mathcal{BF}')^*_{\rm dir}$ be the union of the $(BF'_i)^*_{\rm dir}$ over all $i\le r^\diamond$.
Then Lemma~\ref{rdeclemma}(ii)(b) implies that
$H_{{\rm dir}}+ CA_{{\rm dir}}(r)+ PCA_{{\rm dir}}(r)+ \mathcal{BF}^*_{\rm dir}+ (\mathcal{BF}')^*_{\rm dir}$
has a decomposition into $s'$ edge-disjoint (directed) Hamilton cycles $C'_1,\dots,C'_{s'}$ such that each $C'_i$ contains
$BEPS^*_{i,{\rm dir}}$ for some balanced exceptional path system $BEPS_i$ from $\mathcal{BEPS}$.
Let $C_i$ be the undirected graph obtained from $C'_i-BEPS^*_{i,{\rm dir}}+BEPS_i$ by ignoring the directions of all
the edges. Then Proposition~\ref{prop:CEPSbiparite} (applied with $G'$ playing the role of $G$) implies that $C_1,\dots,C_{s'}$ is
a decomposition of $H+ G^{\rm rob}=H+ CA(r)+ PCA(r)+ \mathcal{BF}+ \mathcal{BF}'$ into edge-disjoint Hamilton cycles.
\endproof 


\section{Proof of Theorem~\ref{NWmindegbip}}\label{sec:proof1}

The proof of Theorem~\ref{NWmindegbip} is similar to that of Theorem~\ref{1factbip} except that we  do not need to apply
the robust decomposition lemma in the proof of Theorem~\ref{NWmindegbip}. For both results, we will need an approximate decomposition result
(Lemma~\ref{almostthm}), which is stated below and proved as Lemma~3.2 in~\cite{paper3}.%
    \COMMENT{osthus changed the last sentence to refer to 3.2}
The lemma extends a suitable set of balanced exceptional systems into a set of edge-disjoint Hamilton cycles covering most edges of an almost complete 
and almost balanced bipartite graph.%
\COMMENT{osthus added extra sentence}
\begin{lemma}\label{almostthm}
Suppose that $0<1/n \ll \eps_0  \ll 1/K \ll \rho  \ll 1$ and $0 \leq \mu \ll 1$,
where $n,K \in \mathbb N$ and $K$ is even.
Suppose that $G$ is a graph on $n$ vertices and $\mathcal{P}$ is a $(K, m, \eps _0)$-partition of $V(G).$
Furthermore, suppose that the following conditions hold:%
	\COMMENT{Previously we had `$(G[A,B],\mathcal{P})$ is a $(K, m, \eps _0, \eps )$-scheme'. now removed.}
\begin{itemize}
	\item[{\rm (a)}] $d(w,B_i) = (1 - 4 \mu \pm 4 /K) m $ and $d(v,A_i) = (1 - 4 \mu \pm 4 /K) m $ for all
	$w \in A$, $v \in B$ and $1\leq i \leq K$.%
	\COMMENT{Daniela swapped $v$ and $w$, now it is the same as in the proofs}
	\item[{\rm (b)}] There is a set $\mathcal J$ which consists of at most $(1/4-\mu - \rho)n$ edge-disjoint balanced exceptional systems with parameter $\eps_0$ in~$G$.
	\item[{\rm (c)}] $\mathcal J$ has a partition into $K^4$ sets $\mathcal J_{i_1,i_2,i_3,i_4}$ (one for all $1\le \I \le K$) such that each $\mathcal J_{\I}$ consists of precisely $|\mathcal J|/{K^4}$ $\i$-BES with respect to~$\cP$.
   \item[{\rm (d)}] Each $v \in A \cup B$ is incident with an edge in $J$ for at most $2 \eps_0 n $ $J \in \mathcal{J}$.%
   \COMMENT{This is a new property. This condition is implied by the fact that $(G[A,B],\mathcal{P})$ is a $(K, m, \eps _0, \eps )$-scheme.($|A_0 \cup B_0| \le \eps_0 n$
and $\Delta( G[A]),\Delta( G[B]) \le \eps_0 n$.)}
\end{itemize}
Then $G$ contains $|\mathcal J|$ edge-disjoint Hamilton cycles such that each of these Hamilton cycles contains some $J\in\mathcal{J}$.
\end{lemma}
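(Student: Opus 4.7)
\smallskip

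\noindent\textbf{Proof plan.} The strategy is to use the fictive-edge construction of Section~\ref{BESstar} to reduce to the problem of finding edge-disjoint Hamilton cycles in an almost complete balanced bipartite graph on $A\cup B$, each containing a prescribed short matching in a prescribed cyclic order. For each $J\in\mathcal{J}$, let $J^*$ be the associated matching on $A\cup B$ (viewed as edge-disjoint from $G$, with each edge labelled by $J$). Since $J^*$ is a matching, every Hamilton cycle $C^*$ of $G[A,B]+J^*$ that is consistent with $J^*$ (in the sense defined before Proposition~\ref{CES-H}) yields, by Proposition~\ref{CES-H}(ii), a Hamilton cycle $C:=C^*-J^*+J$ of $G$ containing~$J$. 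So it suffices to construct pairwise edge-disjoint $C_J^*\subseteq G[A,B]+J^*$, one for each $J\in\mathcal{J}$, each consistent with its own $J^*$.

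We process the classes $\mathcal{J}_{i_1,i_2,i_3,i_4}$ one by one, and within each class we process the $|\mathcal{J}|/K^4$ BES in turn. Write $H_0:=G[A,B]$ and let $H_r$ denote the bipartite graph remaining after the first $r$ Hamilton cycles have been removed. Assumption~(a) implies $H_0$ is nearly complete and balanced bipartite, and condition~(d) together with the fact that each Hamilton cycle removes exactly two edges at each vertex guarantees that after all $r\le (1/4-\mu-\rho)n$ steps we still have $\delta(H_r)\ge (1/2+\rho/2)|A|$ (say) in the bipartite sense; the nearness to regularity is preserved up to an $O(\rho)$-error. At step $r+1$, given the next $J$ and its $J^*$, we seek a Hamilton cycle of $H_r+J^*$ traversing the vertices of $J^*$ in the prescribed order $x_1,y_1,\ldots,x_{s'},y_{s'}$. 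Since $s'\le\eps_0 n$ is tiny and $H_r$ has large bipartite minimum degree, this can be done by a standard absorbing/rotation argument: first pick internally disjoint short paths in $H_r$ joining the consecutive pairs $(y_i,x_{i+1})$ through random vertices outside a pre-selected absorbing structure, and then close the resulting near-spanning path system into a Hamilton cycle using the absorber. The partition property~(c) ensures that within each class $\mathcal{J}_{i_1,i_2,i_3,i_4}$ the positions of the endpoints of $J^*$ are essentially the same, so a single random slicing of $H$ at the start of the class produces reservoirs that work uniformly for all its members.

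The main obstacle, and the reason one cannot simply quote a black-box Hamilton cycle theorem, is maintaining quasirandomness of $H_r$ throughout the $|\mathcal{J}|\sim n/4$ iterations while simultaneously meeting the consistency constraint on each $J^*$. To control this we would use the random-greedy approach of \cite{CKO} refined via the partition $\mathcal{P}$: before starting, choose (Lemma~\ref{randompartition}-style) an $\eps$-uniform refinement and a random reservoir $R\subseteq A\cup B$; invoke Proposition~\ref{chernoff} to ensure that after every step every vertex still has nearly its expected degree into each cluster; and use~(d) to bound the number of BES at any vertex, which in turn bounds the `extra' perturbations of degrees in $H_r$ beyond the $2r$ coming from the Hamilton cycles themselves. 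Having secured this quasirandomness, the short $J^*$-connecting paths exist by a greedy application of Proposition~\ref{chernoff}, and absorption finishes each Hamilton cycle. Iterating over all $J\in\mathcal{J}$ produces the required $|\mathcal{J}|$ edge-disjoint Hamilton cycles of~$G$.
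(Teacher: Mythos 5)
The paper does not actually prove this lemma: it is imported as Lemma~3.2 of the companion paper~\cite{paper3}, which is devoted entirely to the approximate decomposition. So you are effectively reproving a black box, and your sketch would need to carry substantial weight. Your reduction via $J^*$ and Proposition~\ref{CES-H}(ii) is correct: it suffices to find, for each $J\in\mathcal J$, a Hamilton cycle of $G[A,B]+J^*$ consistent with $J^*$, with all these cycles using pairwise disjoint $AB$-edges.

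The fatal problem is the claimed degree estimate. You assert that after removing $r\le|\mathcal J|\le(1/4-\mu-\rho)n$ Hamilton cycles, the bipartite graph $H_r$ still satisfies $\delta(H_r)\ge(1/2+\rho/2)|A|$. This is false. By~(a), each $w\in A$ starts with $d_{G[A,B]}(w)=(1-4\mu\pm 4/K)|B|\approx n/2-2\mu n$. Each Hamilton cycle removes two $AB$-edges at $w$ (one if $w\in V(J^*)$, which by~(d) happens for at most $2\eps_0 n$ of the $J$), so after all $|\mathcal J|$ cycles the remaining degree of $w$ is
\[
(1-4\mu)|B| - 2|\mathcal J| + O(\eps_0 n)\ \approx\ \bigl(n/2 - 2\mu n\bigr) - \bigl(n/2 - 2\mu n - 2\rho n\bigr)\ =\ 2\rho n\ \approx\ 4\rho|A|,
\]
which is far below $|A|/2$ since $\rho\ll 1$. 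So for most of the process the graph is $\Theta(\rho)$-sparse, and no Dirac-type bound, greedy path-connection, or off-the-shelf absorber closes the path system into a Hamilton cycle. This is the whole difficulty: one must pack roughly $n/4$ Hamilton cycles, each traversing its own fictive matching $J^*$ in a prescribed cyclic order, into a bipartite graph of degree about $n/2$, so that essentially every edge gets used. That is an almost-perfect packing problem, not a minimum-degree problem, and the machinery that solves it (regularity maintained as the density decreases towards $\rho$, allocation of cycles to cluster pairs, and routing through the $J^*$-endpoints organised by the $K^4$ classes in~(c)) is precisely the substantive content of~\cite{paper3}. As written, your sketch has no tool that functions once $\delta(H_r)$ drops below $|A|/2$, so it does not constitute a proof.
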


To%
\COMMENT{osthus added para} 
prove Theorem~\ref{NWmindegbip}, we find a framework via Corollary~\ref{coverA0B02c}.
Then we choose suitable balanced exceptional systems using Corollary~\ref{BEScor}.
Finally, we extend these into Hamilton cycles using Lemma~\ref{almostthm}.

\removelastskip\penalty55\medskip\noindent{\bf Proof of Theorem~\ref{NWmindegbip}. }
\noindent\textbf{Step 1: Choosing the constants and a framework.}
By%
   \COMMENT{Daniela changed this proof substantially, eg schemes are not used anymore. So read all of it again}
making $\alpha$ smaller if necessary, we may assume that $\alpha\ll 1$.
Define new constants such that 
\begin{align}
0 & < 1/n_0 \ll \eps_{\rm ex} \ll \epszero \ll \eps'_0\ll \eps' \ll \eps_1 \ll \eps_2 \ll \eps_3 \ll
\eps_4 \ll 1/K\ll \alpha \ll \eps\ll 1, \nonumber
\end{align}
where $K\in  \mathbb{N}$ and $K$ is even.%
\COMMENT{$\eps_3$ and $\eps_4$ are implicitly used when applying Corollary~\ref{BEScor}. }

Let $G$, $F$ and $D$ be as in Theorem~\ref{NWmindegbip}. 
Apply Corollary~\ref{coverA0B02c} with $\eps_{\rm ex}$, $\eps_0$ playing the role of $\eps$, $\eps^*$
to find a set $\cC_1$ of at most $\eps _{\rm ex} ^{1/3} n$ edge-disjoint Hamilton cycles in $F$ so that the graph $G_1$ obtained from $G$
by deleting all the edges in these Hamilton cycles forms part of an
$(\eps_0,\eps',K,D_1)$-framework $(G_1,A,A_0,B,B_0)$ with $D_1 \ge D-2\eps_{\rm ex}^{1/3}n$.
Moreover, $F$ satisfies~(WF5) with respect to $\eps '$ and
\begin{equation}\label{eq:sizeC1}
|\cC_1|=(D-D_1)/2.
\end{equation}
In particular, this implies that $\delta (G_1) \geq D_1$ and that $D_1$ is even (since $D$ is even). 
Let $F_1$ be the graph obtained from $F$ by deleting all those edges lying on Hamilton cycles
in $\cC_1$. Then 
\begin{equation}\label{eq:degF1}
\delta(F_1) \ge \delta(F)-2|\cC_1|\ge (1/2-3\eps_{\rm ex}^{1/3})n.
\end{equation}
 Let
\begin{align*}
m :=\frac{|A|}{K}=\frac{|B|}{K} \qquad \text{and} \qquad t_{K}:=\frac{(1-20\eps_4)D_1}{2K^4}.
\end{align*}
By changing $\eps_4$ slightly, we may assume that $t_K\in\mathbb{N}$.

\smallskip

\noindent\textbf{Step 2: Choosing a $(K,m,\eps_0)$-partition $\cP$.}
Apply Lemma~\ref{part} to $(G_1,A,A_0,B,B_0)$ with $F_1$, $\eps_0$
playing the roles of $F$, $\eps$ in order to obtain partitions $A_1,\dots,A_{K}$ and
$B_1,\dots,B_{K}$ of $A$ and $B$ into sets of size $m$ such that together with $A_0$ and $B_0$ the sets
$A_i$ and $B_i$ form a $(K,m,\eps_0,\eps_1,\eps_2)$-partition $\cP$ for $G_1$.%

Note that by Lemma~\ref{part}(ii) and since $F$ satisfies (WF5),
for all $x \in A$ and $1 \leq j \leq K$, we have
\begin{eqnarray}\label{eq:degF'1}
d_{F_1}(x,B_j)& \ge & \frac{d_{F_1}(x,B) - \eps_1n}{K} \stackrel{{\rm (WF5)}}{\ge} \frac{d_{F_1}(x)-\eps' n -|B_0|- \eps_1 n}{K}\nonumber\\
& \stackrel{(\ref{eq:degF1})}{\ge} & \frac{(1/2-3\eps_{\rm ex}^{1/3})n- 2\eps_1 n}{K}
\ge (1-5\eps_1)m_.
\end{eqnarray}
Similarly, $d_{F_1}(y,A_i)\ge (1-5\eps_1)m$%
\COMMENT{Andy: deleted a prime}
for all $y\in B$ and $1 \leq i \leq K$.

\smallskip

\noindent\textbf{Step 3: Choosing balanced exceptional systems for the almost decomposition.}
Apply Corollary~\ref{BEScor} to the $(\eps_0,\eps',K,D_1)$-framework $(G_1,A,A_0,B,B_0)$ with
$F_1$, $G_1$, $\eps_0$, $\eps'_0$, $D_1$ playing the roles
of $F$, $G$, $\eps$, $\eps_0$, $D$. Let $\cJ'$ be the union of the sets $\cJ_{i_1i_2i_3i_4}$ guaranteed by Corollary~\ref{BEScor}.
So $\cJ'$ consists of $K^4t_{K}$ edge-disjoint balanced exceptional systems with parameter $\eps'_0$ in $G_1$ (with respect to~$\mathcal{P}$).
Let $\cC_2$ denote the set of $10 \eps _4 D_1$ Hamilton cycles guaranteed by Corollary~\ref{BEScor}.
Let $F_2$ be the subgraph obtained from $F_1$ by deleting all the Hamilton cycles in~$\cC_2$.  
Note that 
\begin{equation}\label{eq:D2}
D_2:=D_1 -2|\mathcal C_2| =(1-20 \eps _4)D_1=2K^4 t_K =2|\cJ'|.
\end{equation}
\noindent\textbf{Step 4: Finding the remaining Hamilton cycles.}
Our next aim is to apply Lemma~\ref{almostthm} with $F_2$, $\cJ'$,  $\eps'$ playing the
roles of $G$, $\cJ$,  $\eps_0$.

Clearly, condition~(c) of Lemma~\ref{almostthm} is satisfied.
In order to see that condition (a) is satisfied, let $\mu:=1/K$ and note that for all $w\in A$ we have 
$$
d_{F_2}(w,B_i)\ge d_{F_1}(w,B_i)-2|\cC_2|\stackrel{(\ref{eq:degF'1})}{\ge} (1-5\eps_1)m-20\eps_4 D_1\ge (1-1/K)m.
$$
Similarly $d_{F_2}(v,A_i)\ge (1-1/K)m$ for all $v\in B$.

To check condition~(b), note that
$$|\cJ'|\stackrel{(\ref{eq:D2})}{=} \frac{D_2}{2}\le \frac{D}{2}\le (1/2-\alpha)\frac{n}{2}\le (1/4-\mu-\alpha/3)n .$$
Thus condition~(b) of Lemma~\ref{almostthm} holds with $\alpha/3$ playing the role of $\rho$.
Since the edges in $\cJ'$ lie in $G_1$ and  $(G_1,A,A_0,B,B_0)$ is an $(\eps_0,\eps',K,D_1)$-framework,
(FR5) implies that each $v \in A \cup B$ is incident with an edge in $J$ for at most $\eps ' n+|V_0|\leq 2\eps 'n$
$J \in \cJ '$. 
(Recall that in a balanced exceptional system there are no edges between $A$ and $B$.)
So condition~(d) of Lemma~\ref{almostthm} holds with $\eps '$ playing the role of $\eps _0$.

So we can indeed apply Lemma~\ref{almostthm} to obtain a collection $\cC_3$ of $|\cJ'|$ edge-disjoint Hamilton cycles in $F_2$
which cover all edges of $\bigcup \cJ'$. Then $\cC_1\cup \cC_2\cup \cC_3$ is a set of edge-disjoint Hamilton cycles in~$F$
of size
$$|\cC_1|+|\cC_2|+|\cC_3|\stackrel{(\ref{eq:sizeC1}),(\ref{eq:D2})}{=} \frac{D-D_1}{2}+ \frac{D_1-D_2}{2}+ \frac{D_2}{2}=\frac{D}{2},$$
as required.
\endproof


\section{Proof of Theorem~\ref{1factbip}}\label{sec:proof2}

As mentioned earlier, the proof of Theorem~\ref{1factbip} is similar to that of Theorem~\ref{NWmindegbip} except that we will also need to apply 
the robust decomposition lemma. This means Steps 2--4 and Step 8 in the proof of Theorem~\ref{1factbip} do not appear
in the proof of Theorem~\ref{NWmindegbip}.
Steps 2--4 prepare the ground for the application of the robust decomposition lemma and in Step~8 we apply it to cover the leftover from the approximate decomposition step
with Hamilton cycles. Steps 5--7 contain the approximate decomposition step, using Lemma~\ref{almostthm}.
\COMMENT{osthus added extra sentence}

In our proof of Theorem~\ref{1factbip} it will be convenient to work with an undirected version of the schemes introduced in Section~\ref{sec:findBF}.
Given a graph $G$ and partitions $\mathcal P$ and $\cP'$ of a vertex set $V$, we call $(G, \mathcal{P},\mathcal{P}')$ a
\emph{$(K,L,m,\eps_0, \eps)$-scheme} if the following properties hold:%
   \COMMENT{Daniela changed def to make $G$ into a bipartite graph with vertex classes $A$ and $B$ (ie replacing the old $G$ by $G-V_0$).
Also deleted the def of a $(K,m,\eps_0, \eps)$-scheme} 
\begin{itemize}
\item[(Sch$1$)] $(\mathcal{P},\mathcal{P}')$ is a $(K,L,m,\eps_0)$-partition of $V$. Moreover, $V(G)=A\cup B$.
\item[(Sch$2$)] Every edge of $G$ joins some vertex in $A$ to some vertex in~$B$.
\item[(Sch$3$)] $d_G(v,A_{i,j}) \geq (1 - \eps) m/L $ and $d_G(w,B_{i,j}) \geq (1 - \eps) m/L $ for all
    $v \in B$, $w \in A$, $ i \leq K$ and $j\leq L$.
\end{itemize}
We will also use the following proposition.

\begin{prop}\label{lem:dirscheme}
Suppose that $K, L, n, m/L \in \mathbb{N}$ and $0 <1/n  \ll \eps, \eps_0\ll 1$.
Let $(G,\mathcal{P}, \mathcal{P}')$ be a $(K, L, m, \epszero, \eps)$-scheme with $|G| = n$.
Then there exists an orientation $G_{\rm dir}$ of $G$%
   \COMMENT{Daniela replaced $G-V_0$ by $G$ (also twice in the first para of the proof)}
such that $(G_{\rm dir}, \mathcal{P},\mathcal{P}')$ is a
$[K,L,m,\eps_0,2\sqrt{\eps}]$-scheme.
\end{prop}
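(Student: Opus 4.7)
My plan is to orient each edge of $G$ independently and uniformly at random (so each of the two orientations is chosen with probability $1/2$) to obtain a random orientation $G_{\rm dir}$, and to verify that with positive probability $(G_{\rm dir},\mathcal{P},\mathcal{P}')$ satisfies the properties of a $[K,L,m,\eps_0,2\sqrt{\eps}]$-scheme. Conditions (Sch$1'$) and (Sch$2'$) are inherited immediately from (Sch$1$) and (Sch$2$), so the work concentrates on verifying (Sch$3'$) and (Sch$4'$) with the parameter $2\sqrt{\eps}$.

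For (Sch$3'$), observe first that by (Sch$3$) every undirected bipartite subgraph $G[A_{i,j},B_{i',j'}]$ has minimum degree at least $(1-\eps)m/L$, so Fact~\ref{simplefact} gives that it is $[\sqrt{\eps},1]$-superregular; the same holds for $G[A_i,B_j]$. After the random orientation, for each $v\in A_{i,j}$ the outdegree $d^+_{G_{\rm dir}}(v,B_{i',j'})$ is a sum of $d_G(v,B_{i',j'})\ge(1-\eps)m/L$ independent Bernoulli$(1/2)$ variables, so Proposition~\ref{chernoff} gives that it lies in $(1/2\pm\sqrt{\eps})m/L$ with failure probability at most $\exp(-\Omega(m/L))$; the same applies to indegrees and to the coarser clusters. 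For $\eps$-regularity one takes a union bound over all pairs of subsets $A'\subseteq A_{i,j}$, $B'\subseteq B_{i',j'}$ with $|A'|,|B'|\ge 2\sqrt{\eps}\, m/L$. Since $G[A_{i,j},B_{i',j'}]$ has density at least $1-\sqrt{\eps}/2$ (again from (Sch$3$)), the number of oriented $A'B'$-edges has expectation $(1/2)(1\pm\sqrt{\eps}/2)|A'||B'|$, and concentrates within an additive error $\sqrt{\eps}|A'||B'|/2$ by Proposition~\ref{chernoff}. The resulting failure probability is at most $\exp(-\Omega(\eps^2(m/L)^2))$, which easily beats the $4^{m/L}$ choices of subsets because $1/n\ll\eps$. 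Combining degree concentration with this regularity estimate yields $[2\sqrt{\eps},1/2]$-superregularity of every pair required by (Sch$3'$), and a union bound over the $O(K^2L^2)$ cluster pairs completes this step.

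For (Sch$4'$), fix distinct $x,y\in A$, a cluster index $i\le K$ and a subcluster index $j\le L$. Condition (Sch$3$) implies that the set $Z:=\{z\in B_{i,j}:xz,yz\in E(G)\}$ has size at least $2(1-\eps)m/L-m/L=(1-2\eps)m/L$, and each $z\in Z$ lies in $N^+_{G_{\rm dir}}(x)\cap N^-_{G_{\rm dir}}(y)\cap B_{i,j}$ independently with probability $1/4$. Hence the expected size of this intersection is at least $(1-2\eps)m/(4L)$, which comfortably exceeds the target $(1-2\sqrt{\eps})m/(5L)$, giving enough slack for Proposition~\ref{chernoff} to bound the failure probability by $\exp(-\Omega(m/L))$. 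A union bound over all $O(n^2 KL)$ choices of $(x,y,i,j)$ (and its analogue with the roles of $A$ and $B$ swapped) succeeds by the same margin as before. The main (modest) obstacle is simply arranging the constants so that the Chernoff deviations in the regularity step dominate all the subset-counting union bounds; once one has $1/n\ll\eps$, this is automatic and no further ideas are needed.
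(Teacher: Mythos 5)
Your proposal is correct and follows essentially the same route as the paper: orient every edge of $G$ independently and uniformly at random, observe that (Sch$1'$) and (Sch$2'$) are automatic, and then verify (Sch$3'$) and (Sch$4'$) by combining Fact~\ref{simplefact}/(Sch$3$) with the Chernoff--Hoeffding bound and union bounds over cluster pairs, subsets and vertex pairs. The paper states this argument in two sentences without detail, and your write-up simply fills in the concentration and counting estimates that the paper elides.
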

\proof 
Randomly orient every edge in $G$ to obtain an oriented graph $G_{\rm dir}$. (So given any edge $xy$ in $G$
with probability $1/2$, $xy \in E(G_{\rm dir})$ and with probability $1/2$, $yx \in E(G_{\rm dir})$.)
(Sch$1'$) and (Sch$2'$) follow immediately from (Sch$1$) and (Sch$2$).

Note that Fact~\ref{simplefact} and (Sch$3$) imply that $G[A_{i,j},B_{i',j'}]$ is $[1, \sqrt{\eps}]$-superregular
with density at least $1-\eps$,
for all $i,i'\leq K$ and $j,j' \leq L$.
Using this, (Sch$3'$) follows easily from the large deviation bound in Proposition~\ref{chernoff}.
(Sch$4'$) follows from Proposition~\ref{chernoff} in a similar way.%
\COMMENT{osthus moved this into a comment: 
Given any distinct $x,y \in A$ and any $i\leq K$, $j\leq L$,  $|N_G (x) \cap N_G (y) \cap B_{i,j}|\geq (1-2\eps)m/L$ by (Sch$3$).
Given any vertex $z$ in $N_G (x) \cap N_G (y) \cap B_{i,j}$, $z$ is an element of
$N^+_{G_{\rm dir}} (x) \cap N^-_{G_{\rm dir}} (y) \cap B_{i,j}$ with probability $1/4$ (independent of any
other vertex $z'$ in $N_G (x) \cap N_G (y) \cap B_{i,j}$). Thus,
$$\mathbb E (|N^+_{G_{\rm dir}} (x) \cap N^-_{G_{\rm dir}} (y) \cap B_{i,j}|) \geq (1-2\eps)m/4L.$$
Hence, Proposition~\ref{chernoff} for the binomial distribution implies that, with high probability
$$|N^+_{G_{\rm dir}} (x) \cap N^-_{G_{\rm dir}} (y) \cap B_{i,j}|\geq (1-2\sqrt{\eps})m/5L$$
for all $x,y\in A$ and all $i\le K$ and $j\le L$.%
An analogous argument shows that
with high probability
$$|N^+_{G_{\rm dir}} (x) \cap N^-_{G_{\rm dir}} (y) \cap A_{i,j}|\geq (1-2\sqrt{\eps})m/5L$$
for all $x,y\in B$ and all $i\le K$ and $j\le L$.%
Therefore, with high probability, (Sch$4'$) is satisfied.
Fact~\ref{simplefact} and (Sch$3$) imply that $G[A_{i,j},B_{i',j'}]$ is $[1, \sqrt{\eps}]$-superregular
with density at least $1-\eps$,
for all $i,i'\leq K$ and $j,j' \leq L$. Let $G'_{\rm dir}:=G_{\rm dir} [A_{i,j}, B_{i',j'}]$.
Thus, given any $X \subseteq A_{i,j}$, $Y \subseteq B_{i',j'}$ with $|X|,|Y|\geq \sqrt{\eps}m/L$,
$$\frac{1}{2} (1-\eps -\sqrt{\eps})|X||Y| \leq \mathbb E( e_{G'_{\rm dir}} (X,Y)) \leq \frac{1}{2}|X||Y|.$$
Hence, Proposition~\ref{chernoff} for the binomial distribution implies that, with high probability,%
$$(1/2-3\sqrt{\eps}/2)|X||Y| \leq 
e_{G'_{\rm dir}} (X,Y)\leq (1/2+3\sqrt{\eps}/2)|X||Y|$$
for all $X \subseteq A_{i,j}$, $Y \subseteq B_{i',j'}$ with $|X|,|Y|\geq \sqrt{\eps}m/L$.
 Further, by (Sch$3$) and Proposition~\ref{chernoff} for the binomial distribution,
with high probability
$$(1/2-2{\eps})m/L \leq \delta (G'_{\rm dir} ), 
\Delta (G'_{\rm dir} )
\leq (1/2+2{\eps})m/L.$$
Together, this implies that with high probability $G'_{\rm dir}$ is $[2 \sqrt{\eps}, 1/2]$-superregular.
Analogous arguments show that, with high probability (Sch$3'$) is satisfied. Thus, with high probability
$(G_{\rm dir}, \mathcal{P},\mathcal{P}')$ is a
$[K,L,m,\eps_0,2\sqrt{\eps}]$-scheme, as desired.}
\endproof

\removelastskip\penalty55\medskip\noindent{\bf Proof of Theorem~\ref{1factbip}. }

\noindent\textbf{Step 1: Choosing the constants and a framework.}
Define new constants such that%
    \COMMENT{$\eps_3$ and $\eps_4$ are implicitly used when applying Corollary~\ref{BEScor}. } 
\begin{align}
0 & < 1/n_0 \ll \eps_{\rm ex} \ll \eps_* \ll \epszero \ll \eps'_0\ll \eps' \ll \eps_1 \ll \eps_2  \ll \eps_3 \ll
\eps_4\ll 1/K_2  \\
 &  \ll \gamma \ll 1/K_1 \ll  \eps''\ll 1/L \ll 1/f \ll \gamma_1  \ll 1/g \ll \eps\ll 1, \nonumber
\end{align}
where $K_1, K_2, L, f, g \in  \mathbb{N}$ and both $K_2$, $g$ are even.%
   \COMMENT{Daniela added $g$ even, we need this when applying Cor~\ref{rdeccor} with $\ell':=g$} 
Note that we can choose the constants such that
$$\frac{K_1}{28fgL}, \frac{K_2}{4gLK_1}, \frac{4fK_1}{3g(g-1)} \in \mathbb{N}.$$

Let $G$ and $D$ be as in Theorem~\ref{1factbip}. By applying Dirac's theorem to remove a suitable number of edge-disjoint Hamilton cycles if necessary,
we may assume that $D\le n/2$. 
Apply Corollary~\ref{coverA0B02c} with $G$, $\eps_{\rm ex}$, $\eps_*$, $\eps _0$, $K_2$ playing the roles of $F$, $\eps$, $\eps ^*$, $\eps'$, $K$
to find a set $\cC_1$ of 
at most $\eps _{\rm ex} ^{1/3} n$ 
edge-disjoint Hamilton cycles in $G$ so that the graph $G_1$ obtained from $G$
by deleting all the edges in these Hamilton cycles forms part of an
$(\eps_*,\eps_0,K_2,D_1)$-framework $(G_1,A,A_0,B,B_0)$, where%
   \COMMENT{Daniela added $=D-2|\cC_1|$ below, osthus added $|A|+\eps_0 n \ge n/2$ } 
\begin{equation} \label{D1eq}
|A|+\eps_0 n \ge n/2 \ge D_1 =D-2|\cC_1|\ge D-2\eps_{\rm ex}^{1/3}n\ge D-\eps_0 n \ge n/2-2\eps_0 n \ge |A|-2\eps_0 n.
\end{equation}
Note that  $G_1$ is $D_1$-regular and that $D_1$ is even since $D$ was even.
Moreover, since $K_2/LK_1\in\mathbb{N}$, $(G_1,A,A_0,B,B_0)$ is also an $(\eps_*,\eps_0,K_1L,D_1)$-framework
and thus an $(\eps_*,\eps',K_1L,D_1)$-framework.

Let%
     \COMMENT{$r^\diamond$ is $t$ in the $2$-clique case}
\begin{align*}
m_1 & :=\frac{|A|}{K_1}=\frac{|B|}{K_1}, \qquad r:=  \gamma m_1, \qquad
r_1 := \gamma_1 m_1, \qquad
r_2:= 192g^3K_1r, \\
r_3 & := \frac{2rK_1}{L}, \qquad r^\diamond := r_1 +r_2 +r -(Lf -1)r_3, \\
D_4 & :=D_1-2(Lfr_3+7r^\diamond), \qquad t_{K_1L} :=\frac{(1-20\eps_4)D_1}{2(K_1L)^4} .
\end{align*}
Note that (FR4) implies $m_1/L\in \mathbb{N}$. Moreover, 
\begin{equation}\label{eq:rs}
r_2,r_3\le \gamma^{1/2}m_1\le \gamma^{1/3} r_1,  \qquad  r_1/2\le r^\diamond\le 2r_1.
\end{equation}
Further, by changing $\gamma,\gamma_1,\eps_4$ slightly, we may assume that $r/K^2_2,r_1,t_{K_1L}\in\mathbb{N}$.
Since $K_1/L\in\mathbb{N}$ this implies that $r_3\in\mathbb{N}$.
Finally, note that 
\begin{equation} \label{D4bound}
(1+3\eps_*)|A| \ge D \ge D_4  \stackrel{(\ref{eq:rs})}{\ge} D_1-\gamma_1 n \stackrel{(\ref{D1eq})}{\ge} |A|-2\gamma_1 n \ge (1-5\gamma_1)|A|.
\end{equation}

\noindent\textbf{Step 2: Choosing a $(K_1, L, m_1, \epszero)$-partition $(\mathcal{P}_1,\mathcal{P}'_1)$.}
We now prepare the ground for the construction of the robustly decomposable graph $G^{\rm rob}$, 
which we will obtain via the robust decomposition lemma (Corollary~\ref{rdeccor}) in Step~4.

Recall that $(G_1,A,A_0,B,B_0)$ is an $(\eps_*,\eps',K_1L,D_1)$-framework.
Apply Lemma~\ref{part} with $G_1$, $D_1$, $K_1L$, $\eps_*$ playing the roles of $G$, $D$, $K$, $\eps$ to obtain partitions
$A'_1,\dots,A'_{K_1L}$ of $A$ and $B'_1,\dots,B'_{K_1L}$ of $B$
into sets of size $m_1/L$ such that together with $A_0$ and $B_0$ all
these sets $A'_i$ and $B'_i$ form a $(K_1L,m_1/L,\eps_*,\eps_1,\eps_2)$-partition
$\cP'_1$ for $G_1$. 
Note that $(1-\eps_0)n\le n-|A_0\cup B_0|=2K_1m_1\le n$ by (FR4).
For all $i\le K_1$ and all $h\le L$, let $A_{i,h}:=A'_{(i-1)L+h}$. (So this is just a relabeling of the sets $A'_i$.)
Define $B_{i,h}$ similarly and let
$A_i:= \bigcup_{h\le L} A_{i,h}$ and $B_i:= \bigcup_{h\le L} B_{i,h}$.
Let $\cP_1:=\{A_0,B_0,A_1,\dots,A_{K_1},B_1,\dots,B_{K_1}\}$ denote the
corresponding $(K_1,m_1,\eps_0)$-partition of $V(G)$. Thus $(\cP_1,\cP'_1)$ is a $(K,L,m_1,\eps_0)$-partition of $V(G)$,
as defined in Section~\ref{sec:SF}.
 
Let $G_2:=G_1[A,B]$.%
   \COMMENT{Daniela: changed def because of change in def of (..)-system, before had "Let $G_2$ be the spanning subgraph of $G_1$ obtained by
keeping all $AB$-edges and deleting all other edges". So $G_2$ included $V_0$.}
We claim that $(G_2, \mathcal{P}_1,\mathcal{P'}_1)$ is a $(K_1, L, m_1, \epszero,\eps')$-scheme. Indeed, clearly~(Sch1) and~(Sch2) hold.
To verify (Sch3), recall that that $(G_1,A,A_0,B,B_0)$ is an $(\eps_*,\eps_0,K_1L,D_1)$-framework and so%
   \COMMENT{Daniela added more detail}
by (FR5) for all $x \in B$ we have 
$$
d_{G_2}(x,A)\ge d_{G_1}(x)-d_{G_1}(x,B')-|A_0| \ge D_1 - \eps_0n -|A_0| \stackrel{(\ref{D1eq})}{\ge} |A| -4 \eps_0 n
$$ and
similarly $d_{G_2}(y,B)\ge |B| -4 \eps_0 n$ for all $y\in A$. Since $\eps_0\ll \eps'/K_1L$, this implies~(Sch3).\COMMENT{Here we are just using that $\eps_0 n \ll \eps' |A_{i,j}|$, not e.g. (P2)}

\smallskip

\noindent\textbf{Step 3: Balanced exceptional systems for the robustly decomposable graph.}
In order to apply Corollary~\ref{rdeccor}, we first need to construct suitable balanced exceptional systems.
Apply Corollary~\ref{BEScor} to the $(\eps_*,\eps',K_1L,D_1)$-framework $(G_1,A,A_0,B,B_0)$
with $G_1$, $K_1L$, $\cP'_1$, $\eps_*$ playing the roles of $F$, $K$, $\cP$, $\eps$
in order to obtain a set $\cJ$ of $(K_1L)^4t_{K_1L}$ edge-disjoint balanced exceptional systems in $G_1$ with parameter $\eps_0$
such that for all $1 \le i'_1,i'_2,i'_3,i'_4  \le K_1L$ the set $\cJ$ contains precisely $t_{K_1L}$
$(i'_1,i'_2,i'_3,i'_4)$-BES with respect to the partition $\cP'_1$.
(Note that $F$ in Corollary~\ref{BEScor} satisfies (WF5) since $G_1$ satisfies (FR5).)%
\COMMENT{osthus added bracket}
So $\cJ$ is the union of all the sets $\cJ_{i'_1i'_2i'_3i'_4}$ returned by
Corollary~\ref{BEScor}. (Note that we will not use all the balanced exceptional systems in~$\cJ$ and
we do not need to consider the Hamilton cycles guaranteed by this result. 
So we do not need the full strength of Corollary~\ref{BEScor} at this point.)

Our next aim is to choose two disjoint subsets $\mathcal{J}_{\rm CA}$ and $\mathcal{J}_{\rm PCA}$ of $\cJ$ with the
following properties:
\begin{itemize}	
\item[(a)] In total $\mathcal{J}_{\rm CA}$ contains $L f r_3$ balanced exceptional systems. For each $i\le f$ and each $h \le L$,
$\mathcal{J}_{\rm CA}$ contains precisely $r_3$ $(i_1,i_2,i_3,i_4)$-BES of style $h$ (with respect to the
$(K,L,m_1,\eps_0)$-partition $(\cP_1,\cP'_1)$) such that $i_1,i_2,i_3,i_4\in \{(i-1)K_1/f+2,\dots,iK_1/f\}$.
\item[(b)] In total $\mathcal{J}_{\rm PCA}$ contains $7r^\diamond$ balanced exceptional systems. For each $i\le 7$,
$\mathcal{J}_{\rm PCA}$ contains precisely $r^\diamond$ $(i_1,i_2,i_3,i_4)$-BES (with respect to
the partition $\cP_1$) with
$i_1,i_2,i_3,i_4\in \{(i-1)K_1/7+2,\dots,iK_1/7\}$.
\end{itemize}
(Recall that we defined in Section~\ref{sec:findBF} when an $(i_1,i_2,i_3,i_4)$-BES has style $h$ with respect to a
$(K,L,m_1,\eps_0)$-partition $(\cP_1,\cP'_1)$.)
To see that it is possible to choose $\mathcal{J}_{\rm CA}$ and $\mathcal{J}_{\rm PCA}$, split $\cJ$ into two sets $\cJ_1$ and $\cJ_2$ such that both $\cJ_1$ and $\cJ_2$
contain at least $t_{K_1L}/3$ $(i'_1,i'_2,i'_3,i'_4)$-BES with respect to~$\cP'_1$, for all $1 \le i'_1,i'_2,i'_3,i'_4  \le K_1L$.
Note that there are $(K_1/f-1)^4$ choices of 4-tuples $(i_1,i_2,i_3,i_4)$
with $i_1,i_2,i_3,i_4\in \{(i-1)K_1/f+2,\dots,iK_1/f\}$. Moreover, for each such 4-tuple $(i_1,i_2,i_3,i_4)$
and each $h\le L$ there is one 4-tuple $(i'_1,i'_2,i'_3,i'_4)$ with $1 \le i'_1,i'_2,i'_3,i'_4  \le K_1L$
and such that any $(i'_1,i'_2,i'_3,i'_4)$-BES with respect to~$\cP'_1$ is an $(i_1,i_2,i_3,i_4)$-BES of style $h$ with respect to
$(\cP_1,\cP'_1)$. Together with the fact that
$$ 
\frac{(K_1/f-1)^4t_{K_1L}}{3}\ge \frac{D_1}{7(Lf)^4}\ge \gamma^{1/2}n\stackrel{(\ref{eq:rs})}{\ge} r_3,
$$
this implies that we can choose a set $\mathcal{J}_{\rm CA}\subseteq \cJ_1$ satisfying~(a).

Similarly, there are $(K_1/7-1)^4$ choices of 4-tuples $(i_1,i_2,i_3,i_4)$
with $i_1,i_2,i_3,i_4\in \{(i-1)K_1/7+2,\dots,iK_1/7\}$. Moreover, for each such 4-tuple $(i_1,i_2,i_3,i_4)$
there are $L^4$ distinct 4-tuples $(i'_1,i'_2,i'_3,i'_4)$ with $1 \le i'_1,i'_2,i'_3,i'_4  \le K_1L$
and such that any $(i'_1,i'_2,i'_3,i'_4)$-BES with respect to~$\cP'_1$ is an $(i_1,i_2,i_3,i_4)$-BES with respect to
$\cP_1$. Together with the fact that
$$ \frac{(K_1/7-1)^4L^4t_{K_1L}}{3}\ge \frac{D_1}{7^5}\ge \frac{n}{3\cdot 7^5}\stackrel{(\ref{eq:rs})}{\ge} r^\diamond,$$
this implies that we can choose a set $\mathcal{J}_{\rm PCA}\subseteq \cJ_2$ satisfying~(b).

\smallskip

\noindent\textbf{Step 4: Finding the robustly decomposable graph.}
Recall that $(G_2, \mathcal{P}_1,\cP'_1)$ is a $(K_1, L, m_1, \epszero,\eps')$-scheme.
Apply Proposition~\ref{lem:dirscheme} with $G_2$, $\cP_1$, $\cP'_1$, $K_1$, $m_1$, $\eps'$ playing the roles of
$G$, $\cP$, $\cP'$, $K$, $m$, $\eps$ to obtain an orientation $G_{2,{\rm dir}}$ of $G_2$%
  \COMMENT{Daniela: had $G_2-V_0$, but now $G_2$ doesn't include $V_0$ anymore}
such that
$(G_{2,{\rm dir}}, \mathcal{P}_1,\cP'_1)$ is a $[K_1, L, m_1, \epszero,2\sqrt{\eps'}]$-scheme. 
Let $C=A_1B_1A_2\dots A_{K_1}B_{K_1}$ be a spanning cycle on the clusters in $\cP_1$.

Our next aim is to use Lemma~\ref{lma:EF-bipartite} in order to extend the balanced exceptional systems in $\mathcal{J}_{\rm CA}$ into $r_3$ edge-disjoint
balanced exceptional factors with parameters $(L,f)$ for $G_{2,{\rm dir}}$ (with respect to $C$, $\mathcal P'_1$).
For this, note that the condition on $\cJ_{CA}$ in Lemma~\ref{lma:EF-bipartite} with $r_3$ playing the role of $q$ is satisfied by~(a). 
Moreover, $Lr_3/m_1=2rK_1/m_1=2\gamma K_1\ll 1$. 
Thus we can indeed apply Lemma~\ref{lma:EF-bipartite} to $(G_{2,{\rm dir}}, \mathcal{P}_1,\cP'_1)$
with $\cJ_{CA}$, $2\sqrt{\eps'}$, $K_1$, $r_3$ playing the roles of
$\cJ$, $\eps$, $K$, $q$ in order to obtain $r_3$ edge-disjoint balanced exceptional factors $BF_1,\dots,BF_{r_3}$ with parameters $(L,f)$
for $G_{2,{\rm dir}}$ (with respect to $C$, $\mathcal P'_1$) such that together these balanced exceptional factors
cover all edges in $\bigcup \mathcal{J}_{\rm CA}$. Let $\mathcal{BF}_{\rm CA}:=BF_1+\dots+ BF_{r_3}$.

Note that $m_1/4g,m_1/L\in\mathbb{N}$ since $m_1=|A|/K_1$ and $|A|$ is divisible by $K_2$ and thus $m_1$ is divisible
by $4gL$ (since $K_2/4gLK_1\in\mathbb{N}$ by our assumption).
Furthermore, $4rK_1^2=4\gamma m_1K_1^2\le \gamma^{1/2}m_1\le m_1$.%
\COMMENT{osthus adapted calculation from error in statement of rob dec corollary}
Thus we can apply Corollary~\ref{rdeccor} to the $[K_1, L, m_1, \epszero,\eps'']$-scheme
$(G_{2,{\rm dir}}, \mathcal{P}_1,\cP'_1)$ with $K_1$, $\eps''$, $g$ playing the roles of $K$, $\eps$, $\ell'$ to obtain a spanning
subgraph $CA(r)$ of $G_2$ as described there. (Note that $G_2$ equals the graph $G'$ defined in Corollary~\ref{rdeccor}.)%
    \COMMENT{Daniela: had $G_2-V_0$ instead of $G_2$ (twice)}
In particular, $CA(r)$ is $2(r_1+r_2)$-regular and edge-disjoint from $\mathcal{BF}_{\rm CA}$.

Let $G_3$ be the graph obtained from $G_2$ by deleting all the edges of $CA(r)+ \mathcal{BF}_{\rm CA}$.
Thus $G_3$ is obtained from $G_2$ by deleting at most $2(r_1+r_2+r_3)\le 6r_1=6\gamma_1m_1$ edges at every vertex in $A\cup B=V(G_3)$.
Let $G_{3,{\rm dir}}$ be the orientation of $G_3$%
    \COMMENT{Daniela: had $G_3-V_0$ instead of $G_3$}
in which every edge is oriented in the same way as in $G_{2,{\rm dir}}$.
Then Proposition~\ref{superslice} implies that%
   \COMMENT{Daniela added ref to Proposition~\ref{superslice}}
$(G_{3,{\rm dir}},\cP_1,\cP_1)$ is still a $[K_1, 1,m_1, \epszero,\eps]$-scheme.%
\COMMENT{AL: changed to as $[K_1, m_1, \epszero,\eps]$-scheme is no longer defined.}
Moreover,
$$\frac{r^\diamond}{m_1}\stackrel{(\ref{eq:rs})}{\le} \frac{2r_1}{m_1}=2\gamma_1\ll 1.$$
Together with~(b) this ensures that we can apply
Lemma~\ref{lma:EF-bipartite} to $(G_{3,{\rm dir}},\cP_1)$ with $\cP_1$, $\cJ_{PCA}$, $K_1$, $1$, $7$, $r^\diamond$ playing the roles of
$\cP$, $\cJ$, $K$, $L$, $f$, $q$ in order to obtain $r^\diamond$ edge-disjoint balanced exceptional factors $BF'_1,\dots,BF'_{r^\diamond}$ with parameters $(1,7)$
for $G_{3,{\rm dir}}$ (with respect to $C$, $\mathcal P_1$) such that together these balanced exceptional factors
cover all edges in $\bigcup \mathcal{J}_{\rm PCA}$. Let $\mathcal{BF}_{\rm PCA}:=BF'_1+\dots + BF'_{r^\diamond}$.

Apply Corollary~\ref{rdeccor} to obtain a spanning
subgraph $PCA(r)$ of $G_2$ as described there.%
   \COMMENT{Daniela: had $G_2-V_0$ instead of $G_2$}
In particular, $PCA(r)$ is $10r^\diamond$-regular
and edge-disjoint from $CA(r)+ \mathcal{BF}_{\rm CA}+ \mathcal{BF}_{\rm PCA}$.

Let $G^{\rm rob}:=CA(r)+ PCA(r)+ \mathcal{BF}_{\rm CA}+ \mathcal{BF}_{\rm PCA}$.
Note that by~(\ref{EFdeg}) all the vertices in $V_0:=A_0\cup B_0$ have the same degree  
$r_0^{\rm rob}:=2(Lfr_3+7r^\diamond)$ in $G^{\rm rob}$.
So 
\begin{equation}\label{eq:rrob}
7r_1\stackrel{(\ref{eq:rs})}{\le} r_0^{\rm rob} \stackrel{(\ref{eq:rs})}{\le} 30r_1.
\end{equation}
Moreover,~(\ref{EFdeg}) also implies that all the vertices in $A\cup B$ have the same degree $r^{\rm rob}$ in $G^{\rm rob}$,
where $r^{\rm rob}=2(r_1+r_2+r_3+6r^\diamond)$. So 
$$
r_0^{\rm rob}-r^{\rm rob}=2 \left(Lfr_3+ r^\diamond- (r_1+r_2+r_3)\right)=2(Lfr_3+r-(Lf-1)r_3-r_3 )=2r.
$$

\noindent\textbf{Step 5: Choosing a $(K_2,m_2,\eps_0)$-partition $\cP_2$.}
We now prepare the ground for the approximate decomposition step (i.e.~to apply Lemma~\ref{almostthm}). 
For this, we need to work with a finer partition of $A \cup B$ than the previous one
(this will ensure that the leftover from the approximate decomposition step is sufficiently sparse compared to $G^{\rm rob}$).

Let $G_4:=G_1-G^{\rm rob}$ (where $G_1$ was defined in Step~1) and note that 
\begin{equation} \label{D4D1}
D_4= D_1-r_0^{\rm rob}=D_1-r^{\rm rob}-2r.
\end{equation}
So 
\begin{equation} \label{degrees4}
d_{G_4}(x)=D_4+2r \mbox{ for all } x \in A \cup B \qquad \mbox{and} \qquad d_{G_4}(x)=D_4 \mbox{ for all } x \in V_0.
\end{equation}
(Note%
   \COMMENT{Daniela deleted "Thus every vertex in $V_0$
has degree $D_4$ in $G_4$ while every vertex in $A\cup B$ has degree $D_4+2r$." which we had immediately after (\ref{degrees4}), since
this is what (\ref{degrees4}) says}
that $D_4$ is even since $D_1$ and $r_0 ^{\rm rob}$ are even.)
So $G_4$ is $D_4$-balanced with respect to $(A,A_0,B,B_0)$
by Proposition~\ref{edge_number}.
Together with the fact that $(G_1,A,A_0,B,B_0)$ is an $(\eps_*,\eps_0,K_2,D_1)$-framework, this implies that
$(G_4,G_4,A,A_0,B,B_0)$ satisfies conditions (WF1)--(WF5) in the definition of an $(\eps_*,\eps_0,K_2,D_4)$-weak framework.
However, some vertices in $A_0\cup B_0$ might violate condition~(WF6). 
(But every vertex in $A\cup B$ will still satisfy~(WF6) with room to spare.) 
So we need to modify the partition of $V_0=A_0\cup B_0$ to obtain a new weak framework.

Consider a partition $A^*_0,B^*_0$ of $A_0\cup B_0$ which maximizes the number of edges in $G_4$ between
$A^*_0\cup A$ and $B^*_0\cup B$. 
Then $d_{G_4}(v, A^*_0\cup A) \le d_{G_4}(v)/2$ for all $v\in A^*_0$ since otherwise $A^*_0\setminus \{v\},B^*_0\cup \{v\}$ would
be a better partition of $A_0\cup B_0$. Similarly $d_{G_4}(v, B^*_0\cup B) \le d_{G_4}(v)/2$ for all $v\in B^*_0$.
 Thus (WF6) holds in $G_4$ (with respect to the partition
$A\cup A^*_0$ and $B\cup B^*_0$).
Moreover, Proposition~\ref{keepbalance} implies that $G_4$ is still $D_4$-balanced with respect to $(A,A^*_0,B,B^*_0)$.
Furthermore, with (FR3) and (FR4) applied to $G_1$, we obtain
$e_{G_4}(A\cup A^*_0)\le e_{G_1}(A\cup A_0)+|A^*_0||A\cup A^* _0|\le \eps_0 n^2$ and similarly $e_{G_4}(B\cup B^*_0)\le \eps_0 n^2$.
Finally, every vertex in $A\cup B$ has internal degree at most $\eps_0 n+|A_0 \cup B_0|\le 2\eps_0 n$ in $G_4$ (with respect to the partition
$A\cup A^*_0$ and $B\cup B^*_0$).%
\COMMENT{osthus replaced V0 with A0 B0} 
Altogether this implies that
$(G_4,G_4,A,A_0^*,B,B_0^*)$ is an $(\eps_0,2\eps_0,K_2,D_4)$-weak framework and thus also an $(\eps_0,\eps',K_2,D_4)$-weak framework.     

Without loss of generality we may assume that $|A^*_0|\geq |B^*_0|$. Apply Lemma~\ref{coverA0B02}
to the $(\eps_0,\eps',K_2,D_4)$-weak framework $(G_4,G_4,A,A_0^*,B,B_0^*)$ to find a set
$\mathcal C_2$ of $|\mathcal C_2| \leq \eps _0 n$ edge-disjoint Hamilton cycles in $G_4$ so that the graph $G_5$
obtained from $G_4$ by deleting all the edges of these Hamilton cycles forms part of an
$(\eps_0,\eps',K_2,D_5)$-framework $(G_5,A,A_0^*,B,B_0^*)$, where
\begin{align}\label{eqD5}
D_5=D_4-2|\mathcal C_2| \geq D_4 -2\eps _0 n.
\end{align}
Since $D_4$ is even, $D_5$ is even. Further, 
\begin{equation} \label{degrees5}
d_{G_5}(x)\stackrel{(\ref{degrees4})}{=} D_5+2r \mbox{ for all } x \in A \cup B \qquad \mbox{and} \qquad d_{G_5}(x)\stackrel{(\ref{degrees4})}{=}D_5 \mbox{ for all } x \in A^*_0\cup B^*_0.
\end{equation}

Choose an additional constant $\eps '_4$ such that $\eps _3 \ll \eps '_4 \ll 1/K_2$ and so
that%
\COMMENT{Vital: we have had to wait until now to define $\eps '_4$, since it depends on $D_5$ and
$D_5$ only just defined. In particular, $D_5$ depends on $|\mathcal C_2|$.}
$$t_{K_2}:=\frac{(1-20\eps'_4)D_5}{2K_2^4} \in \mathbb N.$$

Now apply Lemma~\ref{part} to $(G_5,A,A_0^*,B,B_0^*)$ with $D_5$, $K_2$, $\eps_0$
playing the roles of $D$, $K$, $\eps$ in order to obtain partitions $A_1,\dots,A_{K_2}$ and
$B_1,\dots,B_{K_2}$ of $A$ and $B$ into sets of size 
\begin{equation} \label{m2def}
m_2:=|A|/K_2
\end{equation} such that together with $A^*_0$ and $B^*_0$ the sets
$A_i$ and $B_i$ form a $(K_2,m_2,\eps_0,\eps_1,\eps_2)$-partition $\cP_2$ for $G_5$.
(Note that the previous partition of $A$ and $B$  plays no role in the subsequent argument, so
denoting the clusters in~$\cP_2$ by $A_i$ and $B_i$ again will cause no notational conflicts.)%
   \COMMENT{Daniela deleted an entire paragraph verifying that $G'_5:=G_5[A,B]$ is part of a scheme since this is not used anymore}

\smallskip

\noindent\textbf{Step 6: Balanced exceptional systems for the approximate decomposition.}
In order to apply Lemma~\ref{almostthm}, we first need to construct suitable balanced exceptional systems.
Apply Corollary~\ref{BEScor} to the $(\eps_0,\eps',K_2,D_5)$-framework $(G_5,A,A_0^*,B,B_0^*)$ with
 $G_5$, $\eps_0$, $\eps'_0$, $\eps'_4$, $K_2$, $D_5$, $\cP_2$ playing the roles
of $F$, $\eps$, $\eps_0$, $\eps_4$, $K$, $D$, $\cP$. 
(Note that since we are letting $G_5$ play the role of $F$, condition (WF5) in the corollary immediately follows from (FR5).)
Let $\cJ'$ be the union of the sets $\cJ_{i_1i_2i_3i_4}$ guaranteed by Corollary~\ref{BEScor}.
So $\cJ'$ consists of $K_2^4t_{K_2}$ edge-disjoint balanced exceptional systems with parameter $\eps'_0$ in $G_5$
(with respect to $\cP_2$).%
   \COMMENT{Daniela added brackets}
Let $\cC_3$ denote the set of Hamilton cycles guaranteed by Corollary~\ref{BEScor}. So $|\cC_3|=10\eps'_4D_5$.%
   \COMMENT{Daniela added new sentence}

Let $G_6$ be the subgraph obtained from $G_5$ by deleting all those edges lying in the Hamilton cycles from~$\cC_3$.
Set $D_6:=D_5-2|\cC_3|$.
So
\begin{equation} \label{degrees6}
d_{G_6}(x)\stackrel{(\ref{degrees5})}{=}D_6+2r \mbox{ for all } x \in A \cup B \qquad \mbox{and} \qquad d_{G_6}(x)\stackrel{(\ref{degrees5})}{=}D_6 \mbox{ for all } x \in V_0.
\end{equation}
(Note that $V_0=A_0 \cup B_0=A_0^* \cup B_0^*.$)%
\COMMENT{osthus added bracket}
Let $G'_6$ denote the subgraph of $G_6$ obtained by deleting all those
edges lying in the balanced exceptional systems from $\cJ'$. Thus $G'_6=G^\diamond$, where $G^\diamond$ is as defined in Corollary~\ref{BEScor}(iv).
In particular, $V_0$ is an isolated set in $G'_6$ and $G'_6$ is bipartite with vertex classes $A\cup A^*_0$ and $B\cup B^*_0$
(and thus also bipartite with vertex classes $A'=A\cup A_0$ and $B'=B\cup B_0$).%
    \COMMENT{Daniela deleted the following paragraph (as well as the next one saying that $G'_6[A,B]$ is still part of a scheme): 
"Moreover, recall that (as remarked after the corresponding definition in Section~\ref{besconstruct}), a balanced exceptional
system does not contain any $AB$-edges and thus the same holds for $\bigcup \cJ'$ too.
Thus $G'_6$ can be obtained from $G_6$ by keeping all $AB$-edges and deleting all other edges."}


Consider any vertex $v\in V_0$. Then $v$ has degree $D_5$ in $G_5$, degree two in each Hamilton cycle from $\cC_3$,
degree two in each balanced exceptional system from $\cJ'$ and degree zero in~$G'_6$.
Thus
$$D_6+2|\cC_3|=D_5\stackrel{(\ref{degrees5})}{=}d_{G_5}(v)=2|\cC_3|+2|\cJ'|+d_{G'_6}(v)=2|\cC_3|+2|\cJ'|$$
and so
\begin{equation}\label{eq:D5}
D_6=2|\cJ'|.
\end{equation}

\noindent\textbf{Step 7: Approximate Hamilton cycle decomposition.}
Our next aim is to apply Lemma~\ref{almostthm} with $G_6$, $\cP_2$, $K_2$, $m_2$, $\cJ'$, $\eps'$ playing the
roles of $G$, $\cP$, $K$, $m$, $\cJ$, $\eps_0$.
 Clearly, condition~(c) of Lemma~\ref{almostthm} is satisfied.
In order to see that condition (a) is satisfied, let $\mu:=(r^{\rm rob}_0-2r)/4K_2m_2$ and note that%
   \COMMENT{Daniela changed inequality below, before had $1/K_2\ll \mu\ll \eps$, but we don't need this for Lemma~\ref{almostthm}}
$$
0\le \frac{\gamma_1 m_1}{4K_2m_2}\leq
\frac{7r_1-2r}{4K_2m_2}\stackrel{(\ref{eq:rrob})}{\le} \mu \stackrel{(\ref{eq:rrob})}{\le} \frac{30r_1}{4K_2m_2}\le
\frac{30\gamma_1 }{K_1}\ll 1.
$$
Recall that every vertex $v\in B$ satisfies%
\COMMENT{osthus adapted calculation to be an `equality' and not just a lower bound}
$$d_{G_5}(v) \stackrel{(\ref{degrees5})}{=} D_5+2r \stackrel{(\ref{D4D1}),(\ref{eqD5})}{=} D_1-r^{\rm rob}_0+2r \pm 2 \eps_0 n
\stackrel{(\ref{D1eq})}{=}  |A| -r^{\rm rob}_0+2r \pm 4 \eps_0 n.
$$
Moreover,
$$d_{G_5}(v,A)= d_{G_5}(v)-d_{G_5}(v,B\cup B^*_0)-|A^*_0|\ge d_{G_5}(v)-2\eps ' n,$$
where the last inequality holds since $(G_5,A,A^*_0,B,B^*_0)$ is an $(\eps_0,\eps ',K_2,D_5)$-framework (c.f.~conditions (FR4) and (FR5)).
Together with the fact that $\cP_2$ is a $(K_2,m_2,\eps_0,\eps_1,\eps_2)$-partition for $G_5$ (c.f. condition (P2)),
this implies that
\begin{align*}
d_{G_5}(v,A_i) & =\frac{d_{G_5}(v,A)\pm \eps_1 n}{K_2}
=\frac{|A|-r^{\rm rob}_0+2r\pm 2\eps_1 n}{K_2}=
\left(1-\frac{r^{\rm rob}_0-2r}{K_2m_2}\pm 5\eps_1\right)m_2\\
& =(1-4\mu\pm 5\eps_1)m_2=(1-4\mu\pm 1/K_2)m_2.
\end{align*}
Recall that $G_6$ is obtained from $G_5$ by deleting all those edges lying in the Hamilton cycles in $\cC_3$ and that
$$
|\cC_3|= 10\eps'_4 D_5 \leq 10\eps'_4 D_4
 \stackrel{(\ref{D4bound})}{\le} 11 \eps'_4 |A| \stackrel{(\ref{m2def})}{\leq} m_2/K_2.
$$
Altogether this implies that
$d_{G_6}(v,A_i)=(1-4\mu\pm 4/K_2)m_2$. Similarly one can show that $d_{G_6}(w,B_j)=(1-4\mu\pm 4/K_2)m_2$ for all $w\in A$.
So condition~(a) of Lemma~\ref{almostthm} holds.

To check condition~(b), note that%
   \COMMENT{Daniela added $\frac{D_4}{2}$ to the display below}
$$|\cJ'|\stackrel{(\ref{eq:D5})}{=}  \frac{D_6}{2}\le  \frac{D_4}{2}
\stackrel{(\ref{D4D1})}{\le}  \frac{D_1-r^{\rm rob}_0}{2}\le \frac{n}{4}-\mu \cdot 2K_2m_2-r\le
\left(\frac{1}{4}-\mu-\frac{\gamma}{3 K_1}\right)n.$$
Thus condition~(b) of Lemma~\ref{almostthm} holds with $\gamma/3 K_1$ playing the role of $\rho$.

Since the edges in $\cJ'$ lie in $G_5$ and  $(G_5,A,A^*_0,B,B^*_0)$ is an $(\eps_0,\eps',K_2,D_5)$-framework,
(FR5) implies that each $v \in A \cup B$ is incident with an edge in $J$ for at most $\eps ' n+|V_0|\leq 2\eps 'n$ of the%
\COMMENT{osthus added `of the} 
$J \in \cJ '$. 
(Recall that in a balanced exceptional system there are no edges between $A$ and $B$.)
So condition~(d) of Lemma~\ref{almostthm} holds with $\eps '$ playing the role of $\eps _0$.

So we can indeed apply Lemma~\ref{almostthm} to obtain a collection $\cC_4$ of $|\cJ'|$ edge-disjoint Hamilton cycles in $G_6$
which cover all edges of $\bigcup \cJ'$.

\smallskip

\noindent\textbf{Step 8: Decomposing the leftover and the robustly decomposable graph.}
Finally, we can apply the `robust decomposition property' of $G^{\rm rob}$ guaranteed by Corollary~\ref{rdeccor}
to obtain a Hamilton decomposition of the leftover from the previous step together with $G^{\rm rob}$.
 
To achieve this, let $H'$ denote the subgraph of $G_6$ obtained by deleting all those edges lying in the Hamilton cycles from~$\mathcal{C}_4$.
Thus~(\ref{degrees6}) and~(\ref{eq:D5}) imply that
every vertex in $V_0$ is isolated in $H'$ while every vertex $v\in A\cup B$ has degree $d_{G_6}(v)-2|\cJ'|=D_6+2r-2|\cJ'|=2r$ in~$H'$
(the last equality follows from~(\ref{eq:D5})).
Moreover, $H'[A]$ and $H'[B]$ contain no edges. (This holds
since $H'$ is a spanning subgraph of $G_6-\bigcup \cJ'=G'_6$ and since we have
already seen that $G'_6$ is bipartite with vertex classes $A'$ and $B'$.) Now let $H:=H'[A ,B]$. Then 
Corollary~\ref{rdeccor}(ii)(b) implies that $H + G^{\rm rob}$
has a Hamilton decomposition. Let $\cC_5$ denote the set of Hamilton cycles thus obtained.
Note that $H+G^{\rm rob}$ is a spanning subgraph of $G$ which contains all edges of $G$ which were not covered by $\cC_1\cup \cC_2\cup \cC_3\cup \cC_4$.
So $\cC_1\cup \cC_2\cup \cC_3\cup \cC_4 \cup \cC_5$ is a Hamilton decomposition of $G$.
\endproof

\medskip
{\footnotesize \obeylines \parindent=0pt
B\'ela Csaba
Bolyai Institute,
University of Szeged,
H-6720 Szeged, Aradi v\'ertan\'uk tere 1.
Hungary

\begin{flushleft}
{\it{E-mail address}: \tt{bcsaba@math.u-szeged.hu}}
\end{flushleft}

Daniela K\"{u}hn, Allan Lo, Deryk Osthus 
School of Mathematics
University of Birmingham
Edgbaston
Birmingham 
B15 2TT
UK

\begin{flushleft}
{\it{E-mail addresses}: \tt{\{d.kuhn, s.a.lo, d.osthus\}@bham.ac.uk}}
\end{flushleft}

Andrew Treglown
School of Mathematical Sciences
Queen Mary, University of London
Mile End Road
London 
E1 4NS
UK

\begin{flushleft}
{\it{E-mail addresses}: \tt{a.treglown}@qmul.ac.uk}
\end{flushleft}
}


\begin{thebibliography}{100}

\bibitem{1factorization} A.G.~Chetwynd and A.J.W.~Hilton, 
Regular graphs of high degree are 1-factorizable, 
\emph{Proc. London Math. Soc.} {\bf 50} (1985), 193--206.


\bibitem{CH} A.G.~Chetwynd  and A.J.W. Hilton, 
$1$-factorizing regular graphs of high degree---an improved bound, 
\emph{Discrete Math.} {\bf 75} (1989), 103--112.


\bibitem{CKO} D.~Christofides, D.~K\"uhn and D.~Osthus, Edge-disjoint
Hamilton cycles in graphs, \emph{J. Combin. Theory B} \textbf{102} (2012), 1035--1060.


\bibitem{paper3} B.~Csaba, D.~K\"uhn, A.~Lo, D.~Osthus and A.~Treglown, Proof of the $1$-factorization and Hamilton decomposition conjectures III: approximate decompositions, preprint.


\bibitem{FKS} A.~Ferber, M.~Krivelevich and B.~Sudakov,
Counting and packing Hamilton cycles in dense graphs and oriented graphs,  preprint.

\bibitem{fk} A.~Frieze and M.~Krivelevich, On packing Hamilton cycles in $\epsilon$-regular graphs,
\emph{J. Combin. Theory B}~{\bf{94}} (2005), 159--172.


\bibitem{HartkeHCs} S.G.~Hartke and T.~Seacrest, Random partitions
and edge-disjoint Hamiltonian cycles, preprint.

\bibitem{NP} I. Holyer, The NP-completeness of edge-colouring, \emph{SIAM Journ. Comp.}
{\bf 10} (1981), 718--720.

\bibitem{Jackson79} B. Jackson, Edge-disjoint Hamilton cycles in regular graphs of large degree,
\emph{J. London Math. Soc.} {\bf 19} (1979), 13--16.


\bibitem{Janson&Luczak&Rucinski00} S. Janson, T. {\L}uczak\ and\ A. Ruci\'nski,
{\em Random Graphs}, Wiley, 2000.

\bibitem{KLOmindeg} D.~K\"uhn, J.~Lapinskas and D.~Osthus, Optimal packings of Hamilton cycles in graphs of high minimum degree,
\emph{ Combin. Probab. Comput.}, {\bf 22} (2013), 394--416.

\bibitem{paper4} D.~K\"uhn, A.~Lo and D.~Osthus, Proof of the $1$-factorization and Hamilton decomposition conjectures IV:
exceptional systems for the two cliques case, preprint.

\bibitem{3con} D.~K\"uhn, A. Lo, D.~Osthus and K. Staden, The robust component structure of dense regular graphs and applications, preprint.

\bibitem{2clique} D.~K\"uhn, A.~Lo, D.~Osthus and A.~Treglown,
Proof of the $1$-factorization conjecture I: the $2$-clique case, preprint.

\bibitem{Kelly} D.~K\"uhn and D.~Osthus, Hamilton decompositions of regular expanders: a proof of Kelly's conjecture for large tournaments,
\emph{Adv. in Math.} {\bf 237} (2013), 62--146.

\bibitem{KellyII} D.~K\"uhn and D.~Osthus,
Hamilton decompositions of regular expanders: applications, preprint.

\bibitem{KOTkelly} D.~K\"uhn, D.~Osthus and A.~Treglown,
Hamilton decompositions of regular tournaments, 
\emph{Proc.~London Math.~Soc.} {\bf 101} (2010), 303--335.

\bibitem{KOTchvatal} D.~K\"uhn, D.~Osthus and A.~Treglown,
Hamiltonian degree sequences in digraphs, 
\emph{J.~Combin. Theory  B} {\bf 100} (2010), 367--380.

\bibitem{lucas} E.~Lucas, \emph{R\'ecr\'eations Math\'ematiques}, Vol.~2, Gautheir-Villars, 1892.
\bibitem{Viz} V.~G.~Vizing, Critical graphs with given chromatic class, \emph{Metody Diskret. Analiz.} {\bf 5} (1965) 9--17. (In Russian.)

\bibitem{OS} D.~Osthus and K.~Staden,
Approximate Hamilton decompositions of regular robustly expanding digraphs, 
\emph{SIAM J. Discrete Math.}, to appear.

\bibitem{NWreg} C.St.J.A.~Nash-Williams, 
Valency sequences which force graphs to have Hamiltonian circuits,
University of Waterloo Research Report, Waterloo, Ontario, 1969.

\bibitem{initconj} C.St.J.A.~Nash-Williams, Hamiltonian lines
in graphs whose vertices have sufficiently large valencies, in \emph{Combinatorial
theory and its applications, III (Proc. Colloq., Balatonf\"ured, 1969)},
North-Holland, Amsterdam (1970), 813--819.

\bibitem{Diracext} C.St.J.A.~Nash-Williams, Edge-disjoint
Hamiltonian circuits in graphs with vertices of large valency, in
\emph{Studies in Pure Mathematics (Presented to Richard Rado)}, Academic
Press, London (1971), 157--183.

\bibitem{decompconj} C.St.J.A.~Nash-Williams, Hamiltonian
arcs and circuits, in \emph{Recent Trends in Graph Theory (Proc. Conf.,
New York, 1970)}, Springer, Berlin (1971), 197--210.

\bibitem{reed} L.~Perkovic and B.~Reed, 
Edge coloring regular graphs of high degree, \emph{Discrete Math.} {\bf 165/166} (1997), 567--578.


\bibitem{PT} M.J. Plantholt and S.K. Tipnis, 
All regular multigraphs of even order and high degree are $1$-factorable, 
\emph{Electron. J. Combin.} {\bf 8} (2001), R41.

\bibitem{stiebitz} M.~Stiebitz, D.~Scheide, B.~Toft and L.M.~Favrholdt,
\emph{Graph Edge Coloring: Vizing's Theorem and Goldberg's Conjecture}, Wiley 2012.

\bibitem{vaughan} E.~Vaughan,
An asymptotic version of the multigraph $1$-factorization conjecture, 
\emph{J.~Graph Theory} {\bf 72} (2013),  19--29.

\bibitem{west} D.B.~West, 
Introduction to Graph Theory (2nd Edition), Pearson 2000.

\end{thebibliography}
\end{document}